\colorlet{shadecolor}{blue!15}
\newtheorem{theorem}{Theorem}[section]
\newtheorem{corollary}[theorem]{Corollary}
\newtheorem{lemma}[theorem]{Lemma}
\newtheorem{proposition}[theorem]{Proposition}
\newtheorem{remark}[theorem]{Remark}
\newcommand{\be}[1]{\begin{equation}\label{#1}}
\newcommand{\ee}{\end{equation}}
\numberwithin{equation}{section}
\newcommand{\ba}[1]{\begin{align}\label{#1}}
\newcommand{\ea}{\end{align}}
\numberwithin{equation}{section}
\newcommand{\ben}{\begin{equation*}}
\newcommand{\een}{\end{equation*}}
\numberwithin{equation}{section}
\newenvironment{proof}[1][\relax]
  {\paragraph{Proof\ifx#1\relax\else~of #1\fi}}%
  {~\hfill$\square$\par\bigskip}
\newcommand{\calA}{\mathcal{A}}
\newcommand{\calB}{\mathcal{B}}
\newcommand{\calC}{\mathcal{C}}
\newcommand{\calD}{\mathcal{D}}
\newcommand{\calE}{\mathcal{E}}
\newcommand{\calF}{\mathcal{F}}
\newcommand{\calH}{\mathcal{H}}
\newcommand{\calO}{\mathcal{O}}
\newcommand{\calQ}{\mathcal{Q}}
\newcommand{\calR}{\mathcal{R}}
\newcommand{\calS}{\mathcal{S}}
\newcommand{\calT}{\mathcal{T}}
\newcommand{\calV}{\mathcal{V}}
\newcommand{\calX}{\mathcal{X}}
\newcommand{\bbE}{\mathbb{E}}
\newcommand{\bbG}{\mathbb{G}}
\newcommand{\bbN}{\mathbb{N}}
\newcommand{\bbO}{\mathbb{O}}
\newcommand{\bbP}{\mathbb{P}}
\newcommand{\bbQ}{\mathbb{Q}}
\newcommand{\bbR}{\mathbb{R}}
\newcommand{\bbT}{\mathbb{T}}
\newcommand{\bbZ}{\mathbb{Z}}
\newcommand{\sfC}{{\sf C}}
\newcommand{\sfP}{{\sf P}}
\newcommand{\rk}[1]{\bgroup\color{red}%
  \par\medskip\hrule\smallskip%
  \noindent\textbf{#1}%
  \par\smallskip\hrule\medskip\egroup}
\definecolor{speccol}{rgb}{0.7,0.1,0.5}
\definecolor{speccol2}{rgb}{0.237,0.145,0.033}
\newcommand{\alexcom}[1]{\textcolor{orange}{#1}}
\renewcommand{\alexcom}[1]{}
\newcommand{\Iceconf}{\Omega_{6V}}
\newcommand{\heightfcns}{\mathcal{H}}
\newcommand{\PRcyl}[3]{\mathbb{P}^{(#1)}_{#2, #3}} 
\newcommand{\Zcyl}[3]{Z^{(#1)}_{#2, #3}} 
\newcommand{\len}{\mathrm{length}}
\newcommand{\Var}{\mathrm{Var}}
\newcommand{\xlra}{\xleftrightarrow}
\newcommand{\La}{\Lambda}
\newcommand{\ind}{\mathbbm 1}
\newcommand{\Slice}{{\rm Slice}}
\title{Delocalization of the height function of the six-vertex model}
\author{Hugo Duminil-Copin\thanks{Institut des Hautes \'Etudes Scientifiques and Universit\'e de Gen\`eve. \textit{Mail:} IH\'{E}S, 35 route de Chartres,
91440 Bures-sur-Yvette, France. \textit{E-mail:} \texttt{duminil@ihes.fr}}, 
Alex Karrila\thanks{\AA bo Akademi University. \textit{Mail:} Henriksgatan 2, 20500 \AA bo, Finland. \textit{E-mail:} \texttt{alex.karrila@abo.fi}},
Ioan Manolescu\thanks{D\'{e}partement de math\'{e}matiques, 
Universit\'{e} de Fribourg. \textit{Mail:}
Chemin du Mus\'{e}e 23,
CH-1700 Fribourg, Switzerland. \textit{E-mail:} \texttt{ioan.manolescu@unifr.ch}},
Mendes Oulamara\thanks{Institut des Hautes \'{E}tudes Scientifiques and Université Paris-Saclay. \textit{Mail:} IH\'{E}S, 35 route de Chartres,
91440 Bures-sur-Yvette, France. \textit{E-mail:} \texttt{oulamara@ihes.fr}}
\date{\today}
}
\begin{document}

\maketitle

\begin{abstract}
We show that the height function of the six-vertex model, in the parameter range $\mathbf a=\mathbf b=1$ and $\mathbf c\ge1$, is delocalized with logarithmic variance when $\mathbf c\le 2$. This complements the earlier proven localization for $\mathbf c>2$. Our proof relies on Russo--Seymour--Welsh type arguments, and on the local behaviour of the free energy of the cylindrical six-vertex model, as a function of the unbalance between the number of up and down arrows. 
\\ \\
\textbf{MSC classes:} 60K35, 82B20, 82B27 \\
\textbf{Key words:} GFF, delocalization, height function, lattice models, six-vertex model, phase transition
\end{abstract}

\tableofcontents

\section{Introduction}

\subsection{Motivation}\label{subsubsec: intro - motivation}

The six-vertex model was initially proposed by Pauling in 1935 in order to study the thermodynamic properties of ice~\cite{Pau35}. It became the archetypical example of a planar integrable model after Lieb's solution of the model in 1967 in its anti-ferroelectric and ferroelectric phases~\cite{Lie67a,Lie67b,Lie67c} using the Bethe ansatz (see~\cite{BetheAnsatz1} and references therein for an introduction). In the last fifty years, further analysis of the model has provided deep insight into the subtle structure of two-dimensional integrable systems, for instance with the development of the Yang-Baxter equation, quantum groups, and transfer matrices; see e.g.~\cite{Bax89,Resh10}. 

The six-vertex model lies at the crossroads of a vast family of two-dimensional lattice models. Among others, it has been related to the dimer model
, the Ising and Potts models, the critical random-cluster model, the loop $O(n)$ models, the Ashkin-Teller models, random permutations, stochastic growth model
, and quantum spin chains, to cite but a few examples (see references below). In recent years, the interplay between all these models has been used to prove a number of new results on the behaviour of each one of them. Let us mention here the extensive study of the free fermion point in relation to dimers~\cite{BK,FerSpo06,Ken00}; the analysis of critical points of random-cluster models and loop $O(n)$ models~\cite{Nie82,RS19}; the development of parafermionic observables based on the six-vertex model, culminating with the proof of conformal invariance of the Ising model~\cite{CheSmi12,Smi10}\footnote{
See~\cite{IkhWesWhe13,IkhCar09} for examples of constructions,~\cite{Dum17} for a review, and~\cite{BefDumSmi15,DumSmi12} for other examples of simple mathematical applications.
}; the understanding of dimerization properties of the anti-ferromagnetic Heisenberg chain~\cite{AizDum20}; and the relation between Kardar--Parisi--Zhang equation and the stochastic six-vertex model~\cite{BorCorGor16}. 

While the use of the six-vertex model's integrable properties has been extraordinarily fruitful to understand its free energy, the analysis of the model's correlation functions and the associated stochastic processes have been particularly limited (with some notable exceptions like the free fermion point). For instance, the exact integrability provides strong evidence of a Berezinskii--Kosterlitz--Thouless phase transition of the antiferroelectric model between a regime in which correlations decay polynomially fast and a regime where they decay exponentially fast. However, proving mathematically that this is indeed the case remains an open problem with today's techniques.

Another example of a property of the six-vertex model that seemed to elude mathematicians for many years is the rigorous understanding of its height function representation (see definition below). Indeed, the six-vertex model produces one of the most natural models of random height functions. This interpretation of the model plays an important role for at least two reasons. First, special cases include the height function of the dimer model (when considering the free fermion point) and the uniformly chosen graph homomorphisms from $\bbZ^2 $ to $\bbZ$ (when considering the original square-ice model), which are models of independent interest. Second, the height function interpretation has been at the center of the bozonization of 2D lattice models, an extremely powerful tool introduced in the physics literature and enabling the use of the Coulomb gas formalism to understand (as of today non-rigorously) the behaviour of correlations (see e.g.~\cite{Dub11,Nie84,ZubItz77}). 

One of the most fundamental questions one can ask about a model of a random height function $h$ is whether the height function fluctuates or not. More precisely, does the height variance ${\rm Var}[h(x)-h(y)]$ between two points $x$ and $y$ remain bounded uniformly in $x$ and $y$, or does it on the contrary grow to infinity as the distance between $x$ and $y$ goes to infinity? In the former scenario, we say that the height function model is {\em localized} or  {\em smooth}, and in the latter one, that it is {\em delocalized} or  {\em rough}. On which side (localized/delocalized) of the dichotomy the model lies is a crucial question which can be understood as an analogue, for spin or percolation systems, of determining whether long-range order occurs or not at criticality. The answer can be quite subtle and seemingly similar models can exhibit very different behaviours.

As mentioned above, most of the currently known exact results on the six-vertex model seem to provide little rigorous information on the behaviour of the height function, in particular they do not directly answer the question of localization/delocalization. In this paper, we provide the first full description of which parameters $\mathbf c$ are such that the six-vertex height function is localized/delocalized, in the regime corresponding to Rys' model of hydrogen bonded ferroelectrics~\cite{Rys63} where the parameters of the six-vertex model, as defined in the next subsection, are $\mathbf a=\mathbf b=1$ and $\mathbf c\ge1$~\footnote{
Various predictions for the six-vertex model are formulated in terms of the parameter $\Delta=(\mathbf a^2+\mathbf b^2-\mathbf c^2)/(2\mathbf a\mathbf b)$. The six-vertex models with $\mathbf a=\mathbf b=1$ and $\mathbf c\ge1$ are equivalently determined by $\mathbf{a} = \mathbf{b}$ and $\Delta \le 1/2$; as we shall soon see, in terms of the latter formulation, we have localization for $\Delta < -1$ and delocalization for $\Delta \in [-1, 1/2]$.
}.

\subsection{Definitions and main result on the torus}

The six-vertex model on the torus is defined as follows. For $N>0$ even, let $\bbT_N := (V(\bbT_N), E(\bbT_N) )$ be the toroidal square grid graph with $N \times N$ vertices. An \emph{arrow configuration} $\omega$ on $\bbT_N$ is the choice of an orientation for every edge of $E(\bbT_N)$. We say that $\omega$ satisfies the \emph{ice rule}, or equivalently that it is a {\em six-vertex configuration}, if every vertex of $V(\bbT_N)$ has two incoming  and two outgoing incident edges in $\omega$. These edges can be arranged in six different ways around each vertex as depicted in Figure~\ref{fig:the_six_vertices}, hence the name of the model.
For parameters $\mathbf a_1,\mathbf a_2,\mathbf b_1,\mathbf b_2,\mathbf c_1,\mathbf c_2\ge0$, define the {\em weight} of a configuration $\omega$ to be 
\[
W_\mathrm{6V}(\omega) =\mathbf a_1^{n_1}\mathbf a_2^{n_2}\mathbf b_1^{n_3}\mathbf b_2^{n_4}\mathbf c_1^{n_5}\mathbf c_2^{n_6},
\]
where $n_i$ is the number of vertices of $V(\bbT_N)$ having type $i$ in $\omega$. 
In this paper, we will not study the model in its full generality of parameters, and focus on the special choice given by $\mathbf a_1=\mathbf a_2=\mathbf b_1=\mathbf b_2=1$ and $\mathbf c_1=\mathbf c_2=\mathbf c\ge1$, which corresponds to {\em isotropic\footnote{
The reader may verify from Figure~\ref{fig:the_six_vertices} that given $\mathbf a_1=\mathbf a_2=\mathbf b_1=\mathbf b_2$ and $\mathbf c_1=\mathbf c_2$, the weight of a vertex does not change under symmetries of the square lattice.
}} parameters. 

\begin{figure}[htb]
	\begin{center}
		\includegraphics[width=0.75\textwidth, page=1]{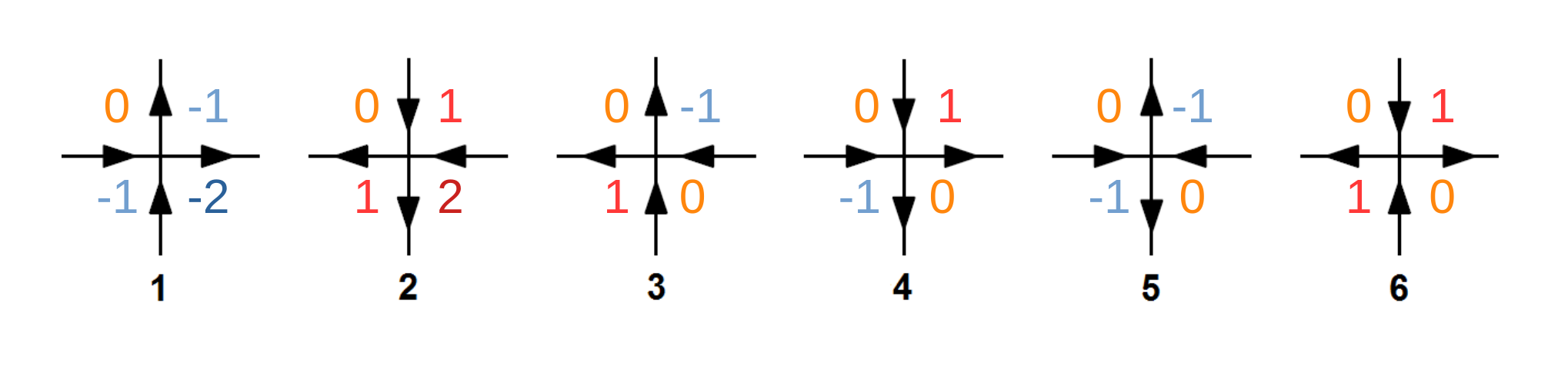}
	\caption{The $6$ \emph{types} of vertices in the six-vertex model together with the corresponding height function values on the four squares touching this vertex (we set $h=0$ on the upper-left square).
	Each type comes with a weight $\mathbf a_1,\mathbf a_2,\mathbf b_1,\mathbf b_2,\mathbf c_1,\mathbf c_2$. 
	}
	\label{fig:the_six_vertices}	
	\end{center}
\end{figure} 

The weights induce a probability measure on the set $\Iceconf(\bbT_N)$ of six-vertex configurations $\omega$ on $\bbT_N$ given by
\[
\bbP_{\bbT_N}[\{\omega\}] = \frac{W_\mathrm{6V}(\omega)}{Z(\bbT_N)},
\]
where $Z(\bbT_N):=\sum_{\omega\in\Iceconf(\bbT_N)} W_\mathrm{6V} (\omega)$ is the partition function. Below, we write $\bbE_{\bbT_N}$ for the associated expectation.

Write $\bbT_N^*$ for the dual graph of $\bbT_N$: its vertices are the faces of $\bbT_N$
and two vertices of $\bbT_N^*$ are connected by an edge of $\bbT_N^*$ if the corresponding faces of $\bbT_N$ share an edge. 
As mentioned in Section~\ref{subsubsec: intro - motivation}, the six-vertex model and its ice-rule naturally emerge when studying  {\em graph homomorphisms} from $\bbT_N^*$ into $\bbZ$, i.e.,~maps $h$ from the faces $F(\bbT_{N})$ of $\bbT_N$ to $\bbZ$ which satisfy $\vert h(x) -  h(y) \vert = 1$ for all neighbouring faces $x, y \in F(\bbT_{N})$. We call such graph homomorphisms {\em height functions}. To avoid certain technical difficulties, we will assume that $N$ is even and partition the faces of $\bbT_N$ in a bipartite fashion into odd and even faces, and will hereafter additionally impose that a height function $h$ is odd on odd faces and even on even faces.

For a given height function $h$, introduce the {\em six-vertex configuration $\omega$ associated to $h$} by orienting each edge $e$ so
that out of the two faces bordering $e$, the one on the left of $e$ (in the sense of this orientation) has the larger value of $h$. Note that two height functions $h$ and $h'$ give rise to the same six-vertex configuration $\omega$ if and only if $h -h'$ is a constant function. In the converse direction, it is not always true that a six-vertex configuration gives rise to a graph homomorphism on the faces of the torus (it only defines the gradient and may lead to inconsistencies when wrapping around the torus). However, for {\em balanced} configurations $\omega$ such that any row (resp.~column) of faces 
around $\bbT_N$ is crossed by as many up (resp.~right) as down (resp.~left) arrows, there exists a height function $h$ associated with $\omega$, which is unique up to additive constant.
From now on, let $\Iceconf^{(\mathrm{bal})}(\bbT_N)$ be the set of balanced configurations and 
\[\bbP_{\bbT_N}^{\mathrm{(\mathrm{bal})}} :=\mathbb P_{\bbT_N}[\,\cdot\,|\,\omega\in \Iceconf^{(\mathrm{bal})}(\bbT_N)].\]

When $\omega$ is chosen according to $\bbP_{\bbT_N}^{\mathrm{(\mathrm{bal})}}$, write $h$ for the height function associated to it (to fix the additive constant choose a root face and impose that $h$ is null on that face). 
Observe that the increments $h(x)-h(y)$ do not depend on the choice of the additive constant and thus on the root face. 
Also note that $\bbE_{\bbT_N}^{\mathrm{(\mathrm{bal})}} [ h(x)-h(y) ] = 0$ by symmetry under a global arrow flip and thus we have
\begin{align*}
\bbE_{\bbT_N}^{\mathrm{(\mathrm{bal})}} [ ( h(x)-h(y) )^2 ] 
= \mathrm{Var}_{\bbT_N}^{\mathrm{(\mathrm{bal})}} (h(x)-h(y)),
\end{align*}
where $\mathrm{Var}_{\bbT_N}^{\mathrm{(\mathrm{bal})}} $ denotes variance under $\bbP_{\bbT_N}^{\mathrm{(\mathrm{bal})}}$.
The goal of this paper is to study the behaviour of this variance as $x$ and $y$ are distant vertices in a large torus. 
Below, $d$ denotes the graph distance on the dual graph $\bbT_{N}^*$.

\begin{theorem}[Delocalized phase]\label{thm:variance_torus}
    Fix $1\le\mathbf c\le 2$. There exist $c,C > 0$ such that for every $N$ even 
    and every $x, y\in F(\bbT_{N})$ with $d(x, y) \geq 2$, we have
    \begin{align*}
    c \log d (x, y )
    \le    
    \bbE_{\bbT_{N}}^{\mathrm{(\mathrm{bal})}} [ ( h(x)-h(y) )^2 ] 
    \le C \log d (x, y ).
    \end{align*}
\end{theorem}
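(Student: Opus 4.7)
The plan is to split the estimate into a lower bound and an upper bound on $\mathrm{Var}^{\mathrm{(bal)}}_{\bbT_N}[h(x)-h(y)]$, both organised around the contour representation of the height function: between each pair of consecutive integer levels, the level sets of $h$ form a family of non-crossing loops drawn on the medial graph, and every loop strictly separating $x$ from $y$ contributes $\pm 1$ to $h(x)-h(y)$. The two structural inputs needed in the range $1 \le \mathbf{c} \le 2$ are, on the one hand, Russo--Seymour--Welsh (RSW) type crossing estimates for this loop ensemble, uniform in the underlying domain; and on the other hand, a local analysis of the cylindrical partition function $\Zcyl{n}{N}{M}$ as a function of the row-unbalance $n$, as advertised in the abstract.

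For the lower bound, I would produce $\Omega(\log d(x,y))$ nested loops separating $x$ from $y$. Concretely, I consider dyadic annuli $A_k$ around $x$ with inner and outer radii $2^k$ and $2^{k+1}$, for $k=1,\dots,\lfloor \log_2 d(x,y) \rfloor - O(1)$, and show by RSW that each $A_k$ contains an encircling contour loop with probability bounded below uniformly in $k$ and $N$. Positive association (FKG), either for the loop ensemble directly or via a random-cluster-like representation obtained through the Baxter--Kelland--Wu correspondence, then lets one decouple scales so that with high probability a constant fraction of these encircling events occur simultaneously. Each additional encircling loop contributes an essentially independent $\pm 1$ increment to $h(x)-h(y)$ via a global sign-flip symmetry, and a standard anticoncentration or second-moment argument converts this into the inequality $\bbE^{\mathrm{(bal)}}_{\bbT_N}[(h(x)-h(y))^2] \ge c \log d(x,y)$.

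For the upper bound, I would pass to a tall cylinder $\Cylgraph{N}{M}$ and exploit the restricted partition function $\Zcyl{n}{N}{M}$, which counts six-vertex configurations of unbalance $n$. Since the height difference across the cylinder is essentially determined by $n$, Gaussian-type tails for $h(x)-h(y)$ at a given scale follow from a quadratic lower bound of the form
\[
-\log \frac{\Zcyl{n}{N}{M}}{\Zcyl{0}{N}{M}} \ \ge \ c\, \frac{n^2}{N}, \qquad |n| \le \varepsilon N,
\]
and summing such tail bounds over dyadic scales around $x$ and $y$ yields $\bbE^{\mathrm{(bal)}}_{\bbT_N}[(h(x)-h(y))^2] \le C \log d(x,y)$. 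The main obstacle is precisely this quadratic lower bound on the free energy: subadditivity in $M$ only delivers global convexity of the limiting free energy in $n$, not a quantitative quadratic local behaviour near $n=0$. I expect it to be proved by combining the RSW estimates of the previous step with a coupling/surgery that turns an unbalance-$n$ configuration into an unbalance-$0$ one by inserting $n$ monotone ``defect lines,'' the entropic cost of each defect being quantitatively controlled by the crossing probabilities. A final short transfer step from the cylinder back to the torus, using monotone couplings, completes the argument.
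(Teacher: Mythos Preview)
Your lower-bound sketch is broadly in line with the paper: the argument does run through dyadic annuli, RSW-type circuit estimates, and an inductive variance increment (the paper's Lemma~\ref{lem:v_n}). One small correction: the positive association used is not obtained via a Baxter--Kelland--Wu type random-cluster representation but is proved directly for $h$ and for $|h|$ when $\mathbf c\ge 1$ (Propositions~\ref{prop:FKG} and~\ref{prop:|h|-monotonicity}); the FKG for $|h|$ is in fact the workhorse.

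The upper-bound plan, however, has the architecture inverted and contains a real gap. In the paper the free-energy input is the \emph{lower} bound $f_\mathbf c(\alpha)\ge f_\mathbf c(0)-C\alpha^2$ of Theorem~\ref{thm:Bethe} (differentiability at $0$), and its sole role is to feed Theorem~\ref{thm:RSW_origins} and thereby produce the uniform annulus-circuit estimate (Theorem~\ref{thm:bxp}). In other words, the free energy is used to \emph{prove} RSW, not the other way around, and once RSW is available it drives \emph{both} the upper and the lower variance bounds. Your proposed inequality $-\log\big(\Zcyl{n}{N}{M}/\Zcyl{0}{N}{M}\big)\ge c\,n^2/N$ is the opposite direction (strict concavity rather than differentiability), is not what is established, and in any case controls the \emph{global} row-unbalance $n$, which on the balanced torus vanishes and is not the same object as the local difference $h(x)-h(y)$; there is no clear mechanism by which such a bound yields tails on $h(x)-h(y)$.

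What the paper actually does for the upper bound is purely RSW-based: one shows (Lemma~\ref{lem:circ_decrease}) by an inductive application of Theorem~\ref{thm:bxp} that, in a domain with $0/1$ boundary, the probability of a $\times$-circuit of $|h|\ge k$ separating a box from the boundary is at most $Ce^{-ck}$. Exploring the $|h|\le k$ cluster of the boundary then gives $w_{2n}\le w_n+C$ (Lemma~\ref{lem:w_n}), hence $w_n=O(\log n)$; the torus case follows by conditioning on $|h|\le 1$ outside a box and comparing via~\eqref{eq:CBC-|h|}. Interestingly, the surgery you propose (defect lines turning unbalance $n$ into $0$) is close in spirit to the map $\mathbf T$ of Lemma~\ref{lem: Mano}, but that map is used in the opposite direction, to convert unbalanced configurations into balanced ones carrying high level lines, and this is how the free-energy lower bound becomes a crossing-probability lower bound, not an upper bound on fluctuations.
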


\begin{remark}
	Instead of an $N \times N$ square torus as here, one may more generally study the balanced six-vertex model on an $N \times M$ torus with even dimensions.
	For $M \geq N$ the variance of $h(x)-h(y)$ behaves like $\log d(x,y) + d(x,y)/N$ up to multiplicative constants, and the results of the present paper may be used to show this.  
\end{remark}

The previous result was known in three special cases, namely for square-ice, i.e.~$\mathbf c=1$~\cite{CPT18,Hom,She05}, for the free fermion point $\mathbf c=\sqrt2$~\cite{BK,FerSpo06,Ken00}, and for $\mathbf c=2$~\cite{GlaPel19}. During the writing of this paper, Marcin Lis produced a proof for $\sqrt{2+\sqrt 2} \leq \mathbf c \leq 2$ based on different techniques than ours~\cite{Lis20}. To the best of our knowledge, the result is new for other parameters $\mathbf c \ge 1$.
This result offers a complete picture of the behaviour of the height function of the six-vertex model in the range of parameters $\mathbf a=\mathbf b=1$ and $\mathbf c\ge1$ as it complements the following result for $\mathbf c>2$, proved in~\cite{GlaPel19} (as a consequence of~\cite{DGHMT,RS19}).

\begin{theorem}[Localized phase;~\cite{GlaPel19,DGHMT,RS19}]\label{thm:localisation}
	Fix $\mathbf c>2$. There exists $C\in(0,\infty)$ such that for every $N$ even and every $x,y\in F(\bbT_{N})$,
	$$ \bbE_{\bbT_{N}}^{\mathrm{(\mathrm{bal})}}[(h(x)-h(y))^2]\le C.$$
\end{theorem}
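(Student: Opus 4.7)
My approach rests on the Baxter--Kelland--Wu (BKW) coupling between the balanced six-vertex measure with $\mathbf a=\mathbf b=1$, $\mathbf c\ge 1$ and the loop representation of the critical random-cluster (FK) model with $q=\mathbf c^2$, and then exploits the discontinuity of the random-cluster phase transition for $q>4$ established in~\cite{DGHMT}. Concretely, first I would construct, under an enlarged probability space on $\bbT_N$, a loop configuration on the medial graph whose law is that of the critical $q=\mathbf c^2$ FK loops and attach to each loop an independent uniformly chosen orientation; the resulting arrow configuration has the distribution of $\bbP_{\bbT_N}^{(\mathrm{bal})}$, and the associated height function is constant on the connected components of the complement of the loops, differing by $\pm 1$ across each loop according to its orientation.

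With this representation in hand, the proof reduces to a counting problem. Writing $\calL(z)$ for the (random) set of loops that enclose the face $z$, and $\calL(x\triangle y):=\calL(x)\triangle \calL(y)$ for those surrounding exactly one of $x,y$, one gets conditionally on the loops
\ben
h(x)-h(y)=\sum_{\ell\in \calL(x\triangle y)} \epsilon_\ell,
\een
where the $\epsilon_\ell\in\{-1,+1\}$ are i.i.d.\ symmetric signs. Taking expectations and using $|\calL(x\triangle y)|\le |\calL(x)|+|\calL(y)|$, the theorem will follow once I prove
\ben
\sup_{N,\,z\in F(\bbT_N)} \bbE\bigl[|\calL(z)|\bigr]<\infty.
\een

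To control this, I would invoke \cite{DGHMT}: for $q>4$, the critical random-cluster model on $\bbZ^2$ has a discontinuous transition, and in particular the radius of the cluster of a fixed vertex under free boundary conditions has an exponentially decaying tail. Translated into the loop representation, this means that the probability that a given face $z$ is encircled by a loop of diameter at least $r$ is bounded by $Ce^{-cr}$, uniformly in the torus size $N$. Since any family of $k$ loops surrounding $z$ is nested and hence the outermost encloses at least $k$ faces, an isoperimetric inequality forces that loop to have diameter at least $\sqrt{k}$. Consequently
\ben
\bbP\bigl[|\calL(z)|\ge k\bigr]\le \bbP\bigl[\exists\,\ell\text{ surrounding }z,\ \mathrm{diam}(\ell)\ge \sqrt k\,\bigr]\le C e^{-c\sqrt k},
\een
and summing over $k$ bounds $\bbE[|\calL(z)|]$ uniformly, yielding the theorem.

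The main obstacle is the transfer of the exponential decay estimate of \cite{DGHMT}, which is formulated on the full plane $\bbZ^2$ with free or wired boundary, to the toric loop measure arising from the BKW coupling and to obtain bounds uniform in $N$. The toric loop measure corresponds to random-cluster configurations with specific (mixed) boundary conditions that couple the two sides of the torus, and one must rule out the possibility that long loops winding around $\bbT_N$ inflate $|\calL(z)|$ for small tori. This is precisely where the RSW technology of \cite{RS19} enters: crossing estimates valid uniformly in boundary conditions, combined with the discontinuity input of \cite{DGHMT}, yield the required uniform exponential decay for loops surrounding a fixed face on any torus, and finish the proof.
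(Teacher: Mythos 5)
You should first note that the paper does not prove Theorem~\ref{thm:localisation} at all: it is quoted from~\cite{GlaPel19} (building on~\cite{DGHMT,RS19}), and your sketch is indeed an attempt at that route. However, the coupling at the heart of your argument is mis-stated in two ways. First, the BKW correspondence for $\mathbf a=\mathbf b=1$ gives loop weight $\sqrt q=\mathbf c^2-2$ (equivalently $\Delta=-\sqrt q/2$), i.e.\ $q=(\mathbf c^2-2)^2$, not $q=\mathbf c^2$; this happens to be harmless for the phase ($\mathbf c>2\Leftrightarrow q>4$ under either formula), but the stated identification is wrong. Second, and more seriously, orienting each loop with an independent \emph{fair} coin reproduces the six-vertex measure only at $\mathbf c=2$: the loop weight $\sqrt q$ must be split according to the winding as $\lambda+\lambda^{-1}$ with $\lambda+\lambda^{-1}=\mathbf c^2-2$, so for $\mathbf c>2$ one has $\lambda>1$ and, conditionally on the unoriented loops, the orientations are independent but \emph{biased}, with probabilities $\lambda/(\lambda+\lambda^{-1})$ and $\lambda^{-1}/(\lambda+\lambda^{-1})$ (the fair-coin case $\lambda=1$ is exactly the $q=4$, $\mathbf c=2$ delocalized case; see also Appendix~\ref{app: pf of FKG and CBC}, where conditionally on $|h|$ the signs form a correlated ferromagnetic Ising model, not i.i.d.\ symmetric signs). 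Consequently your identity $\bbE[(h(x)-h(y))^2]=\bbE[|\calL(x\triangle y)|]$ fails: with biased signs there is an extra conditional-mean term of order $\bbE\bigl[(|\calL(x)\setminus\calL(y)|-|\calL(y)\setminus\calL(x)|)^2\bigr]$, so you need a second-moment bound on the number of surrounding loops, not just a first-moment bound. This is still afforded by the exponential tail, but it is a gap as written.

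The second genuine gap is the step you yourself flag and then wave away. On the torus the unoriented BKW loop measure is \emph{not} the critical FK measure (non-contractible loops carry weight $2$ rather than $\sqrt q$), and the balance conditioning interacts with the orientation weights; transferring the exponential decay of~\cite{DGHMT} (valid for the free infinite-volume measure) to this toroidal measure, uniformly in $N$, is precisely the technical content of~\cite{GlaPel19}. Your proposed tool does not exist in the form you invoke: \cite{RS19} is a short proof of discontinuity and provides no RSW-type crossing bounds, and crossing estimates ``uniform in boundary conditions'' are in fact false at criticality for $q>4$, since under free boundary conditions crossing probabilities decay to $0$. One also needs the (easy but necessary) geometric step ruling out large loops around $z$ traced by the infinite dual cluster, by passing to the adjacent primal cluster of comparable diameter near $z$, before the exponential tail for primal cluster radii can be applied. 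So your architecture matches the cited proof, but as written the key coupling statement is incorrect for $\mathbf c>2$ and the uniform-in-$N$ torus comparison, which is the real work, is missing.
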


In the regime $1\le \mathbf c\le 2$, the model is predicted to have Gaussian behaviour and to converge in the sense of distributions in the scaling limit to (a scaling of) the Gaussian Free Field (GFF) on the two-dimensional torus.
The only case for which this is known is the free fermion point $\mathbf c=\sqrt 2$ (see~\cite{Ken00} and reference therein; cf. also~\cite{GMT17}). 
The logarithmic divergence of the variance proved here is consistent with this behaviour, but not sufficient to determine it. 
We do however mention~\cite{DKKMO20}, whose result may be loosely reformulated as ``any sub-sequential scaling limit of height functions obtained for  $\sqrt 3\le\mathbf c\le 2$ is invariant under rotation''. 

%

\subsection{Main results in planar domains}\label{subsec: main results in planar domains}

In this section, we develop the theory of the six-vertex model in finite domains and present our main results in this context.  Due to the six-vertex model's spatial Markov property (see Section~\ref{sec:SMP} for the precise definition), these results have strong implications for the six-vertex model on the torus discussed in the previous section.

Equip $\bbZ^2$ with the square grid graph structure and let $(\bbZ^2)^* = (\frac12,\frac12) + \bbZ^2$ denote the dual of $\bbZ^2$; its vertices are identified with the faces of $\bbZ^2$. As before, we partition $\mathbb Z^2$ in a bipartite fashion into odd and even faces.
Let $V$ be a finite connected set of vertices of the graph $\bbZ^2$, and let $E$ be the edges incident to them.
The height function model related to arrow configurations on $E$ with ice rule on $V$ is defined on the subgraph $D$ of $(\bbZ^2)^*$ consisting of the faces of $\bbZ^2$ with at least one corner in $V$ and the dual edges of $\bbZ^2$ that cross a primal edge in $E$. We say that such a subgraph $D$ is a \emph{discrete domain}, and denote $V=V(D)$ and $E=E(D)$. 
The faces of $\bbZ^2$ in $D$ with at least one corner not in $V$ are called the \emph{boundary $\partial D$ of $D$} (see Figure~\ref{fig:domains}).

A {\em boundary condition} on $D$ is given by a function $\xi:\partial D\rightarrow\bbZ$; we say that $\xi$ is {\em admissible} if there exists a graph homomorphism $h: D \to \bbZ$ with $h_{| \partial D} = \xi $ and if $\xi$ is odd on odd faces (and therefore even on even faces).
Let $\Iceconf(D,\xi)$ be the set of arrow configurations on $E$, associated with graph homomorphisms $h: D \to \bbZ$ with $h_{| \partial D} = \xi $. The map from these graph homomorphisms to the associated six-vertex configurations in $\Iceconf(D,\xi)$ is bijective, and hence we will often identify them, and call $h$ height functions.
Introduce the probability measure on $\Iceconf(D,\xi)$ given by 
\begin{align*}
	\bbP_D^{\xi}[\{\omega\}] = \frac{W_\mathrm{6V}(\omega)}{Z(D,\xi)},
\end{align*}
where $W_\mathrm{6V}$ is the six-vertex weight from the vertices of $V(D)$ and $Z(D,\xi )$ is the partition function defined by $\sum_{\omega\in\Iceconf(D,\xi)} W_{\rm 6V}(\omega)$ \footnote{
In later sections, we will use the notation $\bbP_D^{\xi}$ for height assignments $\xi$ on other supports than $\partial D$; the concepts above readily generalize to these cases.
}.

For integers $0<n<N$, define $\Lambda_n: =(-n,n)^2$ and $A(n, N) = \Lambda_N\setminus\Lambda_n$; denote $\Lambda_n \subset D$ if $\Lambda_n \cap \bbZ^2 \subset V(D)$ and similarly for $A(n,N) $.
Let $\calO_{h\ge k}(n,N)$ be the event that there exists a circuit of adjacent faces with $h(x)\ge k$ in $A(n,N)$ that surrounds $\Lambda_n$.

\begin{theorem}[Uniformly positive annulus circuit probabilities]\label{thm:bxp}
    Fix $\mathbf{c} \in [1, 2]$. For every $k,\ell>0$, there exists $c=c(\mathbf{c} ,\ell,k)>0$ such that for every $n$ large enough, 
    every discrete domain $D\supset \La_{2n}$, and every admissible boundary condition $\xi$ on $\partial D$ (or a subset of it) with $|\xi|\le \ell$, we have
    \begin{equation}\label{eq:bxp}
    	\bbP_D^{\xi}[\calO_{h\ge k}(n,2n)]\ge c.
    \end{equation}
\end{theorem}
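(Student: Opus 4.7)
The plan is to prove Theorem~\ref{thm:bxp} by a Russo--Seymour--Welsh (RSW) argument adapted to the six-vertex height function. The three key inputs are (i) the FKG inequality for events increasing in the height $h$, available in the isotropic regime $1\le\mathbf{c}\le 2$; (ii) monotonicity in boundary conditions; and (iii) a ``seed'' crossing estimate derived from the local behaviour of the free energy of the cylindrical six-vertex model as a function of the up/down arrow imbalance, as announced in the abstract.

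First, I would reduce to a standard boundary setup. Since $\calO_{h\ge k}(n,2n)$ is increasing in $h$ and $|\xi|\le\ell$, monotonicity in boundary conditions yields an inequality of the form $\bbP_D^{\xi}[\calO_{h\ge k}(n,2n)]\ge \bbP_{D_0}^{\xi_0}[\calO_{h\ge k'}(n,2n)]$, for some parity-respecting admissible boundary function $\xi_0$ pointwise below $-\ell$, a convenient domain $D_0\supset\Lambda_{2n}$ (for instance a large square box), and a new threshold $k'=k+O(\ell)$. The task thus reduces to lower bounding the circuit probability under the reference measure $\bbP_{D_0}^{\xi_0}$, in which the boundary has a fixed small value.

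The heart of the argument is the base crossing estimate: for some fixed aspect ratio $\rho>0$ and constant $c_1>0$, a rectangle of dimensions $\rho n\times n$ is crossed in the long direction by a path of faces with $h\ge k'$ with probability at least $c_1$, uniformly in large $n$ and in the reduced boundary condition. This is where the cylindrical free-energy analysis enters. On a cylinder of side length comparable to $n$, the free energy viewed as a function of the arrow imbalance $s$ across a horizontal cross-section is smooth near its minimum at $s=0$, with controlled growth. Comparing partition function ratios at $s=0$ and at $s$ of order $k'$, one infers that on the cylinder the height on a horizontal section exceeds $k'$ on a macroscopic arc with uniformly positive probability. Transferring this from the cylinder to a planar rectangle relies on the spatial Markov property and an ``unrolling'' argument that compares measures with cylindrical versus planar boundary data.

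With the base estimate in hand, the RSW upgrade follows a standard template: combine short crossings via FKG and the $90^\circ$ rotational symmetry of the isotropic model to obtain crossings of rectangles of arbitrary fixed aspect ratio with uniformly positive probability, then glue four such crossings covering the top, bottom, left, and right strips of $A(n,2n)$ into the desired circuit around $\Lambda_n$, once more via FKG. The main obstacle is the base crossing estimate itself: converting free-energy information, which provides control on partition function ratios as a function of arrow imbalance, into the existence of a \emph{path} of faces with $h\ge k'$, rather than merely the existence of a single such face or a large average height, requires delicate slicing and conditioning arguments along horizontal sections of the cylinder. Once that estimate is established, the remainder of the proof is essentially standard RSW technology applied to the FKG-positively-correlated family of height increasing events.
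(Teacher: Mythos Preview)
Your outline has the right ingredients but the architecture is inverted, and you significantly underestimate the RSW step. In the paper, the RSW result (Theorem~\ref{thm:RSW}, built on Proposition~\ref{prop:RSW1}) is established \emph{a priori} for all $\mathbf c\ge 1$, with no free-energy input, and it is not the ``standard template'' you invoke: there is no self-duality between $\{h\ge k\}$ and $\{h<k\}$ when $k>0$, so the usual square-crossing seed is unavailable. The paper's RSW instead bounds the annulus circuit probability by a power of a vertical strip-crossing probability \emph{from a narrow window}, at the price of lowering the height threshold from $k$ to $ck$ for some fixed $c<1$. The mechanism behind this (the ``no crossings between slits'' argument of Proposition~\ref{prop:RSW1}) uses reflection symmetries and \eqref{eq:FKG-|h|} in a way that has no analogue in Bernoulli-percolation RSW, and the height loss would in fact be fatal for the kind of renormalisation you sketch.

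The free energy then enters only to lower bound that strip-crossing probability, yielding Theorem~\ref{thm:RSW_origins}: $\bbP_{\Lambda_{12r}}^{0,1}[\calO_{h\ge ck}(6r,12r)]\ge c\exp\big[Cr^2(f_{\mathbf c}(k/\eta r)-f_{\mathbf c}(0))\big]$, which the Bethe-ansatz estimate $f_{\mathbf c}(\alpha)\ge f_{\mathbf c}(0)-C\alpha^2$ renders uniform in $r$. What you call the ``unrolling argument'' is the fences-and-ridges construction of Section~\ref{sec:crossing to annulus}: the free energy controls a cylinder event consisting of $2n$ vertical $\times$-crossings at alternating heights $0$ and $k$ (Proposition~\ref{prop: Mano}), not ``height on a horizontal section exceeding $k'$ on a macroscopic arc''; converting this to a single planar strip crossing requires tiling the cylinder into $\Theta(mn)$ cells, building low-height fences in the good cells (using Proposition~\ref{prop:RSW1} a second time), and extracting one good cell by a pigeonhole/counting argument. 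This conversion --- not any post-hoc RSW gluing --- is the main technical innovation, and your proposal contains no concrete mechanism for it. Once Theorem~\ref{thm:RSW_origins} and the Bethe estimate are in hand, the actual proof of Theorem~\ref{thm:bxp} is the short reduction you describe (shift $\xi$ by $\ell$, compare to $\{0,1\}$ via \eqref{eq:CBC-h}, push the domain to $\Lambda_{2n}$ via Corollary~\ref{cor:pushing_modified}).
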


An important aspect of the previous estimate is that it is uniform over the scales $n$ of the annulus in which the circuit occurs, as well as over the domains $D$. This allows one to combine it with the spatial Markov and FKG properties of the model (see Sections~\ref{sec:SMP}--\ref{sec:monotonicity}) to deduce the other main theorems of this paper. 
Note also that the ``flatness'' $|\xi|\le \ell$ of the boundary condition is crucial, and the statement above is expected to fail otherwise. An extreme example is given by ``sloped'' boundary conditions, that may be chosen in such a way as to completely freeze the configuration inside the domain (see Figure~\ref{fig:domains} for an example),
thus preventing the event $\calO_{h\ge k}(n,2n)$ from occurring. 

 
Theorem~\ref{thm:bxp} may be used to derive estimates similar to~\eqref{eq:bxp} for annuli with any outer to inner radius ratio, with the constant $c$ depending on this ratio.
This can be shown with standard RSW-type techniques, by building a big circuit out of many small ones. 
Two extensions of the result above will be discussed in Lemmas~\ref{lem:circ_increase} and~\ref{lem:circ_decrease}.
They are concerned with how the probability of events $\calO_{|h| \geq k}(n,N)$ evolves as 
$N/n$ tends to infinity, and as $k$ tends to infinity, respectively. 
The upshot is that the probability then converges to $1$ polynomially in $n/N$, and to $0$ exponentially in $k$, respectively. 

As a consequence of Theorem~\ref{thm:bxp}, we obtain the following bounds on the variance of the height function. 
Below, $d$ denotes the graph distance on $(\bbZ^2)^*$. 
\begin{corollary}[Logarithmic variance in planar domains]\label{cor:variance_D}
    Fix $1\le \mathbf c\le 2$. 
    There exist $c, C> 0$ such that for every discrete domain $D$, 
    every admissible boundary condition $\xi$ on $\partial D$, 
    and every face $x$ of $D \setminus \partial D$, 
    if we set $\max_{y \in \partial D } |\xi (y) | = \ell$, then 
    \begin{equation*}
    c\log d (x, \partial D)-4\ell^2 \le \mathrm{Var}_D^{\xi}  ( h ( x ) )  \le C\log d (x, \partial D) + 4\ell^2 .
    \end{equation*}
\end{corollary}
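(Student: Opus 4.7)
The plan is to deduce both bounds from Theorem~\ref{thm:bxp} and its two extensions (Lemmas~\ref{lem:circ_increase} and~\ref{lem:circ_decrease}). Write $N := d(x, \partial D)$, $J := \lfloor \log_2 N \rfloor - 1$, and consider the dyadic annuli $A_j := A(2^j, 2^{j+1}) + x$ for $0 \le j \le J$, all contained in $D$.

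\textbf{Upper bound.} If $h(x) \ge k$ for some $k > \ell$, the outer boundary of the connected component of $\{y : h(y) \ge k\}$ containing $x$ is a circuit of adjacent faces with $h \ge k$ surrounding $x$ and contained in $D$ (because $|\xi| \le \ell < k$ on $\partial D$); this circuit sits inside some dyadic annulus $A_j$, so
\[
\bbP_D^\xi[h(x) \ge k] \;\le\; \sum_{j=0}^{J} \bbP_D^\xi\big[\calO_{h \ge k}(2^j, 2^{j+1})\big] \;\le\; C J e^{-c(k-\ell)},
\]
the last inequality being the announced Lemma~\ref{lem:circ_decrease}, applied uniformly in admissible boundary conditions with $|\xi| \le \ell$. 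Adding the symmetric estimate for $\{h(x) \le -k\}$ (obtained via the arrow reversal $h \mapsto -h$, $\xi \mapsto -\xi$) and integrating the resulting exponential tail yields $\bbE_D^\xi[h(x)^2] \le C\log N + O(\ell^2)$, which implies the upper bound on the variance.

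\textbf{Lower bound.} The strategy is a scale-by-scale martingale argument producing $\Theta(\log N)$ martingale increments of order $1$. Let $\calF_j$ denote the $\sigma$-algebra of heights outside $\La_{2^j}(x)$, so $\calF_{J+1} \subset \cdots \subset \calF_0$; then by orthogonality of martingale increments,
\[
\mathrm{Var}_D^\xi(h(x)) \;=\; \sum_{j=1}^{J+1} \bbE_D^\xi\big[\,\mathrm{Var}\big(\bbE_D^\xi[h(x)\mid \calF_{j-1}] \,\big|\, \calF_j\big)\,\big].
\]
Fix $j$ and condition on $\calF_j$. By the spatial Markov property of Section~\ref{sec:SMP}, the conditional law in $\La_{2^j}(x)$ is a six-vertex model with random boundary $\xi_j := h|_{\partial \La_{2^j}(x)}$, which after a global shift by a reference value $m_j$ falls under Theorem~\ref{thm:bxp}. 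The key claim is that on a ``good event'' $G_j$ encoding that $\xi_j - m_j$ is bounded on $\partial \La_{2^j}(x)$, both circuit events $B_j^\pm := \{\exists \text{ circuit in }A_{j-1}\text{ around }x\text{ with }\pm h \ge \pm m_j + 1\}$ have conditional probability uniformly bounded below, by Theorem~\ref{thm:bxp} and its symmetric counterpart. A monotone coupling on the innermost such circuit (Section~\ref{sec:monotonicity}) then yields $\bbE_D^\xi[h(x)\mid \calF_{j-1}] \ge m_j + 1$ on $B_j^+$ and $\le m_j - 1$ on $B_j^-$, so the inner conditional variance is uniformly positive on $G_j$.

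\textbf{Main obstacle.} It remains to guarantee that $G_j$ holds on a positive proportion of scales, with a circuit-probability lower bound in Theorem~\ref{thm:bxp} that does not degrade too fast with the allowed oscillation of the boundary. The natural route is to feed in the already-established upper bound: it shows that $\xi_j$ has $\sqrt{\log N}$-Gaussian-type fluctuations around its mean, so choosing $L$ of this order and union-bounding over $\partial \La_{2^j}(x)$ makes $\bbP[G_j] \ge 1/2$ uniformly in $j$; one then must verify that $c(\mathbf c, \ell, k)$ in Theorem~\ref{thm:bxp} degrades at most polynomially in $\ell$ for the sum $\sum_j \ge c\log N$ to survive. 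A likely cleaner alternative is to upgrade $G_j$ to the existence of a \emph{locally flat circuit} in the outer part of $A_j$, where $h$ oscillates by only $O(1)$: combining the positive-density circuit estimates for both $\{h \ge 1\}$ and $\{h \le -1\}$ via an RSW-type interpolation should produce such a flat circuit at positive density, resetting the effective boundary condition to $O(1)$ oscillation at every scale and thereby decoupling the per-scale constant from $N$. This RSW bookkeeping -- together with the bookkeeping of the additive $4\ell^2$ slack -- is where the bulk of the technical effort lies.
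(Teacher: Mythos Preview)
Your implication ``$h(x)\ge k$ $\Rightarrow$ $\bigcup_j \calO_{h\ge k}(2^j,2^{j+1})$'' is false. If $h(x)=k$ while all neighbours of $x$ have height $k-1$, the cluster $\{h\ge k\}$ is the single face $x$; there is no circuit of $h\ge k$ in any annulus $A_j$. More generally, the level sets forced by $h(x)\ge k>\ell$ are $\times$-circuits of heights $\ell+1,\dots,k$ around $x$, but nothing prevents them from all lying inside $\La_1(x)$. The paper avoids this entirely: it defines $w_n:=\sup_D \bbE^{0,1}_D[h(0)^2]$ over domains with $\partial D\cap\La_n\neq\emptyset$ and proves $w_{2n}\le w_n+C$. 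The mechanism is to explore from $\partial D$ the cluster of $|h|\le k$ and let $K$ be the first $k$ for which it reaches $\La_n$; Lemma~\ref{lem:circ_decrease} gives $K$ exponential tails, and conditioning on this exploration leaves a domain with $\partial D'\cap\La_n\neq\emptyset$ and boundary values $\pm K,\pm(K+1)$, so $\bbE^{0,1}_D[h(0)^2]\le w_n+\bbE[K^2+2K]$. For non-simply-connected $D$ the paper routes through the torus.

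\textbf{Lower bound: different route.} Your martingale decomposition is conceptually sound but, as you note, runs into the problem of controlling flatness of the induced boundary $\xi_j$ on $\partial\La_{2^j}(x)$; the constant in Theorem~\ref{thm:bxp} depends on the oscillation of $\xi_j$, which a priori grows along scales. The paper sidesteps this completely by working with $|h|$. It sets $v_n:=\min_{\xi:\partial\La_n\to\{-1,0,1\}}\bbE^\xi_{\La_n}[h(0)^2]$, reduces any $\bbE^\xi_D[h(0)^2]$ to $v_n$ via \eqref{eq:CBC-|h|} and \eqref{eq:FKG-|h|} (conditioning $|h|\le 1$ outside $\La_n$), and shows $v_{Rn}\ge v_n+1$. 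The increment comes from conditioning on the outermost circuit of $|h|\ge k$ in $A(n,Rn)$ (which exists with probability $\ge 1/2$ by Lemma~\ref{lem:circ_increase}); inside, the boundary is $\{k,k+1\}$, so $\bbE^{k,k+1}_{D'}[h(0)^2]=\bbE^{0,1}_{D'}[(h(0)+k)^2]\ge k^2+v_n$ using $\bbE^{0,1}_{D'}[h(0)]\ge 0$. The point is that \eqref{eq:FKG-|h|} lets one always reset to $\{0,1\}$ boundary, so no flatness control is ever needed.
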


It is quite standard for percolation models that Theorem~\ref{thm:bxp} along with positive association and the spatial Markov property imply results such as Corollary~\ref{cor:variance_D} and Theorem~\ref{thm:variance_torus}.
However, we warn the reader of subtleties in their proofs
due to the particular forms of the spatial Markov property (Proposition~\ref{prop: SMP}) and the pushing of boundary conditions (Proposition~\ref{prop:pushing_1})
in this height-function model.

\subsection{Some core ideas of the proof}\label{sec:sketch}

As already mentioned, the key to all the results discussed so far is the circuit probability estimate of Theorem~\ref{thm:bxp}. Its proof relies on three main inputs.
First, Theorem~\ref{thm:Bethe} below estimates certain free energies associated to the six-vertex model on a cylinder;
it was obtained in~\cite{DKKMT20} using the Bethe ansatz\footnote{The central role of this input is highlighted by the fact that it is the only place in this paper that differentiates between the phases $\mathbf{c} > 2$ and $1 \leq \mathbf{c} \leq 2$.}.
The second is contained in the proof of Theorem~\ref{thm:RSW_origins}, and is way to relate the estimates of the free energy obtained above to a certain behaviour of the height function on the cylinder. This is the main innovation of the present work.
The third central input, also contained in the proof of Theorem~\ref{thm:RSW_origins}, is a Russo--Seymour--Welsh (RSW) type theory for the level sets of the height function. 
Below, we briefly introduce these three results in this order.

Let $\bbO_{N,M}$ denote the cylindrical square lattice with a height of $M$ faces and a perimeter of $N$ faces. The six-vertex configurations on (the $N\times(M-1)$ degree $4$ vertices of) $\bbO_{N,M}$ and their six-vertex weights are then defined as straightforward generalizations of the toroidal and finite planar cases. Let $N$ be even and, for $s \in [ -N/2, N/2]$, denote by $\Iceconf^{(s)}$ the set of six-vertex configurations on $\bbO_{N,M}$ such that every row of $N$ faces around $\bbO_{N,M}$ is crossed by $2 \lceil s\rceil$ more up arrows than down arrows. Let
\[
Z^{(s)}_{N,M}:=\sum_{\omega\in \Iceconf^{(s)}}W_\mathrm{6V}(\omega).
\]

\begin{theorem}[Free energy on the cylinder;~\cite{DKKMT20}]\label{thm:Bethe}
	Fix $\mathbf c>0$. There exists a function $f_\mathbf c:(-1/2,1/2)\to \bbR^+$ such that 
	\begin{align*}
	\lim_{\substack{N \to \infty\\ N\,\mathrm{even}}} \lim_{M \to \infty} \frac{1}{NM} \log 
	\Zcyl{ \alpha N }{N}{M}
	 = f_\mathbf c(\alpha).
	\end{align*}
	Moreover, 
	for $0<\mathbf c\le 2$ there exists $C= C(\mathbf{c}) > 0$ such that for every $\alpha \in (-1/2,1/2)$,
		\begin{align}\label{eq:BA}
			f_\mathbf c(\alpha) \geq f_\mathbf c(0)-C \alpha^2.
		\end{align}
\end{theorem}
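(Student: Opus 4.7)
The result splits naturally into (i) existence of the limit and (ii) the quadratic bound for $\mathbf{c}\le 2$. For (i), my first move is to pass to the row-to-row transfer matrix on the cylinder: $V_N$ acts on arrow configurations of a horizontal cross-section and, by the ice rule, block-diagonalizes as $\bigoplus_s V_N^{(s)}$ according to the number of up-arrows. One checks $\Zcyl{s}{N}{M} = \mathrm{tr}\bigl((V_N^{(s)})^M\bigr)$, and a Perron--Frobenius argument on each block (after an appropriate positive similarity) yields a simple positive top eigenvalue $\Lambda_N^{(s)}$, so that $\lim_{M\to\infty} \tfrac{1}{M}\log \Zcyl{s}{N}{M} = \log \Lambda_N^{(s)}$. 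This reduces the first limit in the statement to understanding the large-$N$ behaviour of $\tfrac{1}{N}\log \Lambda_N^{(\alpha N)}$.

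To control this limit I would invoke the coordinate Bethe ansatz, which parameterizes the eigenvectors of $V_N^{(s)}$ by $N/2-s$ complex Bethe roots solving a system of transcendental equations depending on $\mathbf{c}$, with the top eigenvalue coming from a distinguished ``ground-state'' solution. Under the scaling $s=\alpha N$, $N\to\infty$, these roots are expected to condense to a density $\rho_\alpha$ supported on a curve and satisfying a linear Lieb-type integral equation. Then $f_{\mathbf c}(\alpha)$ arises as an explicit integral functional of $\rho_\alpha$, establishing part (i).

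For (ii), note that $\mathbf{c}\le 2$ is equivalent to $\Delta := 1-\mathbf{c}^2/2 \geq -1$, the critical (``massless'') phase. In this regime the Bethe roots lie on the real axis, and a resolvent-type analysis of the Lieb equation shows that both $\rho_\alpha$ and the endpoints of its support depend real-analytically on $\alpha$ near $0$, so that $f_{\mathbf c}$ itself is real-analytic at $\alpha = 0$. The arrow-reversal symmetry gives $f_{\mathbf c}(-\alpha) = f_{\mathbf c}(\alpha)$, and a H\"older-type inequality applied to the partition functions $\Zcyl{s}{N}{M}$ yields concavity of $f_{\mathbf c}$; consequently $f_{\mathbf c}(0)$ is the maximum and
\begin{equation*}
f_{\mathbf c}(\alpha) = f_{\mathbf c}(0) + \tfrac12 f_{\mathbf c}''(0)\alpha^2 + O(\alpha^4), \qquad f_{\mathbf c}''(0) \le 0.
\end{equation*}
This gives the desired lower bound on a neighbourhood of $0$, and concavity plus boundedness on $(-1/2,1/2)$ extends it to the whole interval by enlarging $C$.

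I expect the main obstacle to lie in the precise analysis of the endpoint of the Bethe-root condensation and its dependence on $\alpha$ and $\mathbf{c}$, especially as $\mathbf{c}$ approaches the value $2$ from below, where $\Delta = -1$ and logarithmic corrections characteristic of the BKT transition start to appear at the edge of the Bethe-root sea. Ensuring that $f_{\mathbf c}''(0)$ is finite, and that the implicit constant in the Taylor remainder can be taken uniform in $\mathbf{c}$ over a range that includes $\mathbf{c}=2$, is the technical heart of the estimate; this is the one place where the condition $\mathbf{c}\le 2$ enters decisively, and where the Bethe-ansatz computation of \cite{DKKMT20} does the real work.
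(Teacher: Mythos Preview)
The paper does not prove Theorem~\ref{thm:Bethe}; it is quoted as an external input from~\cite{DKKMT20} and used as a black box throughout (the introduction even stresses that this is the \emph{only} place where the distinction between $\mathbf c\le 2$ and $\mathbf c>2$ enters the argument). So there is no proof here to compare your proposal against.

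That said, your sketch is a reasonable outline of the Bethe-ansatz strategy that~\cite{DKKMT20} follows: block-decompose the transfer matrix by unbalance, extract the top eigenvalue in each block, show that the Bethe roots condense to a density solving the Lieb integral equation as $N\to\infty$, and then study the dependence of that density on $\alpha$ to get the Taylor expansion at $0$. Two minor remarks. First, the constant $C$ in~\eqref{eq:BA} is explicitly allowed to depend on $\mathbf c$, so you do not need uniformity in $\mathbf c$ as $\mathbf c\uparrow 2$; what is needed is that $f_{\mathbf c}''(0)$ is finite for each fixed $\mathbf c\in(0,2]$, and the endpoint $\mathbf c=2$ (i.e.\ $\Delta=-1$) is indeed where the analysis is most delicate. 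Second, concavity of $f_{\mathbf c}$ is more cleanly obtained via equivalence of ensembles --- convexity in $h$ of the tilted pressure $\lim \tfrac{1}{NM}\log\sum_s e^{2hs}Z^{(s)}_{N,M}$ and Legendre duality --- than by a direct H\"older inequality on the restricted partition functions $Z^{(s)}_{N,M}$.
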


The function $f_\mathbf c( \alpha )$ is called the \emph{free energy} of the cylindrical six-vertex model at unbalance $\alpha$.
The previous theorem has an appealing physical intuition: the free energy $f_\mathbf c(\alpha)$ is differentiable at 0 as a function of $\alpha$, for all $0<\mathbf c\le 2$. 

The objective of the second main ingredient, Theorem~\ref{thm:RSW_origins}, is to deduce the annulus circuit probabilities, and thus ultimately the delocalization of the height function, from the differentiability of $f_\mathbf c$. Let us mention that~\cite{DKKMT20} also shows that the free energy is non-differentiable at 0 when $\mathbf c>2$, 
which corresponds to the regime where the height function is localised (see Theorem~\ref{thm:localisation}).
As such, we have a direct correspondence between differentiability/non-differentiability of $f_\mathbf c(\alpha)$ at 0 and delocalization/localization of the height function with slope $0$; this correspondence is expected to apply in great generality, in particular for other slopes \cite{She05}.

\begin{theorem}[From free energy to annulus circuits]\label{thm:RSW_origins}\label{thm:crucial}
    There exist $\eta, c,C>0$ such that for all integers $k,r$ with $k$ large enough and $r > 2k / \eta $,
    and for all $\mathbf{c} \geq 1$, we have
    \begin{equation}\label{eq:RSW_origins}
	   \bbP_{\Lambda_{12 r}}^{0,1}[\calO_{h\ge ck}( 12 r , 6 r)]
    	\ge c \exp\big[C r^2\big(f_\mathbf c (\tfrac{k}{\eta r} )- f_\mathbf c(0)\big)\big],
    \end{equation}
    where $0,1$ denotes the admissible boundary condition on $\partial\Lambda_{12 r}$ taking values 0 and 1 only. 
\end{theorem}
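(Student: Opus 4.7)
The target bound says that under the flat $0,1$ boundary condition on $\Lambda_{12r}$, an $\{h\ge ck\}$-circuit surrounding $\Lambda_{6r}$ in the annulus $A(6r,12r)$ occurs with probability at least the ``Boltzmann cost'' of a tilt of size $\alpha\sim k/r$ on a cylindrical six-vertex model of area $\sim r^2$. The plan naturally splits into (i) transferring Theorem~\ref{thm:Bethe} from partition functions of the full cylinder to partition functions of planar rectangles with tilted boundary data, and (ii) gluing such tilted rectangles into an annular circuit through RSW-type arguments.

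\textbf{Step 1 (Free energy to planar tilted partition function).} Pick $N\asymp M\asymp r$ and an unbalance $s=\lceil\alpha N\rceil\simeq k$, and consider the cylinder $\Cylgraph{N}{M}$. Theorem~\ref{thm:Bethe} yields
\begin{align*}
\log \Zcyl{s}{N}{M}\ge NM\bigl[f_{\mathbf c}(\alpha)+o(1)\bigr].
\end{align*}
The idea is then to cut the cylinder along a vertical seam to obtain a planar rectangle $R$ of dimensions $\asymp N\times M$ equipped with boundary data enforcing a slope of $\alpha$ across $R$ (heights $\approx 0$ on one long side and $\approx 2k$ on the opposite one, together with admissible values on the short sides). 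This cut-and-paste introduces only a boundary entropy of order $N+M=O(r)$, which is dominated by the bulk exponent of order $r^2$, so that
\begin{align*}
\frac{Z_R^{\mathrm{tilt}}}{Z_R^{\mathrm{flat}}}\ge c\exp\!\bigl[Cr^2(f_{\mathbf c}(\alpha)-f_{\mathbf c}(0))\bigr].
\end{align*}
Equivalently, under (approximately) flat boundary data on $R$, the height function crosses $R$ in the tilt direction by a path lying in the level set $\{h\ge ck\}$ with probability at least this amount.

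\textbf{Step 2 (From tilted crossing to annular circuit).} Embed the annulus $A(6r,12r)$ in $\Lambda_{12r}$ and cover it by a bounded rotation-symmetric collection of rectangles $R_1,\ldots,R_L$ as in Step 1, each oriented with its tilt direction radial. The spatial Markov property (Proposition~\ref{prop: SMP}), FKG-type monotonicity of the model (Section~\ref{sec:monotonicity}), and the pushing-of-boundary-conditions lemma (Proposition~\ref{prop:pushing_1}) then allow one to assemble the individual level-$ck$ crossings of Step 1 into a single $\{h\ge ck\}$-circuit around $\Lambda_{6r}$ --- the standard RSW box-gluing applied to the level sets of the height function --- and simultaneously to replace the prescribed tilted boundary data on each $R_i$ by the trace of the flat $0,1$ boundary condition on $\partial\Lambda_{12r}$, at a multiplicative cost that can be absorbed into $c$.

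\textbf{Main obstacle.} The hard and novel step is Step 1: Theorem~\ref{thm:Bethe} is an asymptotic statement in the $N,M\to\infty$ limit, whereas the bound we need is a finite-volume inequality that survives exchanging partition functions for probabilities of specific height configurations. Since the gain $\exp[Cr^2(f_{\mathbf c}(\alpha)-f_{\mathbf c}(0))]\sim \exp[-Ck^2]$ is only a constant (in $r$, for fixed $k$), subleading corrections of order larger than $O(1)$ in the exponent cannot be afforded. Controlling the seam entropy and the discrepancy between cylindrical and Dirichlet rectangle boundary conditions uniformly in the tilt and the scale is precisely where the novelty of the argument lies, and is also the reason that a Russo--Seymour--Welsh framework is needed on top of the Bethe-ansatz input.
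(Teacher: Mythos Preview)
Your proposal contains a genuine gap, and you have in fact identified it yourself in the ``Main obstacle'' paragraph without resolving it. In Step~1 you take $N\asymp M\asymp r$ and claim the seam entropy $O(N+M)=O(r)$ is dominated by the bulk exponent $O(r^2)$; but as you then observe, the relevant bulk exponent is $r^2\bigl(f_{\mathbf c}(\alpha)-f_{\mathbf c}(0)\bigr)=O(k^2)$, which is \emph{constant} in $r$, so an $O(r)$ correction (or indeed any correction that is not $O(1)$ in the exponent) destroys the bound. Theorem~\ref{thm:Bethe} is a limit statement and gives no error term at finite $N,M$, so working at scale $r$ directly cannot succeed.

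The paper circumvents this by a very different mechanism. One takes $N,M\to\infty$ (so that the free-energy statement becomes exact) and tiles the large cylinder $\bbO_{N,M}$ into $nm\asymp NM/r^2$ cells of size $\asymp r\times r$. Proposition~\ref{prop: Mano} converts the partition-function ratio into a lower bound on the probability of an event $\calA(S,n,k)$ that forces a vertical $\times$-crossing of $h\ge k$ in each of $n$ columns. A ridge/fence argument (Lemmas~\ref{lem:fences} and~\ref{lem:ridges}) then shows that, conditionally on suitable low-height ``fences'' in neighbouring slices, the ridge event factorises over the good cells, yielding $\bbP[\calA(S,n,k)]\le C^{nm}\,p^{\,cnm}$ where $p$ is a single $r$-scale strip-crossing probability. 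Taking the $(nm)^{-1}$-th root turns the $NM$-scale free energy into an $r$-scale bound with \emph{no} error, since $\exp\bigl[NM(f_{\mathbf c}(\alpha)-f_{\mathbf c}(0))+o(NM)\bigr]^{6/(nm)}\to\exp\bigl[6\eta r^2(f_{\mathbf c}(\alpha)-f_{\mathbf c}(0))\bigr]$. Your Step~2 is also off: the paper does not tile the annulus by radially tilted rectangles but instead feeds the single strip-crossing bound into the RSW Theorem~\ref{thm:RSW}, which outputs the annular circuit directly.
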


Theorems~\ref{thm:Bethe} and~\ref{thm:RSW_origins} readily imply Theorem~\ref{thm:bxp}. 

Our third main step, the RSW theory, follows ideas that were created initially in the context of two-dimensional Bernoulli percolation~\cite{Rus78,SeyWel78}, and were instrumental for instance in the computation of its critical point. 
To date, RSW type results are understood as comparing crossing probabilities in domains of different shape but similar size scale.
In the past decade, the theory has been extended to a wide variety of percolation models~\cite{BefDum12,BolRio10,Tas15,DumHonNol11,DST,DGPS,KoSTas20}, and more recently to level sets of height function models on planar graphs, see e.g.~\cite{Hom,GM}.

In our main RSW type result, Theorem~\ref{thm:RSW}, the careful reader will observe a twist compared to the existing such statements on height function models: 
we bound the probability of having crossings of height larger than $c k$ of long domains by the probability of having crossings of height $k$ of short domains, 
where $c >0$ is a small constant. 
Such a loss in the height would prove very problematic for the renormalization arguments usually performed in percolation models. 
Indeed, Theorem~\ref{thm:RSW} does not a priori suffice to prove a dichotomy theorem as that of~\cite{DST} or~\cite{DT19}. 
In our setting, Theorem~\ref{thm:Bethe} provides an input which renders the renormalization superfluous.

\subsection{Further questions}
\label{subsec:intro - further questions}

\paragraph{Infinite volume limits and mixing}

A reader familiar with the random surface theory of~\cite{She05} will notice that the delocalization proven in this paper, together with an application of that theory\footnote{See~\cite{Piet} on the validity of~\cite{She05} on the six-vertex model.}, shows the local convergence of the balanced six-vertex arrow configurations, for $\mathbf{c} \in [ 1, 2]$, on the torus $\bbT_N$ as $N \to \infty$.  More delicate questions address the infinite-volume limit of the model in planar domains, and the rate at which the effect of different boundary conditions dies out, i.e., the mixing rate. Analogous infinite volume limits and mixing properties are fundamental, e.g., in the study of the Ising and FK Ising models.
For $\mathbf{c} \in [ \sqrt{3}, 2]$, such properties have been established also for the six-vertex model in~\cite{Lis20}.
We plan to discuss these topics in the full range $\mathbf{c} \in [ 1, 2]$ in a later publication.

\paragraph{Different model parameters}

The reader will notice that our main results, Theorems~\ref{thm:bxp} and its consequences, Theorem~\ref{thm:variance_torus} and Corollary~\ref{cor:variance_D}, are valid only for $\mathbf c \in [1,2]$. 
That $\mathbf c \leq 2$ is required is unsurprising since the model exhibits a different behaviour when $\mathbf c > 2$, 
as illustrated by Theorem~\ref{thm:localisation}.  The difference in behaviour may be traced back to the behaviour of the free energy of Theorem~\ref{thm:Bethe}; recall that this is the only point in our proof differentiating between $\mathbf c \in [1,2]$ and $\mathbf c > 2$.

When $\mathbf c \in (0,1)$, the Bethe ansatz computation of Theorem~\ref{thm:Bethe} still applies and provides a differentiable free energy at $\alpha = 0$. 
Moreover, the height model is expected to have a similar behaviour to when $\mathbf c \in [1,2]$. 
However, all the other main arguments of this paper fail in the range $\mathbf c \in (0,1)$, due to the lack of positive association which is ubiquitously applied in our proofs. Indeed, when $\mathbf c < 1$, the FKG property fails, both for the height function and its absolute value.

In a more general context, it is natural to consider the model with arbitrary positive weights 
$\mathbf a_1 = \mathbf a_2 = \mathbf a$, 
$\mathbf b_1 = \mathbf b_2 = \mathbf b$, and 
$\mathbf c_1 = \mathbf c_2 = \mathbf c$; recall that it is expected that the behaviour of the model depends only on $\Delta = (\mathbf a^2+\mathbf b^2-\mathbf c^2)/(2\mathbf a\mathbf b)$, and thus delocalization results similar to ours should hold for all parameters $(\mathbf a,\mathbf b,\mathbf c)$ with $\Delta \in [-1,1/2]$. As regards this case, we leave it to the reader to verify that our combinatorial tools of Section~\ref{sec: basic 6V height fcn ppties} and Appendix~\ref{appendix:FKG stuff}, in particular the positive association properties of the model, remain valid with analogous proofs whenever $\max \{ \mathbf{a}, \mathbf{b}\} \leq \mathbf{c}$. Consequently, it also holds that \textit{if} Theorem~\ref{thm:bxp} remains true for $\max \{ \mathbf{a}, \mathbf{b}\} \leq \mathbf{c}$, then so do Theorem~\ref{thm:variance_torus} and Corollary~\ref{cor:variance_D} (the proof of this implication is only based on the tools of Section~\ref{sec: basic 6V height fcn ppties}). Unfortunately, the geometric RSW theory (and more precisely, the proof of Proposition~\ref{prop:RSW1}) in this paper relies on the model being invariant under both vertical and diagonal reflections, hence requiring $\mathbf a = \mathbf b$.

\paragraph{Sloped boundary conditions}
Let $\xi : ( \bbZ^2 )^* \to \bbZ$ be a fixed height function and study the measures $\bbP_D^{\xi_{\vert \partial D}}$ in growing domains $D \nearrow \bbZ^2 $. Corollary~\ref{cor:variance_D} gives the height variance for flat enough boundary conditions: for instance if $\xi(x) - \xi(0) = O(1)$, we have $\Var_D^{\xi_{\vert \partial D}} ( h(x) ) \sim  \log d(x, \partial D)$.
One may also study boundary conditions that are not flat, most interestingly boundary conditions with a given slope: take a fixed ice-rule arrow configuration in an $N \times M$ torus, embed it periodically in the plane, and let $\xi$ be the corresponding height function on the faces of $\bbZ^2$. The \textit{slope} of $\xi$ is then the vector $s = ((\xi(y+(N,0))-\xi(y) ) / N, ( \xi(y+(0,M))- \xi(y) )/ M )$ (which is independent of $y \in( \bbZ^2 )^* $); note that
\begin{align*}
	\xi(x) - \xi(0) = \langle x, s \rangle + O(1).
\end{align*}
With different choices of $M,N, \xi$ in the above, the possible slopes $s$ are exactly the rational points of $[-1, 1] \times [-1, 1]$.

It is expected that a result similar to Corollary~\ref{cor:variance_D} holds under the measure $\bbP_D^{\xi_{\vert \partial D}}$, 
whenever the slope of $\xi$ is in the interior\footnote{For slopes on the boundary of $[-1, 1]^2$ one readily shows that the configuration inside $D$ freezes completely.}
of $[-1, 1]^2$ (Corollary~\ref{cor:variance_D} treats the zero-slope case).
For such boundary conditions, the RSW result is also expected to apply for $h -\xi$ instead of $h$, at sufficiently large scales.
Indeed, in a slightly different context, it was shown in  \cite{She05} that the height function delocalizes for non-zero slopes in $(-1,1)^2$. 
Then, the height function in finite domains is expected to converge to the unique infinite-volume one, and to delocalize logarithmically. 

\subsection*{Organization of the paper}
Section~\ref{sec: basic 6V height fcn ppties} introduces a toolbox of fundamental combinatorial properties of six-vertex height functions, which will be constantly applied in the sequel. 
Section~\ref{sec:RSW} presents crossing probability estimates, in particular the RSW-type result of Theorem~\ref{thm:RSW}; 
these do not rely on~\eqref{eq:BA} and are valid for all $\mathbf c \geq 1$. 
Section~\ref{sec:Annulus} contains the proofs of Theorems~\ref{thm:RSW_origins} and~\ref{thm:bxp}. 
The estimates on the variance of the height function of Theorem~\ref{thm:variance_torus} and Corollary~\ref{cor:variance_D} are proved in Section~\ref{sec:5}.

\section{Basic properties}
\label{sec: basic 6V height fcn ppties}

This section studies six-vertex height functions in (discrete domains embedded in) the plane, torus, or cylinder, with parameter $\mathbf{c} \geq 1$. As mentioned in the introduction, all the results in this section also hold for the three-parameter model when $\max \{ \mathbf{a}, \mathbf{b} \} \leq \mathbf{c}$.

\paragraph{On setup and notation}

We denote by $\bbG$ an ``ambient space graph'' that can be taken to be either the torus $\bbT_N$, the cylinder $\bbO_{N,M}$, or the whole plane $\bbZ^2$. By the terms vertex, edge, face, and dual edge we will always refer to those structures of $\bbG$. We will always assume that $N$ is even and hence the faces of $\bbG$ can be  bipartitioned into \textit{even} and \textit{odd} faces, such that no odd (resp.~even) face shares an edge with another odd (resp.~even) face.

A discrete domain $D \subset \bbG^*$ is defined for $\bbG=\bbT_N$ and $\bbG=\bbO_{N,M}$ similarly to the planar case in Section~\ref{subsec: main results in planar domains}.
Recall that a function $h: D \to \bbZ$ is a \textit{height function} if for any two adjacent faces $x $ and $y$ in $D$, we have $|h(x)-h(y)| =1$, and $h$ is even on even faces and odd on odd faces.
Let $\calH_D$ denote the set of such height functions on $D$.

Finally, recall from the introduction that a boundary condition $\xi$ (and thus its induced measure $\bbP^\xi_D$) may be defined on any nonempty set of faces $B \subset D$. 

\begin{figure}
	\begin{center}
		\includegraphics[width=0.45\textwidth]{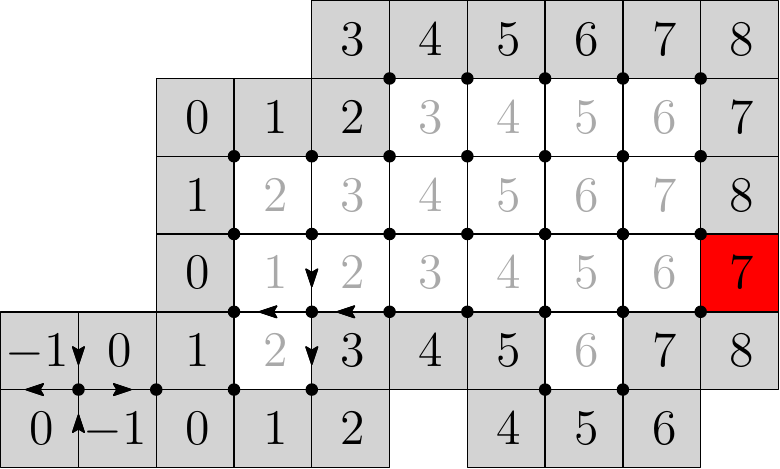}
	\caption{A discrete domain $D$ generated by the marked vertices. The boundary $\partial D$ is represented in grey and some of the arrows of the associated 6-vertex configuration are shown. The boundary condition is such that there exists a unique height function inside $D$ consistent with it (we say that the height function is frozen). If the red square were to contain a $9$ instead of a $7$, the boundary condition would become non-admissible.}
	\label{fig:domains}
	\end{center}
\end{figure}

\subsection{Spatial Markov property}\label{sec:SMP}

\begin{proposition}[Spatial Markov property (SMP)]\label{prop: SMP}
    Let $D \subset D'$ be two domains of $\bbG$ and $\xi$ be an admissible boundary condition on $\partial D'$. 
    Then for any realisation $\zeta$ of a height function chosen according to $\bbP_{D'}^\xi$, we have
    \begin{align}\label{eq:SMP}\tag{SMP}
    	\bbP_{D'}^{\xi} [ \; \cdot \; \vert \; h = \zeta \text{ on $D^c \cup \partial D$} ] = \bbP_D^{ \zeta_{
    \vert \partial D} }.
    \end{align}
\end{proposition}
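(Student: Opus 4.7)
The proof I would write is essentially a direct computation exploiting the factorized form of the six-vertex weight. My plan runs as follows.

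First, I would recall that $W_{\mathrm{6V}}(\omega) = \prod_{v \in V(D')} w(v,\omega)$, where $w(v,\omega) \in \{\mathbf{a},\mathbf{b},\mathbf{c}\}$ is the local weight determined by the arrangement of the four edges incident to $v$ in $\omega$, or equivalently by the values of the associated height function on the four faces of $\bbG$ around $v$. In particular, $w(v,\omega)$ depends only on $h$ restricted to the four faces adjacent to $v$.

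Second, I would split the product over $V(D')$ according to whether a vertex lies in $V(D)$ or not. Here the key geometric observation (which I would state and briefly justify from the definition of a discrete domain) is the following: if $v \in V(D') \setminus V(D)$, then none of the four faces around $v$ lies in the \emph{interior} $D \setminus \partial D$, since an interior face of $D$ has all four corners in $V(D)$. Consequently, each of the four faces around such a $v$ is in $D^c \cup \partial D$, so $w(v,\omega)$ is determined by $\zeta$ on the event $\{h = \zeta \text{ on } D^c \cup \partial D\}$. This yields the factorization
\begin{equation*}
W_{\mathrm{6V}}(\omega) \;=\; W(\zeta) \cdot \prod_{v \in V(D)} w(v,\omega),
\end{equation*}
where $W(\zeta)$ is a constant (given $\zeta$) and the second factor equals $W_{\mathrm{6V}}$ of $\omega_{\vert D}$, viewed as a six-vertex configuration on $D$.

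Third, I would observe that an $h$ sampled under $\bbP_{D'}^\xi$ that agrees with $\zeta$ on $D^c \cup \partial D$ is in bijection with a height function on $D$ having boundary condition $\zeta_{\vert \partial D}$; in particular $\zeta_{\vert \partial D}$ is automatically admissible since $\zeta$ extends to a height function on $D'$. Combining this bijection with the factorization above and dividing by the conditional normalizing constant cancels $W(\zeta)$ and turns the denominator into $Z(D,\zeta_{\vert \partial D})$, yielding exactly $\bbP_D^{\zeta_{\vert \partial D}}$ on the right-hand side of~\eqref{eq:SMP}.

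There is no substantial obstacle here; the only delicate point is bookkeeping the decomposition of $V(D')$ into $V(D)$ and its complement and verifying that vertices outside $V(D)$ really contribute a factor determined by $\zeta$ alone. This is handled by the observation that interior faces of $D$ have all four corners in $V(D)$, which is immediate from the definition of a discrete domain (and of $\partial D$) given in Section~\ref{subsec: main results in planar domains}. Everything else is the standard argument that a Gibbs measure with a product weight satisfies the domain Markov property.
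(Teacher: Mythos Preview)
Your proposal is correct and follows essentially the same approach as the paper: both split the product weight over $V(D')$ into the factor over $V(D)$ and the factor over $V(D')\setminus V(D)$, observe that the latter depends only on $\zeta$ (since all four faces around such a vertex lie in $D^c\cup\partial D$), and conclude by normalization. Your write-up is slightly more explicit about the geometric justification and the bijection, but there is no substantive difference.
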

Above, the left-hand side refers to the law of the height function restricted to $D$, written $h_{\vert D}$, when $h$ is sampled according to $\bbP_{D'}^{\xi}$.
Observe that the right-hand side of~\eqref{eq:SMP} only depends on the values of $\zeta$ on $\partial D$. 
In particular, this proves that conditionally on the values of $h$ on $\partial D$, the restrictions of the height function to $D$ and $D^c$ are independent.

\begin{proof}
For any height function $h$ equal to $\zeta$ on $D^c \cup \partial D$, 
\begin{align*}
	\bbP_{D'}^\xi[h]=\tfrac1{Z(D',\xi)}
	\prod_{v \in V(D)}\mathbf{c}^{\ind_{\{v \text{ is of type $5$ or $6$ in $h$}\}}} 
	\prod_{v \in V(D')\setminus V(D)}\mathbf{c}^{\ind_{\{v \text{ is of type $5$ or $6$ in $h$}\}}}.
\end{align*}
The second product above only depends on $\zeta$, 
since it only involves vertices for which all four surrounding faces have height prescribed by $\zeta$. 
Thus, the law of $h_{\vert D} $ under $\bbP_{D'}^{\xi} [ \; \cdot \; \vert \; h = \zeta \text{ on $D^c \cup \partial D$} ]$  has probabilities proportional to the first product above, and therefore to 
$\bbP_D^{ \zeta_{
    \vert \partial D} } [ h_{\vert D} ]$, with a factor of proportionality that depends on $\zeta_{
    \vert D^c \cup  \partial D}$. 
As these two measures are supported on the same set of height configurations, we conclude that they are equal.
\end{proof}

\subsection{Monotonicity properties and correlation inequalities}
\label{sec:monotonicity}

The six-vertex model enjoys  useful monotonicity properties with respect to its height function representation when $\mathbf a=\mathbf b=1$ and $\mathbf c\ge1$ (or in more general when $\max \{ \mathbf{a}, \mathbf{b} \} \leq \mathbf{c}$). We now state these properties. Proofs are given in Appendix~\ref{app: pf of FKG and CBC} since they are all not explicitly present in the literature\footnote{
It is also worthwhile to point out that the computations would yield counterexamples for these monotonicity results
when $\mathbf c\in(0,1)$, or $\max \{ \mathbf{a}, \mathbf{b} \} > \mathbf{c}$.
}.

An important concept in the study of both height functions and boundary conditions is the \emph{partial order relation} $\preceq$ on $\heightfcns_D$. For two height functions $h, h' \in \heightfcns_D$, we set $h \preceq h'$ if and only if $h(x) \leq h'(x)$ for all faces $x$ in $D$. An analogous partial order is defined between boundary conditions.

A function $F: \heightfcns_D \to \bbR$ is \emph{increasing} if
$h \preceq h' $ implies that $ F(h) \leq F(h')$.
An event $A$ is {\em increasing} if its indicator function $\mathbbm{1}_A$ is an increasing function, and \emph{decreasing} if its complement $A^c$ is increasing. The results below are stated in terms of expectations of increasing functions, but we will mostly apply them to probabilities of increasing events.

\begin{proposition}\label{prop: CBC and FKG}\label{prop:FKG}
    Fix a discrete domain $D$, any two admissible boundary conditions $\xi \preceq \xi'$ 
   	and any two increasing functions $F,G: \heightfcns_D \to \bbR$. Then, we have
    \begin{align}
    	\bbE^{\xi}_{D} [F(h)G(h)] &\geq \bbE^{\xi}_{D} [F(h)] \bbE^{\xi}_{D} [G(h)], \tag{FKG}\label{eq:FKG-h}\\
    	\bbE^{\xi'}_{D} [F (h)] &\geq \bbE^{\xi}_{D} [F (h)].\tag{CBC}\label{eq:CBC-h}    
    \end{align}
\end{proposition}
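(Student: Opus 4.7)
I would derive both inequalities from a single unified input — the FKG/Holley lattice condition on the six-vertex weights — combined with the fact that height functions form a distributive lattice under pointwise maximum $\vee$ and minimum $\wedge$.

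\textbf{Step 1 (lattice structure).} First I would check that if $h_1,h_2 \in \heightfcns_D$ agree on $\partial D$ (or more generally have boundary values $\xi_1,\xi_2$), then $h_1\vee h_2$ and $h_1\wedge h_2$ are again height functions with boundary values $\xi_1\vee\xi_2$ and $\xi_1\wedge\xi_2$ respectively. The only nontrivial property to verify is $|(h_1\vee h_2)(x)-(h_1\vee h_2)(y)|=1$ for adjacent faces $x,y$, and similarly for $\wedge$. Since adjacent faces have opposite parities, the parity constraint forces this difference to be odd; a short case analysis using that each $h_i$ is 1-Lipschitz then rules out $|\cdot|\ge 3$, leaving exactly $1$. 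Thus $\heightfcns_D^\xi$ is a finite distributive lattice, and moreover $\heightfcns_D^{\xi}\vee \heightfcns_D^{\xi'}\subset \heightfcns_D^{\xi'}$, $\heightfcns_D^{\xi}\wedge \heightfcns_D^{\xi'}\subset \heightfcns_D^{\xi}$ whenever $\xi\preceq \xi'$.

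\textbf{Step 2 (lattice inequality on weights).} The key combinatorial lemma I would prove is
\[
W_{\mathrm{6V}}(h_1\vee h_2)\, W_{\mathrm{6V}}(h_1\wedge h_2)\ \ge\ W_{\mathrm{6V}}(h_1)\, W_{\mathrm{6V}}(h_2)
\]
for any $h_1,h_2$ whose $\vee$ and $\wedge$ are height functions. Because $W_{\mathrm{6V}}$ factorizes over vertices, it suffices to verify the inequality at each vertex $v$ as a function of the four surrounding face heights $(h(f_1),\dots,h(f_4))$. This is a finite case analysis: one lists all 4-tuples of heights compatible with the ice rule and, for each pair, checks how the vertex type changes under $\vee/\wedge$. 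The interesting cases are those where passing from $\{h_1,h_2\}$ to $\{h_1\vee h_2, h_1\wedge h_2\}$ converts a pair of $\mathbf a$- or $\mathbf b$-vertices into a pair of $\mathbf c$-vertices (or one $\mathbf c$- and one $\mathbf a/\mathbf b$-vertex into two $\mathbf c$-vertices, etc.); the required inequality then reduces to $\mathbf c^2 \ge \mathbf a\mathbf b$, or similar, which is exactly where the assumption $\max\{\mathbf a,\mathbf b\}\le \mathbf c$ enters.

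\textbf{Step 3 (deducing FKG and CBC).} With the lattice inequality of Step 2 in hand, \eqref{eq:FKG-h} follows from the Ahlswede--Daykin / Fortuin--Kasteleyn--Ginibre theorem applied to the measure $\bbP^\xi_D$, whose probabilities are proportional to $W_{\mathrm{6V}}$ on the distributive lattice $\heightfcns_D^\xi$. For \eqref{eq:CBC-h} I would invoke Holley's criterion: to show that $\bbP^{\xi'}_D$ stochastically dominates $\bbP^\xi_D$, it suffices to check
\[
\bbP^{\xi'}_D(h_1\vee h_2)\,\bbP^{\xi}_D(h_1\wedge h_2)\ \ge\ \bbP^{\xi'}_D(h_1)\,\bbP^{\xi}_D(h_2) \quad \text{for } h_1\in\heightfcns_D^{\xi'},\ h_2\in\heightfcns_D^\xi.
\]
The partition functions on both sides cancel, reducing the estimate to \emph{exactly} the vertex-wise inequality of Step 2 (applied now to a mixed pair with different boundary values, but whose $\vee$/$\wedge$ still lie in the relevant spaces by Step 1).

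\textbf{Main obstacle.} Steps 1 and 3 are essentially formal once the lattice structure is in place; the substantive work is the vertex-by-vertex verification of the lattice inequality in Step 2. The catalogue of 4-tuples of heights at a vertex is large, the vertex weight is not a straightforwardly log-supermodular function of these heights, and it is genuinely the assumption $\max\{\mathbf a,\mathbf b\}\le\mathbf c$ that makes the inequality hold — as the authors note, the claim (and FKG itself) fails for $\mathbf c<1$, so one should expect to produce explicit equality-critical configurations and see precisely how $\mathbf c^2 \ge \mathbf a\mathbf b$ tips each borderline case.
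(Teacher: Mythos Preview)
Your proposal is correct and follows the same overall strategy as the paper --- verify a Holley/FKG condition and apply the standard machinery --- but you and the authors choose different formulations of that condition. You aim for the \emph{global} lattice inequality $W_{\mathrm{6V}}(h_1\vee h_2)\,W_{\mathrm{6V}}(h_1\wedge h_2)\ge W_{\mathrm{6V}}(h_1)\,W_{\mathrm{6V}}(h_2)$, to be checked vertex-by-vertex over all pairs of admissible local $4$-tuples; the paper instead verifies the \emph{single-site} Holley condition
\[
\bbP^{\xi}_D[h(x)\ge k \mid h_{|D\setminus\{x\}}=\chi]\ \le\ \bbP^{\xi'}_D[h(x)\ge k \mid h_{|D\setminus\{x\}}=\chi'],\qquad \chi\preceq\chi',
\]
together with irreducibility. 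The paper's route is lighter: once the neighbours of $x$ are fixed at a common value $m$, the conditional law of $h(x)$ is governed only by the counts $n_\pm=\#\{y\in N_x^\times:\chi(y)=m\pm1\}$, giving $\bbP[h(x)=m+1\mid\chi]=\mathbf c^{n_+}/(\mathbf c^{n_+}+\mathbf c^{n_-})$, and the monotonicity in $(n_+,n_-)$ is immediate from $\mathbf c\ge1$. Your Step~2 leads to the same conclusion but requires cataloguing pairs of vertex types, which is a larger (though still finite and routine) check. Either way the substantive input is the same inequality, and your Step~1 on the lattice structure is exactly what the paper uses to check the auxiliary hypotheses of Holley's criterion.
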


The proof of Propositions~\ref{prop:FKG} is in Appendix~\ref{app: pf of FKG and CBC}. 
For now, let us prove the following elementary corollary of~\eqref{eq:CBC-h}.

\begin{corollary}\label{cor:E[h]>0}
     Let $D$ be a discrete domain and $\xi$ an admissible boundary condition.
    If $\xi \ge m$ (resp. $\xi \le M$) then for any face $x$ of $D$, we have
    \begin{align*}
    \bbE^{\xi}_D [h (x)] \geq m \qquad (\text{resp. }
    \bbE^{\xi}_D [h (x)] \leq M).
    \end{align*}
\end{corollary}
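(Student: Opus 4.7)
The plan is to combine~\eqref{eq:CBC-h} with a flip symmetry of the six-vertex measure inherited from the isotropic weight assignment $\mathbf{a}_1=\mathbf{a}_2$, $\mathbf{b}_1=\mathbf{b}_2$, $\mathbf{c}_1=\mathbf{c}_2$. Reversing all arrows of a six-vertex configuration exchanges vertex types pairwise ($1\leftrightarrow2$, $3\leftrightarrow4$, $5\leftrightarrow6$) and therefore preserves the total weight $W_{\mathrm{6V}}$. In height-function language, this means that for any even integer $k$ and any admissible boundary condition $\eta$ on $\partial D$, the map $h\mapsto k-h$ is a weight-preserving bijection between height functions with boundary $\eta$ and height functions with boundary $k-\eta$. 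The shift by the even integer $k$ preserves the even/odd parity bipartition, and the admissibility of $k-\eta$ follows by applying $h\mapsto k-h$ to any height-function extension of $\eta$. Taking expectations of the linear functional $h(x)$ yields the flip identity
\begin{align*}
\bbE_D^{k-\eta}[h(x)] \;=\; k-\bbE_D^{\eta}[h(x)].
\end{align*}

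Assume first that $\xi\geq m$ and set $\xi'(y):=2m-\xi(y)$ on $\partial D$. The identity above with $k=2m$ and $\eta=\xi$ gives $\bbE_D^{\xi'}[h(x)]=2m-\bbE_D^{\xi}[h(x)]$. Moreover, $\xi\geq m$ forces $\xi'\leq m\leq \xi$ pointwise, i.e.\ $\xi'\preceq \xi$, and~\eqref{eq:CBC-h} applied to the increasing functional $h\mapsto h(x)$ gives $\bbE_D^{\xi'}[h(x)]\leq \bbE_D^{\xi}[h(x)]$. Combining the two relations yields $2m-\bbE_D^{\xi}[h(x)]\leq \bbE_D^{\xi}[h(x)]$, hence $\bbE_D^{\xi}[h(x)]\geq m$, as desired. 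The upper bound in the case $\xi\leq M$ follows by the mirror argument, setting $\xi'(y):=2M-\xi(y)\geq \xi(y)$, so that $\xi\preceq\xi'$ and the CBC comparison is reversed.

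I do not foresee any substantial obstacle: the argument is essentially a two-line application of symmetry and monotonicity. The only point requiring care is that the flip $h\mapsto k-h$ must use an even shift $k=2m$ (respectively $k=2M$) so that the parity constraint on height functions and the admissibility of the reflected boundary condition $\xi'$ are both preserved.
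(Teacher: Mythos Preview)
Your proof is correct and follows essentially the same approach as the paper: both combine the flip symmetry $W_{\mathrm{6V}}(h)=W_{\mathrm{6V}}(-h)$ with~\eqref{eq:CBC-h}. The paper first reduces to $m=0$ and uses the bare sign flip $h\mapsto -h$, whereas you handle general $m$ directly via $h\mapsto 2m-h$; these are the same argument up to a preliminary additive shift.
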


\begin{proof}
	It suffices to prove the first bound for $m=0$ (the rest follows readily). 
	The comparison between boundary conditions and the invariance of weight under sign flip $W_\mathrm{6V}(h) = W_\mathrm{6V}(-h)$  give
    \begin{align*}
	    2\bbE^{\xi}_D [h (x)]\ge\bbE^{ \xi}_D [h (x)] + \bbE^{ -\xi}_D [h (x)] = 0,
    \end{align*}which is what we wanted to prove.
\end{proof}

Crucially, our model enjoys an additional monotonicity property for the absolute value of the height function. 

\begin{proposition}\label{prop:|h|-monotonicity}
	Fix a discrete domain $D$, $\xi' \succeq \xi \geq 0$ two admissible boundary conditions on $\partial D$,
	a (possibly empty) set of faces $B\subset D$, 
	and two height-functions $\zeta' \succeq \zeta \geq 0$ on $B$ achievable under $\bbP^{\xi'}_D$ and $\bbP^{\xi}_D$, respectively.
	Then, for any two increasing functions $F,G: \heightfcns_D \to \bbR$, we have
	\begin{align} 
		\bbE^{\xi'}_D \big[F( \vert h \vert) \,\big|\, |h| = \zeta' \text{ on $B$}\big]
		&\geq 
		\bbE^{\xi}_D \big[F( \vert h \vert) \,\big|\, |h| = \zeta \text{ on $B$}\big],
		\tag{CBC-|h|} \label{eq:CBC-|h|}\\
		 \bbE^{\xi}_D \big[F( \vert h \vert) G( \vert h \vert)\big] 
		&\geq \bbE^{\xi}_D \big[F( \vert h \vert)\big]  \bbE^{\xi}_D \big[ G( \vert h \vert)\big].
		 \tag{FKG-|h|} \label{eq:FKG-|h|}
	\end{align}
\end{proposition}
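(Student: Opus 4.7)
Both (FKG-|h|) and (CBC-|h|) are monotonicity properties of the pushforward law of $|h|$ under $\bbP^{\xi}_D$ (and its conditioning on $|h|_{|B} = \zeta$). The plan is to derive them from Holley's theorem after a sign-pattern decomposition; throughout we use the hypothesis $\xi, \zeta \geq 0$.

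The central reformulation is as follows. Given a non-negative profile $\eta : D \to \bbZ_{\geq 0}$ realisable as $|h|$ under the given boundary/conditioning data, the height functions $h$ with $|h| = \eta$ are in bijection with sign assignments $\sigma$ on the connected components of $\{\eta \geq 1\}$, with $\sigma \equiv +1$ on every component that meets $\partial D \cup B$ (forced by $\xi, \zeta \geq 0$) and free on the other "interior'' components. Writing $\mathcal{N}(\eta)$ for this set of admissible sign patterns and $N_{5,6}(h)$ for the count of type-$5$/$6$ vertices, one obtains
\begin{align*}
	\bbP^{\xi}_D\bigl[\,|h| = \eta \,\big|\, |h|_{|B} = \zeta\,\bigr] \;=\; \frac{1}{Z(\xi,\zeta)} \sum_{\sigma \in \mathcal{N}(\eta)} \mathbf{c}^{N_{5,6}(\sigma \eta)} \;=:\; \frac{W(\eta)}{Z(\xi,\zeta)}.
\end{align*}
By Holley's theorem, (FKG-|h|) reduces to the lattice condition $W(\eta_1 \vee \eta_2) \, W(\eta_1 \wedge \eta_2) \geq W(\eta_1) \, W(\eta_2)$ for all realisable $\eta_1, \eta_2 \geq 0$, while (CBC-|h|) reduces to an analogous two-parameter lattice inequality under $\xi \preceq \xi'$ and $\zeta \preceq \zeta'$.

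To verify this lattice condition, I would leverage the lattice/log-supermodularity condition for $W_{\mathrm{6V}}$ established in the course of proving Proposition~\ref{prop:FKG}, together with an involutive pairing of sign patterns on the two sides. Writing $H_i = \sigma_i \eta_i$ for $\sigma_i \in \mathcal{N}(\eta_i)$ and considering, when they are valid height functions, $H_\vee = H_1 \vee H_2$ and $H_\wedge = H_1 \wedge H_2$, the termwise inequality $W_{\mathrm{6V}}(H_1) W_{\mathrm{6V}}(H_2) \leq W_{\mathrm{6V}}(H_\vee) W_{\mathrm{6V}}(H_\wedge)$ should then aggregate, after summing over sign patterns, to the desired lattice inequality.

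\textbf{The main obstacle} is the combinatorial matching of sign patterns. Two issues arise. First, for mismatched signs the pointwise maximum $H_1 \vee H_2$ may fail to be a height function (e.g., adjacent faces with $(h_1, h_2)$-values $(2,1)$ and $(1,2)$ produce $h_\vee$-values $(2,2)$), so the pairing must either be restricted to aligned signs or be supplemented by local modifications that restore the height-function constraint while controlling the weight change. Second, even for aligned signs, the identity $\{|H_\vee|(x), |H_\wedge|(x)\} = \{|H_1|(x) \vee |H_2|(x), |H_1|(x) \wedge |H_2|(x)\}$ only holds pointwise as a multiset, so $|H_\vee|$ and $|H_\wedge|$ may swap contributions between $\eta_1 \vee \eta_2$ and $\eta_1 \wedge \eta_2$ at individual faces. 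This forces the introduction of an auxiliary "swap'' index on the components of $\{\eta_1 \geq 1\} \cap \{\eta_2 \geq 1\}$, and the verification that the resulting map on sign patterns is a bijection respecting the admissibility constraints imposed by $\partial D \cup B$. Once this bookkeeping is carried out, (CBC-|h|) follows from the two-measure variant of the lattice condition via Holley's stochastic-domination theorem, using the monotone coupling it provides.
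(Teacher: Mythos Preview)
Your approach and the paper's diverge at a crucial point, and the obstacles you flag are not merely bookkeeping; as stated, the argument has a genuine gap.

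You attempt to verify the full FKG lattice condition $W(\eta_1\vee\eta_2)\,W(\eta_1\wedge\eta_2)\ge W(\eta_1)\,W(\eta_2)$ for the summed-over-signs weight $W(\eta)$, by pairing sign patterns and invoking the lattice condition for $W_{\mathrm{6V}}$ termwise. The two issues you list are real. When the signs of $H_1$ and $H_2$ disagree on adjacent clusters, $H_1\vee H_2$ genuinely fails to be a height function, and there is no local repair that preserves the weight inequality in any evident way. More fundamentally, even when a sign-aligned pairing is available, the map $(\sigma_1,\sigma_2)\mapsto(\sigma_\vee,\sigma_\wedge)$ you would need on $\mathcal N(\eta_1)\times\mathcal N(\eta_2)\to\mathcal N(\eta_1\vee\eta_2)\times\mathcal N(\eta_1\wedge\eta_2)$ is not a bijection: the cluster structures of $\{\eta_1\ge1\}$, $\{\eta_2\ge1\}$, $\{\eta_1\vee\eta_2\ge1\}$ and $\{\eta_1\wedge\eta_2\ge1\}$ are all different, so the very index sets $\mathcal N(\cdot)$ change size. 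Aggregating termwise inequalities over non-bijective index sets does not yield the product inequality you want. Your ``swap index'' suggestion does not address this cardinality mismatch.

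The paper sidesteps this entirely by using the \emph{single-site} form of Holley's criterion (Lemmas~\ref{lem:Holley}--\ref{lem: FKG}): one only needs to compare the conditional laws of $|h(x)|$ given $|h|=\chi$ and $|h|=\chi'$ on $D\setminus\{x\}$ with $\chi\preceq\chi'$. This localises the problem so that no global sign-matching is ever required. With $m=\min_{y\sim x}\chi(y)=\max_{y\sim x}\chi(y)$ the only nontrivial case is $m=1$, where one must show that $\bbP[\,|h(x)|=2\mid|h|=\chi\text{ off }x]$ is increasing in $\chi$. Here the paper's key idea --- absent from your proposal --- is to observe (Lemma~\ref{lem: signs of h have ising distribution}) that, conditionally on $|h|$, the signs on the clusters of $\{|h|>0\}$ follow a ferromagnetic Ising model, and that raising $\chi$ to $\chi'$ amounts to imposing an additional equal-sign constraint on certain clusters (Lemma~\ref{lem: Ising on contracted graph is conditional Ising}). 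The required monotonicity then becomes a positive-correlation statement for even spin functions in the Ising model, which is exactly Griffiths' second inequality. This replaces your intractable global bijection by a single classical correlation inequality.
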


\begin{remark}
The inequality~\eqref{eq:FKG-|h|} also holds for the conditional measure $\bbP^{\xi}_D [. \,|\, |h| = \zeta \text{ on $B$}]$ by the same proof. The statements above also apply to boundary conditions $\xi, \xi'$ imposed on any non-empty set rather than just $\partial D$.
\end{remark}

\subsection{Boundary pulling and pushing}

In models with the spatial Markov property and monotonicity properties, 
a useful tool is the comparison of probabilities of certain events in different domains. 
This is sometimes referred to as the pushing/pulling of boundary conditions. 
In our model, it is achieved through the FKG inequality for the absolute value of the height function. 

In order to state the pushing/pulling property, we need the concept of minimal height functions. Let $D \subset \bbG^*$ be a discrete domain and $\xi$ be an admissible boundary condition defined on $B \subset D$.
The reader may verify that
\begin{align*}
\underline{h} (x) = \max_{y \in B} (\xi (y) - d_D(x, y)),
\end{align*}   
where $d_D$ denotes the graph distance on $D \subset \bbG^*$,
is the unique minimal height function $h$ with $h_{ \vert B} = \xi$. That is, for any other such $h$, we have $\underline h \preceq h$. Similarly, if $h_m$ is the height function taking only values $m$ and $m+1$, it holds that $h(\cdot)= \max\{ \underline{h}(\cdot), h_m (\cdot) \}$ is the unique minimal height function $h$ with $h_{ \vert B} = \xi$ with $h \geq m$. Maximal extensions can be constructed similarly.

\begin{proposition}\label{prop:pushing_1}
    Fix integers $k>m$. 
    Let $D \subset \bbG^*$ be a discrete domain, $\xi$ be an admissible boundary condition on $B \subset D$ with $\xi \geq m$
    and $\zeta \in \heightfcns_D$ the minimal height function with boundary conditions $\xi$ and with $\zeta \geq m-1$. 
    Then, for any $B' \supset B$, we have
    \begin{align}
	    \bbP^{\zeta_{ \vert B' } }_D [\exists C \in \calC \text{ with } h_{|C}\ge k ]
	    \leq 2\, \bbP^{\xi}_D [\exists C \in \calC \text{ with } h_{|C}\ge k],	\label{eq:pushing_0}
    \end{align}
    for any collection $\calC$ of connected subsets of $D$. 
	When each set in $\calC$  intersects $B$, then the factor $2$ may be removed. 
\end{proposition}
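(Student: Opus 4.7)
The plan is to sandwich the left-hand probability between two quantities involving the ``absolute value event''
\[
|A| := A \cup A^-, \quad A = \{\exists C \in \calC : h_{|C} \geq k\}, \quad A^- = \{\exists C \in \calC : h_{|C} \leq -k\},
\]
and to establish the chain
\[
\bbP^{\zeta_{|B'}}_D[A] \leq \bbP^{\zeta_{|B'}}_D[|A|] \leq \bbP^\xi_D[|A|] \leq 2\, \bbP^\xi_D[A].
\]
The events $A$ and $A^-$ are disjoint on each connected $C$ (as $k \geq 1$). Moreover, whenever each $C \in \calC$ meets $B$, the condition $h_{|C \cap B} = \xi \geq m > -k$ forces $A^-_C = \emptyset$; in that stronger case $|A| = A$, so the chain collapses to $\bbP^{\zeta_{|B'}}_D[A] \leq \bbP^\xi_D[A]$ and yields the claim without the factor $2$.

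For the rightmost inequality I will use the sign-flip symmetry. Since $-\xi \preceq \xi$, \eqref{eq:CBC-h} combined with the involution $h \mapsto -h$ (which swaps $\bbP^\xi_D$ with $\bbP^{-\xi}_D$ and $A$ with $A^-$) gives $\bbP^\xi_D[A^-] = \bbP^{-\xi}_D[A] \leq \bbP^\xi_D[A]$, whence $\bbP^\xi_D[|A|] \leq 2\bbP^\xi_D[A]$. The leftmost inequality is immediate from $A \subseteq |A|$.

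The central step is the middle inequality $\bbP^{\zeta_{|B'}}_D[|A|] \leq \bbP^\xi_D[|A|]$. After shifting heights by $m-1$ (treating parity appropriately) I may assume $m = 1$, so $\xi \geq 1$, $\zeta \geq 0$, and $k \geq 2$. A short case analysis at each $b \in B' \setminus B$ then yields $|h(b)| \geq \zeta(b)$ deterministically under $\bbP^\xi_D$: either the minimal extension satisfies $\underline h(b) \geq 0$, giving $\zeta(b) = \underline h(b)$ and $h(b) \geq \underline h(b) \geq 0$; or $\underline h(b) < 0$ and $\zeta(b) \in \{0,1\}$ is the smallest nonnegative integer of the parity of $b$, in which case the bound is trivial by parity. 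Consequently $G := \{|h|_{|B'\setminus B} = \zeta_{|B'\setminus B}\}$ is decreasing in $|h|$, while $|A|$ is increasing in $|h|$, so \eqref{eq:FKG-|h|} gives $\bbP^\xi_D[|A| \mid G] \leq \bbP^\xi_D[|A|]$. By \eqref{eq:SMP} we have $\bbP^{\zeta_{|B'}}_D[\,\cdot\,] = \bbP^\xi_D[\,\cdot \mid h_{|B'\setminus B} = \zeta_{|B'\setminus B}]$, so the remaining task is to compare this $h$-conditioning with the $|h|$-conditioning on $G$.

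The hard part will be this final sign-pattern identification: the event $\{h_{|B'\setminus B} = \zeta_{|B'\setminus B}\}$ selects the all-positive sign pattern among those consistent with $G$. I plan to exploit the isotropy $\mathbf a = \mathbf b$ to argue that reflection $h \mapsto -h$ on any connected component $K$ of $\{h < 0\}$ preserves the six-vertex weight: inside $K$ this follows from the global $h \mapsto -h$ symmetry; at each boundary vertex of $K$ the height-function constraint forces the four surrounding faces to take alternating values $\{0,-1,0,-1\}$, which is a type-$5$ or type-$6$ vertex, a property preserved under reflection to $\{0,1,0,1\}$, so that $n_5+n_6$ is unchanged. Moreover, because $\xi \geq 1$ on $B$, any component of $\{h < 0\}$ is separated from $B$, so the reflection leaves the boundary values on $B$ intact. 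Applying this reflection iteratively to the components of $\{h<0\}$ hitting negatively signed faces of $B' \setminus B$ produces a weight-preserving correspondence between the sub-events of $G$ associated with different sign patterns. Consequently the law of $|h|$ under $\bbP^\xi_D[\,\cdot\,\mid G]$ agrees with that under $\bbP^{\zeta_{|B'}}_D$, giving $\bbP^{\zeta_{|B'}}_D[|A|] = \bbP^\xi_D[|A| \mid G] \leq \bbP^\xi_D[|A|]$ and closing the argument.
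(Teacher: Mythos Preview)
Your overall chain and the handling of the first inequality, the \eqref{eq:FKG-|h|} step, and the $\bbP^\xi_D[|A|]\le 2\bbP^\xi_D[A]$ step are correct and coincide with the paper's approach. The verification that $|h(b)|\ge\zeta(b)$ deterministically, so that $G$ is a $|h|$-decreasing event, is also fine.

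The gap is the final ``sign-pattern identification'' paragraph. The reflection $h\mapsto -h$ on a component $K$ of $\{h<0\}$ is \emph{not} weight-preserving: your claim that every boundary vertex of $K$ has the alternating pattern $(0,-1,0,-1)$ is false. Take a single odd face $x$ with $h(x)=-1$, all four edge-neighbours equal to $0$, and some corner-neighbour $y$ with $h(y)=1$. The vertex between $x$ and $y$ has pattern $(-1,0,1,0)$, which is of type~1--4; after flipping $K=\{x\}$ the pattern becomes $(1,0,1,0)$, of type~5--6, and the weight changes by a factor $\mathbf c$. This is exactly the Ising interaction of Section~\ref{subsubsec: Ising tools}: conditionally on $|h|$, the signs on the clusters of $\{|h|>0\}$ form a genuine ferromagnetic Ising model rather than a product measure, so different sign patterns carry different weights and your claimed equality $\bbP^{\zeta_{|B'}}_D[|A|]=\bbP^\xi_D[|A|\mid G]$ is not justified (and is in general false).

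The fix, which is what the paper does, is not to aim for equality but for the weaker inequality $\bbP^{\zeta_{|B'}}_D[A]\le\bbP^\xi_D[|A|\mid G]$ (with $A$, not $|A|$, on the left). Shift to $m=0$ rather than $m=1$, so that $\zeta\ge -1$. Then $\bbP^\xi_D[|A|\mid G]$ is a convex combination over admissible $\zeta'$ with $|\zeta'|=|\zeta|$ of $\bbP^\xi_D[|A|\mid h_{|B'}=\zeta']\ge\bbP^\xi_D[A\mid h_{|B'}=\zeta']$. A case analysis identical to yours shows that every such admissible $\zeta'$ satisfies $\zeta'\succeq\zeta$ (here the choice $m=0$ matters: at odd faces where $\zeta=-1$ both signs $\pm 1$ dominate $\zeta$, whereas with your $m=1$ the sign $-1$ would undercut $\zeta=1$). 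Hence by \eqref{eq:CBC-h} each term is $\ge\bbP^\xi_D[A\mid h_{|B'}=\zeta]=\bbP^{\zeta_{|B'}}_D[A]$, and the chain closes.
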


The above will mostly be used in the form of the following corollary. 

\begin{corollary}\label{cor:pushing_modified}
	Let $D \subset D'$ be two discrete domains, $\xi'$ be an admissible boundary condition for $\heightfcns_{D'}$ on $\partial D'$, with $\xi' \geq m$ for some $m$. 
	Let $\xi$ be the minimal admissible boundary condition for $\heightfcns_{D}$ on $\partial D$, that coincides with $\xi'$ on $\partial D \cap \partial D'$ and satisfies $\xi \geq m$. 
	Then, for any $k > m$, 
	\begin{align}	\label{eq:pushing_11}
		\bbP^{\xi}_{D} [\exists C \in \calC \text{ with } h_{|C}\ge k+2] \le 
    	2\,\bbP^{\xi'}_{D'} [\exists C \in \calC \text{ with } h_{|C}\ge k],
	\end{align}
	for any collection $\calC$ of connected subsets of $D$. 
	When each set in $\calC$ intersects $\partial D'$, then the factor $2$ may be removed. 
\end{corollary}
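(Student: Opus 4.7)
The plan is to apply Proposition~\ref{prop:pushing_1} inside the larger domain $D'$, transfer the resulting probability to $D$ via the spatial Markov property, and then compare the induced boundary condition on $\partial D$ with $\xi$ up to a shift of~$2$. Concretely, let $\zeta \in \heightfcns_{D'}$ be the minimal height function on $D'$ with $\zeta|_{\partial D'} = \xi'$ and $\zeta \geq m - 1$. Applying Proposition~\ref{prop:pushing_1} with $B = \partial D'$ and $B' = \partial D' \cup \partial D$ yields
\[
\bbP_{D'}^{\zeta|_{B'}}[\exists C \in \calC,\, h|_C \geq k] \;\leq\; 2\, \bbP_{D'}^{\xi'}[\exists C \in \calC,\, h|_C \geq k].
\]
Since $\zeta|_{B'}$ fixes the height on $\partial D$ and the event above depends only on $h|_D$, Proposition~\ref{prop: SMP} identifies the left-hand side with $\bbP_D^{\zeta|_{\partial D}}[\exists C \in \calC,\, h|_C \geq k]$.

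The technical heart is the pointwise inequality $\zeta|_{\partial D} \geq \xi - 2$. For this I would invoke the explicit formula from Section~\ref{sec:monotonicity} for the minimal height function extending a boundary condition above a given floor: for $x \in \partial D$,
\[
\zeta(x) = \max\!\bigl(\,\max\nolimits_{y \in \partial D'}(\xi'(y) - d_{D'}(x,y)),\, h_{m-1}(x)\,\bigr),\quad \xi(x) = \max\!\bigl(\,\max\nolimits_{y \in \partial D \cap \partial D'}(\xi'(y) - d_D(x,y)),\, h_m(x)\,\bigr).
\]
The inclusion $D \subset D'$ gives $d_{D'}(x,y) \leq d_D(x,y)$, and $\partial D \cap \partial D' \subseteq \partial D'$, so the first term in $\zeta(x)$ dominates the first term in $\xi(x)$. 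By parity, $h_{m-1}(x) \in \{h_m(x),\, h_m(x) - 2\}$, hence in particular $h_{m-1}(x) \geq h_m(x) - 2$. Taking the maximum of these two bounds gives $\zeta(x) \geq \xi(x) - 2$ on $\partial D$.

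To conclude, $\xi - 2$ is still admissible on $\partial D$ and $\{\exists C \in \calC,\, h|_C \geq k\}$ is an increasing event, so by~\eqref{eq:CBC-h} and the shift $h \mapsto h + 2$,
\[
\bbP_D^{\zeta|_{\partial D}}[\exists C \in \calC,\, h|_C \geq k] \;\geq\; \bbP_D^{\xi - 2}[\exists C \in \calC,\, h|_C \geq k] \;=\; \bbP_D^{\xi}[\exists C \in \calC,\, h|_C \geq k + 2].
\]
Chaining the three displayed inequalities produces~\eqref{eq:pushing_11}. Under the extra assumption that every $C \in \calC$ meets $\partial D'$, the factor $2$ drops out, inherited directly from the refined version of Proposition~\ref{prop:pushing_1} (where $\partial D'$ plays the role of $B$).

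The main obstacle is the pointwise comparison $\zeta|_{\partial D} \geq \xi - 2$: the two minimal extensions live on different graphs ($D'$ versus $D$) and are constrained by different floors ($m-1$ versus $m$), and it is precisely the parity-driven gap between these floors that is responsible for the loss of $2$ in the height threshold appearing in the conclusion.
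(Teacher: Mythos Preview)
Your proof is correct and follows the same route as the paper's: apply Proposition~\ref{prop:pushing_1} in $D'$ with $B=\partial D'$ and $B'=\partial D'\cup\partial D$, use \eqref{eq:SMP} to pass to $\bbP_D^{\zeta|_{\partial D}}$, then compare boundary conditions via $\zeta|_{\partial D}\succeq\xi-2$ and shift. The paper asserts the inequality $\zeta|_{\partial D}\succeq\xi-2$ without justification (``by choice of $\xi$''), whereas you supply the explicit verification using the minimal-extension formula, which is a welcome addition.
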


The corollary will be applied to the existence of certain paths, most commonly crossings of certain domains.
%
Two things should be kept in mind when applying Corollary~\ref{cor:pushing_modified}. 
First, due to~\eqref{eq:CBC-h},~\eqref{eq:pushing_11} also applies to pairs of boundary conditions $\tilde\xi,\tilde\xi'$ with 
$\tilde\xi \preceq \xi$ and $\xi' \preceq \tilde \xi'$. 
Second, even though the statement suggests that $\xi$ is chosen in terms of $\xi'$, we will sometimes start with a boundary condition $\xi$, 
then construct a boundary condition $\xi'$ for which~\eqref{eq:pushing_11} holds. The two cases correspond to boundary pushing and pulling.

\begin{proof}[Proposition~\ref{prop:pushing_1}]
	Since the model is invariant under the addition of a constant, we may limit ourselves to the case $m =0$. 
	Fix a set $\calC$ of connected subsets of $D$.
	Write $\calA = \{ h\,:\, \exists C \in \calC \text{ with }h_{|C} \geq k\}$.
	
	Since $k>0$, if $|h|\in \calA$, then there exits  $C \in \calC$ on which $|h|\ge k$ and in particular $h$ is of constant sign. 
	As a consequence
	$$
	\bbP^{\xi}_D [h\in \calA]+\bbP^{\xi}_D [-h\in \calA] \ge \bbP^{\xi}_D [|h|\in \calA].
	$$
	By sign flip symmetry and comparison between boundary conditions~\eqref{eq:CBC-h} 
	(recall that $\xi\ge 0$, and hence $-\xi \preceq \xi$), we find that
	\begin{align}\label{eq:22remove}
		2\bbP^{\xi}_D [h \in \calA]\ge \bbP^{\xi}_D [|h|\in \calA].
	\end{align}
	
	It remains to lower bound the right-hand side.
	Observe that the lowest possible values of $|h|$ on $B$ are given by $|\zeta|$. By~\eqref{eq:FKG-|h|},
	\begin{align*}
		\bbP^{\xi}_D [|h|\in \calA] 
		&\geq \bbP^{\xi}_D [|h|\in \calA \,|\, |h| = |\zeta| \text{ on $B$}]\\
		&\geq \min\big\{\bbP^{\xi}_D [h\in \calA \,|\, h = \zeta' \text{ on $B$}]\,:\, \zeta' \text{ admissible s.t. } |\zeta'| = |\zeta|\big\}.
	\end{align*}
	Due to~\eqref{eq:FKG-h} and to the fact that $\calA$ is increasing, the minimum above is realised by the lowest configuration $\zeta'$ satisfying the condition above. The choice of $\zeta$ as lowest among the realisations of $h$ on $B$ with $\zeta \geq -1$ guarantees that the minimum in the above is obtained when $\zeta' = \zeta$.
	Combining this observation with~\eqref{eq:22remove} provides the desired bound. 
	
	Finally, if $\calC$ is such that all $C \in \calC$ intersect $\partial D$, 
	then $\bbP^{\xi}_D [h \in \calA] =  \bbP^{\xi}_D [|h|\in \calA]$. 
	Indeed, when $|h| \in \calA$, the sign of $h$ on any set $C\in \calC$ realising $\calA$ is necessarily $+$, due to its intersection with $\partial D$ and to the fact that $\xi \geq 0$ (since $k > 0$, $C$ intersects the boundary only on faces where $\xi > 0$). 
	Thus, in this particular case, the factor $2$ may be removed from~\eqref{eq:pushing_0}.
\end{proof}

\begin{proof}[Corollary~\ref{cor:pushing_modified}]
	Fix $D\subset D'$, $\xi$ and $\xi'$ as in the statement. 
	Let $\zeta$ be the smallest realisation of a height function on $D'$ with boundary conditions $\xi'$ and with $\zeta \geq m-1$. 
	Then, due to Proposition~\ref{prop:pushing_1} and~\eqref{eq:SMP},
	\begin{align*}
	    \bbP^{\xi'}_{D'} [\exists C \in \calC \text{ with } h_{|C}\ge k]
	    \geq \tfrac12 \bbP^{\zeta_{\vert \partial D} }_{D} [\exists C \in \calC \text{ with } h_{|C}\ge k ].
	\end{align*}
	Notice now that, by choice of $\xi$, we have $\zeta_{\vert \partial D} \succeq \xi - 2$, and~\eqref{eq:CBC-h} thus gives
	\begin{align*}
	    \bbP^{\zeta_{\vert \partial D} }_{D} [\exists C \in \calC \text{ with } h_{|C}\ge k ]
	    \geq \bbP^{\xi-2}_{D} [\exists C \in \calC \text{ with } h_{|C}\ge k]
	   	= \bbP^{\xi}_{D} [\exists C \in \calC \text{ with } h_{|C}\ge k+2].
	\end{align*}
	The claim follows.
	When all sets in $\calC$ intersect $\partial D'$, the factor $1/2$ disappears in the first equation displayed above. 
\end{proof}

\section{RSW theory}\label{sec:RSW}

\newcommand{\Strip}{\mathrm{Strip}}
This section introduces tools of a geometric nature for the six-vertex height functions, 
related to crossings of domains by height function level sets. 
The main result is the Russo--Seymour--Welsh (RSW) Theorem~\ref{thm:RSW}. 
An intermediate result, Proposition~\ref{prop:RSW1} will also be used later, in Section~\ref{sec:4}. In this section, we only work in the plane.

The results in this section work for all $\mathbf{c} \geq 1$, i.e., they do not differentiate between the localized and delocalized phases. However, as discussed in Section~\ref{subsec:intro - further questions}, they do \textit{not} directly generalize for the six-vertex model with three parameters $\mathbf{a}, \mathbf{b}, \mathbf{c}$. The reader will also notice that various inexplicit constants appear in the statements of this section. Explicit values for these constants could be worked out by carefully tracing through the proofs, but this is not needed for the purpose of this paper. An interesting consequence is nevertheless that the inexplicit constants may be chosen uniformly in $\mathbf{c} \geq 1$. 

\subsection{The main RSW result}

Given a discrete domain $D$ and sets $A, B $ of faces of $ D$, write $A \xlra{h \geq k \text{ in $D$}} B$  for the event that there exists a path of faces
$u_0, \dots, u_n$ of $D$ with $u_0 \in A$, $u_n \in B$, $u_i$ adjacent to $u_{i+1}$ in $D$  for all $i$ and $h(u_i) \geq k$ for all $i$. 
When no ambiguity is possible, we remove the mention to $D$ from the notation. 
The same notation applies with $h \leq k$ and $|h|$ instead of $h$.  
 
For convenience, we will work here with the following measures in infinite horizontal strips.
Fix $n \geq 2$ and set $\Strip := \bbZ \times [0,n]$ seen as a subgraph of $\bbZ^2$. 
Its boundary $\partial \Strip$ is formed of the faces in $\bbZ \times ([0,1] \cup [n-1,n])$; 
the notion of admissible boundary condition on $\partial \Strip$ adapts readily from that on (finite) domains. 
Fix an admissible boundary condition $\xi$ on $\Strip$ with $|\xi| \preceq M$ for some $M \geq 1$. 
The measure $\bbP_{\Strip}^\xi$ is defined as the weak limit of measures $\bbP_{[-m,m]\times [0,n]}^{\xi_m}$ as $m \to \infty$,
where $\xi_m$ is the minimal boundary condition on $\partial [-m,m]\times [0,n]$ which is equal to $\xi$ on $\partial \Strip \cap \partial [-m,m]\times [0,n]$. It is an immediate consequence of the finite energy of the model 
that $\bbP_{\Strip}^\xi$ exists. Furthermore, by the same argument, $\bbP_{\Strip}^\xi$  is the limit of any sequence of measures $\bbP_{D_m}^{\zeta_m}$, where 
$D_m$ is any increasing sequence of domains with $\bigcup_{m \geq 1}D_m = \Strip$ 
and $\zeta_m$ is any sequence of boundary conditions on $\partial D_m$ that are equal to $\xi$ on $\partial \Strip \cap \partial D_m$. 

Note that as a consequence of this construction, the spatial Markov property~\eqref{eq:SMP}, the FKG inequalities~\eqref{eq:FKG-h} and~\eqref{eq:FKG-|h|}, the comparison of boundary conditions~\eqref{eq:CBC-h} and~\eqref{eq:CBC-|h|}
and the pushing of boundary conditions~\eqref{eq:pushing_11} apply to $\bbP_\Strip^\xi$.

We are now ready to state the main result of this section. 

\begin{theorem}[RSW]\label{thm:RSW}
	There exist absolute constants $\delta, c,C>0$ such that for any $k \geq 1/c$ and any $n$, we have
	\begin{align}\label{eq:RSW}
	\bbP_{\Lambda_{12 n}}^{0,1}[\calO_{h\ge ck}( 6n ,12 n)]
	\geq c \big(\bbP_{\bbZ \times [-n,2n]}^{0,1}\big[[0, \lfloor \delta n \rfloor ]\times \{0\} \xleftrightarrow{h \geq k \; \mathrm{in} \; \bbZ \times [0,n]}\bbZ\times \{ n\}\big]\big)^{C},
	\end{align}
where $0,1$ denote the admissible boundary conditions on the boundary of each domain that only take values 0 and 1.
\end{theorem}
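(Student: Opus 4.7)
The plan is to execute a classical Russo--Seymour--Welsh bootstrap adapted to level sets of the height function: starting from the hypothesised short vertical crossing of the strip, use the symmetries of the model and positive association to build long crossings of rectangles of bounded aspect ratio, and then combine four such crossings into a circuit surrounding $\Lambda_{6n}$ inside $\Lambda_{12n}$. The isotropy $\mathbf a=\mathbf b$ is essential, as it provides the diagonal reflection symmetry needed to interchange horizontal and vertical crossings; vertical and horizontal reflections, together with horizontal translations of the strip, provide the remaining symmetries we exploit.

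First, the strip-crossing event appearing on the right-hand side of~\eqref{eq:RSW} must be transported into a crossing probability in a large finite rectangle with $0,1$ boundary conditions. Using the definition of the $\bbP^{0,1}$ measure on the strip as a limit of measures on truncations, together with Corollary~\ref{cor:pushing_modified}, such a finite-volume bound is obtained at the cost of lowering the height threshold from $k$ to $k-c_0$ for some absolute constant $c_0$. By horizontal translation and by reflection symmetries, one then obtains short crossings from (or to) any $\delta n$-segment on any side of a rectangle of size $\Theta(n)$, each with the same lower bound. Moreover, since the event ``crossing from $[0,L]\times\{0\}$ to the top'' is the union over $j$ of the events ``crossing from $[j\delta n,(j+1)\delta n]\times\{0\}$ to the top'', a bare union bound yields a crossing from an arbitrarily long bottom segment to the top with no further height loss.

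Next, using the diagonal symmetry, one obtains analogous horizontal crossings of vertical rectangles. To assemble a circuit in $A(6n,12n)$, one places four such long crossings along the four sides of the annulus and combines them via~\eqref{eq:FKG-h}. Since any two consecutive sides meet in an overlapping $\Theta(n)\times\Theta(n)$ corner box, FKG yields that all four crossings co-occur with probability at least the product of the four individual probabilities; the union of the four paths then contains a circuit. Each of the four crossing probabilities is bounded below by the hypothesis probability (after the constant height loss above), so the circuit probability is bounded below by a fixed power thereof, producing the exponent $C$ in~\eqref{eq:RSW}.

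The principal obstacle, and the origin of the loss from $k$ to $ck$, is controlling the cumulative effect of boundary-condition pushings: each invocation of Corollary~\ref{cor:pushing_modified} decreases the effective height threshold by $2$, and one must verify that the total number of such invocations throughout the argument is bounded by some absolute constant $C_0$ independent of $n$, $k$, and $\mathbf c \geq 1$. One must also track the sign of $h$ along the crossings, which is done via the $0,1$ boundary conditions that force $h \geq 0$ near $\partial\Lambda_{12n}$ and, where helpful, by invoking~\eqref{eq:FKG-|h|} in place of~\eqref{eq:FKG-h} so that gluings of $|h|$-level sets may be converted to $h$-level sets of the correct sign. Assuming $k \geq 1/c$ for $c$ sufficiently small then ensures $k - C_0 \geq \lfloor ck \rfloor$, which upgrades the final bound to threshold $ck$ and yields~\eqref{eq:RSW}.
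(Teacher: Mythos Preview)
Your outline misses the central RSW step and, relatedly, misidentifies the source of the height loss. The hypothesis gives a vertical crossing of a strip of height~$n$ starting from a segment of length~$\delta n$; by symmetries this yields crossings of boxes of aspect ratio roughly~$1$ (or crossings of long boxes in the \emph{easy} direction, via your union bound). What is needed for a circuit in $A(6n,12n)$ is a horizontal crossing of a rectangle of size roughly $18n\times 3n$, i.e.\ a crossing in the \emph{hard} direction of a long rectangle. Your sketch does not explain how to produce this: combining two adjacent $n\times n$ crossings by~\eqref{eq:FKG-h} does not yield a $2n\times n$ crossing, since the two crossings need not intersect. This is precisely the RSW problem, and the paper devotes Sections~\ref{sec:gluing_cross}--\ref{sec:long_in_stip} to it.

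The paper's mechanism is the bridging Lemma~\ref{lem:bridges}: conditionally on the left- and right-most vertical crossings of $|h|\ge k$ from $I_{-1}$ and $I_{1}$, one uses Proposition~\ref{prop:RSW1} (``no crossing between slits'') to produce a horizontal path of $h\ge c_0 k$ connecting them. This step necessarily drops the threshold from~$k$ to~$c_0 k$ \emph{multiplicatively}, because Proposition~\ref{prop:RSW1} only controls crossings at level $(1-c_0)k$ when the slits carry boundary height~$k$. Iterating bridges via Proposition~\ref{prop:RSW3} then gives long horizontal crossings at level~$c_0 k$, and only afterwards does the strip-to-annulus passage of Section~\ref{sec:strip_to_annulus} incur the bounded \emph{additive} losses you describe. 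Thus the reduction from~$k$ to~$ck$ is not an artefact of finitely many boundary pushings; it comes from the bridging step, and your argument as written does not supply any substitute for it.
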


\begin{figure}
    \begin{center}
    \includegraphics[width=0.35\textwidth]{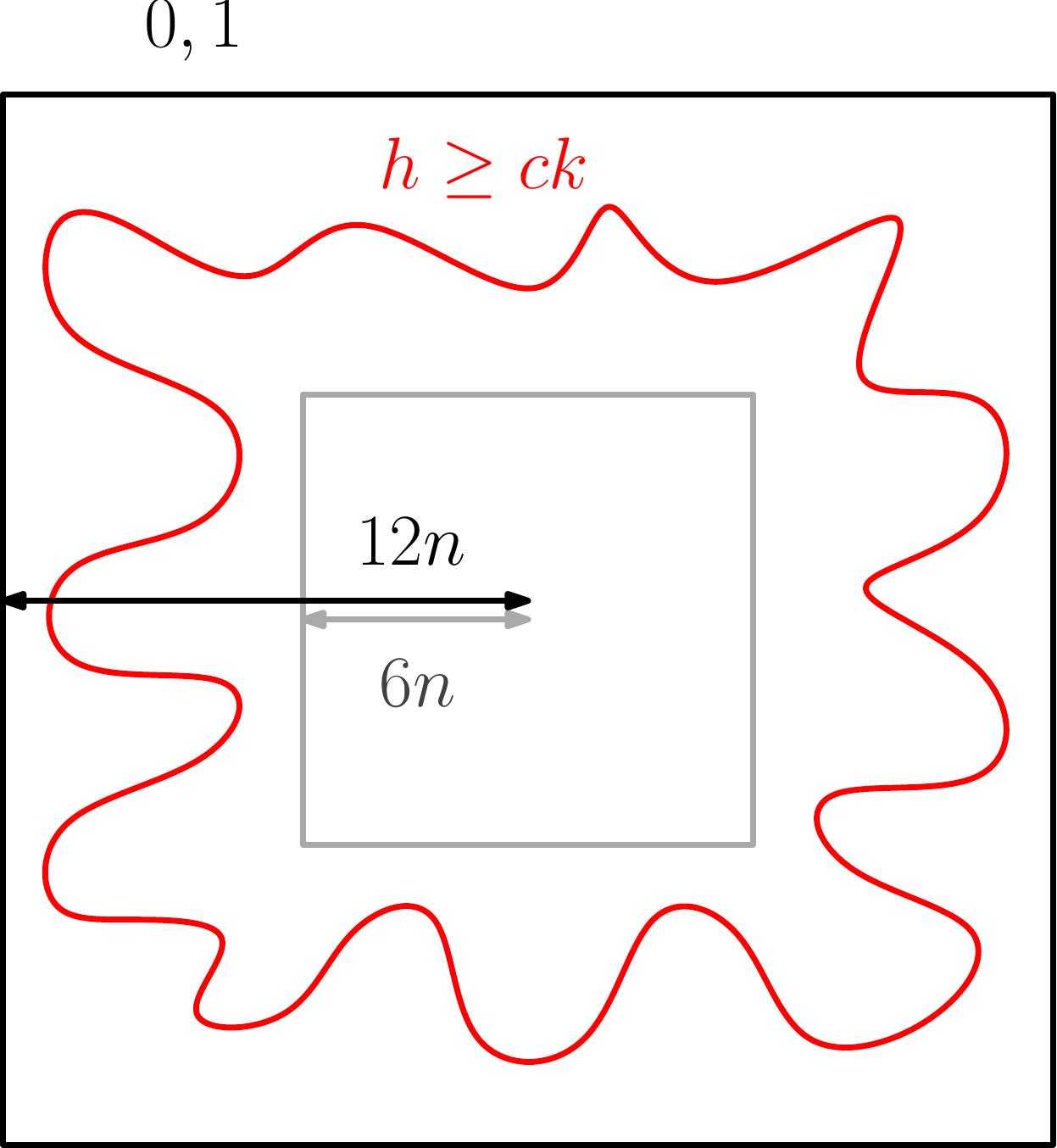} \quad \qquad
    \includegraphics[width=0.50\textwidth]{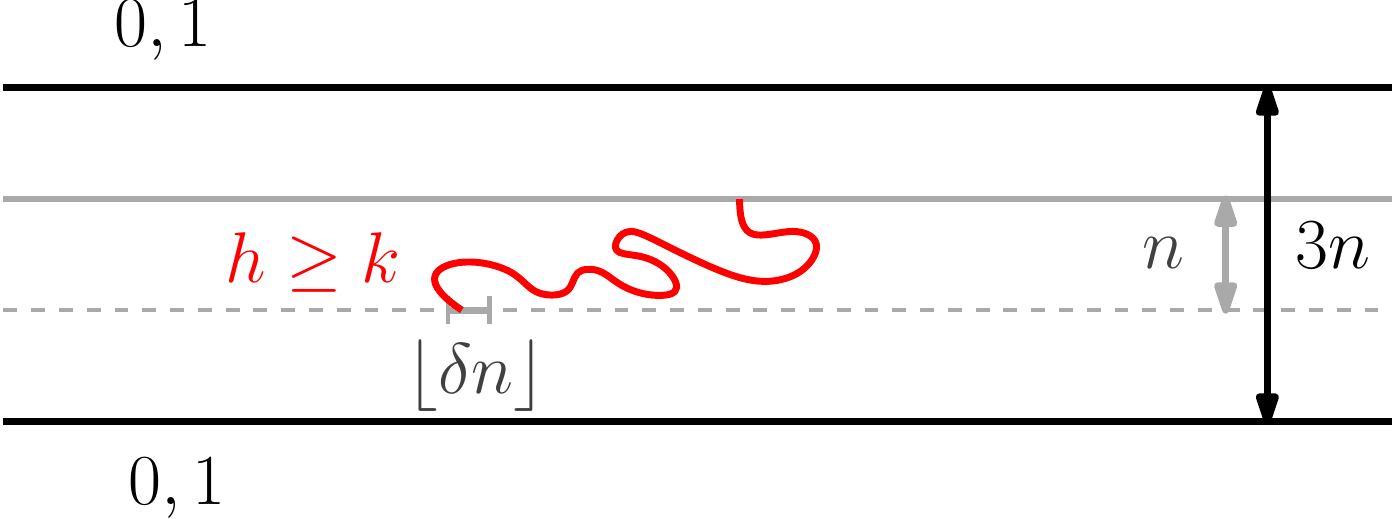}
    \caption{
    Theorem~\ref{thm:RSW} lower-bounds the probability of a large-height circuit around an annulus (left) in terms of that of a vertical crossing of the middle third of a strip, starting from a given narrow window of length $\lfloor\delta n\rfloor$ (right).}
    \end{center}
\end{figure}

The rest of this section is organised as follows. 
In Section~\ref{subsec: symmetric domain crossings}, we discuss duality properties and crossings of certain symmetric domains. In Section~\ref{sec:gluing_cross}, we prove a result about vertical crossings of a strip with endpoints contained in small intervals. 
This result is used to prove to Theorem~\ref{thm:RSW} and will also be used in Section~\ref{sec:4}. 
In Section~\ref{sec:long_in_stip} we use the result of Section~\ref{sec:gluing_cross} to bound the probability of horizontal crossings of long rectangles in a strip. Then, in Section~\ref{sec:strip_to_annulus}, the previous bounds are extended to circuits in annuli, thus proving Theorem~\ref{thm:RSW}. 

\subsection{Crossings of symmetric quadrilaterals}\label{subsec: symmetric domain crossings}

A discrete domain $D$ is said to be \textit{simply-connected} if it is the subgraph of $(\bbZ^2)^*$ bounded on or inside a simple loop on $(\bbZ^2)^*$. (The corresponding primal vertices $V(D) \subset \bbZ^2$ are hence those inside the loop.)
When four (different) faces $a,b,c,d$ in counter-clockwise order on the bounding loop are fixed, $(D;a,b,c,d)$ is called a \textit{(discrete) quad}. The boundary of a quad is divided into four arcs $(ab)$, $(bc)$, $(cd)$, and $(da)$, that are paths on $(\bbZ^2)^*$ intersecting at their extremities.

For a discrete domain $D$, we say that two faces $u$ and $v$ are \textit{$\times$-adjacent} in $D$ if $d_D(u, v) = 2$ and $u$ and $v$ share a corner; a \textit{$\times$-path} in $D$ is a sequence of $\times$-adjacent faces.
For sets $A, B $ of faces of $ D$ we write $A \xlra{h \geq k \text{ in $D$}}_{ \times } B$  for the event that there exists a $\times$-path $u_0, \dots, u_n \in D$ with $u_0 \in A$, $u_n \in B$ and $h(u_i) \geq k$ for all $i$; similar notations are used for $h \geq k$ and $\vert h \vert$, and ``in $D$'' is omitted if clear.

\begin{remark}\label{rem:dual_crossings}
	The $\times$-paths are the dual of ordinary paths, in the sense that for a quad $(D; a, b, c, d)$, we have
	\begin{align}
	\label{eq:dual_crossings}
 \{ (ab) \xlra{h \geq k } (cd) \}^c
 =  \{(bc) \xlra{h < k }_{ \times } (da)\}.
	\end{align}
	See Figure~\ref{fig:symmetric domains} for an explanation. Furthermore, we have
	\begin{align}
	\label{eq:from times-crossings to crossings}
		\{(bc) \xlra{h \leq k-2 } (da)\}
		\subset 
		\{(bc) \xlra{h < k }_{ \times } (da)\}
		\subset
		\{(bc) \xlra{h \leq k } (da)\}.
	\end{align}
\end{remark}

A \emph{symmetry} $\sigma$ of $\bbZ^2$ is a graph isomorphism from $\bbZ^2$ to itself that fixes the bipartition of $F(\bbZ^2)$ (even faces are sent to even faces, and odd faces to odd faces). Given an admissible boundary condition $\xi$ on $D$, we denote by $\sigma \xi$ the admissible boundary condition $\xi \circ \sigma^{-1}$ on $\sigma (D)$. A discrete quad $(D;a,b,c,d)$ is said to be \emph{symmetric} if there exists a symmetry $\sigma: \bbZ^2 \to \bbZ^2$ such that $\sigma(D) = D$ and $\sigma$ maps the boundary arcs $(ab)$ and $(cd)$ of $D$ to $(bc)$ and $(da)$.
 
\begin{figure}
\begin{center}
\includegraphics[height=0.45\textwidth]{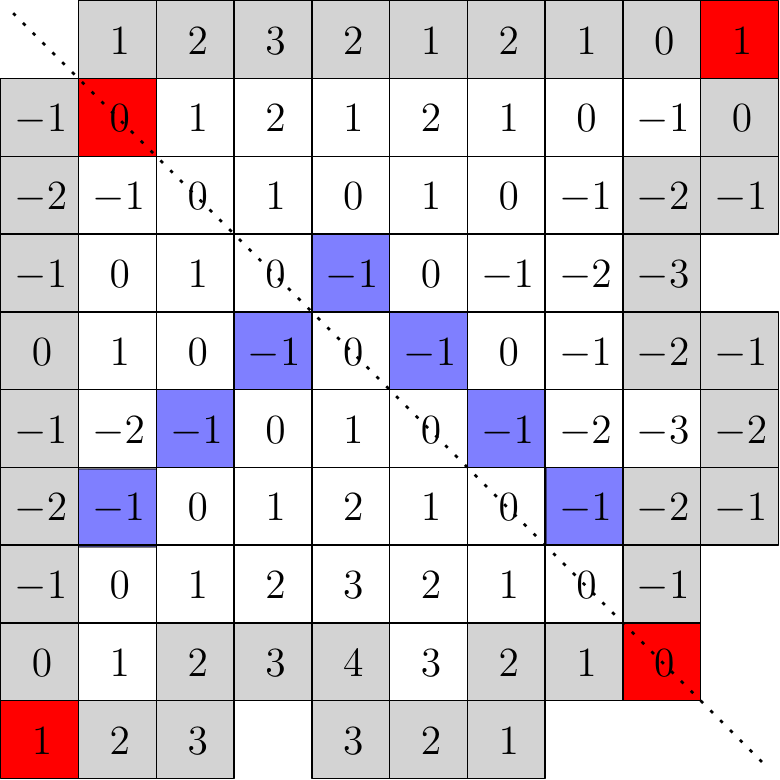}
\caption{A quad (with marked faces at its corners) which is symmetric with respect to the reflection $\sigma$ along the diagonal. 
The boundary condition $\xi$ is such that $-\xi \preceq \sigma \xi$, as required in Lemma~\ref{lem:symmetric_domain}. 
This domain contains no path of non-negative height from top to bottom, but contains a negative $\times$-crossing from left to right.
Such $\times$-crossings are not necessarily unique; one may be found by following the contour of the connected component of non-negative faces of the bottom arc.}
\label{fig:symmetric domains}
\end{center}
\end{figure}

\begin{lemma}[Crossing probability in symmetric domains]\label{lem:symmetric_domain}
	Let $(D;a,b,c,d)$ be a discrete quad which is symmetric with respect to a symmetry $\sigma$.
	For any boundary condition $\xi$ on $\partial D$ such that $\sigma \xi  \succeq -\xi $, we have 
	\begin{align*}
		\bbP_D^\xi[  (ab) \xlra{h \geq 0 } (cd) ] \ge \tfrac{1}{2}.
	\end{align*}
\end{lemma}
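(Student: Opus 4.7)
The plan is to bound the probability of the complement event $A_1^c = \{(ab) \xlra{h \geq 0}(cd)\}^c$ by that of $A_1 = \{(ab) \xlra{h \geq 0}(cd)\}$ itself, which forces $\bbP_D^\xi[A_1] \geq 1/2$. The main idea is to combine four ingredients: the planar duality~\eqref{eq:dual_crossings} between $h \geq 0$ crossings and $h \leq -1$ $\times$-crossings; the sign-flip symmetry $h \leftrightarrow -h$ of the weight (which exchanges $\xi$ and $-\xi$); the comparison between boundary conditions~\eqref{eq:CBC-h}, invoked through the hypothesis $-\xi \preceq \sigma\xi$; and the measure-preserving action of $\sigma$ mapping $\bbP_D^{\sigma\xi}$ to $\bbP_D^\xi$ and transporting the boundary arcs $(bc),(da)$ to $(ab),(cd)$.

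Concretely, I would carry out the following chain in order. First, duality and the integrality of $h$ give $A_1^c = \{(bc) \xlra{h \leq -1}_\times (da)\}$. Second, sign-flipping converts the probability of this event under $\bbP_D^\xi$ into the probability of the increasing event $\{(bc) \xlra{h \geq 1}_\times (da)\}$ under $\bbP_D^{-\xi}$. Third,~\eqref{eq:CBC-h} combined with $-\xi \preceq \sigma\xi$ upgrades the measure to $\bbP_D^{\sigma\xi}$. Fourth, pushing forward by $\sigma$ — if $\tilde h \sim \bbP_D^{\sigma\xi}$ then $\tilde h \circ \sigma \sim \bbP_D^{\xi}$, and a $\times$-path for $\tilde h$ between $(bc)$ and $(da)$ pulls back through $\sigma^{-1}$ to a $\times$-path for $\tilde h \circ \sigma$ between $\sigma^{-1}((bc)) = (ab)$ and $\sigma^{-1}((da)) = (cd)$ — re-expresses this as $\bbP_D^\xi\bigl[(ab) \xlra{h \geq 1}_\times (cd)\bigr]$. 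Finally, the sign-flipped version of the second inclusion in~\eqref{eq:from times-crossings to crossings} gives $\{(ab) \xlra{h \geq 1}_\times (cd)\} \subset A_1$, closing the loop.

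The only subtleties are bookkeeping. One is to verify that $\sigma^{-1}$ indeed sends $(bc) \to (ab)$ and $(da) \to (cd)$; this is routine from $\sigma((ab)) = (bc)$ and $\sigma((cd)) = (da)$ in each of the two geometric possibilities for $\sigma$, namely a diagonal reflection fixing $b, d$ (an involution) or a $90^{\circ}$ rotation cyclically permuting $a,b,c,d$ (of order four). The other is that passing from $\times$-crossings at level $1$ to ordinary crossings at level $0$ loses no parity factor: two $\times$-adjacent faces with $h \geq 1$ always have a common edge-adjacent interpolating face with $h \geq 0$, since $\times$-adjacent heights differ by $0$ or $\pm 2$. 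I do not anticipate any genuine obstacle beyond these two verifications; the proof is essentially a duality-plus-symmetry argument in the spirit of the Zhang-type square-root trick for self-dual percolation, adapted to the $\times$-path duality of the height function.
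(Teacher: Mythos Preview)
Your proof is correct and follows essentially the same approach as the paper: both combine the duality~\eqref{eq:dual_crossings}, the inclusion~\eqref{eq:from times-crossings to crossings}, the symmetry $\sigma$, the comparison~\eqref{eq:CBC-h} via $\sigma\xi \succeq -\xi$, and the sign-flip invariance, merely in a different order (the paper does duality $\to$ inclusion $\to$ symmetry $\to$ CBC $\to$ sign-flip, whereas you do duality $\to$ sign-flip $\to$ CBC $\to$ symmetry $\to$ inclusion). Your bookkeeping discussion is slightly more detailed than necessary, since the crossing event is symmetric in its two endpoints and so it does not actually matter whether $\sigma^{-1}$ sends $(bc)$ to $(ab)$ or to $(cd)$.
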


\begin{proof}
	Using first~\eqref{eq:dual_crossings} and then~\eqref{eq:from times-crossings to crossings}, we deduce that 
	\begin{align*}
	1 - \bbP_D^\xi[  (ab) \xlra{h \geq 0 } (cd)  ] = \bbP_D^\xi[  (bc) \xlra{h <0 }_{ \times } (da) ] \leq \bbP_D^\xi[  (bc) \xlra{h \leq 0 } (da)].
	\end{align*}
	Applying the symmetry $\sigma$ (and the fact that symmetries preserve six-vertex weights in our parameter range) we get
	\begin{align*}
    	\bbP_D^\xi[ (bc) \xlra{h \leq 0 } (da)]
    	= \bbP_{D}^{\sigma \xi}[ (ab) \xlra{h \leq 0 } (cd)] 
    	\le \bbP_{D}^{-\xi}[  (ab) \xlra{h \leq 0 } (cd) ] 
    	= \bbP_{D}^{\xi}[  (ab) \xlra{h \geq 0 } (cd)],
	\end{align*}
	where the inequality follows from the comparison of boundary conditions of Proposition~\ref{prop: CBC and FKG} since $\sigma\xi \succeq - \xi$ and the event $(ab) \xlra{h \leq 0 } (cd) $ is decreasing. The claim follows by combining the two displayed equations above.
\end{proof}

\subsection{No crossings between slits}\label{sec:gluing_cross}

For the rest of this section, aiming to prove Theorem~\ref{thm:RSW}, we omit the integer roundings in $\lfloor \delta n \rfloor$ to streamline the notation; $\delta n$ thus always represents an integer.

\begin{figure}
\begin{center}
\includegraphics[width=0.45\textwidth, page=1]{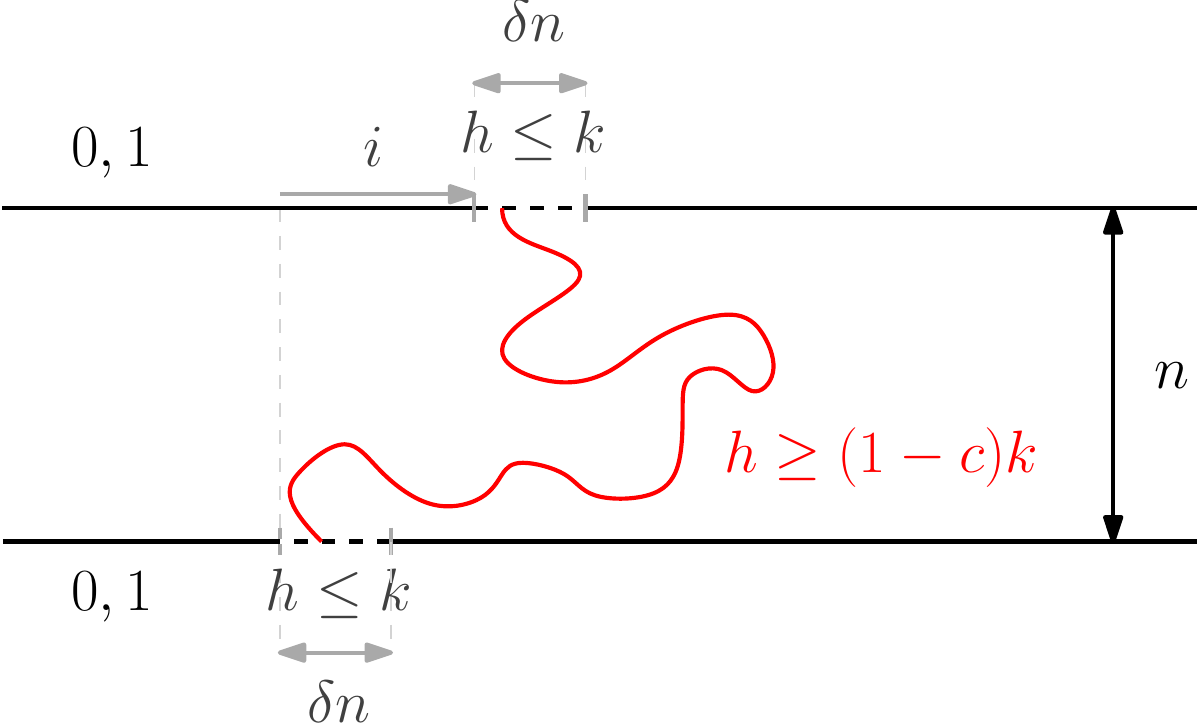} \quad
\includegraphics[width=0.51\textwidth, page=1]{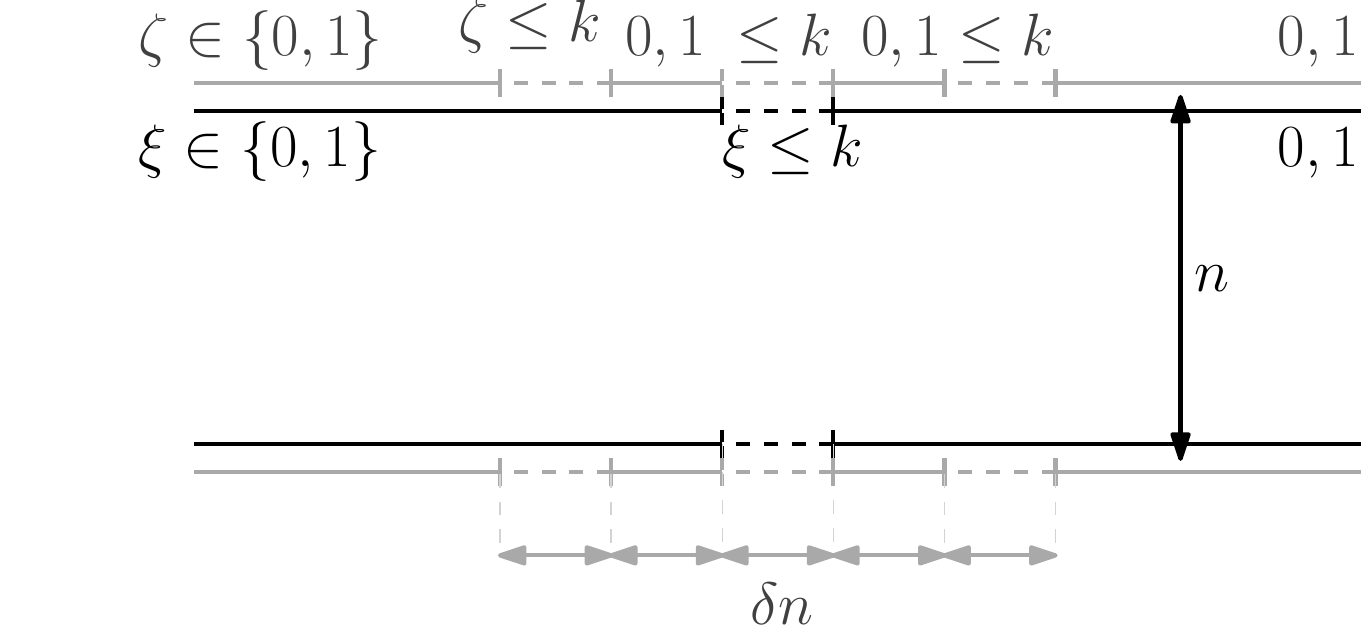}
\caption{
\label{fig:no vertical crossings}
{\em Left:} an illustration of the boundary condition and the crossing event in Proposition~\ref{prop:RSW1}.
In the top and bottom intervals of length $\delta n$ (which we call slits) the boundary conditions are $k,k-1$, except at their ends, where they progressively decrease to $0$. One should think of $\delta n$ as being much larger than $k$. 
{\em Right:} illustration for the proof in the case $i=0$: the boundary conditions $\xi$ (black) and $\zeta$ (gray) -- the same color code as in the left picture applies. The geometry and the definition of boundary conditions are similar on the lower and upper boundaries of the strip.
}
\end{center}
\end{figure}

\begin{proposition}\label{prop:RSW1}
	There exist constants $\delta, c >0$ such that the following holds. 
	For any $k \geq 1/c$, any $n$
	and any $i \in \bbZ$
	\begin{align}\label{eq:RSW1}
		\bbP_{\bbZ \times [0,n]}^{\xi}\big[[0,\delta n]\times \{0\} \xleftrightarrow{h \geq (1-c) k}[i,i+\delta n]\times \{n\}\big] \leq 1-c,
	\end{align}
	where $\xi$ is the largest boundary condition on $\partial \bbZ \times [0,n]$ which is at most $k$ and has
	 values $0,1$ on $(\bbZ \setminus [0,\delta n])\times \{0\}$ and $(\bbZ \setminus [i, i+\delta n])\times \{n\}$.
\end{proposition}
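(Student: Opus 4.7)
The plan is to prove the bound by contradiction, combining a symmetrization of the boundary condition with the symmetric-quad crossing estimate of Lemma~\ref{lem:symmetric_domain}.

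First, I would enlarge $\xi$ to a dominating boundary condition $\zeta \succeq \xi$ on $\partial \Strip$ that is invariant under the horizontal reflection $\sigma: (x,y) \mapsto (x, n-y)$. Explicitly, $\zeta$ is the largest admissible boundary condition bounded by $k$ whose slits are placed at both $[0,\delta n]$ and $[i, i + \delta n]$ on both the top and the bottom arcs of $\partial \Strip$, with values in $\{0,1\}$ elsewhere. Then~\eqref{eq:CBC-h} reduces the proof to showing $\bbP^{\zeta}_{\Strip}[E] \leq 1 - c$, where $E$ denotes the crossing event in the statement, and by $\sigma$-invariance $\bbP^{\zeta}_{\Strip}[E] = \bbP^{\zeta}_{\Strip}[\sigma E]$, where $\sigma E$ is the diagonally reflected crossing event between the other pair of slits at height $\geq (1-c)k$.

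Second, I would consider an $n \times n$ square quad $Q = [A, A+n] \times [0, n]$ with $A$ chosen so that $Q$ contains all four slits on its top and bottom arcs, and such that an anti-diagonal reflection $\sigma'$ of $Q$ (fixing two opposite corners) swaps the top/bottom pair of arcs with the left/right pair, thereby satisfying the hypothesis of Lemma~\ref{lem:symmetric_domain}. Since the left and right arcs of $Q$ lie in the interior of the strip, I would use~\eqref{eq:SMP} to condition on the random heights $\eta$ on these arcs. Introducing a ``good event'' $G$ under which the resulting boundary condition on $\partial Q$ satisfies the shifted symmetric hypothesis $\sigma' \eta + \eta \succeq 2m$ for some threshold $m$ chosen close to $(1-c)k$, one can apply the lemma to the shifted height $h - m$ and obtain a lower bound of $1/2$ on the corresponding primal crossing of $Q$. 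Translating via~\eqref{eq:dual_crossings} and~\eqref{eq:from times-crossings to crossings}, this forbids a certain configuration with positive probability, which combined with $\bbP^{\zeta}_{\Strip}[E] = \bbP^{\zeta}_{\Strip}[\sigma E]$ contradicts $\bbP^{\zeta}_{\Strip}[E] > 1 - c$ once $c$ is chosen small enough.

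The main obstacle will be the lower bound $\bbP^{\zeta}_{\Strip}[G] \geq c'$ for a positive constant $c'$ uniform in $n$, $k$, and $i$. Constructing $G$ requires enforcing that the random heights on the left and right arcs of $Q$ mirror the slit heights on its top and bottom arcs via $\sigma'$, so that the shifted symmetry condition $\sigma' \eta + \eta \succeq 2m$ holds with $m$ near $(1-c)k$ on all of $\partial Q$. The tools for this are the FKG and CBC inequalities for $|h|$ from Proposition~\ref{prop:|h|-monotonicity}, combined with the rigidity of the four-slit geometry of $\zeta$; crucially, the assumption $k \geq 1/c$ with $c$ small makes the slit ramp regions (of length $O(k)$) negligible compared to the slit plateaus (of length $\approx \delta n$), which is needed to absorb the geometric asymmetry between the top-bottom and left-right arcs of $Q$ when matching them via $\sigma'$.
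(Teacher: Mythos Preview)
Your plan has a genuine gap at the step you yourself flag as the main obstacle: lower-bounding $\bbP^{\zeta}_{\Strip}[G]$ uniformly in $n,k,i$. First, a sign issue: to obtain a horizontal crossing of $Q$ at height $\leq m$ (which is what blocks $E$), the shifted hypothesis of Lemma~\ref{lem:symmetric_domain} reads $\sigma'\eta+\eta \preceq 2m$, not $\succeq$. With the correct inequality and $m$ near $(1-c)k$, unwinding $\sigma'$ shows that $G$ requires the random heights on the left and right arcs of $Q$ to satisfy $\eta \leq 2m-\zeta(\sigma'(\cdot))$. On the $\sigma'$-images of the slits this means $\eta \lesssim (1-2c)k$; on the images of the non-slit parts it means $\eta \lesssim 2(1-c)k$. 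Neither bound is available: the left and right arcs lie in the interior of the strip at distance up to $n/2$ from $\partial\Strip$, and nothing in Propositions~\ref{prop:FKG} or~\ref{prop:|h|-monotonicity} gives a uniform upper bound on interior heights under $\zeta$. In fact, for fixed $k$ and large $n$, the delocalization that this very proposition is meant to help establish suggests that heights in the middle of the arcs typically exceed any fixed threshold. So the argument is circular: $G$ encodes precisely the kind of height control the proposition is supposed to provide. There is also a geometric obstruction you do not address: for $|i|>n(1-\delta)$ there is no $n\times n$ square whose top and bottom arcs contain all four slits.

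The paper circumvents both problems by a self-referential trick rather than by controlling interior heights directly. For $i=0$ it places \emph{three} copies of the slit (at positions $-2,0,2$) and considers, via the sign-flip and shift $h\mapsto k-h$, the \emph{dual} crossings at height $\leq ck$ from neighbouring slits. These have the same probability as the original crossing event; if that probability were close to $1$, then by FKG four such low-height crossings occur simultaneously with high probability, and their left- and right-most realisations bound a random quad with boundary heights $\approx ck$. Lemma~\ref{lem:symmetric_domain} then applies to this quad (case analysis on whether the crossings stay near the slit or wander far, handled by vertical and diagonal reflections respectively), yielding a horizontal blocker with probability $\geq 1/2$ and a contradiction. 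General $i$ is then reduced to $i=0$ by a short separate argument. The key point you are missing is that the low-height paths needed to set up the symmetric quad are manufactured out of the very crossing event whose probability is assumed large, rather than being postulated as an independent good event.
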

See Figure~\ref{fig:no vertical crossings}(left) for an illustration.

The proposition above will be used twice: once as the key step in the proof of Theorem~\ref{thm:RSW} 
and again to build so-called ``fences'' in Section~\ref{sec:4}.
It may be useful to adopt a dual view of the result above. 
Indeed, due to Remark~\ref{rem:dual_crossings}, the above shows that, 
in spite of the large boundary conditions (roughly) $k$ on the slits $(\bbZ \setminus [0,\delta n])\times \{0\}$ and $(\bbZ \setminus [i, i+\delta n])\times \{n\}$, 
one may construct with positive probability a path of at most $(1-c)k$ disconnecting these slits from each other.

\begin{proof}[Proposition~\ref{prop:RSW1}]
We will start by proving the statement for $i = 0$; the statement for general $i$ follows by a simple manipulation. For the rest of this subsection, we omit the subscripts $\bbZ \times [0,n]$ in the strip measures.
\smallskip

\noindent
{\bf Case $i = 0$:}
Fix $\delta = 1/17$
and integers $k$ and $n$ with $kc \geq 1$, where $c >0 $ is a constant whose value will be specified later and will not depend on $k$ or $n$.
For $j \in \bbZ$, write $L_j$ for the vertical line $\{ j \delta n \} \times \bbR$, 
\begin{align*}
	I_j:=[ j \delta n , (j+1) \delta n  ] \times\{ 0\} \quad \text{ and }\quad 
	\tilde I_j:=[ j \delta n , (j+1) \delta n   ] \times\{ n\}.
\end{align*}
Then, for $\alpha \in \{0,+,-\}$ and $k \geq 1$, define 
$\calE_{h\ge k}(j ,\alpha)$ as the event that there exists a path of $h\ge {k}$
from $I_j$ to $\tilde I_j$ in the strip $\bbR \times [0, n]$, and furthermore 
\begin{itemize}[noitemsep,nolistsep]
	\item if $\alpha=0$, the path intersects neither $L_{j-5}$ nor $L_{j+6}$,
	\item if $\alpha=+$, the path intersects $L_{j+6}$; 
	\item if $\alpha=-$, the path intersects $L_{j-5}$.
\end{itemize}
Similar notations apply for $h \leq k$ and for $\vert h \vert$.
Notice that the events $\calE_{h\ge k}(j ,\alpha)$ for $\alpha \in \{0,+,-\}$ are all increasing, they are not mutually exclusive, 
and we have
\begin{align}
\label{eq:union of cross events}
\bigcup_{ \alpha \in \{-,0,+\} } \calE_{h\ge k}(j ,\alpha)
\; = \; \{I_j \xlra{h\ge k \text{ in $\bbR \times [0, n]$}} \tilde I_j\}.
\end{align}

Write $\zeta$ for the largest boundary condition for the strip $\bbZ \times [0,n]$, 
which takes values at most $k$ and values in $\{ 0,1 \}$ outside of $I_{-2}$, $I_0$, $I_{2}$ and their top counterparts $\tilde I_{-2}$, $\tilde I_0$ and $\tilde I_{2}$; see Figure~\ref{fig:no vertical crossings}(right) for an illustration.
Next we state a lemma that will quickly imply the desired result. 

\begin{lemma}\label{lem:tricky}
    For $c =1/4$, any $k \geq 44$, and any $\alpha \in \{0,+,-\}$, we have
    \begin{align}\label{eq:tricky}
        \bbP^{\xi}[I_0 \xleftrightarrow{h \geq (1-c)k} \tilde I_0]
        \leq 1-\tfrac{1}{6}\bbP^\zeta[\calE_{h \leq c k}(-3,\alpha) \cap \calE_{h \leq c k}(-1,\alpha)  \cap \calE_{h \leq c k}(1,\alpha) \cap \calE_{h \leq c k}(3,\alpha)],
    \end{align}
    where $\xi$ the boundary condition defined in Proposition~\ref{prop:RSW1}, for $i = 0$. 
\end{lemma}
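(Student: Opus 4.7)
}
The plan is to lower-bound the probability of the complementary event $\{I_0 \not\xleftrightarrow{h \geq (1-c)k} \tilde I_0\}$ under $\bbP^\xi$ by $\tfrac{1}{6}$ times the four-crossing probability under $\bbP^\zeta$, which will give the statement upon taking the complement. I first reduce to $\bbP^\zeta$ by monotonicity. Since $\zeta$ agrees with $\xi$ on $\partial\mathrm{Strip} \setminus (I_{\pm 2} \cup \tilde I_{\pm 2})$ while taking larger values on the two extra slits at $\pm 2$, we have $\xi \preceq \zeta$; the event in question is decreasing in $h$, so~\eqref{eq:CBC-h} yields
\begin{align*}
\bbP^\xi\big[I_0 \not\xleftrightarrow{h \geq (1-c)k} \tilde I_0\big]
\;\geq\;
\bbP^\zeta\big[I_0 \not\xleftrightarrow{h \geq (1-c)k} \tilde I_0\big],
\end{align*}
and it suffices to work under $\bbP^\zeta$.

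The heart of the argument will be a (near-)deterministic geometric construction: on the event $\calB_\alpha := \bigcap_{j \in \{-3,-1,1,3\}} \calE_{h \le ck}(j,\alpha)$, the four low-height vertical crossings, combined with the low boundary values of $\zeta$ on the intervals of $\partial\mathrm{Strip}$ between the big slits (where $\zeta \in \{0,1\}$), can be assembled into a connected subset of faces of height $\le ck$ whose closure contains a $\times$-path of $h < (1-c)k$ separating $I_0$ from $\tilde I_0$ in the strip. The inner crossings at $j = \pm 1$ form the vertical sides of a low-height enclosure of $\tilde I_0$ (or of $I_0$), while the outer crossings at $j = \pm 3$ serve the analogous role for the neighbouring big slits $\tilde I_{\pm 2}$ and $I_{\pm 2}$, which under $\zeta$ would otherwise obstruct the horizontal portions of that enclosure. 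The gap $(1-2c)k \geq 22$ provided by $c=1/4$ and $k \ge 44$ is precisely the margin needed to apply~\eqref{eq:from times-crossings to crossings} in passing from a height-$\le ck$ connected set to a height-$<(1-c)k$ $\times$-path, and~\eqref{eq:dual_crossings} then rules out the high-height crossing.

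The constant $\tfrac{1}{6}$ does not arise from the deterministic implication itself but from its probabilistic translation: I anticipate $\tfrac13$ to come from the choice of $\alpha \in \{0,+,-\}$ (the enclosure construction requires different auxiliary pieces for each $\alpha$, and combining them will lose a factor of $3$ via a union-bound or pigeonhole step), and the remaining $\tfrac{1}{2}$ from a sign-flip step in the spirit of~\eqref{eq:22remove} in the proof of Proposition~\ref{prop:pushing_1}, needed to convert an event on $\{h \le ck\}$ into one on $\{|h| \le ck\}$ so as to invoke~\eqref{eq:FKG-|h|} and combine the four crossings together under $\bbP^\zeta$.

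The main obstacle will be the rigorous geometric construction above. Verifying that the four vertical crossings can be genuinely matched up with boundary segments into a single $\times$-path separating $I_0$ from $\tilde I_0$ requires careful bookkeeping at the transition windows of $\partial\mathrm{Strip}$, where $\zeta$ interpolates between $\{0,1\}$ and values up to $k$ near the ends of each slit, together with checking that no horizontal portion of the enclosure has to dip into the interior under one of the big slits $I_0, \tilde I_0, I_{\pm 2}, \tilde I_{\pm 2}$ (which would force high local heights and destroy the low-height property). An auxiliary application of the pushing of boundary conditions of Corollary~\ref{cor:pushing_modified}, perhaps after using~\eqref{eq:SMP} to condition on the outer crossings, may also be needed to handle the immediate interface with the high-valued slits $I_0, \tilde I_0$ under $\xi$.
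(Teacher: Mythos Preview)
Your central claim --- that on $\calB_\alpha := \bigcap_{j \in \{\pm 1, \pm 3\}} \calE_{h \le ck}(j,\alpha)$ the four low-height crossings, together with the low parts of $\partial \mathrm{Strip}$, assemble into a connected low-height barrier separating $I_0$ from $\tilde I_0$ --- is false. Two obstructions:
\begin{itemize}
\item The crossings at $j=\pm 1$ may stay entirely inside $[-\delta n,0]\times[0,n]$ and $[\delta n,2\delta n]\times[0,n]$ respectively, leaving the vertical corridor over $I_0$ completely free for a high-height path from $I_0$ to $\tilde I_0$. Three disjoint bottom-to-top paths with ordered endpoints are perfectly compatible with planarity.
\item You cannot link the four crossings through $\partial \mathrm{Strip}$: under $\zeta$, the boundary segments between $I_{-1}$ and $I_1$, between $\tilde I_{-1}$ and $\tilde I_1$, between $I_{\pm 3}$ and $I_{\pm 1}$, and between $\tilde I_{\pm 3}$ and $\tilde I_{\pm 1}$ are precisely the high slits $I_0, \tilde I_0, I_{\pm 2}, \tilde I_{\pm 2}$. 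There is no low-height boundary route connecting any pair of your four crossings.
\end{itemize}
So the proposed deterministic implication $\calB_\alpha \Rightarrow \{I_0 \not\xlra{h\ge(1-c)k} \tilde I_0\}$ does not hold, and your outline provides no probabilistic substitute.

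The paper's argument is structurally different and does not attempt any such implication. It augments $\calB_\alpha$ by the two \emph{high}-height crossings $\{I_{\pm 2}\xlra{h\ge(1-c)k}\tilde I_{\pm 2}\}$ to form an event $\calS$, and proves the \emph{conditional} bound $\bbP^\zeta[I_0\xlra{h\ge(1-c)k}\tilde I_0\mid\calS]\le 1/2$. On $\calS$ one explores the leftmost and rightmost low crossings $\gamma_L,\gamma_R$ (from $I_{\mp 1}$ to $\tilde I_{\mp 1}$); these bound a random quad $D$ with side heights $\approx ck$ and top/bottom heights up to $k$ on the remaining slit $I_0,\tilde I_0$. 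The $1/2$ then comes from the symmetric-domain Lemma~\ref{lem:symmetric_domain}, applied after pushing/pulling boundary conditions to an auxiliary symmetric quad --- a square containing $D$ when $\alpha=0$, or a domain built by reflecting a sub-arc of $\gamma_L$ when $\alpha=\pm$. The outer crossings at $j=\pm 3$ and the high crossings at $j=\pm 2$ are used only to control the geometry of $D$ so that this symmetric construction is available; they are never assembled into a barrier.

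Your accounting of $\tfrac16$ is also off. The lemma is stated for a \emph{fixed} $\alpha$, so no pigeonhole over $\alpha$ occurs. In the paper, the factor $\tfrac12$ is the output of Lemma~\ref{lem:symmetric_domain}, and the factor $\tfrac13$ is a pigeonhole over $j_0\in\{-2,0,2\}$: from the conditional bound one deduces that at least one $\bbP^\zeta[I_{j_0}\xlra{}\tilde I_{j_0}]$ is small, and each of these dominates $\bbP^\xi[I_0\xlra{}\tilde I_0]$ by~\eqref{eq:CBC-h} and a horizontal shift of $\zeta$.
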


Before proving the lemma, let us see how it allows us to conclude the proof of Proposition~\ref{prop:RSW1}. 
For an integer $j$, let $\tau_j \xi$ be the horizontal shift of the boundary condition $\xi$ by $j\delta n$, and remark that by definition of $\xi$ and $\zeta$, if $j$ is an odd integer we have $\tau_j \xi \le k-\zeta$.
Using first the sign flip invariance of the height function and then the~\eqref{eq:CBC-h} inequality and the horizontal shift symmetry\footnote{
Note that we shift the boundary condition by $k$, which swaps the odd and even faces in case $k$ is odd. They can be swapped back by shifting the strip horizontally by $(1,0)$; alternatively, the reader may observe that it suffices to prove the claim for even $k$ here. 
We will keep these manipulations implicit in the subsequent parity swaps occurring in the rest of the article.
	}, we have, for all $j \in \{ \pm 3, \pm 1\}$ and $\alpha \in \{-,0,+\}$,
\begin{align*}
	\bbP^\zeta[\calE_{h \leq c k}(j,\alpha)]
	= 	\bbP^{k-\zeta}[\calE_{h \geq (1-c) k}(j,\alpha)] 
	 \geq \bbP^\xi[\calE_{h \geq (1-c) k}(0,\alpha)].
\end{align*}
By~\eqref{eq:union of cross events}, we conclude that there exists $\alpha_0 \in \{-,0,+\}$  such that
\begin{align*}
	\bbP^\xi[\calE_{h \geq (1-c) k}(0,\alpha_0)] \geq \tfrac13 \bbP^{\xi}[I_0 \xleftrightarrow{h \geq (1-c)k } \tilde I_0].
\end{align*}
Using first the FKG inequality for the decreasing events $\calE_{h \leq c k}(j,\alpha_0)$, and then the two previously displayed equations, we get
\begin{align*}
	\bbP^\zeta  [\calE_{h \leq c k}(-3,\alpha_0 ) \cap \calE_{h \leq c k}(-1,\alpha_0 )  \cap \calE_{h \leq c k}(1,\alpha_0 ) \cap \calE_{h \leq c k}	(3,\alpha_0 )] 
 &\geq 
	\prod_{j \in \{ \pm 3, \pm 1 \} } \bbP^\zeta  [\calE_{h \leq c k}(j,\alpha_0 )  ]
	\\
	& \geq 
	\Big(\tfrac{1}3\bbP^{\xi}[I_0 \xleftrightarrow{h \geq (1-c) k} \tilde I_0]\Big)^4.
\end{align*}
Injecting this into~\eqref{eq:tricky}, we conclude that for $c=1/4$, $\bbP^{\xi}[I_0 \xleftrightarrow{h \geq (1-c) k} \tilde I_0]$ is bounded above by an absolute constant strictly smaller than $1$. 
Adjusting the value of $c$ to a smaller constant if need be, we find~\eqref{eq:RSW1} with $i =0$. 
\medskip 

\noindent{\bf Case $ i \neq 0$:}
    Fix constants $\delta$ and $c$ so that~\eqref{eq:RSW1} holds for $i = 0$ with these constants. 
    Define $\delta' = \delta /3$; we will prove Proposition~\ref{prop:RSW1} for $\delta'$ instead of $\delta$, and omit integer roundings also in $\lfloor \delta' n \rfloor$. 
    By the~\eqref{eq:CBC-h} inequality, for any $i \in [-2 \delta' n, 2 \delta' n]$, 
	  \begin{align*}
		\bbP^{\xi'}\big[[0,\delta' n]\times \{0\} \xleftrightarrow{h \geq (1-c) k}&[i,i+\delta' n]\times \{n\}\big]  \\
		& \leq \bbP^{\xi}\big[[0,\delta n]\times \{0\} \xleftrightarrow{h \geq (1-c) k}[0,\delta n]\times \{n\}\big]
		\leq (1-c),
	\end{align*}
    where $\xi'\le \xi$ are the boundary conditions on $\partial \bbZ \times [0,n]$ 
    defined in the statement of Proposition~\ref{prop:RSW1} for $\delta'$ and $\delta$, respectively, and for $i=0$ for the latter.

	It thus remains to prove Proposition~\ref{prop:RSW1} for $|i| > 2 \delta' n$. By vertical reflection symmetry of the model, we may assume $i > 2 \delta' n$.
	Now, using first the sign flip invariance of the height function and then the~\eqref{eq:CBC-h} inequality 
	and the vertical reflection symmetry (see Figure~\ref{fig:contradictory crossings}), we compute
	\begin{align*}
		\bbP^{\xi'} & \big[[\delta' n, 2\delta'n]\times \{0\} \xleftrightarrow{h \leq c k}[-i+ \delta'n,-i + 2\delta'n]
		\times \{n\}\big]  	
		\\
		&= \bbP^{k-\xi'}  \big[[\delta' n, 2\delta'n]\times \{0\} \xleftrightarrow{h \geq (1-c) k}[-i+ \delta'n,-i + 2\delta'n]
		\times \{n\}\big]  
		\\
		& \geq \bbP^{\xi'}\big[[0,\delta' n]\times \{0\} \xleftrightarrow{h \geq (1-c) k}[i,i+\delta' n]\times \{n\}\big]. 
	\end{align*}
	Moreover, notice that as $c \leq 1/4$ (due to the assumption of previous lemma) and $k \geq 1/c \geq 4$,
	the event on the left hand side above excludes the one on the right-hand side (see Figure~\ref{fig:contradictory crossings} again).   
	We conclude that 
	\begin{align*}
		\bbP^{\xi'}\big[[0,\delta' n]\times \{0\} \xleftrightarrow{h \geq (1-c) k}[i,i+\delta' n]\times \{n\}\big] \leq \tfrac12.
	\end{align*}
\end{proof}

\begin{figure}
\begin{center}
\includegraphics[width=0.5\textwidth]{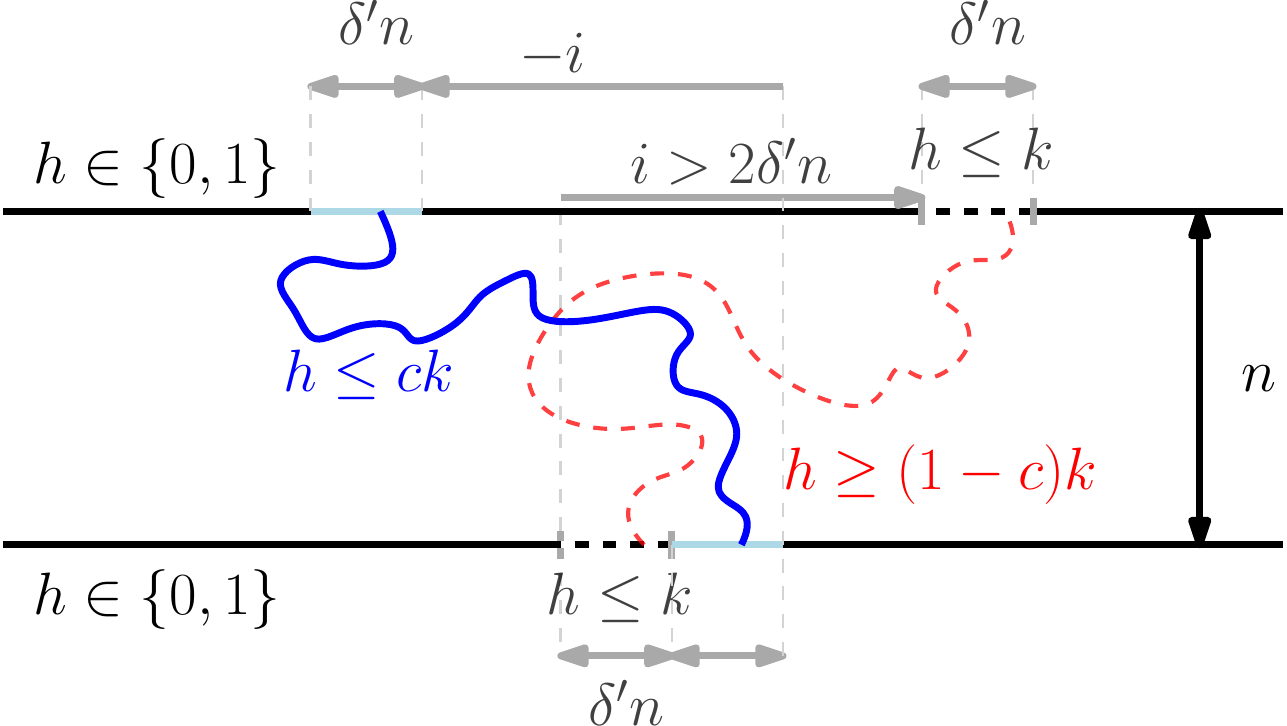}
\caption{
\label{fig:contradictory crossings}
A geometric argument in the end of the proof of Proposition~\ref{prop:RSW1}. The light-blue and dashed boundary segments are reflection symmetric. The small-height crossing between the light-blue boundary segments (in blue) excludes the large-height crossing between the dashed boundary segments (in dashed red). Solid-line boundaries represent here boundary conditions $0,1$ and dashed boundaries represent their maximal extensions smaller or equal to $k$.
}
\end{center}
\end{figure}
 
We now give the proof of Lemma~\ref{lem:tricky}. 

\begin{proof}[Lemma~\ref{lem:tricky}]
	Fix $\alpha \in \{0,-,+\}$. Set 
	\begin{equation*}
		\calS:=\calE_{h \leq c k}(-3,\alpha) \cap \calE_{h \leq c k}(-1,\alpha)  \cap \calE_{h \leq c k}(1,\alpha) \cap \calE_{h \leq c k}	(3,\alpha)
		\cap  \{I_{-2} \xleftrightarrow{h \geq (1-c)k } \tilde I_{-2}\} \cap  \{I_2 \xleftrightarrow{h \geq (1-c)k } \tilde I_2\}.
	\end{equation*}
	We first claim that it suffices to prove that
	\begin{align}\label{eq:tricky2}
    	\bbP^{\zeta}[I_0 \xleftrightarrow{h \geq (1-c)k } \tilde I_0 \,|\, \calS ] 
    	\leq 1/2.
	\end{align}
	Indeed,~\eqref{eq:tricky2} implies that 
	\begin{align*}
	\bbP^{\zeta}  \Big[ 
	\Big(
	\bigcap_{j \in \{ 0, \pm 2 \} } \{I_j \xlra{h \geq (1-c) k } \tilde I_j \}
	\Big)^c
	 \Big]
    	&\geq \tfrac12	\bbP^{\zeta}[ \calE_{h \leq c k}(-3,\alpha) \cap \calE_{h \leq c k}(-1,\alpha)  \cap \calE_{h \leq c k}(1,\alpha) \cap \calE_{h \leq c k}	(3,\alpha) ],
	\end{align*}
	and therefore there exists $j_0 \in \{ 0, \pm 2 \}$ such that
	\begin{align*}
	\bbP^{\zeta}  \Big[ I_{j_0} \xlra{h \geq (1-c) k } \tilde I_{j_0} \Big]
	\leq 
	1-\tfrac{1}{6}\bbP^\zeta[\calE_{h \leq c k}(-3,\alpha) \cap \calE_{h \leq c k}(-1,\alpha)  \cap \calE_{h \leq c k}(1,\alpha) \cap \calE_{h \leq c k}(3,\alpha)].
	\end{align*}
	Finally, observe that the boundary conditions $\zeta$ are such that
	\begin{align*}
    	\bbP^{\xi}[I_0 \xleftrightarrow{h \geq (1-c) k } \tilde I_0] 
    	\leq \bbP^{\zeta }[I_j \xleftrightarrow{h \geq (1-c) k } \tilde I_j] \quad \text{ for every $j \in \{-2,0,2\}$}. 
	\end{align*}
	In conclusion,~\eqref{eq:tricky} is indeed implied by~\eqref{eq:tricky2}, and we will focus on proving the latter. 
	
	For $h\in \calS$, the fact that
	$\calE_{h \leq c k}(-3,\alpha)$, $\calE_{h \leq c k}(-1,\alpha)$ and $I_{-2} \xleftrightarrow{h \geq (1-c)k} \tilde I_{-2}$ occur, 
	induces the existence of a leftmost crossing $\gamma_L$ of $h \le ck$ from $I_{-1}$ to $\tilde I_{-1}$. 
	Similarly, there exists a rightmost crossing $\gamma_R$ of $h \le ck$ from $I_1$ to $\tilde I_{1}$.
	
	Let $D$ be the discrete domain made of faces of $\bbZ\times [0,n]$ 
	that are between $\gamma_L$ and $\gamma_R$, or on $\gamma_L$ and $\gamma_R$. 
	Notice that the event $\calS$ and the paths $\gamma_L$ and $\gamma_R$ are measurable in terms of 
	the values of the height function on $D^c \cup \partial D$.
	Moreover, when $\calS$ occurs, all faces on $\gamma_L$ and $\gamma_R$ have height $ck$ or $ck-1$~\footnote{
In this sentence and for the rest of this proof we suppress for the sake of streamlined writing two minor details. First, we omit the integer roundings from $\lfloor ck \rfloor$ when it appears in exact height function values.	Second, as $\gamma_L$ ends on $I_{-1}$ and $\tilde{I}_{-1}$ where the boundary condition is $0,1$ the height value at the endpoints and close to them is not $ck$ or $ck-1$. Due to being leftmost, $\gamma_L$ actually slides directly to the left from both end points, to reach the height $ck$ or $ck-1$ in $I_{-2}$ and $\tilde{I}_{-2}$, respectively, and then connects these ``left-pushed endpoints'' by a curve on which the height indeed is $ck$ or $ck-1$.
	}. See Figure~\ref{fig:tricky quad}.
	
\begin{figure}
	\begin{center}
		\includegraphics[width=0.65\textwidth]{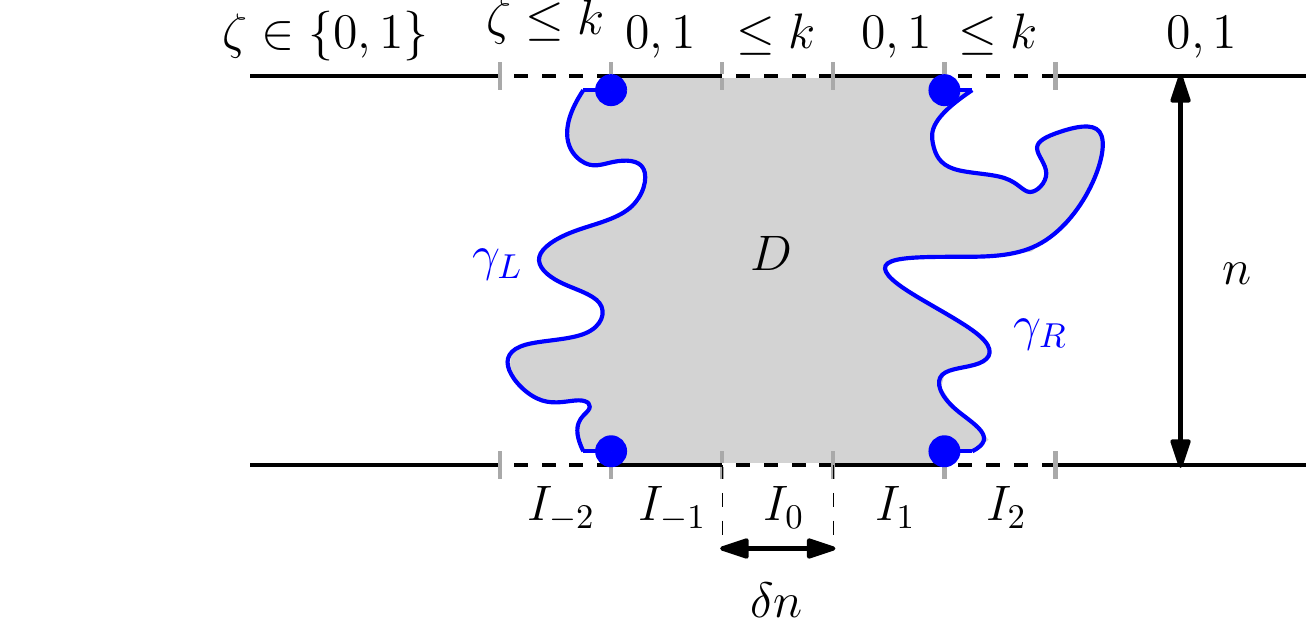}
	\caption{
	\label{fig:tricky quad}
	Illustration for the proof of Lemma~\ref{lem:tricky}. The curves $\gamma_L$ and $\gamma_R$ (blue) and the boundary segments of the strip between them define the quad $D$. Solid-line boundaries represent here boundary conditions $\zeta \in \{ 0,1 \}$, and dashed lines represent maximal extensions of boundary conditions remaining at most $k$.}
	\end{center}
\end{figure}

	Thus, conditionally on a realisation of $\gamma_L$, $\gamma_R$ and $h$ on $D^c \cup \partial D$, 
	the height function in $D$ is distributed according to $\bbP_D^{\chi}$, 
	where $\chi$ is identical to $\zeta$ on the boundary of the strip $ \bbZ \times [0,n]$ and is equal to $ck$ or $ck-1$ on $\gamma_L$ and $\gamma_R$. 
	Let $\calX$ be the set of possible realisations of $(D, \chi)$ such that $\calS$ occurs.
	Then 
	 \begin{align*}
		\bbP^{\zeta}[\calS ]  &= \sum_{(D,\chi)\in \calX}\bbP^{\zeta}[\gamma_L, \gamma_R \text{ bound $D$} ]\qquad \text{ and}  \\
		\bbP^{\zeta}[ \{ I_0 \xlra{h \geq (1-c)k } \tilde I_0 \} \cap \calS ]
		&= \sum_{(D,\chi)\in \calX}\bbP^{\chi}_{D}[I_0 \xlra{h \geq (1-c)k } \tilde I_0 ]\,\bbP^{\zeta}[\gamma_L, \gamma_R \text{ bound $D$}].
	\end{align*}
	To prove~\eqref{eq:tricky2}, it thus suffices to show that for every $(D,\chi)\in \calX$,
	\begin{align}\label{eq:tricky3}
		\bbP^{\chi}_{D}[I_0 \xlra{h \geq (1-c)k } \tilde I_0 ] \leq \tfrac12 .
	\end{align}
	If $\gamma_L$ and $\gamma_R$ intersect, then the left-hand side is equal to $0$ and there is nothing to do, 
	so we restrict ourselves to domains $D$ for which $\gamma_L$ and $\gamma_R$ do not intersect. 
	In the rest of the proof we show~\eqref{eq:tricky3} by distinguishing between the different values of $\alpha$. 
	We only describe the proof for $\alpha=0$ and $\alpha=+$; the proof for $\alpha=-$ is the same as that for $\alpha = +$. 

	\paragraph{Case of $\alpha = 0$:} 
	Since $\calS$ occurs, 
	$\gamma_L$ and $\gamma_R$ are contained between any path of height $h \leq c k$ 
	from $I_{-3}$ to $\tilde I_{-3}$  and from $I_{3}$ to $\tilde I_{3}$. 
	As we are in the case $\alpha = 0$, such paths exist left of $L_{-8}$ and right of $L_9$. 
	Thus $D$ is necessarily contained in a $n\times n$ square $D' \supset [-8\delta n, 9\delta n]\times [0,n]$ (recall that $17 \delta n \leq n$). 
	
Denote by $\{ck, ck-1\}$ the boundary condition on $\partial D$ taking only values $ck$ and $ck-1$, 
and by $\chi \vee \{ck, ck-1\}$ the pointwise maximum of the two; by~\eqref{eq:CBC-h},
\begin{align*}
\bbP^{\chi}_{D}[I_0 \xlra{h \geq (1-c)k} \tilde I_0 ]
\leq
\bbP^{\chi \vee \{ck, ck-1\} }_{D}[I_0 \xlra{h \geq (1-c)k} \tilde I_0 ].
\end{align*}
Note that on the left and right sides of $D$, $\chi$ (and thus also $\chi \vee \{ck, ck-1\}$) takes values $ck$ and $ck-1$.  Let $\chi'$ be the boundary condition for $\heightfcns_{D'}$ on $\partial D'$ which is equal to $ \chi \vee \{ck, ck-1\}$ on $\partial D \cap \partial D'$
	and on the faces of $\partial D'$ left or right of $D$ takes values $ck$ and $ck-1$. 
	By definition, $ck - 1 \leq \chi' \leq k $, and we can apply the boundary pushing of Corollary~\ref{cor:pushing_modified} (recall that $c=1/4$ so $ck - 1 < (1-c)k$) to get
	\begin{align*}
		\bbP^{ \chi \vee \{ck, ck-1\} }_{D}[I_0 \xlra{h \geq (1-c)k} \tilde I_0 ] \leq
		\bbP^{\chi'}_{D'}[I_0 \xlra{h \geq (1-c)k -2 \text{ in $D$}} \tilde I_0 ]
		\leq \bbP^{\chi'}_{D'}[\calV_{h\ge (1-c)k-2} (D')],
	\end{align*}
	where $\calV_{h\ge (1-c)k - 2} (D')$ denotes the existence of a path of $h \geq (1-c)k-2$ crossing $D'$ vertically.
	Recall from Remark~\ref{rem:dual_crossings} that $\calV_{h\ge (1-c)k-2} (D') \subset \calH_{h \leq (1-c)k-3} (D')^c$, 
	where $\calH$ refers to horizontal crossings. 
	Apply now Lemma~\ref{lem:symmetric_domain} to the symmetric domain $D'$, with the boundary conditions 
	 $\lfloor (1-c)k \rfloor - 3 - \chi'$, 
	to conclude that 
	\begin{align*}
		\bbP^{\chi'}_{D'}[\calV_{h \ge (1-c)k - 2 } (D')] 
		\leq 1 - \bbP^{\chi'}_{D'}[\calH_{h \le (1-c)k -3} (D')] 
		\leq \tfrac12,
	\end{align*}
	for all $k \geq 28$ \footnote{{This threshold ensures that the maximal boundary value $\lfloor (1-c)k \rfloor - 3 -( \lfloor ck \rfloor -1)$ of the boundary condition $\lfloor (1-c)k \rfloor - 3 - \chi'$ has larger modulus than the minimal one, namely $\lfloor (1-c)k \rfloor - 3 -k$.}}.
	
\begin{figure}
	\begin{center}
	\includegraphics[width = 0.98\textwidth]{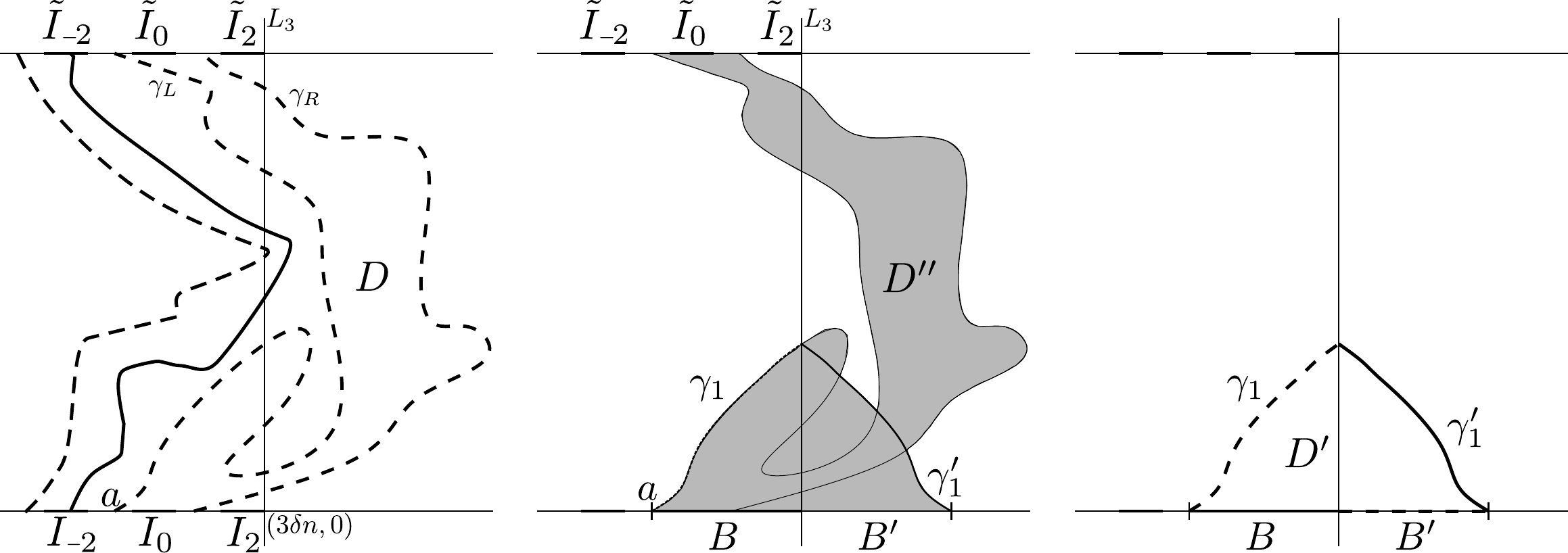}
	\caption{The dashed lines represent boundary conditions $ck, ck-1$, while the thick ones are boundary conditions $k,k-1$ (up to interpolations at the ends of the intervals). The segments at the bottom and top have length $\delta n$. 
	{\em Left:} the domain $D$ formed of faces contained between $\gamma_L$ and $\gamma_R$. 
	{\em Middle:} the domain $D$ is extended into $D''$ by pushing away the (small) boundary conditions $ck,ck-1$. 
	The probability of existence of a crossing from $I_0$ to $\tilde I_0$ of large height increases.
	{\em Right:} the domain $D''$ is shrunk to $D'$ by pulling closer the (large) boundary conditions $k,k-1$. 
	The probability of existence of a crossing from $\gamma_1$ to $B'$ of low height decreases.}
	\label{fig:tricky}
	\end{center}
\end{figure}

\paragraph{Case of $\alpha=+$:} 
	Since $\calE_{h \leq c k}(-3,+)$ and  $I_{-2} \xleftrightarrow{h \geq (1-c)k} \tilde I_{-2}$ occur, $\gamma_L$ necessarily intersects the vertical line $L_{3}$ (see Figure~\ref{fig:tricky}).
	Orient $\gamma_L$ from bottom to top and let $\gamma_1$ be its sub-path up to its first intersection with $L_3$.
	Write $a$ for the starting point of $\gamma_L$ and $B$ for the segment of $\bbZ \times \{0\}$ between $a$ and $(3\delta n,0)$. 
	Let $\sigma$ be the reflection with respect to the line of faces touching $L_3$ on the left ($\sigma$ thus preserves parity); define $\gamma_1' = \sigma( \gamma_1 )$ and $B' = \sigma(B)$.
	Set $D'$ to be the simply connected domain bounded by $B$, $B'$, $\gamma_1'$ and $\gamma_1$;
	let $D'' = D' \cup D$; see Figure~\ref{fig:tricky} for an illustration. 
	
	Let $\chi \vee \{ck, ck-1\}$ be as above and write $\chi''$ for the lowest boundary conditions on $\partial D''$ 
	which are identical to $\chi \vee \{ck, ck-1\}$ on $\partial D'' \cap \partial D$ and $\{ck, ck-1\}$ on the right or left of $D$.
		Applying~\eqref{eq:CBC-h} and~\eqref{eq:pushing_11} we find, 
	\begin{align*}
		\bbP^{\chi}_{D}[I_0 \xlra{h \geq (1-c)k} \tilde I_0 ] 
		\leq 
		\bbP^{\chi \vee \{ck, ck-1\} }_{D}[I_0 \xlra{h \geq (1-c)k} \tilde I_0 ] 
		\leq \bbP^{\chi''}_{D''}[I_0 \xlra{h \geq (1-c)k - 2} \tilde I_0 ].
	\end{align*}
	As stated in Remark~\ref{rem:dual_crossings}, the event $I_0 \xlra{h \geq (1-c)k-2} \tilde I_0$ 
	is incompatible with the event $\gamma_1 \xlra{h \leq (1-c)k-3 \text{ in $D''$}} B'$, so
		\begin{align*}
		\bbP^{\chi''}_{D''}[I_0 \xlra{h \geq (1-c)k - 2} \tilde I_0 ]
		\leq 1-	\bbP^{\chi''}_{D''}[\gamma_1 \xlra{h \leq (1-c)k-3 } B'].	
	\end{align*}
		
Next, write $\chi'$ for the largest boundary conditions on $\partial D'$ which is smaller or equal to $k$
	and equal to $\chi''$ on $\partial D' \cap \partial D''$. 
	Applying~\eqref{eq:pushing_11} to $-h$, we get
	\begin{align*}
	\bbP^{\chi''}_{D''}[\gamma_1 \xlra{h \leq (1-c)k-3 } B'] 
		\geq \bbP^{\chi'}_{D'}[\gamma_1 \xlra{h \leq (1-c)k-5 } B'] .	
	\end{align*}
	Observe that, {by construction, $D'$ is invariant under $\sigma$. Furthermore, $\gamma_1$ and $B'$ are contained in $\partial D' \cap \partial D''$, and therefore $\chi' = \chi'' \in \{ ck, ck-1\}$ on $\gamma_1 \cup B'$.} 
	Apply Lemma~\ref{lem:symmetric_domain} to the boundary condition {$\lfloor (1-c)k \rfloor -5 - \chi'$} to find that 
	\begin{align*}
		 \bbP^{\chi'}_{D'}[\gamma_L \xlra{h \leq (1-c)k-5 \text{ in $D'$}} B'] \geq \tfrac12,			
	\end{align*}
	for $k \geq 44$ \footnote{{This lower bound on $k$ ensures that $\lfloor (1-c)k \rfloor -5 - (\lfloor ck \rfloor-1)$ has larger modulus than $\lfloor (1-c)k \rfloor -5 - k$}.}. 
	The four equations displayed above imply~\eqref{eq:tricky3}.
\end{proof}

\subsection{Long crossings in a strip}\label{sec:long_in_stip}

In our proof of Theorem~\ref{thm:RSW}, the intermediate result below refers to crossings of long rectangles in a strip. We remind the reader that we still omit the integer roundings in~$\lfloor \delta n \rfloor$.

\begin{proposition}\label{prop:RSW3}
	There exist constants $\delta, c,C >0$ such that the following holds. 
	For any $k \geq 1/c$, any $n$, and any $\rho \geq 1$, 
	\begin{align}\label{eq:RSW3}\nonumber
	&\bbP_{\bbZ \times [-n,2n]}^{0,1}\big[\{0\}\times [-n,2n] \xleftrightarrow{h \geq c k \text{ in $\bbZ \times [0,n]$}}\{\rho \delta n\}\times [-n,2n]\big]\\
	&\qquad\qquad\qquad \geq  \big(c\, \bbP_{\bbZ \times [-n,2n]}^{0,1}\big[[0,\delta n]\times \{0\} \xleftrightarrow{h \geq k\text{ in $\bbZ \times [0,n]$}}\bbZ\times \{ n\}\big]\big)^{C \rho}.
	\end{align}
\end{proposition}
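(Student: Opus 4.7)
The proof of Proposition~\ref{prop:RSW3} will follow a classical RSW scheme in two stages: first, convert the vertical-from-slit crossing estimate on the RHS of~\eqref{eq:RSW3} into a short horizontal crossing estimate, and then iterate via the FKG inequality to build arbitrarily long crossings. Throughout, let $p$ denote the vertical-from-slit probability on the right-hand side of~\eqref{eq:RSW3}.

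\textbf{Stage 1 (short horizontal crossings).} The aim of this stage is to establish that, for some fixed $M \in \bbN$ and constants $c_1, c_2, C_1 > 0$,
\begin{align*}
\bbP_{\bbZ \times [-n,2n]}^{0,1}\big[\{0\}\times[-n,2n] \xleftrightarrow{h \geq c_1 k \text{ in }\bbZ\times[0,n]} \{M\delta n\}\times[-n,2n]\big] \geq (c_2 p)^{C_1}.
\end{align*}
The key tools are translation invariance of the infinite-strip measure (giving $O(1)$ horizontal translates of the vertical-from-slit event, each of probability $p$), the FKG inequality~\eqref{eq:FKG-h} to combine these translates into a joint event of probability $\geq p^{O(1)}$, the spatial Markov property~\eqref{eq:SMP} to condition on the leftmost and rightmost vertical crossings of height $\geq k$, and Proposition~\ref{prop:RSW1} applied (up to $90^\circ$ rotation, valid because $\mathbf{a}=\mathbf{b}$) to the narrow pocket enclosed between the two conditioned crossings. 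This pocket has high ($\gtrsim k$) BCs on its vertical sides (coming from the conditioned crossings) and low ($0,1$) BCs on its top and bottom (inherited from the strip), which matches the ``high bulk, low on slits'' hypothesis of Proposition~\ref{prop:RSW1} after rotation and BC matching via Corollary~\ref{cor:pushing_modified}. Its conclusion, combined with duality (Remark~\ref{rem:dual_crossings}), yields a horizontal crossing of the pocket at reduced height $c_1 k$.

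\textbf{Stage 2 (iteration).} Let $q \geq (c_2 p)^{C_1}$ be the short-crossing probability from Stage 1, and define the increasing events
\begin{align*}
A_j := \big\{\{0\}\times[-n,2n] \xleftrightarrow{h \geq c_1 k \text{ in }\bbZ\times[0,n]} \{jM\delta n\}\times[-n,2n]\big\}.
\end{align*}
By~\eqref{eq:FKG-h} applied to $A_j$ together with a horizontal translate of the short-crossing event (and a short vertical connector in the overlap region to guarantee topological intersection of the two horizontal crossings), one obtains $\bbP[A_{j+1}] \geq c_3\, q\, \bbP[A_j]$ for some $c_3 > 0$. Iterating $\lceil \rho/M \rceil$ times yields $\bbP[A_{\lceil \rho/M \rceil}] \geq (c\, p)^{C \rho}$ for suitable absolute constants $c, C > 0$, matching the claim.

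\textbf{Main obstacle.} The truly delicate step is Stage 1: extracting a horizontal crossing at height $\geq c_1 k$ from the vertical-from-slit information at height $\geq k$. Invoking Proposition~\ref{prop:RSW1} in the conditioned pocket requires meticulous bookkeeping of boundary conditions (which must be manipulated via Corollary~\ref{cor:pushing_modified} to match the hypothesis, stated for an idealized infinite strip with slits of a specific shape). The reduction $k \to c_1 k$ in the output height is unavoidable: it reflects both the intrinsic duality gap in Proposition~\ref{prop:RSW1} (which trades a threshold $\geq (1-c)k$ for one $\leq (1-c)k - 1$) and additional height losses accumulated through the BC pushing needed to reconcile the geometries.
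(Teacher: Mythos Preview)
Your overall two-stage strategy matches the paper's, but Stage~1 has a real gap in how Proposition~\ref{prop:RSW1} is invoked. You describe its hypothesis as ``high bulk, low on slits'', but it is the reverse: the boundary condition there is $\{0,1\}$ on the bulk of the strip's top and bottom and $\sim k$ only on two short slits. Your pocket has high values on its long sides $\gamma_L,\gamma_R$ and --- at best, after pushing --- low values on its short top/bottom arcs (which lie on $\bbZ\times\{0,n\}$, interior to the ambient strip, so are not ``inherited'' $\{0,1\}$); a $90^\circ$ rotation puts the high values on the \emph{full} top and bottom rather than on short slits, so the picture still does not match Proposition~\ref{prop:RSW1}. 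The paper's mechanism is not a rotation but the additive flip $h\mapsto k-h$: this swaps high and low, after which pushing the now-low sides $\gamma_L,\gamma_R$ out to the full strip $\bbZ\times[0,n]$ lands exactly in the setup of Proposition~\ref{prop:RSW1}, whose upper bound on high vertical crossings dualises to the desired horizontal crossing of the pocket. You also skip a conditioning subtlety: exploring leftmost/rightmost $h\ge k$ crossings imposes both positive and negative constraints, so \eqref{eq:SMP} and \eqref{eq:CBC-h} alone do not isolate the pocket law. The paper handles this by conditioning on $|h|\ge k$ crossings instead, using \eqref{eq:CBC-|h|} to minimise the exterior $|h|$-configuration, and then the Ising representation of signs (Section~\ref{subsubsec: Ising tools}) to obtain positive sign on both $\gamma_L,\gamma_R$ with probability at least~$1/4$.

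On Stage~2, your ``vertical connector'' is never actually produced. The paper sidesteps this by taking as its basic block the bridge $\calB_{h\ge c_0 k}(j)=\{I_{j-1}\xleftrightarrow{h\ge c_0 k}I_{j+1}\}$ with both endpoints on the bottom row: consecutive bridges have interleaved boundary endpoints and therefore must intersect by planarity, so \eqref{eq:FKG-h} and translation invariance alone give the long crossing without any extra connector.
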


Let us prepare for the proof of Proposition~\ref{prop:RSW3} by fixing $\delta>0$ such that Proposition~\ref{prop:RSW1} applies for $3 \delta$,
and the constant $c_0>0$ appearing soon, which is given by the same proposition so that~\eqref{eq:height shift for slit crossings} below holds.
For notation, as in the proof of  Proposition~\ref{prop:RSW1}, we denote
\begin{align*}
    I_j:=[ j \delta n , (j+1) \delta n  ] \times\{ 0\} \quad \text{ and }\quad 
    \tilde I_j:=[ j \delta n , (j+1) \delta n   ] \times\{ n\}.
\end{align*}
Moreover, let $\calB_{h\ge \ell}(j)$ denote the ``bridging'' between $I_{j-1}$ and $I_{j+1}$ in $\bbZ \times [0,n]$:
\begin{align*}
	\calB_{ h \ge \ell}(j)& := \big\{ I_{j-1} \xlra{h \geq \ell \text{ in $\bbZ \times [0,n]$}} I_{j+1} \big\},
\end{align*}
and define $\calB_{ \vert h \vert \ge \ell}(j)$ analogously.
The following lemma is the key step in the proof of Proposition~\ref{prop:RSW3}.

\begin{lemma}\label{lem:bridges}
	With the notation above, 
	\begin{align}\label{eq:bridges}
		\bbP_{\bbZ \times [-n,2n]}^{0,1}[\calB_{h\ge c_0 k}(0)]
		\geq \tfrac{c_0}8 \big(\bbP_{\bbZ \times [-n,2n]}^{0,1}\big[I_0 \xleftrightarrow{h \geq k\text{ in $\bbZ \times [0,n]$}} \tilde I_0\big]\big)^{2}.
	\end{align}
\end{lemma}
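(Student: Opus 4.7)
My plan is to construct the bridge from two flanking vertical up-crossings, joined via a horizontal high-height crossing in the region between them. Write $p := \bbP^{0,1}_{\bbZ \times [-n, 2n]}[I_0 \xlra{h \geq k \text{ in } \bbZ \times [0,n]} \tilde I_0]$ for the base probability on the right-hand side of~\eqref{eq:bridges}. By horizontal translation invariance of $\bbP^{0,1}_{\bbZ \times [-n, 2n]}$ (by shifts of $\delta n$, which we may assume to be even so that the parity bipartition is preserved), the events $\calC_{-1} := \{I_{-1} \xlra{h \geq k} \tilde I_{-1}\}$ and $\calC_1 := \{I_1 \xlra{h \geq k} \tilde I_1\}$ each have probability $p$; being increasing, \eqref{eq:FKG-h} yields $\bbP[\calC_{-1} \cap \calC_1] \geq p^2$.

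On this event, I would take the leftmost realisation $\gamma_L$ of $\calC_{-1}$ and the rightmost $\gamma_R$ of $\calC_1$; by extremality, the height along $\gamma_L \cup \gamma_R$ takes values in $\{k-1, k\}$. Let $\tilde D \subset \bbZ \times [0,n]$ be the quad bounded on the sides by $\gamma_L, \gamma_R$ and on top/bottom by the portions of $\bbZ \times \{0, n\}$ between them. By \eqref{eq:SMP}, the conditional law inside $\tilde D$ is $\bbP_{\tilde D}^{\chi}$ with boundary values in $\{k-1, k\}$ on $\gamma_L \cup \gamma_R$ and in $\{0, 1\}$ on the top/bottom. It suffices to show that, uniformly in admissible $(\gamma_L, \gamma_R)$, the probability under $\bbP_{\tilde D}^{\chi}$ of a horizontal crossing $\gamma_L \xlra{h \geq c_0 k \text{ in } \tilde D} \gamma_R$ is at least $c_0/8$; concatenation with parts of $\gamma_L, \gamma_R$ (of height $\geq k \geq c_0 k$) then produces the desired bridge from $I_{-1}$ to $I_1$ at height $\geq c_0 k$.

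The horizontal crossing estimate in $\tilde D$ is the core step, to be derived from Proposition~\ref{prop:RSW1}. The plan is first to apply the sign-flip $h \mapsto k - h$ (under which the measure is invariant up to parity swaps), which exchanges the high-BC side arcs of $\tilde D$ with the low-BC top/bottom arcs; then to rotate by $90^\circ$ (valid since $\mathbf{a} = \mathbf{b}$), so that $\tilde D$ becomes a strip-like quad with high BCs on top/bottom. The desired horizontal crossing becomes a low-height crossing of this rotated flipped quad, whose non-existence dualises (Remark~\ref{rem:dual_crossings}) to a high-height crossing joining the high-BC top and bottom arcs. Bounding the latter above by Proposition~\ref{prop:RSW1} -- after using Corollary~\ref{cor:pushing_modified} to reduce the ``full-side high BC'' setting to the slit-shaped BC of the proposition -- yields the bound $1 - c$, and hence the required lower bound $\geq c_0/8$ on the horizontal crossing after collecting losses.

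The main obstacle is the last step: $\tilde D$ has random, wiggly side arcs, with high BCs along their \emph{entire} length rather than along short slits as in Proposition~\ref{prop:RSW1}. The reduction requires combining \eqref{eq:CBC-h} and Corollary~\ref{cor:pushing_modified} in directions that partially cancel each other, and one must carefully verify that the comparison loss is uniform in the realisation of $(\gamma_L, \gamma_R)$ and fits within a constant $1/8$.
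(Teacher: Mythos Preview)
Your proposal has a genuine gap at the spatial-Markov step. You assert that the conditional law in $\tilde D$ has boundary values in $\{0,1\}$ on the top and bottom arcs (the portions of $\bbZ\times\{0,n\}$ between $\gamma_L$ and $\gamma_R$). This is not justified: the rows $\bbZ\times\{0\}$ and $\bbZ\times\{n\}$ lie in the \emph{interior} of the ambient strip $\bbZ\times[-n,2n]$, so conditioning on $\gamma_L,\gamma_R$ and their extremality (which only reveals $h$ to the left of $\gamma_L$ and to the right of $\gamma_R$ within $\bbZ\times[0,n]$) does not fix the height on those arcs. If you further condition on it and apply~\eqref{eq:CBC-h} to push it to its minimum, the values drop to order $-n$ away from the endpoints of $\gamma_L,\gamma_R$, which destroys the horizontal-crossing estimate; pushing to the maximum goes the wrong way for the increasing event $\calH_{h\geq c_0k}(\tilde D)$.

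The paper's proof is designed around precisely this obstruction, and the remedy is to work with $|h|$ rather than $h$. One takes left- and rightmost crossings of $|h|\geq k$, conditions on $|h|$ on the entire complement $D^{\rm out}=(\bbZ\times[-n,2n]\setminus D)\cup\partial D$, and uses~\eqref{eq:CBC-|h|} to push $|h|$ on $D^{\rm out}$ down to its minimal configuration. This produces the control over the top/bottom arcs that you assumed for free. An Ising-sign argument (the positive association of signs given $|h|$, see Section~\ref{subsubsec: Ising tools}) then shows that with probability at least $1/4$ the sign of $h$ is $+$ on both $\gamma_L$ and $\gamma_R$, after which one is back to a signed boundary condition to which~\eqref{eq:CBC-h} and Proposition~\ref{prop:RSW1} apply.

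Two smaller remarks. First, on the extremal crossings of $h\geq k$ the height lies in $\{k,k+1\}$, not $\{k-1,k\}$. Second, the $90^\circ$ rotation is an unnecessary detour and creates exactly the difficulty you flag at the end. After the sign flip $h\mapsto k-h$, the sides $\gamma_L,\gamma_R$ carry boundary values in $\{-1,0\}$ while the short top/bottom arcs (contained in $[-\delta n,2\delta n]\times\{0,n\}$) carry values at most $k+1$. One then pushes the low-valued wiggly sides \emph{outward} to the full strip $\bbZ\times[0,n]$ via Proposition~\ref{prop:pushing_1}, landing directly in the slit configuration of Proposition~\ref{prop:RSW1}; since the wiggly arcs are the ones being pushed away, their shape is irrelevant.
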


The idea behind the proof of this lemma is simple:
condition on the left- and right-most crossings of height greater than $k$ from $I_{-1}$ to $\tilde I_{-1}$ 
and from $I_{1}$ to $\tilde I_{1}$, respectively, 
then use Proposition~\ref{prop:RSW1} to connect these two paths by a path of height at least $c k$. 
However, there are problems arising when pushing away boundary conditions; 
to overcome these we will need to use the FKG property for the absolute value of the height function~\eqref{eq:FKG-|h|}.

\begin{proof}
	We start by transferring the question to crossings in the absolute value of the height function. 
	First, by the comparison between boundary conditions for $h$, we have 
	\begin{align*}
		\bbP_{\bbZ \times [-n,2n]}^{0,1}[\calB_{h\ge c_0k}(0)]  
		\geq \tfrac12 \bbP_{\bbZ \times [-n,2n]}^{0,1}[\calB_{|h|\ge c_0k} (0)] .
	\end{align*}
	Now, if we define
	\begin{align*}
	\calT & := \{I_{-1}\xlra{|h| \geq k\text{ in $\bbZ \times [0,n]$}} \tilde I_{-1}\}
	\cap \{I_{1}\xlra{|h| \geq k\text{ in $\bbZ \times [0,n]$}} \tilde I_{1}\},
	\end{align*}
 	we have, due to the inclusion of events and the FKG inequality
	\begin{align*}
		\bbP_{\bbZ \times [-n,2n]}^{0,1}[\calT] 
		\geq \big(\bbP_{\bbZ \times [-n,2n]}^{0,1}\big[I_{0}\xlra{h \geq k\text{ in $\bbZ \times [0,n]$}} \tilde I_{0}\big]\big)^{2}.
	\end{align*}	
	It thus suffices to prove
	\begin{align}
	\label{eq:bridges2}
	\bbP_{\bbZ \times [-n,2n]}^{0,1} [ \calB_{|h|\ge c_0k} (0) \; \vert \; \calT ] \geq c_0 / 4.
	\end{align}
	
	When $\calT$ occurs, write $\gamma_L$ for the left-most path in $\bbZ \times [0,n]$ with $|h| \geq k$ connecting $I_{-1}$ to $\tilde I_{-1}$. 
	Similarly, let $\gamma_R$ be the right-most path in $\bbZ \times [0,n]$  with $|h| \geq k$	connecting $I_{1}$ to $\tilde I_{1}$. (By finite energy, such paths exist almost surely.)
	Write $D$ for the discrete sub-domain $\bbZ \times [0,n]$ of faces between or on the paths $\gamma_L$ and $\gamma_R$. (See Figure~\ref{fig:bridging domain} for an illustration.)	Notice that $\gamma_L$ and $\gamma_R$ are measurable in terms of the absolute value of the height function on 
	$D^{\text{out}} = (\bbZ \times [-n,2n] \setminus D) \cup \partial D$. Equip $D$ with the structure of a quad with $\gamma_L$ and $\gamma_R$ being two sides and the remaining two contained in $\bbZ \times [0,1]$ and $\bbZ \times [n-1,n]$, respectively, and denote as earlier $\calV(D)$ and $\calH(D)$ for vertical and horizontal crossing events, respectively. 
	
	\begin{figure}
	\begin{center}
	\includegraphics[width=0.5\textwidth]{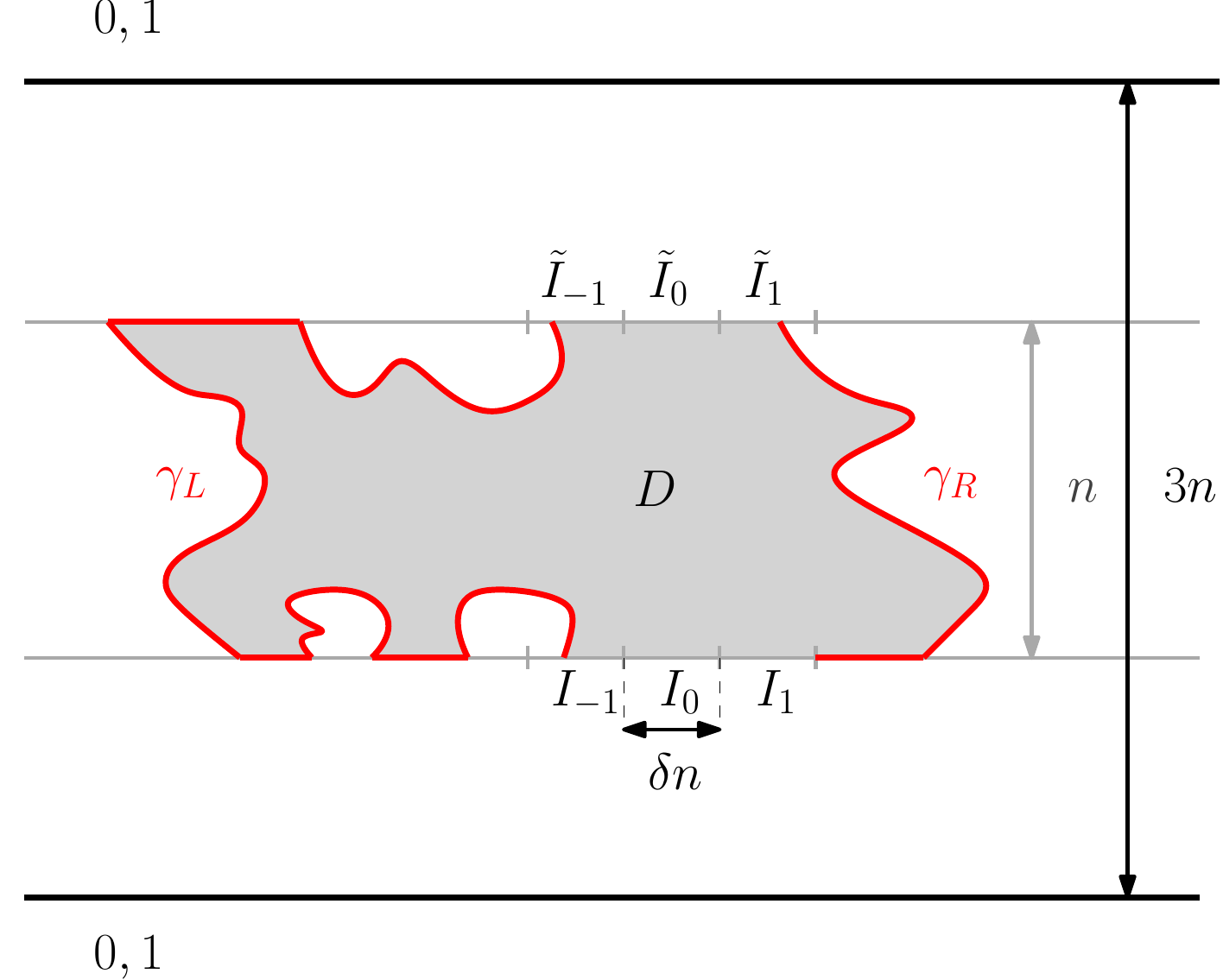}
	\caption{An illustration for the proof of Lemma~\ref{lem:bridges}.
	The red paths $\gamma_L$ and $\gamma_R$ have $|h|\geq k$ and are measurable in terms of the value of $|h|$ 
	on them and on the faces to their left and right, respectively.}
	\label{fig:bridging domain}	
	\end{center}	
	\end{figure}

	By inclusion of events, the~\eqref{eq:CBC-|h|} inequality, and inclusion again, we have
		\begin{align}
		\nonumber
		\bbP_{\bbZ \times [-n,2n]}^{0,1}\big[\calB_{|h|\ge c_0k} (0) \,\big|\,|h| \text{ on $D^{\text{out}}$} \big] 
		&\geq \bbP_{\bbZ \times [-n,2n]}^{0,1}\big[\calH_{|h| \geq c_0k } (D) \,\big|\,|h| \text{ on $D^{\text{out}}$} \big] \\
		&\geq \bbP_{\bbZ \times [-n,2n]}^{0,1}\big[ \calH_{|h| \geq c_0k } (D)  \,\big|\,|h| = \zeta \text{ on $D^{\text{out}}$} \big]\nonumber
		\\
		&\geq \bbP_{\bbZ \times [-n,2n]}^{0,1}\big[ \calH_{h \geq c_0k } (D)  \,\big|\,|h| = \zeta \text{ on $D^{\text{out}}$} \big],\nonumber
	\end{align}
	where $\zeta \geq 0$ is the minimal configuration on $D^c$ which is equal to $k$ and $k+1$ on~$\gamma_L$ and~$\gamma_R$.
	
	Now, it holds true that  
	conditionally on the fact that $|h| = \zeta$ on $D^{\text{out}}$, 
	there is a probability at least $1/4$ that $h$ is equal to $k$ and $k+1$ on both $\gamma_L$ and $\gamma_R$~\footnote{
This follows from the observation in Section~\ref{subsubsec: Ising tools} that given the absolute value, the signs of a height function are given by a (ferromagnetic) Ising model. The positive association of the Ising model and the positive boundary signs in $\bbP_{\bbZ \times [-n,2n]}^{0,1}$ thus make two plus signs the most probable one out of the four possible sign combination on the curves $\gamma_L$ and $\gamma_R$.
	}.
	In that case, the boundary condition for $h$ on $\partial D$, induced by $|h| = \zeta$, dominates the minimal boundary condition $\xi$ on $\partial D$ with $\xi \geq -1$ and which is equal to $k$ and $k+1$ on $\gamma_L$ and $\gamma_R$. 
	Thus, 
	\begin{align}
	\nonumber
		\bbP_{\bbZ \times [-n,2n]}^{0,1}\big[ \calH_{h \geq c_0k } (D)  \,\big|\,|h| = \zeta \text{ on $D^{\text{out}}$} \big]
		&\geq \tfrac14 \bbP_{D}^{\xi}\big[ \calH_{h \geq c_0k } (D)  \big]\\
		& =  \tfrac14 \bbP_{D}^{k - \xi}\big[ \calH_{h \leq (1-c_0)k } (D) \big]\nonumber\\
		& \geq \tfrac14\big( 1-\bbP_{D}^{k - \xi}\big[ \calV_{h \geq (1-c_0)k} (D) \big]\big),\nonumber
	\end{align}
	where the last line is due to Remark~\ref{rem:dual_crossings}.

	Notice now that the boundary conditions $k - \xi$ are bounded above by $k+1$ and are equal to $0$ and $-1$ on $\gamma_R$ and $\gamma_L$.
	Using boundary pushing (Proposition~\ref{prop:pushing_1}), we may now compare to $\bbP_{\bbZ\times [0,n]}^{\xi'}$ where $\xi'$ is the largest boundary condition smaller than $k+1$ and which is equal to $0$ and $1$ on $(\bbZ \setminus [-\delta n, 2\delta n])\times \{0,n\}$. We obtain
	\begin{align*}
		\bbP_{D}^{k - \xi}\big[ \calV_{h \geq (1-c_0)k} (D) \big]
		& \leq 
		\bbP_{\bbZ\times [0,n]}^{\xi'}\big[ \calV_{h \geq (1-c_0)k} (D) \big] \\
			& \leq 
				\bbP_{\bbZ\times [0,n]}^{\xi'}\big[[-\delta n, 2\delta n]\times \{0\} \xlra{h \geq (1-c_0)k } [-\delta n, 2\delta n]\times \{n\}\big],
	\end{align*}
	where the latter inequality used the fact that the bottom and top boundary segments of the quad
	$ D $ {are contained in the intervals $ [-\delta n, 2\delta n]\times \{0\}$ and $ [-\delta n, 2\delta n]\times \{n\}$, respectively.}
	
	Finally, Proposition~\ref{prop:RSW1} proves that
	\begin{align}
	\label{eq:height shift for slit crossings}
	\bbP_{\bbZ\times [0,n]}^{\xi'}\big[[-\delta n, 2\delta n]\times \{0\} \xlra{h \geq (1-c_0)k } [-\delta n, 2\delta n]\times \{n\}\big]
	\leq 1-c_0
	\end{align}
	(as such, with the boundary conditions $\xi'$, Proposition~\ref{prop:RSW1} addresses crossings of $h \geq (1-c_0)(k+1)$ but the choice of $c_0$ allows us to ignore this difference). 
	
	The four previously displayed inequalities yield for any $D^{\text{out}}$
	\begin{align*}
		\bbP_{\bbZ \times [-n,2n]}^{0,1}\big[\calB_{|h|\ge c_0k} \,\big|\,|h| \text{ on $D^{\text{out}}$} \big]  
		\geq \tfrac14 c_0.
	\end{align*}
This finishes the proof of~\eqref{eq:bridges2} and the entire lemma.
\end{proof}

\begin{proof}[Proposition ~\ref{prop:RSW3}]
It suffices to give the proof when $\rho$ is an integer.
	When the events $\calB_{h\ge c_0k}(j)$ with $0\le j<\rho$ occur, 
	they induce the existence of a path from $\{0\}\times [-n,2n]$ to $\{\rho \delta n\}\times [-n,2n]$
	of height at least $c_0k$. Moreover, this path is contained in the central strip $\bbZ \times [0,n]$. 
	Due to the FKG inequality, the invariance of $\bbP_{\bbZ \times [-n,2n]}^{0,1}$ under  horizontal translations and Lemma~\ref{lem:bridges},
	we find
	\begin{align}
		&\bbP_{\bbZ \times [-n,2n]}^{0,1}\big[\{0\}\times [-n,2n] \xlra{h \geq c_0k \text{ in $\bbZ \times [0,n]$}}\{\rho \delta n\}\times [-n,2n]\big]\nonumber\\
		&\hspace{4cm}\geq \bbP_{\bbZ \times [-n,2n]}^{0,1}[\calB_{h\ge c_0k}(j)]^{\rho} \nonumber\\
		&\hspace{4cm}\geq 
		\Big[ \tfrac{c_0}8 \bbP_{\bbZ \times [-n,2n]}^{0,1}\big[I_0 \xlra{h \geq k\text{ in $\bbZ \times [0,n]$}} \tilde I_0\big]^{2}
\Big]^{\rho}.
\label{eq:aalade}
	\end{align}

We now claim that 
	\begin{align}
	\label{eq:targeting a vertical crossing to opposite slit}
		\bbP_{\bbZ \times [-n,2n]}^{0,1}\big[I_0 \xlra{h \geq k\text{ in $\bbZ \times [0,n]$}} \tilde I_0\big]
		\geq \Big(\tfrac13\bbP_{\bbZ \times [-n,2n]}^{0,1}\big[I_0 \xlra{h \geq k\text{ in $\bbZ \times [0,n]$}} \bbZ \times \{n\}\big]\Big)^2,
	\end{align}
	which together with~\eqref{eq:aalade} completes the proof.
	To prove~\eqref{eq:targeting a vertical crossing to opposite slit}, observe that if the event on the right-hand side occurs, then $I_0$ is either connected by $h \geq k$ to either to $\tilde I_0$, to $(-\infty, 0] \times \{n\}$ or to $[\delta n, \infty) \times \{n\}$.
	It follows that at least one of these connections has probability $\tfrac13\bbP_{\bbZ \times [-n,2n]}^{0,1}\big[I_0 \xlra{h \geq k\text{ in $\bbZ \times [0,n]$}} \bbZ \times \{n\}\big]$; if it is the connection to $\tilde I_0$,~\eqref{eq:targeting a vertical crossing to opposite slit} follows immediately, so assume next that it is the connection to $[\delta n, \infty) \times \{n\}$ (the third case is symmetric).

	Now, if $I_0$ is connected to $[\delta n, \infty) \times \{n\}$ and 
	$\tilde I_0$ to $[\delta n, \infty) \times \{0\}$
	by paths of height at least $k$  simultaneously, 
	then $I_0$ and $\tilde I_0$ are connected to each other by such paths. 
	Thus, using the vertical symmetry and the FKG inequality, 
	\begin{align*}
		\bbP_{\bbZ \times [-n,2n]}^{0,1}\big[I_0 \xlra{h \geq k\text{ in $\bbZ \times [0,n]$}} \tilde I_0\big]
		\geq \bbP_{\bbZ \times [-n,2n]}^{0,1}\big[I_0 \xlra{h \geq k\text{ in $\bbZ \times [0,n]$}} [\delta n, \infty) \times \{n\}\big]^2,
	\end{align*}
and~\eqref{eq:targeting a vertical crossing to opposite slit} follows from the assumption of the previous paragraph.
\end{proof}

\subsection{From strip to annulus}\label{sec:strip_to_annulus}

In this section we conclude the proof of Theorem~\ref{thm:RSW}. The fairly classic argument consists in combining different crossings in rectangles and using the proper comparison between boundary conditions.

\begin{proof}[Theorem ~\ref{thm:RSW}]
	Write $\calH_{h \geq \ell}(R)$ for the event that the rectangle $R := [0,18n] \times [-n,2n]$ 
	contains a horizontal crossing of height at least $\ell$, that is a path of $h\ge \ell$ from $\{0\}\times [-n,2n]$ to $\{18 n\}\times [-n,2n]$. 
	By Proposition~\ref{prop:RSW3}, we may fix constants $c, C, \delta$ such that 
	\begin{align*}
		\bbP_{\bbZ \times [-n,2n]}^{0,1}[\calH_{h \geq c k}(R)]
 \geq  c \,\bbP_{\bbZ \times [-n,2n]}^{0,1}\big[[0,\delta n]\times \{0\} \xlra{h \geq k\text{ in $\bbZ \times [0,n]$}}\bbZ\times \{ n\}\big]^{C}.
	\end{align*}
	Let $S_L = [-3n, 0 ] \times [-n,2n]$ and $S_R=  [18n, 21n ] \times [-n,2n]$ be the two squares to the left and right of $R$, respectively. 
	Write $\calV_{h \leq \ell}(S_L)$ for the event that there exists a path from the top $[-3n, 0 ] \times \{2n\}$ 
	to the bottom $[-3n, 0 ] \times \{-n\}$ of $S_L$ formed of faces with height at most $\ell$. The same notation applies to $S_R$. See Figure~\ref{fig:RSW from strip to rectangle}.
	
\begin{figure}
\begin{center}
\includegraphics[width=0.8\textwidth]{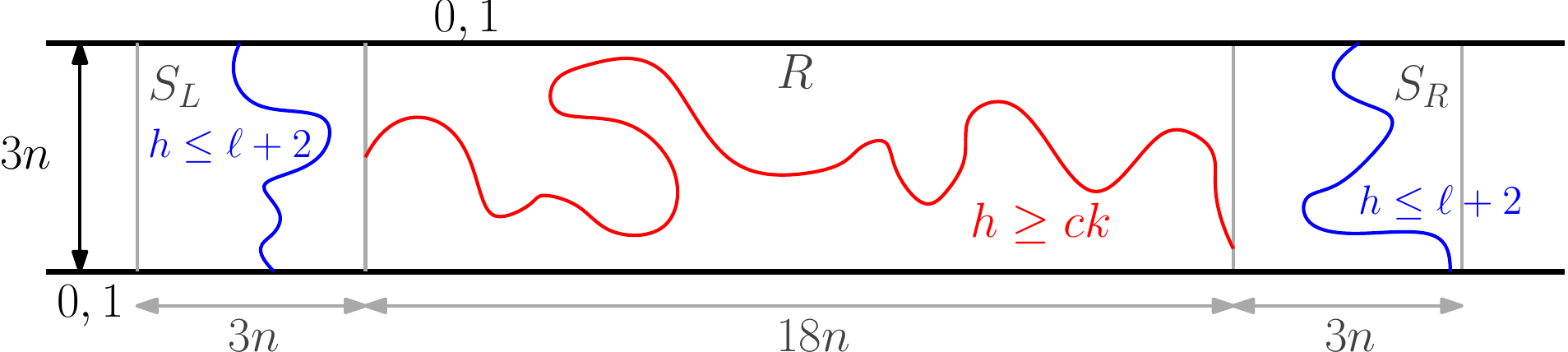}
\caption{The last step in the proof of Theorem~\ref{thm:RSW} is, {roughly speaking,} based on the Bayes formula for the events of the horizontal crossing $\calH_{h \geq c k}(R)$ and the two vertical crossings {$ \calV_{h \leq \ell+2}(S_L) \cap \calV_{h \leq \ell+2}(S_R)$.}}
\label{fig:RSW from strip to rectangle}
\end{center}
\end{figure}	
	
	When $\calH_{h \geq c k}(R)$ occurs, let $\Gamma$ be the lowest path connecting the left and right sides of $R$ which is of height greater or equal to $c k$. 
	Notice that $\Gamma$ may be explored by revealing a random set of faces $F\subset R$, all of whose heights are at most $ck+1$ (here and below, we omit integer roundings of $\lceil ck \rceil$ and treat $ck$ as an integer). 
	 Denote also $\ell = \lceil (c k + 1)/2 \rceil $. We have for any possible realisation $\gamma$ of $\Gamma$, using~\eqref{eq:pushing_11} for $-h$,  
	\begin{align*}
		\bbP_{\bbZ \times [-n,2n]}^{0,1}[\calV_{h \leq \ell + 2}(S_L) \cap \calV_{h \leq \ell + 2}(S_R)|\,\Gamma = \gamma]
		\geq \bbP_{S_L}^{\xi_L}[\calV_{h \leq \ell }(S_L)]\bbP_{S_R}^{\xi_R}[ \calV_{h \leq \ell }(S_R)],
	\end{align*}
	where $\xi_L$ and $\xi_R$ are the largest boundary conditions on $\partial S_L$ and $\partial S_R$, respectively, that are everywhere at most $c k + 1$ and equal to $0$ and $1$ on $\bbZ \times \{-n,3n\}$.
	Now, due to Lemma~\ref{lem:symmetric_domain}, each of the probabilities of the right-hand side above is at least $1/2$. 
	In conclusion, 
	\begin{align*}
		&\bbP_{\bbZ \times [-n,2n]}^{0,1}[\calH_{h \geq c k}(R) \,|\, \calV_{h \leq \ell + 2}(S_L) \cap \calV_{h \leq \ell+ 2}(S_R)]\\		
		& \qquad \geq \bbP_{\bbZ \times [-n,2n]}^{0,1}[\calH_{h \geq c k}(R) \cap \calV_{h \leq \ell + 2}(S_L) \cap \calV_{h \leq \ell + 2}(S_R)]\\
	&\qquad \geq  \tfrac{c}4 \bbP_{\bbZ \times [-n,2n]}^{0,1}\big[[0,\delta n]\times \{0\} \xlra{h \geq k \text{ in $\bbZ \times [0,n]$}}\bbZ\times \{ n\}\big]^{C}.
	\end{align*}
	
	When $\calV_{h \leq \ell + 2}(S_L) \cap \calV_{h \leq \ell + 2}(S_R)$ occurs, 
	consider the discrete domain $D$ on and between the vertical crossings of height at most $\ell + 2$
	that are left-most in $S_L$ and right-most in $S_R$, respectively. By the spatial Markov property, the conditional measure on the left-hand side above can be seen as a convex combination of measures on such domains $D$, with boundary conditions which are at most $\ell+2$. 
	By the previous display and the~\eqref{eq:CBC-h} inequality, we conclude the existence of such a domain $D_0$ with 
	$[0,18n] \times [-n,2n] \subset D_0 \subset [-3n,21n] \times [-n,2n]$ such that 
	\begin{align}\label{eq:long_rect}
		\bbP_{D_0}^{ \ell+1, \ell +2}[\calH_{h \geq c k}(R)]  
		\geq  \tfrac{c}4 \bbP_{\bbZ \times [-n,2n]}^{0,1}\big[[0,\delta n]\times \{0\} \xlra{h \geq k\text{ in $\bbZ \times [0,n]$}}\bbZ\times \{ n\}\big]^{C}.
	\end{align}
	
	Finally, consider the rectangle $R_N = [-9n,9n] \times [6n, 9n]$ and its rotations $R_W$, $R_S$ and $R_E$ around the origin by $\tfrac\pi2$, $\pi$ and $\tfrac{3\pi}2$, respectively. 
	Note that $R_N$ is a translate of the rectangle $R$ considered above. 
	 By~\eqref{eq:pushing_11}, we deduce that 
	\begin{align*}
		\bbP_{\Lambda_{12n}}^{0,1}[\calH_{h \geq ck - \ell - 3}(R_N)]
		\geq \tfrac12 \bbP_{D_0}^{0,1}[\calH_{h \geq ck - \ell -1}(R)]
		= \tfrac12 \bbP_{D_0}^{\ell +1, \ell +2}[\calH_{h \geq c k}(R)].
	\end{align*}
	By rotational invariance, the same lower bound holds for probabilities of crossing $R_W$, $R_S$ and $R_E$ in the ``long'' direction.
	If all these crossing events occur simultaneously, 
	then $\Lambda_{9n}\setminus \Lambda_{6n}$ contains a circuit of height at least $ck - \ell - 3 \geq ck/2 - 6$, 
	and thus $\calO_{h\ge ck/2 - 6}( 6 n ,12 n)$ occurs. 
	Applying the FKG inequality, we find 
	\begin{align*}
		\bbP_{\Lambda_{12n}}^{0,1}[\calO_{h\ge ck/2 -6}( 6 n ,12 n)]
		\geq \big(\tfrac{c}{8}\big)^4 \, \bbP_{\bbZ \times [-n,2n]}^{0,1}\big[[0,\delta n]\times \{0\} \xlra{h \geq k\text{ in $\bbZ \times [0,n]$}}\bbZ\times \{ n\}\big]^{4C}.
	\end{align*}
	The above implies~\eqref{eq:RSW} after adjustment of the constants $c, C$.
\end{proof}

\section{From free energy to circuit probability estimate}\label{sec:6V-cylinder}\label{sec:4}\label{sec:Annulus}

In this section, let $N$ be even and let $\bbP_{\bbO_{N,M}}^{(0)}$ denote the six-vertex measure on the cylinder graph $\bbO_{N,M}$ conditioned on the event that each row of $N$ faces around $\bbO_{N,M}$ is crossed by as many up arrows as down arrows. Under $\bbP_{\bbO_{N,M}}^{(0)}$, each six-vertex configuration defines a height function on the cylindrical dual graph which is unique up to additive constant. When describing events in terms of height function, we will mean that the associated equivalence class of height functions contains a representative having the property of interest.

\subsection{A probabilistic interpretation of free energy increments}

 For $k,n \geq 1$ and a set $S=\{s_0,\dots,s_{2n-1}\}$ of $2n$ faces on the bottom of $\bbO_{N,M}$ (indexed from left to right), let $\calA(S,n,k)$ be the event that for each $0\le i<2n$, there exists a vertical $\times$-crossing of the cylinder, starting at $s_i$, and on which $h=0$ if $i$ is even, and $h=k$ if $i$ is odd. The core of this section is the proof of the following result (recall the definition of the free energy $f_{\mathbf c}$ from Theorem~\ref{thm:Bethe}).
\begin{proposition}
\label{prop: Mano}\label{lem: anchored clusters}
For every $\alpha\in (0,1/2)$ and $k\ge1$, for $n = \lfloor  \lceil \alpha N \rceil / k \rfloor$ we have 
\begin{align*}
	\liminf_{N \to \infty} \liminf_{M \to \infty} \tfrac{1}{NM} \log \max_{S}\bbP^{(0)}_{\bbO_{N,M}} [ \calA(S,n,k) ]
	\geq f_{\mathbf c}(\alpha) - f_{\mathbf c}(0),
\end{align*}
where the maximum is over sets $S$ of $2n$ faces on the bottom of $\bbO_{N,M}$.
\end{proposition}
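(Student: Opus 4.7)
The plan is to deduce the proposition from Theorem~\ref{thm:Bethe} by establishing the partition-function inequality
\[
Z^{(nk)}_{N,M} \;\le\; N^{2n} \cdot \max_{S} W_{\mathrm{6V}}\!\left(\calA(S,n,k) \cap \Iceconf^{(0)}_{N,M}\right),
\]
where $W_{\mathrm{6V}}$ denotes the total six-vertex weight of a set of configurations. Indeed, dividing by $Z^{(0)}_{N,M}$, taking logarithms, dividing by $NM$, and using that $nk/N \to \alpha$ and $\tfrac{2n \log N}{NM} \to 0$ in the iterated limit, the right-hand side converges to $f_{\mathbf c}(\alpha) - f_{\mathbf c}(0)$, which yields the claim.

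The heart of the argument is a weight-preserving injection $\Psi : \Iceconf^{(nk)}_{N,M} \hookrightarrow \bigsqcup_{S} \!\left(\calA(S,n,k) \cap \Iceconf^{(0)}_{N,M}\right)$. Given $\omega \in \Iceconf^{(nk)}$, its height function lifts to the universal cover of $\bbO_{N,M}$ with horizontal period shift $+2nk$. A standard level-set argument then yields, for each $\ell \in \{0, k, 2k, \ldots, (2n-1)k\}$, the existence of a top-to-bottom vertical $\times$-crossing of the cylinder lying within the level set $\{h_\omega \equiv \ell \!\pmod{2nk}\}$. Selecting a canonical such crossing $\gamma_\ell$ with bottom endpoint $s_\ell$ and letting $S$ be the resulting tuple, the $\gamma_\ell$'s cut $\bbO_{N,M}$ into $2n$ strips $T_0, \ldots, T_{2n-1}$, with $T_i$ bordered on the left by $\gamma_{ik}$ (at height $ik$) and on the right by $\gamma_{(i+1)k}$ (at height $(i+1)k$). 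Define $\Psi(\omega) = (S, h)$ via the strip-wise transformation $h := h_\omega - ik$ on $T_i$ for $i$ even, and $h := (i+1)k - h_\omega$ on $T_i$ for $i$ odd. One verifies that the new heights glue consistently across strip boundaries (each $\gamma_{ik}$ acquires height $0$ for $i$ even and $k$ for $i$ odd), producing a balanced height function which realizes the event $\calA(S, n, k)$. Both the shifts (trivially) and the reflections (via the global arrow-flip symmetry, valid for the isotropic parameters $\mathbf a = \mathbf b$, $\mathbf c_1 = \mathbf c_2$) preserve the six-vertex weight, so $\Psi$ is weight-preserving.

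The main obstacle is the injectivity of $\Psi$: two distinct $\omega_1 \ne \omega_2$ could a priori produce the same pair $(S, h)$, since $h$ may admit several families of alternating-height $\times$-crossings anchored at $S$, each giving rise under the formal inverse to a different preimage. The plan for resolving this is to exploit that $\Psi$ acts within each strip as a pure height-relabeling preserving spatial positions, so the ``canonical (rightmost) crossing within a strip'' structure is preserved by $\Psi$; moreover, in $h$ the crossings $\gamma_{ik}$ at height $k$ form impermeable barriers for any $\times$-path at height $0$ and vice versa, which locks the strip decomposition of $h$ as determined by $(S, h)$ alone. Together these observations should allow one to recover $\gamma_\bullet$ from $(S, h)$ uniquely, hence invert $\Psi$. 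Should a fully canonical choice be too delicate, an alternative route is to bound the preimage multiplicity by $e^{o(NM)}$ using the identity $\bbE^{(0)}[|\Gamma|] \cdot Z^{(0)} = \bbE^{(nk)}[|\Gamma'|] \cdot Z^{(nk)}$ (from a bijection of decorated configurations) together with a separate sub-exponential bound on the conditional expected number of alternating-height crossing tuples given $\calA(S)$; this weaker bound still suffices for the desired conclusion.
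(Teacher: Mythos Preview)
Your proposal contains a genuine gap: the map $\Psi$ is \emph{not} weight-preserving, contrary to what you assert. The transformation you describe flips all arrows inside the odd strips $T_i$ (reflection $h\mapsto (i+1)k-h_\omega$) while leaving the arrows in the even strips unchanged (shift $h\mapsto h_\omega-ik$). At every primal vertex lying on a boundary curve $\gamma_{ik}$, two of the four incident edges get flipped and two do not; a direct check shows that each such vertex switches between type $1$--$4$ and type $5$--$6$, hence changes weight by a factor $\mathbf c^{\pm1}$. Since there are $2n$ boundary curves, each a vertical crossing of the cylinder of length at least $M$, the total weight distortion is $\mathbf c^{\pm\Theta(nM)}$, which after taking $\tfrac{1}{NM}\log$ contributes a term of order $n/N\approx\alpha/k$. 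This does not vanish in the limit and spoils the desired inequality. The global arrow-flip symmetry only guarantees weight preservation when applied to the whole configuration, not strip by strip.

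The paper circumvents exactly this obstacle by splitting the argument in two. First (Lemma~\ref{lem: Mano}) it builds a map from $\Iceconf^{(2L)}$ into $\Iceconf^{(0)}\cap\calB(L)$ using only \emph{two} boundary curves, selected by pigeonhole so that their combined length is at most $2M/\alpha$; the resulting weight distortion is then $\mathbf c^{\pm O(M/\alpha)}$, which is negligible after dividing by $NM$. Second, the passage from the two-crossing event $\calB(nk)$ to the $2n$-crossing event $\calA(S,n,k)$ is done \emph{probabilistically}, not by a bijection: one introduces interpolating events $\calA_j(n,k)$ and shows $\bbP[\calA_j]\le\bbP[\calA_{j+1}]$ via the comparison of boundary conditions~\eqref{eq:CBC-h} and the $h\leftrightarrow 2k-h$ symmetry. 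This monotonicity step involves no weights at boundary curves and so avoids the issue entirely. Your injectivity concern is secondary; even granting a perfectly canonical choice of crossings, the weight-distortion problem would remain.
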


Relating the probability of the events $\calA(S,n,k)$ to $f_\mathbf c$ will be done in two steps. We start by relating the free energy to the probability of the event $\calB(L)$  that  $h$ contains two vertical $\times$-crossings of $h=0$ and $h=L$ respectively.
\begin{lemma}
\label{lem: Mano}
For every $\alpha\in (0,1/2)$, we have
\begin{align*}
\liminf_{N \to \infty} \liminf_{M \to \infty} \tfrac{1}{NM} \log 
\PRcyl{0}{N}{M} [ \calB(\lceil\alpha N\rceil) ]
\geq  f_{\mathbf c}( \alpha ) - f_{\mathbf c}( 0 ).
\end{align*}
\end{lemma}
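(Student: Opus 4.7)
}

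The plan is to reduce the probability estimate to a partition function inequality, and then to establish it by a geometric decomposition. Writing
\begin{equation*}
\bbP^{(0)}_{\bbO_{N,M}}[\calB(L)] = \frac{Z^{(0)}_{N,M}[\calB(L)]}{Z^{(0)}_{N,M}},
\end{equation*}
with $L = \lceil \alpha N \rceil$ and using $Z^{(\alpha N)}_{N,M} = \exp(NM f_{\mathbf c}(\alpha) + o(NM))$ from Theorem~\ref{thm:Bethe}, the lemma reduces to establishing the partition-function inequality
\begin{equation}\label{eq:sketch_key}
Z^{(0)}_{N,M}[\calB(L)] \geq e^{-o(NM)}\, Z^{(\alpha N)}_{N,M}.
\end{equation}

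To prove \eqref{eq:sketch_key}, I would exploit the natural decomposition that appears when $\calB(L)$ holds: the two vertical $\times$-paths $\gamma_0$, $\gamma_L$ at heights $0$ and $L$ partition the cylinder into two strips $R_1$, $R_2$ whose widths sum to $N$, and by the spatial Markov property (Proposition~\ref{prop: SMP}), conditioning on $\gamma_0, \gamma_L$ factorizes the restricted partition function into a product of two strip partition functions with the prescribed heights $0$, $L$ on their vertical sides. Inside each strip, the height function climbs (or descends) by exactly $L$ across a horizontal width of order $N/2$, giving a local slope of order $\alpha = L/N$ that matches the local slope of a configuration in $\Iceconf^{(\alpha N)}_{\bbO_{N,M}}$. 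Restricting the sum over $(\gamma_0, \gamma_L)$ to a canonical choice (e.g.\ straight vertical lines at $x = 0$ and $x = N/2$) and invoking the horizontal reflection symmetry of the isotropic weights to identify the two factors, I aim for a bound of the form
\begin{equation*}
Z^{(0)}_{N,M}[\calB(L)] \geq e^{-o(NM)} \cdot \big(Z^{\,(\text{local slope }\alpha)}_{N/2,\, M}\big)^2,
\end{equation*}
where each factor is essentially the cylinder partition function on a thinner cylinder of perimeter $N/2$ with the prescribed local slope. By Theorem~\ref{thm:Bethe}, each factor equals $\exp((N/2) M f_{\mathbf c}(\alpha) + o(NM))$, and their product matches $Z^{(\alpha N)}_{N,M}$ up to subexponential factors, yielding \eqref{eq:sketch_key}.

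The main obstacle I expect is the comparison between the strip (rectangular) partition functions with prescribed vertical-boundary heights on $\gamma_0, \gamma_L$ and the cylindrical partition function $Z^{(\alpha N)}_{N,M}$ from the definition of $f_{\mathbf c}$. Two bookkeeping steps are needed: first, removing the constraint that $\gamma_0, \gamma_L$ be specific paths with constant heights (at an entropy cost controlled by finite-energy estimates and by summing over local perturbations of the straight-line choice); second, gluing the two strips into a single narrower cylinder without losing free energy (amounting to matching top-bottom boundary profiles). Both are expected to cost only $e^{o(NM)}$, justified by the convexity and continuity of $f_{\mathbf c}$ guaranteed by Theorem~\ref{thm:Bethe} in the delocalized regime $1 \leq \mathbf c \leq 2$, but carrying this out rigorously, and in particular selecting boundary profiles $\eta$ on top and bottom of each strip that are typical under $\bbP^{(\alpha N)}_{\bbO_{N,M}}$ so that the entropy cost is indeed subexponential, is the crux of the proof.
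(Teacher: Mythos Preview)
Your reduction to the partition-function inequality
\[
Z^{(0)}_{N,M}[\calB(L)] \;\geq\; e^{-o(NM)}\, Z^{(\alpha N)}_{N,M}
\]
is correct and is exactly how the paper begins. The gap is in how you propose to establish it.

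Your plan is to split the cylinder along $\gamma_0,\gamma_L$ into two strips and lower-bound each strip partition function by a narrower-cylinder partition function with the matching slope. But Theorem~\ref{thm:Bethe} gives the free energy \emph{only} for the cylindrical geometry, and you have no tool to pass from a planar strip with prescribed heights $0,L$ on its two vertical sides to a cylinder of perimeter $N/2$. Identifying the two sides of such a strip produces a cylinder with \emph{horizontal} height monodromy $L$, which is not the object $Z^{(s)}_{N/2,M}$ (vertical arrow unbalance). You acknowledge this as ``the crux of the proof'' and appeal to ``convexity and continuity of $f_{\mathbf c}$ \dots\ in the delocalized regime'', but Theorem~\ref{thm:Bethe} provides only the lower bound $f_{\mathbf c}(\alpha)\ge f_{\mathbf c}(0)-C\alpha^2$, not convexity, and in any case convexity of $f_{\mathbf c}$ says nothing about boundary-condition independence of the free energy. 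Moreover, Lemma~\ref{lem: Mano} is needed (via Theorem~\ref{thm:RSW_origins}) for all $\mathbf c\ge 1$, so you cannot restrict to $\mathbf c\in[1,2]$. There is also a factor-of-two slip: a strip of width $N/2$ in which the height climbs by $L=\alpha N$ has slope $2\alpha$, not $\alpha$, so even if the strip-to-cylinder comparison were available you would land on $f_{\mathbf c}(2\alpha)$.

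The paper bypasses all of this with a direct combinatorial argument. It builds a map
\[
\mathbf T:\ \Omega^{(2L)}_{6V}(\bbO_{N,M})\ \longrightarrow\ \Omega^{(0)}_{6V}(\bbO_{N,M})\cap\calB(L)
\]
as follows: decompose $\omega$ into oriented non-crossing loops and paths; among the (at least $2L$) upward vertical-crossing paths, pick $\gamma_1,\dots,\gamma_{2L}$ so that between consecutive ones the up/down path counts balance; by pigeonhole there is an index $i^*$ with $\mathrm{length}(\gamma_{i^*})+\mathrm{length}(\gamma_{L+i^*})\le 2M/\alpha$; reverse all arrows on $\gamma_{i^*}$ and in the region between $\gamma_{i^*}$ and $\gamma_{L+i^*}$. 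The resulting configuration is balanced and the faces bordering $\gamma_{i^*}$ and $\gamma_{L+i^*}$ give the required $\times$-crossings of heights $0$ and $L$. The only weight change occurs at vertices on the two chosen paths, so $W_{6V}(\mathbf T(\omega))\ge \mathbf c^{-2M/\alpha}W_{6V}(\omega)$; and any $\omega'$ has at most $N^2 2^{2M/\alpha}$ preimages (choice of two starting edges times at most $2^{2M/\alpha}$ left/right turns). Both factors are $e^{o(NM)}$, which yields the inequality immediately and works for every $\mathbf c>0$.
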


\begin{proof}In what follows, set $L=\lceil \alpha N \rceil$. The strategy of the proof is to construct a map 
\[
\mathbf T: \Omega^{(2L)}_{6V}(\bbO_{N,M}) \longrightarrow \Omega^{(0)}_{6V}(\bbO_{N,M}) \cap \calB(L)
\] such that 
\begin{itemize}[noitemsep,nolistsep]
\item[(i)] for any $\omega \in \Omega^{(2L)}_{6V}(\bbO_{N,M}) $,  we have
$W_{\rm 6V}( \mathbf T (\omega) ) \geq \mathbf c^{-2M/\alpha} W_{\rm 6V}(\omega)$,
\item[(ii)]  for any $\omega' \in \Omega^{(0)}_{6V}(\bbO_{N,M}) \cap \calB(L) $, the number of preimages $|\mathbf T^{-1}(\{\omega'\})|$  is bounded by $ N^2 2^{2M / \alpha}$.
\end{itemize}
Assuming for a moment that such a map $\mathbf T$ is constructed and using the definition of the free energy $f_\mathbf c$ in Theorem~\ref{thm:Bethe}, we find
\begin{align*} 
\sum_{ \omega' \in \calB(L) } W_{\rm 6V}( \omega ' )& \stackrel{(ii)}\geq 
\!\!\!\!\!\!\!\sum_{ \omega \in \Omega_{\rm 6V}^{( 2 L)}(\bbO_{N,M})} \!\!\!\!\frac{W_{\rm 6V}( \mathbf T ( \omega ) )}{N^2 2^{2M / \alpha}} \stackrel{(i)}\geq 
 \frac{Z^{(2L)}_{N,M}\mathbf c^{-2M/\alpha}}{N^2 2^{2M / \alpha}}
=\exp[ f_\mathbf c(\alpha) MN(1+o(1))],
\end{align*}
where $o(1)$ denotes a quantity tending to 0 as $M$ and then $N$ tend to infinity. The claim thus
follows by using the definition of the free energy again to give
\[
Z_{N,M}^{(0)}=\exp[ f_\mathbf c(0)MN(1+o(1))].
\]

\begin{figure}
	\begin{center}
	\includegraphics[width=0.87\textwidth]{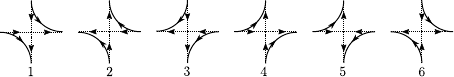}
	\caption{A deterministic map from local 6-vertex configuration to oriented loops and paths. Vertices of types 5 and 6 could be split into non-crossing paths in two ways, but we always choose the left-turning splitting.}
	\label{fig:loops}
	\end{center}
\end{figure}

We therefore turn to the construction of $\mathbf T$ (see Figures~\ref{fig:loops}--\ref{fig:loop reversal}). Consider $\omega \in \Omega^{(2L)}_{6V}(\bbO_{N,M})$.
Define the associated configuration $\overrightarrow{\omega}$ of fully-packed, noncrossing oriented loops and paths on $\bbO_{N,M}$, the paths starting and ending at the bottom or top of the cylinder: $\overrightarrow{\omega}$ is obtained from $\omega$ by splitting the arrows at each vertex into noncrossing loop/path segments. This splitting done so that $\overrightarrow{\omega}$ is a deterministic function of $\omega$ (there is only one noncrossing way to split type 1--4 vertices, while for type 5--6 vertices that could be split into two left or two right-turns, we fix an arbitrary rule, say for definiteness the left-turning splitting depicted in Figure~\ref{fig:loops}). 
Note that $\overrightarrow{\omega}$ must contain at least $2L$ paths between the bottom and the top of the cylinder, and that among all such paths, there are exactly $2L$ more that are oriented upward than downward.

 Let $\gamma_1, \ldots, \gamma_{2L}$ be upward vertical crossing paths of $\overrightarrow{\omega}$ (indexes running from left to right) such that for $1 \leq i \leq 2L- 1$, the connected component of $\bbO_{N,M} \setminus (\gamma_i \cup \gamma_{i+ 1})$ to the right of $\gamma_i$ has an equal number of up and down vertical directed paths of $\overrightarrow{\omega}$. It is not hard to check that such crossings $\gamma_1, \ldots, \gamma_{2 L}$ exist.
Such a family of paths $\gamma_1, \ldots, \gamma_{2L}$ may not be unique, so in order for $\mathbf T$ to be well-defined, we again fix some arbitrary deterministic way to choose them.

The six-vertex configurations on $\bbO_{N,M} $ have $\leq 2NM$ oriented edges and  $\gamma_1, \ldots, \gamma_{2 L}$ are edge-disjoint, so for some $1 \leq i \leq L$, we must have
\begin{align*}
\len (\gamma_i ) + \len (\gamma_{L+i} ) \leq 2MN/L\le 2M / \alpha.
\end{align*}
Let $i^*$ be the integer $i$ minimizing the left-hand side above. We finally define $\mathbf T(\omega)$ to be the six-vertex configuration obtained by reversing the arrows of $\omega$ that are either on the path $\gamma_{i^*}$ or in the connected component $C$ of $\bbO_{N,M}  \setminus (\gamma_{i^*} \cup \gamma_{L+i^*})$ to the right of $\gamma_{i^*}$\footnote{This can be seen as reversing some loops and paths of $\overrightarrow{\omega}$, which directly implies that $\mathbf T(\omega)$ indeed satisfies the ice rule.}.

\begin{figure}
	\begin{center}
	\includegraphics[width=.75\textwidth]{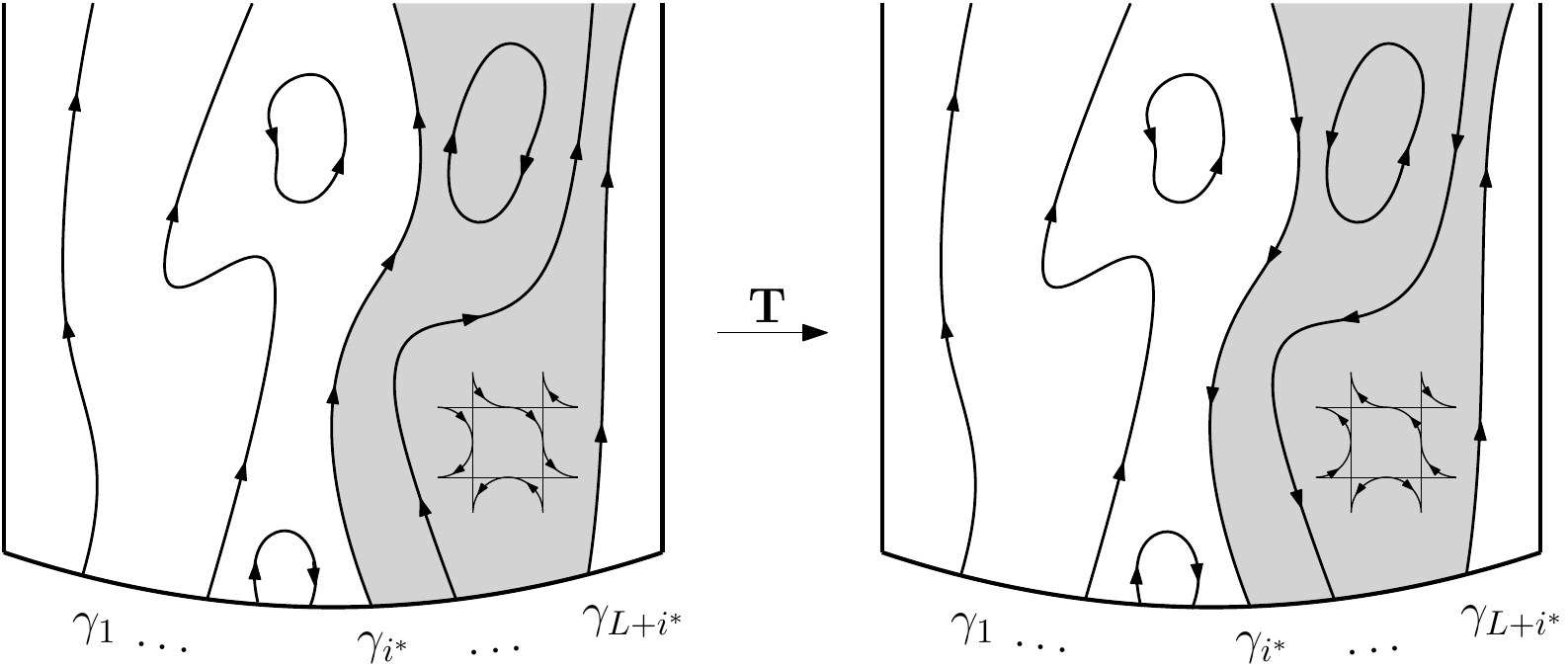}
	\caption{The construction of $\mathbf T$ by reversing the orientation of all the loops and paths (and hence the arrows of 6-vertex configuration) that are between the paths $\gamma_{i^*} $ and $ \gamma_{L+i^*}$ (grey region) or on the former.}
	\label{fig:loop reversal}
	\end{center}
\end{figure}

We now verify that $\mathbf T$ sends any configuration in $\Omega^{(2L)}_{6V}(\bbO_{N,M})$ to $ \Omega^{(0)}_{6V}(\bbO_{N,M}) \cap \calB(L)$, and that it has the desired properties (i) and (ii):
\begin{itemize}
\item 
$\mathbf T(\omega) \in \Omega^{(0)}_{6V}(\bbO_{N,M}) \cap \calB(L)$: look at the directed loops and paths of $\overrightarrow{\omega}$, after the reversal performed by $\mathbf T$.
Among the paths between the top and bottom of $\bbO_{N,M}$, on $\bbO_{N,M}  \setminus (C \cup \gamma_{i^*})$ there are $L$ more upward than downward paths, and on $(C \cup \gamma_{i^*})$ there are $L$ more downward than upward paths. It follows that $\mathbf T(\omega)$ has as many upward as downward paths, so $\mathbf T(\omega) \in \Omega^{(0)}_{6V}(\bbO_{N,M})$. The refinement $\mathbf T(\omega) \in \Omega^{(0)}_{6V}(\bbO_{N,M}) \cap \calB(L)$ follows from the same argument, as the directed loops and paths are level lines of the height function of $\mathbf T(\omega) $, and vertical $\times$-paths of constant height are formed by the faces on both sides of each path  $\gamma_1, \ldots, \gamma_{2 L}$. 

\item Property (i): when changing from $\omega$ to $\mathbf T(\omega)$, only the vertices on the paths $\gamma_{i^*}$ and $ \gamma_{L+i^*}$ may change weight. There are at most $2M/\alpha$ such vertices, each changing the six-vertex weight by a factor  at most  $\mathbf c$.

\item Property (ii): if $\omega'= \mathbf T(\omega)$ and we know $\gamma_{i^*} $ and $ \gamma_{L+i^*}$, we can reconstruct $\omega$. Regardless of $\omega'$, there are at most $N^2 2^{M / \alpha}$ possible pairs of paths $( \gamma_{i^*}, \gamma_{L+i^*} )$: at most $N^2$ pairs of first edges, and at most $2^{2M / \alpha}$ ways to choose the next at most $2M/\alpha$ edges of $\gamma_{i^*} $ and $ \gamma_{L+i^*}$ (the paths turn at every vertex). 
\end{itemize}
This finishes the proof.
\end{proof}

We now turn to the proof of Proposition~\ref{prop: Mano}.

\begin{proof}[Proposition~\ref{prop: Mano}]
Fix a root face $\rho$ on the bottom of $\bbO_{N,M}$ and, for integers $j \geq 0$ and $n, k \geq 1$, let $\calA_j(n,k)$ be the event that there are $2j+2$ vertical $\times$-crossings of the cylinder $(\gamma_i,-j\le i\le j)$ and $\gamma'$, around the cylinder in this order and such that 
\begin{itemize}[noitemsep,nolistsep]
\item $\gamma_0$ starts from  $\rho$,
\item the height $h$ on $\gamma_i$ is $0$ if $|i|\le j$ is even, and $k$ if $|i|\le j$ is odd, 
\item the height $h$ on $\gamma'$ is $(n-j)k$ if $j$ is even and $-(n-j-1)k$ if $j$ is odd.
\end{itemize}
We start by proving that for $0 \le j \leq n-2$,
\begin{equation}\label{eq:ahah}
\bbP^{(0)}_{\bbO_{N,M}}[\calA_{j}(n,k)]\le\bbP^{(0)}_{\bbO_{N,M}}[\calA_{j+1}(n,k)].
\end{equation}
In order to see this, fix $0 \le j \leq n-2$; let us assume for definiteness that $j$ is even (the case of odd $j$ can be treated in a similar fashion). For $h \in \calA_j(n,k)$ or $h \in \calA_{j+1}(n,k)$, suppose (as the choice of the vertical $\times$-crossings inducing this event may not be unique) in the following that for $i > 0$ (resp.~$i < 0$) $\gamma_i$ is taken to be the left-most (resp.~right-most) appropriate $\times$-crossings of $h=0$ or $h=k$ from the root $\rho$. Observe that the crossings $\gamma_{j+1}$ and $\gamma_{-j-1}$ thus defined exist even for $h \in \calA_j(n,k)$ due to the existence of the crossing $\gamma'$ on which $h \geq 2 k$.
Let $X(h)$ be the portion of the cylinder on or right of $\gamma_{-j-1}$ and on or left of $\gamma_{j+1}$ and $\mathcal X$ be the set of possible values of $(X(\omega),h_{|X(\omega)})$ for $\omega$ such that $\gamma_{-j-1}, \dots, \gamma_{j+1}$ exist. We can write 
\begin{align*}
\bbP^{(0)}_{\bbO_{N,M}}[\calA_j(n,k)]&=\sum_{ (X, \xi) \in \mathcal{X} } \bbP^{(0)}_{\bbO_{N,M}}[\calV^\times_{h\ge (n-j)k}(Y)\; \vert \; h_{|X}=\xi] \bbP^{(0)}_{\bbO_{N,M}}[h_{|X}=\xi],\\
\bbP^{(0)}_{\bbO_{N,M}}[\calA_{j+1}(n,k)]&=\sum_{ (X, \xi) \in \mathcal{X} } \bbP^{(0)}_{\bbO_{N,M}}[\calV^\times_{h\le -(n-(j+1)-1)k}(Y)\; \vert \; h_{|X}=\xi] \bbP^{(0)}_{\bbO_{N,M}}[ h_{|X}=\xi],
\end{align*}
where the notation $\calV^\times(Y)$ denotes the occurrence of a vertical $\times$-crossing of the discrete domain $Y$ formed of faces that are in or share a corner with a face in $\bbO_{N,M}\setminus X$. Observe that by the spatial Markov property, $h_{ \vert Y}$ under $\bbP^{(0)}_{\bbO_{N,M}}[\cdot \; \vert \; h_{|X}=\xi] $ has the law $\bbP^{B,\{k-1,k\}}_{Y} $, where the superscript denotes the boundary condition $\{k-1,k\}$ on the union $B$ of the left and right sides of $Y$. From this observation, the comparison between boundary conditions and the invariance of the height function distribution between $h$ and $2k-h$, we deduce that 
\begin{align*}
\bbP^{(0)}_{\bbO_{N,M}}[\calV^\times_{h\ge (n-j)k}(Y) \vert  h_{|X}=\xi]&=\bbP^{B,\{k-1,k\}}_{Y}[\calV^\times_{h\ge (n-j)k}(Y)]\\
&\le \bbP^{B,\{k+1,k\}}_{Y}[\calV^\times_{h\ge (n-j)k}(Y)]\\
&=\bbP^{B,\{k-1,k\}}_{Y}[\calV^\times_{h\le -(n-(j+1)-1)k}(Y)]\\
&= \bbP^{(0)}_{\bbO_{N,M}}[\calV^\times_{h\le -(n-(j+1)-1)k}(Y)\vert  h_{|X}=\xi] ,
\end{align*}
from which~\eqref{eq:ahah} follows.

We now conclude the proof of the proposition. Set $n = \lfloor  \lceil \alpha N \rceil / k \rfloor$, $\rho=s_0$ and observe that $\calB (\lceil \alpha N \rceil) \subset \calB ( nk )$. By the rotational symmetry of the measure around the cylinder, we find that $$\tfrac1N\bbP^{(0)}_{\bbO_{N,M}}[B(nk)]\le\bbP^{(0)}_{\bbO_{N,M}}[\calA_0(n,k)].$$ Using first this observation, then~\eqref{eq:ahah} iteratively $n-1$ times, and then the fact that $\calA_{n-1}(n,k)$ is contained in the union of the $\calA(S,n,k)$ over $S$, where $S$ can be chosen in $\binom{N}{2n} \leq 2^N$ ways, we find
\begin{align*}
\tfrac1N\bbP^{(0)}_{\bbO_{N,M}}[B( \lceil \alpha N \rceil  )]\le\bbP^{(0)}_{\bbO_{N,M}}[\calA_0 (n,k)]\le \bbP^{(0)}_{\bbO_{N,M}}[\calA_{n-1} (n,k)]\le 2^N \max_S\bbP^{(0)}_{\bbO_{N,M}}[\calA(S,n,k)].
\end{align*}
The claim now follows from Lemma~\ref{lem: Mano}.
\end{proof}

\subsection{Proof of Theorem~\ref{thm:RSW_origins}}\label{sec:crossing to annulus}

\paragraph{Parameters and their relations}
We fix the following parameters for the rest of this subsection.
\begin{itemize}[noitemsep,nolistsep]
\item[(i)] Let $\delta > 0$ be an absolute constant so that both Proposition~\ref{prop:RSW1} and Theorem~\ref{thm:RSW} hold\footnote{
The inequalities in Proposition~\ref{prop:RSW1} and Theorem~\ref{thm:RSW} both trivially remain true if we adjust $\delta$ smaller, so there exists such $\delta$ that both hold.
} for this value of $\delta$.
Set $\eta = \delta /12$ and let $ c_0 > 0$ be the absolute constant given by Proposition~\ref{prop:RSW1} in~\eqref{eq:first RSW input to main thm} below.
\item[(ii)] Fix integers $k$ and $r$ with $k$ large and $r > 2k/\eta$; they correspond to those appearing
in Theorem 1.7.
\item[(iii)] Introduce the additional parameters $N, M \in \bbN$, with $N $ even. 
We will ultimately take $M$ and $N$ to infinity (in this order). 
Given $k$ and $r$, we will only work with pairs $N, M$ and their subsequential limits such that
\begin{align}\label{eq:MN}
n := \tfrac{N}{\eta r} \qquad \text{ and } \qquad m := \tfrac{M}{r},
\end{align}
are integers\footnote{By the previous footnote, we may assume $\delta /12 = \eta \in \bbQ$.} 
and $m$ is divisible by $3$.
Finally, we set $\alpha = \frac{ k }{ \eta r}$ so that the relation $\lfloor  \lceil \alpha N \rceil / k \rfloor = n$  of Proposition~\ref{prop: Mano} holds (note also that $\alpha \in (0, 1/2)$ as required).
\end{itemize}
In spite of this hierarchy, we will treat below $k$ and $r$ as ``any integers'', re-stating separately any assumptions on them. In particular, this will clarify the fact that when $k> k_0$ is chosen large enough, the lower bound $k_0$ only depends on parameters that are absolute constants.

\paragraph{The setup for the proof}
Let $S = \{s_0,\dots, s_{2n-1}\}$ be the set of faces, as in Proposition~\ref{prop: Mano}, that maximizes the probability $\bbP^{(0)}_{\bbO_{N,M}} [ \calA(S,n,k) ]$.
Let $\calX$ be the union of the clusters of $h\le 1$ of $s_0,s_2,\dots,s_{2n-2}$ and their bounding $\times$-paths of $h=2$.
 Since  
\begin{align*}
	\bbP^{(0)}_{\bbO_{N,M}} [ \calA(S,n,k) ] 
	= \sum_{X} \bbP^{(0)}_{\bbO_{N,M}} [ \calA(S,n,k) \,|\, \calX = X]\bbP^{(0)}_{\bbO_{N,M}} [\calX = X],
\end{align*}
one may find a realisation $X$ of $\calX$ such that 
\begin{align}\label{eq:hu1}
	\bbP^{(0)}_{\bbO_{N,M}} [ \calA(S,n,k) \,|\, \calX = X] \geq \bbP^{(0)}_{\bbO_{N,M}} [ \calA(S,n,k) ].
\end{align}
Fix $X$ to be such a realisation.

\begin{figure}[t]
\begin{center}
\includegraphics[width=0.36\textwidth]{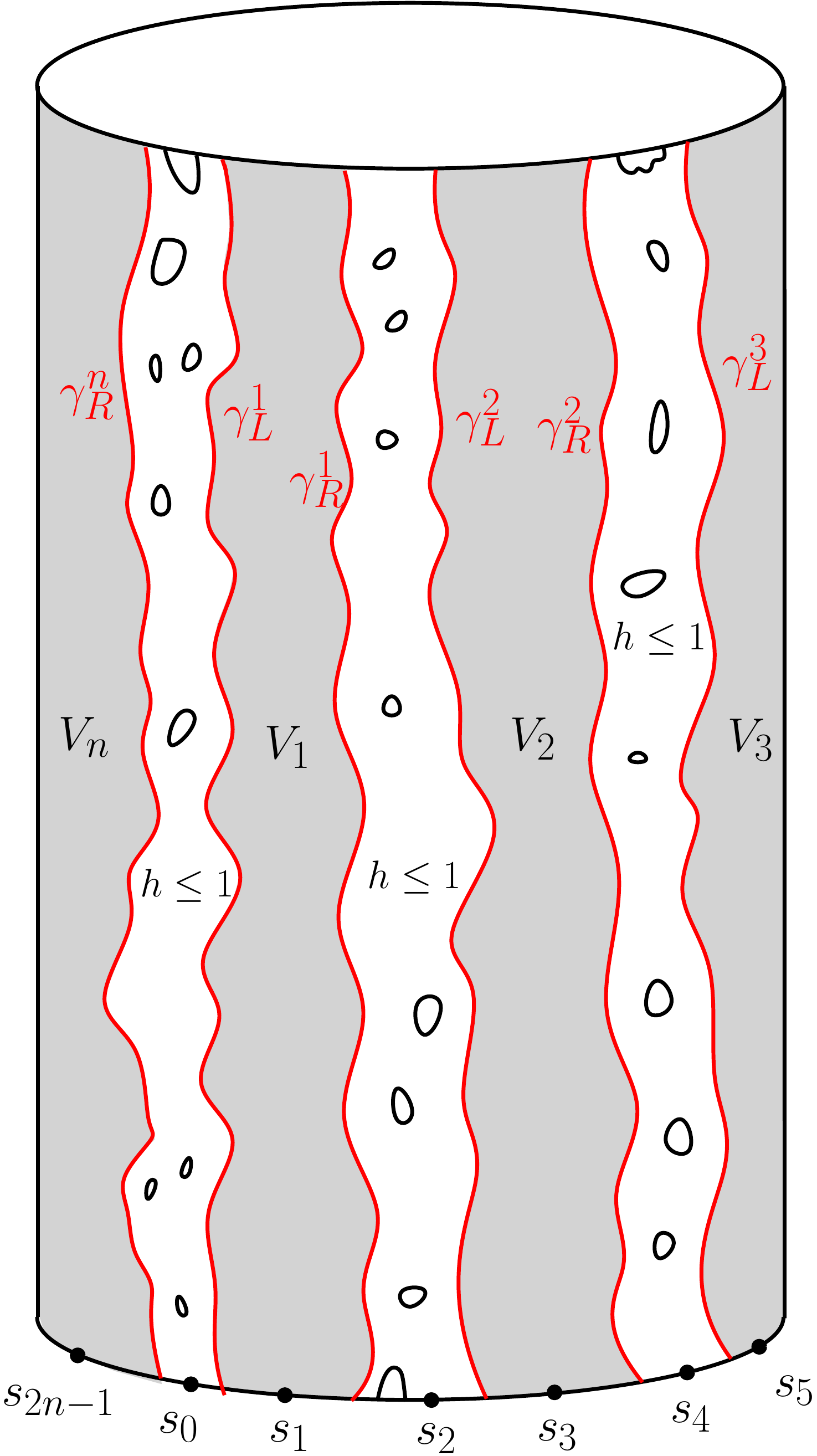}
\caption{An illustration of the setup of the proof of Theorem~\ref{thm:RSW_origins}. }
\label{fig:vertical domains}
\end{center}
\end{figure}

Now, $X$ is such that it does not exclude $\calA(S,n,k)$. In particular, $\mathbb O_{N,M} \setminus X$ contains $n$ regions $\tilde V_1,\dots, \tilde V_n$ (containing the faces $s_1,s_3,\dots s_{2n - 1} $, respectively) and $\calA(S,n,k)$ means that each of them contains a vertical $\times$-crossing of $\mathbb O_{N,M}$ of height at least $k$. 
Write $V_i$ for the discrete domain formed of faces of $\mathbb O_{N,M}$ in $\tilde V_i$ or sharing a corner with a face in $\tilde V_i$. Note that $V_i$ has a natural quad structure, with top and bottom sides on the top and bottom of $\bbO_{N,M}$ and left and right sides given by the faces of $X \cap V_i$ with height in $\{1, 2\}$; denote the latter two paths by $\gamma_L^i$ and $\gamma_R^i$, respectively, and orient them from bottom to top. See Figure~\ref{fig:vertical domains}.

For $1 \leq y \leq m$ let $\Slice_y$ be the translate by $(0,(y-1) r)$ of $\bbO_{N,r}$, seen as a subset of $\bbO_{N,M}$.
These horizontal slices form a partition of  $\bbO_{N,M}$. We next will construct several sub-domains of $V_x$ defined in terms of its intersection with certain slices. 

Notice that $V_x \cap \Slice_y$ may be formed of several domains (i.e. connected components). 
Call a domain of $V_x \cap \Slice_y$ that intersects both the top and bottom of $\Slice_y$ a {\em valid} domain.
The boundary of any valid domain is formed of segments from the top and bottom of $\Slice_y$ and sub-paths of $\gamma_L^x$ and $\gamma_R^x$. Among the latter, there exists exactly two sub-paths with one endpoint on the bottom of $\Slice_y$ and one on the top of $\Slice_y$. 
These two sub-paths bound the domain on the left and on the right; we will call then the left and right boundary of the domain. 
Generally the left and right boundary of a valid domain may be both part of the same path $\gamma_x^L$ or $\gamma_x^R$, or may be one part of $\gamma_x^L$ and the other part of $\gamma_x^R$. In the latter case, we call the valid domain {\em traversing}. 

Several geometric observations follow. First, any $V_x \cap \Slice_y$ contains at least one traversing domain. Second, any path running inside $V_x$ from the bottom to the top of  $\bbO_{N,M}$ intersects all traversing domains.
Finally, the traversing domains of $V_x \cap \Slice_y$ may be naturally ordered from bottom to top, with one traversing domain being considered below another, if the former may be connected to the bottom of $\bbO_{N,M}$ by a path inside $V_x$ which avoids the latter. 

Fix $y \equiv 2 $ mod~$3$. Then $\Slice_{y - 1} \cup \Slice_{y } \cup \Slice_{y + 1}$ also forms a slice of $\bbO_{N,M}$, and the denominations above apply to $V_x \bigcap (\Slice_{y - 1} \cup \Slice_{y } \cup \Slice_{y + 1} )$. 
Let  $U_{x, y}$ denote the bottom-most traversing domain of $V_x \bigcap (\Slice_{y - 1} \cup \Slice_{y } \cup \Slice_{y + 1} )$; 
write $\vartheta_L^{x,y}$ and $\vartheta_R^{x,y}$ for its left and right boundary, respectively. 
One may observe that, due to $U_{x,y}$ being bottom-most, $\vartheta_L^{x,y}$ is contained in $\gamma_L^x$ 
and $\vartheta_R^{x,y}$  is contained in $\gamma_L^x$. 
Write $\overline{U}_{x, y}$ for the domain of $(\Slice_{y - 1} \cup \Slice_{y } \cup \Slice_{y + 1})$ contained between $\vartheta_L^{x,y}$ and $\vartheta_R^{x,y}$, so that $U_{x, y} \subset \overline{U}_{x, y}$. 
See Figure~\ref{fig:first quad fig}~(left).

For $y \equiv 0$ or $1$ mod~$3$, define $Q_{x, y}$ as the union of valid domains of $U_{x, y} \cap \Slice_{y}$.
Also, let $\gamma_L^{x, y}$ (resp. $\gamma_R^{x, y}$) be the unique sub-path of $\vartheta_L^{x,y}$ (resp. $\vartheta_R^{x,y}$) between the top and bottom of $\Slice_y$ -- the uniqueness comes from the fact that, when $y \equiv 0$ mod~$3$, $\vartheta_L^{x, y}$ and $\vartheta_R^{x, y}$ intersect the top of $\Slice_y$ exactly once, while when $y \equiv 1$ mod~$3$, they intersect the bottom of $\Slice_y$ exactly once.
Then, $Q_{x,y}$ is contained between $\gamma_L^{x, y}$ and $\gamma_R^{x, y}$; see Figure~\ref{fig:first quad fig} (right). 

For $y \equiv 2 $ mod~$3$, defined $Q_{x,y}$ as the bottom-most traversing domain of $V_x \cap \Slice_y$ which is contained in $U_{x,y}$ 
(as above, it may be observed that $U_{x,y}$ contains at least one traversing domain of  $V_x \cap \Slice_y$). 
Write $\gamma_L^{x, y}$ and $\gamma_R^{x, y}$ for the left and right boundary of $Q_{x,y}$, respectively.
Note that $\gamma_L^{x, y}$ is {\em not} necessarily contained in $\vartheta_L^{x,y}$ or  $\vartheta_R^{x,y}$ -- see Figure~\ref{fig:first quad fig} (right) for an example. 
A key feature of this construction is that any path contained in $V_x$, linking $Q_{x,y}$ to the bottom of $\Slice_{y-1}$ or the top of $\Slice_{y+1}$, necessarily contains a vertical crossing of $Q_{x,y-1}$ or $Q_{x,y+1}$, respectively. 

\begin{figure}[t]
    \begin{center}
    \includegraphics[width=0.75\textwidth]{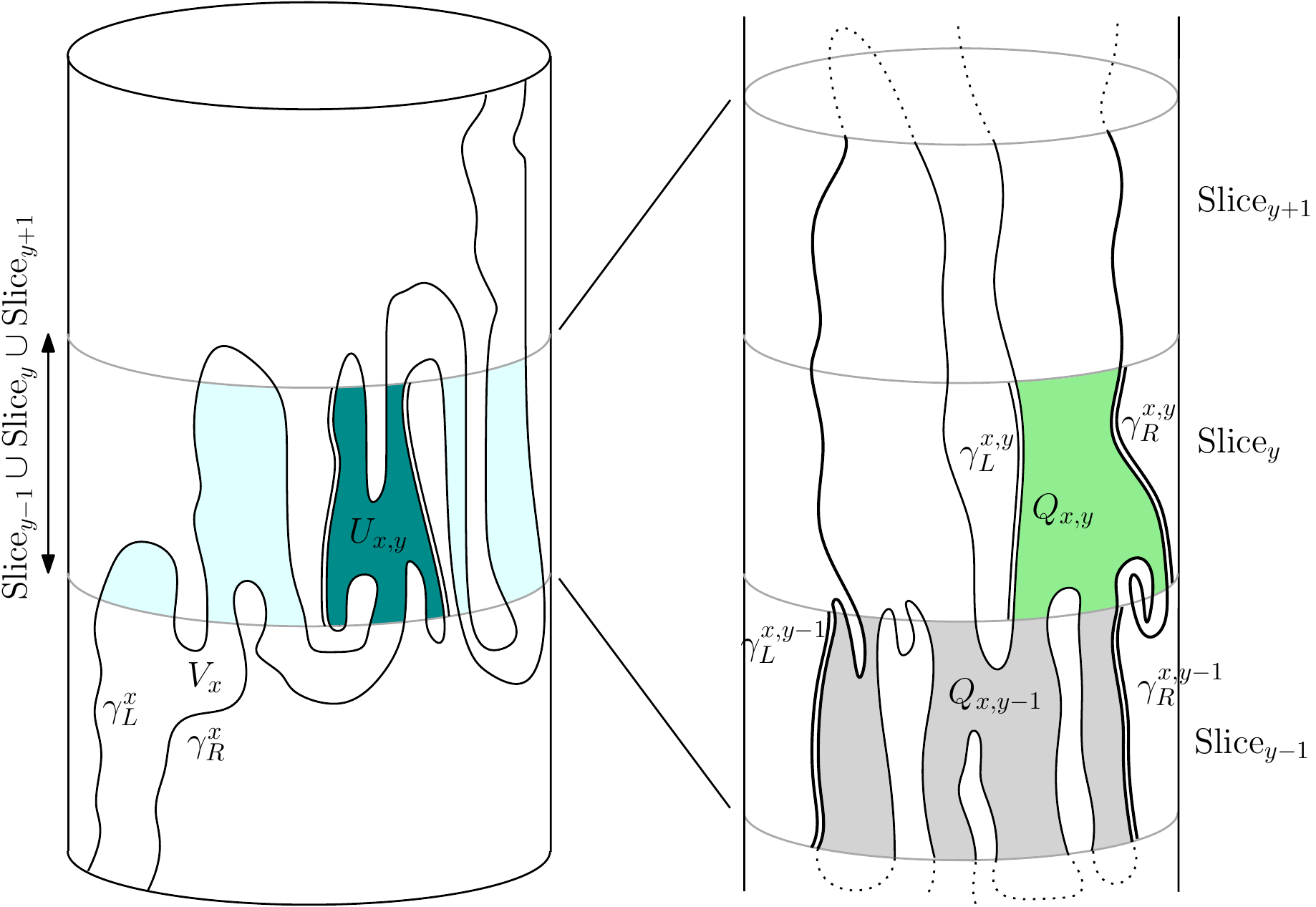}
    \caption{{\em Left:} $V_x \bigcap (\Slice_{y - 1} \cup \Slice_{y } \cup \Slice_{y + 1} )$ may consist of several domains; $U_{x, y}$ is the bottom-most traversing one. The curves $\vartheta_L^{x,y}$ and $\vartheta_R^{x,y}$ are highlighted by doubled lines.
    {\em Right:} $U_{x, y} \bigcap \Slice_{y-1}$ may consist of several domains; those that cross the slice vertically form $Q_{x, y-1}$.
    For the middle slice, $Q_{x,y}$ is defined in the same way as $U_{x,y}$. It is separated inside $V_x$ from the bottom and top of the cylinder by $Q_{x,y-1}$ and $Q_{x,y+1}$, respectively. 
    The curves $\gamma_L^{x,y-1}$, $\gamma_R^{x,y-1}$, $\gamma_L^{x,y}$ and $\gamma_R^{x,y}$ are highlighted.}
    \label{fig:first quad fig}
    \end{center}
\end{figure}

For all $x,y$, set $\overline{Q}_{x, y}$ to be the domain of $\Slice_y$ contained between $\gamma_L^{x,y}$ and $\gamma_R^{x,y}$.
Thus $Q_{x, y} \subset \overline{Q}_{x, y}$ and the latter has a natural quad structure, 
with two arcs formed by $\gamma_L^{x,y}$ and $\gamma_R^{x,y}$ and the two others formed by parts of the top and bottom of $\Slice_{y}$, respectively. 
Denote the top and bottom boundary arcs of $ \overline{Q}_{x, y} $ by ${\rm Top}_{x,y}$ and ${\rm Bottom}_{x,y}$, respectively. 
We call $\overline Q_{x,y}$ {\em tight} if ${\rm Top}_{x,y}$ and ${\rm Bottom}_{x,y}$ each consist of at most $\lfloor \delta r \rfloor $ faces (where $\delta >0$ is the absolute constant described above).
Furthermore, for $(x,y)$ with $y \equiv 2 $ mod~$3$, we say that $(x,y)$ is {\em good} if $\overline Q_{x,y-1}$, $\overline Q_{x,y}$ and $\overline Q_{x,y+1}$ are all tight. 

\begin{lemma}\label{lem:good_Q}
At least half of the pairs $(x,y)$, with $1 \leq x \leq n$ and $1 \leq y \leq m$ and $y \equiv 2 $ mod~$3$, are good.
\end{lemma}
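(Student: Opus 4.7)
The plan is a counting (pigeonhole) argument based on the fact that each slice $\Slice_y$ has fixed perimeter $N$, while the tops and bottoms of the quads $\overline{Q}_{x,y}$ for different $x$ are essentially disjoint subsets of the top and bottom of $\Slice_y$.

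First, I would verify that for each fixed $y \in \{1, \dots, m\}$, the sums $\sum_{x=1}^n |{\rm Top}_{x,y}|$ and $\sum_{x=1}^n |{\rm Bottom}_{x,y}|$ are bounded by $N$ (up to at most one shared boundary face per adjacent pair of quads, which we can absorb into a negligible $O(n)$ correction). This should follow because the quads $\overline{Q}_{x,y}$ lie in the disjoint vertical strips $V_x$, whose interiors partition $\bbO_{N,M} \setminus X$, so their intersections with the top row of $\Slice_y$ overlap at most at isolated endpoints of the bounding paths $\gamma_L^{x,y}, \gamma_R^{x,y}$.

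Second, by Markov's inequality, the number of $x \in \{1, \dots, n\}$ such that $|{\rm Top}_{x,y}| > \lfloor \delta r \rfloor$ is at most $N/(\lfloor \delta r \rfloor + 1) < N/(\delta r)$, which equals $\eta n/\delta = n/12$ by the choices $N = \eta r n$ and $\eta = \delta/12$. Applying the same bound for the Bottom, the number of $x$'s for which $\overline{Q}_{x,y}$ fails to be tight is strictly less than $n/6$, uniformly in $y$.

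Finally, since the triples $\{y-1, y, y+1\}$ for $y \equiv 2 \pmod 3$ partition $\{1, \dots, m\}$, and since $(x, y)$ fails to be good iff at least one of $\overline{Q}_{x, y-1}, \overline{Q}_{x, y}, \overline{Q}_{x, y+1}$ is not tight, a union bound yields strictly fewer than $3 \cdot n/6 = n/2$ bad values of $x$ for each $y \equiv 2 \pmod 3$. Hence at least $n/2$ good $x$'s for each such $y$, and summing over the $m/3$ relevant values of $y$ produces at least $(m/3)(n/2) = nm/6$ good pairs, which is precisely half of the total $n \cdot m/3$. The only subtle point is the geometric disjointness claim in the first step, which must be read off carefully from the construction of the $\overline{Q}_{x,y}$'s in the preceding paragraphs; everything afterwards is elementary arithmetic.
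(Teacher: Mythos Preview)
Your overall strategy is exactly the paper's: bound $\sum_x |{\rm Top}_{x,y}|$ and $\sum_x |{\rm Bottom}_{x,y}|$ by $N$, apply Markov's inequality with threshold $\delta r = 12\eta r = 12N/n$ to get at most $n/12$ long tops and $n/12$ long bottoms per slice, and then take a union over the three slices of each triple. The arithmetic matches.

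The gap is precisely where you flag the ``only subtle point''. Your justification for disjointness --- that $\overline{Q}_{x,y}$ lies in $V_x$ --- is false. By definition $\overline{Q}_{x,y}$ is the full region of $\Slice_y$ between $\gamma_L^{x,y}$ and $\gamma_R^{x,y}$, and this region can contain pockets of $X$ and, a priori, pieces of other $V_{x'}$ (the paper explicitly warns that $\overline{Q}_{x,y'}$ need not be contained in $\overline{U}_{x,y}$; see Figure~\ref{fig:first quad fig}, right). So the inclusion $\overline{Q}_{x,y}\subset V_x$ does not hold, and hence neither does the immediate disjointness of the ${\rm Top}_{x,y}$'s that you deduce from it.

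The paper closes this gap by a short topological argument: first it shows the $\overline{U}_{x,y}$'s (for fixed $y$) are pairwise disjoint, using that $\vartheta_L^{x',y},\vartheta_R^{x',y}$ avoid $V_x$ while $\overline{U}_{x,y}\cap V_x$ contains a path joining $\vartheta_L^{x,y}$ to $\vartheta_R^{x,y}$, which forces non-nesting by symmetry; it then bootstraps this to the $\overline{Q}_{x,y'}$'s for each $y'\in\{y-1,y,y+1\}$. Once that disjointness is in place, your Markov/union-bound steps go through verbatim. So your outline is correct modulo replacing the incorrect inclusion argument by this Jordan-curve style reasoning.
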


\begin{proof}
	We will actually prove a slightly stronger claim: namely, at least half of the $n$ pairs $(x,y)$, 
	with fixed $1 \leq y \leq m$, $y \equiv 2 $ mod~$3$, are good. Fix thus one such $y$ for the rest of the proof.
		
	We first claim that the quads $\overline U_{x,y}$ with $1 \leq x \leq n$ are disjoint from each other.
	Fix~$x \neq x'$. Note that the paths $\vartheta_L^{x',y}$ and $\vartheta_R^{x',y}$ do not intersect $V_x$, and in particular not $\vartheta_L^{x,y}, \vartheta_R^{x,y} \subset V_x$. 
	Furthermore, $\overline U_{x,y} \cap V_x$ contains a horizontal path between $\vartheta_L^{x,y}$ and $\vartheta_R^{x,y}$;
	it follows that each of $\vartheta_L^{x',y}$ and $\vartheta_R^{x',y}$ lies entirely outside of $\overline U_{x,y}$. In particular, either $\overline U_{x,y}$ and $\overline U_{x',y}$ are disjoint, or $\overline U_{x,y} \subset \overline U_{x',y}$.	
	Symmetry allows us to discard the latter scenario, and we conclude that the quads $\overline U_{x,y}$ with $1 \leq x \leq n$ are disjoint. 
	
	Next, we claim that for any fixed $y' \in \{y, y \pm 1\}$, the quads $\overline  Q_{x,y'}$, with $1 \leq x \leq n$ are also disjoint. 
	(It is worth observing that that $\overline  Q_{x,y'}$ is not necessarily contained in  $\overline U_{x,y}$ 
	(see Figure~\ref{fig:first quad fig}, right diagram), so this is not immediate from the above.)
	For $y' = y$, the same proof as for $\overline U_{x,y}$ applies. 
	For $y' = y \pm 1$, a slight alteration of the argument above is necessary. 
	Indeed, for $x\neq x'$, since $\overline U_{x,y}\cap \overline U_{x',y} = \emptyset$,
	the paths $\gamma_L^{x',y'}$ and $\gamma_R^{x',y'}$ do not intersect $\overline U_{x,y}$, in particular not $\gamma_L^{x,y'}$ nor $\gamma_R^{x,y'}$. 
	But $\overline Q_{x,y'} \cap \overline U_{x,y}$ contains a path from $\gamma_L^{x,y'}$ to $\gamma_R^{x,y'}$, 
	namely its bottom or top, so both $\gamma_L^{x',y'}$ and $\gamma_R^{x',y'}$ are entirely outside of $\overline Q_{x,y'}$. Thus,  either $\overline Q_{x,y'}$ and $\overline Q_{x',y'}$ are disjoint, or $\overline Q_{x,y'} \subset \overline Q_{x',y'}$, and the latter is again excluded by symmetry.

	From the above, we conclude that for any $y' \in \{y, y \pm 1\}$,  
	the disjoint union of ${\rm Bottom}_{x,y'}$ for $x = 1,\dots, n$ is contained in one row of $N$ faces of $\bbO_{N,M}$.
	By the pigeon hole principle, at least a proportion $11/12$ of the quad bottoms $({\rm Bottom}_{x,y'})_{1\leq x\leq n}$ contain at most
	$12 N/n =  12 \eta r = \delta r$ faces (using the relations of the various parameters).
	The same holds for the tops of the quads $(\overline Q_{x,y'})_{1\leq x\leq n}$, 
	and we conclude that out of the $n$ quads $(\overline Q_{x,y'})_{1\leq x\leq n}$, there are at most $n/6$ quads that are not tight.
	Finally, out of the $n$ triplets of quads 
	$(\overline Q_{x,y-1}, \overline Q_{x,y}, \overline Q_{x,y+1})$, at least $n/2$ are formed exclusively of tight quads. 
\end{proof}

Let now $\calR$ be the ``ridge event'' that each $Q_{x,y}$ with $y \equiv 2 $ mod~$3$ contains a $\times$-path of $h \geq k$ between 
 ${\rm Top}_{x,y}$ and ${\rm Bottom}_{x,y}$. 
Then, we have $\calA(S,n,k) \subset \calR $ and 
\begin{align}\label{eq:hu2}
	\bbP^{(0)}_{\bbO_{N,M}} [ \calR \,|\, \calX = X] \geq \bbP^{(0)}_{\bbO_{N,M}} [ \calA(S,n,k) |\calX=X].
\end{align}

Moreover, define the ``fencing event'' $\calF$ that for each $(x,y)$ with $y \equiv 2 $ mod~$3$ which is good, 
$Q_{x,y-1}$ and $Q_{x,y+1}$ \textit{do not} contain $\times$-paths of $h \geq (1-c_0)k + 1$ (where $c_0$ is the absolute constant defined above) between the top and bottom of $\Slice_{y-1}$ and $\Slice_{y+1}$, respectively. (Equivalently, by Remark~\ref{rem:dual_crossings}, each component of $Q_{x,y-1}$ and $Q_{x,y+1}$ is crossed horizontally by a path of $h \leq (1-c_0)k $.)

\paragraph{The key lemmas}
The proof of Theorem~\ref{thm:RSW_origins} hinges on two lemmas which we now state and prove. 

\begin{lemma}[Building fences]\label{lem:fences}
With the parameters and notations above, we have for all $r > 0$ and all $k >k_0(c_0)$ large enough
	\begin{align*}
		\bbP^{(0)}_{\bbO_{N,M}} [\calF \,|\,\calR \text{ and } \calX = X] \geq c_0^{n m}.
	\end{align*}
\end{lemma}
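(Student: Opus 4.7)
}

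The natural strategy is to decompose the fencing event via the spatial Markov property and apply Proposition~\ref{prop:RSW1} locally. The collection of domains $\{\overline Q_{x,y+\epsilon} : (x,y) \in \calG,\, \epsilon \in \{\pm 1\}\}$ is pairwise disjoint: two such domains with different $x$ but the same $(y,\epsilon)$ are disjoint by the argument used in the proof of Lemma~\ref{lem:good_Q}, while two with different $(y,\epsilon)$ lie in different slices of $\bbO_{N,M}$. Let $W$ denote the complement of their union. Both $\calR$ and $\{\calX = X\}$ are measurable with respect to $h|_W$, since the paths of height $\geq k$ defining $\calR$ lie in the $Q_{x,y} \subset \Slice_y \subset W$ for $y \equiv 2 \bmod 3$. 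By the spatial Markov property (Proposition~\ref{prop: SMP}), conditional on $h|_W$ the restrictions of $h$ to distinct $\overline Q_{x,y\pm 1}$ are independent, each distributed as $\bbP^\xi_{\overline Q_{x,y\pm 1}}$ for the boundary trace $\xi$, and therefore
\[
\bbP^{(0)}_{\bbO_{N,M}}[\calF \mid h|_W] = \prod_{(x,y)\in\calG,\,\epsilon\in\{\pm 1\}}\bbP^\xi_{\overline Q_{x,y+\epsilon}}\bigl[\nexists\,\text{$\times$-path of }h\geq(1-c_0)k+1\text{ in }Q_{x,y+\epsilon}\bigr].
\]

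For each factor, by symmetry I focus on $\overline Q_{x,y-1}$. This quad has sides $\gamma_L^{x,y-1}, \gamma_R^{x,y-1}$ on which $h \in \{1,2\}$ (as subpaths of the bounding $\times$-paths of $h = 2$ around the cluster $X$) and top/bottom arcs of length at most $\delta r$ each, since $(x,y)$ is good. My plan is to unroll $\Slice_{y-1}$ to a strip $\bbZ \times [0, r]$ and view ${\rm Top}_{x,y-1}, {\rm Bottom}_{x,y-1}$ as the slits in Proposition~\ref{prop:RSW1}. Using \eqref{eq:CBC-h} together with Corollary~\ref{cor:pushing_modified} -- where the factor $2$ vanishes since every $\times$-path between top and bottom of $\overline Q_{x,y-1}$ touches $\partial \Slice_{y-1}$, and where the $+2$ height shift introduced by the Corollary is absorbed by the extra $+1$ in our threshold $(1-c_0)k+1$ versus the $(1-c_0)k$ of Proposition~\ref{prop:RSW1} -- I would compare to a strip measure with boundary values in $\{0,1\}$ outside the slits and prescribed values on the slits. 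Proposition~\ref{prop:RSW1} then yields the upper bound $1-c_0$ on the crossing probability, so each factor in the product is at least $c_0$. Since $|\calG| \leq nm/3$ by Lemma~\ref{lem:good_Q}, the product has at most $2nm/3$ factors, yielding $\bbP^{(0)}_{\bbO_{N,M}}[\calF \mid h|_W] \geq c_0^{2nm/3} \geq c_0^{nm}$ almost surely on $\calR \cap \{\calX = X\}$, and averaging concludes.

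The main obstacle lies in the boundary comparison just described: the values of $\xi$ on the slits come from the conditioning and may a priori exceed $k$, whereas Proposition~\ref{prop:RSW1} is stated for slit values at most $k$. To resolve this I would apply Proposition~\ref{prop:RSW1} with the threshold parameter $K \geq k$ equal to the maximum slit value of $\xi$ and use that, in the relevant range of $K$, the event of a $\times$-path of $h \geq (1-c_0)k+1$ is contained in that of a $\times$-path of $h \geq (1-c)K$, via a careful choice of the constants $c, c_0$. The tightness condition in the definition of ``good'' and the low heights on the side paths $\gamma_L^{x,y-1}, \gamma_R^{x,y-1}$ are essential for matching the hypotheses of Proposition~\ref{prop:RSW1}.
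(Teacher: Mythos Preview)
Your decomposition has a genuine gap that your own final paragraph half-identifies but does not actually close. When you condition on $h|_W$, the values of $h$ on ${\rm Top}_{x,y\pm 1}$ and ${\rm Bottom}_{x,y\pm 1}$ come from whatever height the configuration happens to have there, and on the event $\calR$ these may be arbitrarily large (indeed, $\calR$ only \emph{forces} large heights in the middle slices). Your proposed fix is to apply Proposition~\ref{prop:RSW1} with threshold $K$ equal to the maximal slit value and argue that a $\times$-crossing of $h\ge(1-c_0)k+1$ implies one of $h\ge(1-c)K$. But that inclusion requires $(1-c_0)k+1\ge(1-c)K$, which fails once $K$ exceeds roughly $k$; there is no mechanism in your conditioning that prevents $K\gg k$. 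So the comparison to Proposition~\ref{prop:RSW1} breaks down, and no choice of constants repairs it. A secondary problem is that $\{\calX=X\}$ is not $h|_W$-measurable: the interiors of the $\overline Q_{x,y\pm1}$ can contain pieces of $\gamma_L^x$, $\gamma_R^x$ (and hence of $X$) other than $\gamma_L^{x,y\pm1}$, $\gamma_R^{x,y\pm1}$, so the averaging step at the end is not justified.

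The paper resolves both issues at once by a different conditioning: before building fences, it \emph{explores} $\calR$ by revealing, in each $Q_{x,y}$ with $y\equiv 2\bmod 3$, the cluster of $h\le k-1$ containing $\gamma_L^{x,y}$ and its bounding $\times$-path of $h=k$. This exploration decides $\calR$ while revealing only faces of height at most $k$. Conditioning on this exploration together with $\calX=X$ yields a height-function measure whose fixed values are all $\le k$, so Corollary~\ref{cor:pushing_modified} (for $-h$) legitimately reduces each fencing event to the situation of Proposition~\ref{prop:RSW1} with slit height $k$. The individual bounds $\ge c_0$ are then combined not via independence but via \eqref{eq:FKG-h}, which applies to the conditional measure and handles the fact that the domains $Q_{x,y\pm1}$ are not cleanly separated by the conditioning.
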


\begin{proof}
	The occurrence of $\calR$ may be determined by exploring, for each $y$ with $y \equiv 2 $ mod~$3$, the component of $h \leq k-1$ in $Q_{x, y}$ that contains $\gamma_L^{x, y}$, and the $\times$-paths of $h=k$ bounding this component. Indeed, either the component of $h \leq k-1$ reaches $\gamma_R^{x, y}$, hence preventing any vertical $\times$-path of $h \geq k$, or $\gamma_L^{x, y}$ and $\gamma_R^{x, y}$ are separated in $Q_{x, y}$ by a $\times$-path of $h=k$, which due to the boundary conditions traverses from ${\rm Top}_{x,y}$ to ${\rm Bottom}_{x,y}$.
	This exploration only reveals faces in $Q_{x,y}$ with height at most $k$. Let ${\sf Exp}$ denote the random pair of faces and heights explored in this procedure. 
	Then 
	\begin{align}
	\label{eq:explore fences}
		\bbP^{(0)}_{\bbO_{N,M}} & [\calF \,|\,\calR \text{ and } \calX = X] \\
		& = \sum_{(E, h_{ \vert E} ) }\bbP^{(0)}_{\bbO_{N,M}} [\calF \,|\,{\sf Exp} = (E, h_{ \vert E} )  \text{ and } \calX = X] \bbP^{(0)}_{\bbO_{N,M}} [ {\sf Exp} = (E, h_{ \vert E} )  \, | \, \calR \text{ and } \calX = X],
			\nonumber
	\end{align}
	where the sum is over all possible realisations $(E, h_{ \vert E} ) $ of ${\sf Exp}$ such that $\calR$ occurs. 
	
\begin{figure}
\begin{center}
\includegraphics[width=0.55\textwidth]{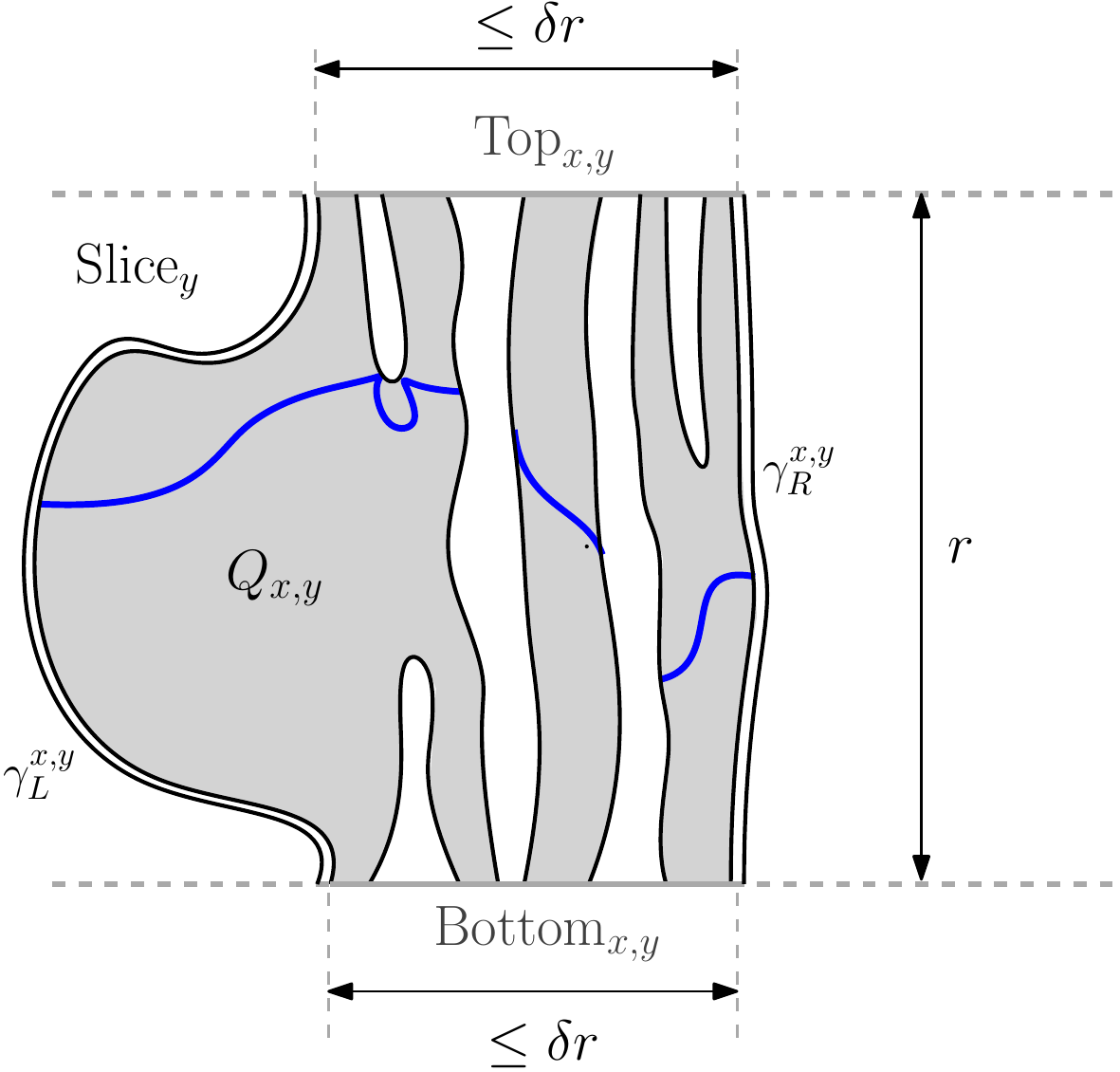}
\caption{
\label{fig:fence lemma}
An illustration for the proof of Lemma~\ref{lem:fences}. In every domain of $Q_{x, y}$, exactly two boundary segments in $\gamma_L^x \cup \gamma_R^x$ (in black) cross $\Slice_y$ vertically; the left- and right-most such crossings are $\gamma_L^{x,y}$ and $\gamma_{R}^{x,y}$, respectively.
 The boundary condition $\xi$ on $Q_{x,y}$ is $\{1, 2\}$  on the black parts of $\partial Q_{x, y}$, and their maximal extension which is smaller or equal to $k$ on the gray parts. 
The blue paths form a fence: they separate ${\rm Top}_{x,y}$ from ${\rm Bottom}_{x,y}$ inside $Q_{x,y}$ and have~$h \leq (1-c_0)k$.
}
\end{center}
\end{figure}

	Fix some $(E, h_{ \vert E} ) $ as above such that $\calR$ occurs, and fix $(x,y)$ with $y \not \equiv 2 $ mod~$3$, such that $\overline Q_{x,y}$ is tight. Recall the ``dual formulation'' of $\calF$
	and denote 
	\begin{align*}
	\calF^{x, y}_\ell = \{ \text{there is a left-to-right crossings of $h\leq \ell $ in each component of $Q_{x,y}$} \}
	\end{align*}
	(the meaning of ``left-to-right'' is explained with Figure~\ref{fig:fence lemma}).
	Recall that $E$ contains no face in $Q_{x,y}$.
	Due to~\eqref{eq:pushing_11} applied to $-h$, we now have
	\begin{equation}
	\label{eq:starting chain of inequalities here}
		\bbP^{(0)}_{\bbO_{N,M}}  [ \calF^{x, y}_{ (1-c_0)k } \,|\,{\sf Exp}  = (E, h_{ \vert E} )  \text{ and } \calX = X]
		\geq \bbP^{\xi}_{Q_{x,y}} [ \calF^{x, y}_{ (1-c_0)k -2 }   ] ,
	\end{equation}
	where $\xi$ is the largest boundary conditions on $\partial Q_{x,y}$ 
	that is at most $k$ and takes values in $\{1, 2\}$ on $\partial Q_{x,y} \setminus ({\rm Top}_{x,y}\cup {\rm Bottom}_{x,y})$.

	Now, going back to the ``primal formulation'' of $\calF$ and using Remark~\ref{rem:dual_crossings}, we have 
	\begin{align*}
		\bbP^{\xi}_{ Q_{x,y}} [ \calF^{x, y}_{ (1-c_0)k -2 }  ] 
		&\geq 1  - \bbP^{\xi}_{Q_{x,y}} [ {\rm Bottom}_{x,y}\xlra{h \geq (1-c_0)k - 2 \text{ in } Q_{x,y}}  {\rm Top}_{x,y}];
	\end{align*}
here and the rest of this proof, we omit integer roundings in $\lfloor (1-c_0 ) k \rfloor$.
	Then, by Corollary~\ref{cor:pushing_modified} and then inclusion of events,
		\begin{align*}
		\bbP^{\xi}_{Q_{x,y}} [ &{\rm Top}_{x,y}\xlra{h \geq (1-c_0)k - 2 \text{ in } Q_{x,y}}  {\rm Bottom}_{x,y}]\\
		& \leq \bbP^{\xi'}_{\bbZ \times [yr, (y+1) r]}	[ {\rm Top}_{x,y}\xlra{h \geq (1-c_0)k -4 \text{ in } Q_{x,y}}  {\rm Bottom}_{x,y}]\\
		& \leq 
		\bbP^{\xi'}_{\bbZ \times [yr, (y+1) r]}
		[ {\rm Top}_{x,y}\xlra{h \geq (1-c_0)k -4 }  {\rm Bottom}_{x,y}],
	\end{align*}
	where $\xi'$ is the boundary condition on $\partial \bbZ \times [yr, (y+1) r]$ that takes values in $\{ 1, 2\}$ outside of ${\rm Top}_{x,y}$ and ${\rm Bottom}_{x,y}$, where it is given by the maximal extension smaller or equal to~$k$.
	
	Now, recall that $ {\rm Top}_{x,y}$ and ${\rm Bottom}_{x,y}$ both contain at most $\lfloor \delta r \rfloor$ faces.
	Proposition~\ref{prop:RSW1} (and our original choice of $c_0$ and $\delta$ to match the below equation) guarantees~\footnote{Strictly speaking, for the boundary condition $\xi$, Proposition~\ref{prop:RSW1} addresses crossings of $h \geq (1-c_0) (k-1) + 1$ but the above holds by adjusting $c_0$ suitably smaller and taking $k > k_0 (c_0)$ large enough.}
	that
	\begin{align}
	\label{eq:first RSW input to main thm}
			\bbP^{\xi'}_{\bbZ \times [yr, (y+1) r]}
		[ {\rm Top}_{x,y}\xlra{h \geq (1-c_0)k -4 }  {\rm Bottom}_{x,y}] \leq 1-c_0 .
	\end{align}
	Tracing through the chain of inequalities that started from~\eqref{eq:starting chain of inequalities here}, we have
	\begin{align*}
			\bbP^{(0)}_{\bbO_{N,M}} [ \gamma_L^{x,y} \xlra{h\leq (1-c_0)k \text{ in $Q_{x,y}$}} \gamma_R^{x,y}\,|\,{\sf Exp} = (E, h_{ \vert E} )  \text{ and } \calX = X] \geq c_0.
	\end{align*}
	
	Finally, due to~\eqref{eq:SMP},~\eqref{eq:FKG-h} applies to the conditional measure $\bbP^{(0)}_{\bbO_{N,M}} [ \cdot\,|\,{\sf Exp} = (E, h_{ \vert E} )  \text{ and } \calX = X]$. 
	As there are at most $mn$ collections $Q_{x,y}$ needing to be crossed in order for $\calF$ to occur, we conclude that 
	\begin{align*}
		\bbP^{(0)}_{\bbO_{N,M}} [\calF \,|\,{\sf Exp} = (E, h_{ \vert E} ) \text{ and } \calX = X]
		\geq c_0^{mn}.
	\end{align*}
	The statement then follows from~\eqref{eq:explore fences}.
\end{proof}

\begin{lemma}[Ridges given fences]\label{lem:ridges}
With the parameters and notations above, we have for all $r > 0$ and all $k>k_0(c_0 )$ large enough
	\begin{align*}
		\bbP^{(0)}_{\bbO_{N,M}} [\calR \,|\, \calF \text{ and } \calX = X] \leq 
		\Big(2\bbP_{\bbZ \times [-r,2r]}^{0,1}\big[[0,\delta r]\times \{0\} \xleftrightarrow{ h \geq c_0k - 2\text{ in $\bbZ \times [0,r]$}}\bbZ\times \{ r\}\big]\Big)^{\tfrac{mn}{6}}.
	\end{align*}
\end{lemma}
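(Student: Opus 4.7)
Overall plan: I want to factorize the conditional probability over independent regions associated to good pairs $(x,y)$ with $y\equiv 2\bmod 3$, and upper-bound each factor by the strip crossing probability appearing in the statement. Since at least $nm/6$ pairs are good by Lemma~\ref{lem:good_Q}, this will produce the claimed exponent $nm/6$, while the factor of $2$ in front of the strip probability will come from a single application of Corollary~\ref{cor:pushing_modified} per good pair.

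Factorization over vertical strips. Under $\bbP^{(0)}_{\bbO_{N,M}}[\cdot\mid\calX=X]$ the complement $\bbO_{N,M}\setminus X$ is the disjoint union of the $\tilde V_x$'s, so by~\eqref{eq:SMP} the heights on distinct $V_x$'s are conditionally independent, each distributed as $\bbP^{\xi_x}_{V_x}$ with boundary condition $\xi_x\in\{1,2\}$ inherited from $X$. Since $\calR$ and $\calF$ both decompose as intersections over $x$ of events internal to $V_x$, it suffices to work inside a single $V_x$. Fix $x$. By Remark~\ref{rem:dual_crossings} each event $\calF_{x,y}$ (for $y\equiv 2\bmod 3$ good) is equivalent to the existence of left-to-right ordinary paths of $h\le(1-c_0)k$ in every connected component of $Q_{x,y-1}$ and $Q_{x,y+1}$; I would explore, for each good $y$, the topmost such fence in $Q_{x,y-1}$ and the bottommost one in $Q_{x,y+1}$, along with all heights in $V_x$ lying outside the region $R_{x,y}$ bounded by these two fences inside the slice triplet $V_x\cap(\Slice_{y-1}\cup\Slice_y\cup\Slice_{y+1})$. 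Since for distinct good $y$'s the regions $R_{x,y}$ sit in disjoint slice triplets, by~\eqref{eq:SMP} the heights inside the various $R_{x,y}$'s are conditionally independent given this exploration, and as $\calR_{x,y}$ depends only on $h|_{R_{x,y}}$, the conditional probability factorizes once more over good $y$.

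Bounding a single factor. Fix a good $(x,y)$ and let $\eta_{x,y}$ be the boundary condition on $\partial R_{x,y}$ produced by the exploration: it takes values $\le(1-c_0)k$ on the two fences and values in $\{1,2\}$ on the arcs of $\gamma_L^x,\gamma_R^x$ cut out by the fences. Applying~\eqref{eq:from times-crossings to crossings} to $-h$, the ridge event $\calR_{x,y}$ is contained in the existence of an \emph{ordinary} path of $h\ge k-1$ from ${\rm Bottom}_{x,y}$ to ${\rm Top}_{x,y}$ inside $Q_{x,y}\subset\Slice_y$, which is an increasing event. By~\eqref{eq:CBC-h} one may replace $\eta_{x,y}$ by its pointwise maximal admissible extension $\tilde\eta_{x,y}\le(1-c_0)k$, taking values in $\{(1-c_0)k-1,(1-c_0)k\}$ everywhere on $\partial R_{x,y}$, only increasing the probability. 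Then Corollary~\ref{cor:pushing_modified} with $D=R_{x,y}$, $D'=\bbZ\times[(y-1)r,(y+2)r]$, $m=(1-c_0)k-1$ and $\xi'$ equal to $\{(1-c_0)k-1,(1-c_0)k\}$ on $\partial D'$ yields an extra factor of $2$ in exchange for lowering the threshold from $k-1$ to $k-3$ in the full strip measure. Finally, shifting $h$ by $-(1-c_0)k+1$ (accompanied by a half-lattice horizontal translation to restore the parity convention, as in the proof of Proposition~\ref{prop:RSW1}) converts the boundary condition into $0,1$ and the threshold into $c_0k-2$; using that ${\rm Bottom}_{x,y}$ is contained in an interval of length $\le\delta r$ by tightness of $\overline Q_{x,y}$, together with horizontal translation invariance and the trivial enlargement of the arrival set to the entire line $\bbZ\times\{r\}$, the single-factor probability is at most $2\,\bbP^{0,1}_{\bbZ\times[-r,2r]}\big[[0,\delta r]\times\{0\}\xlra{h\ge c_0k-2\text{ in }\bbZ\times[0,r]}\bbZ\times\{r\}\big]$. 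Multiplying over the $\ge nm/6$ good pairs yields the lemma.

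The hardest part is the third paragraph: the induced boundary condition $\eta_{x,y}$ on $\partial R_{x,y}$ is genuinely non-uniform, mixing very large fence values $\le(1-c_0)k$ with very small side values in $\{1,2\}$, and it is the fact that both are pointwise dominated by a \emph{common} maximal admissible extension $\le(1-c_0)k$ that lets~\eqref{eq:CBC-h} and Corollary~\ref{cor:pushing_modified} combine into a single clean shift to a strip measure with boundary condition $0,1$.
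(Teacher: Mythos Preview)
Your overall strategy coincides with the paper's: factorize over good pairs $(x,y)$ by exploring fences, then bound each factor via \eqref{eq:CBC-h}, a height shift, and Corollary~\ref{cor:pushing_modified} to land on the strip measure with boundary condition $\{0,1\}$. The single-factor computation in your third paragraph is correct (the paper performs the shift before the push rather than after, but this is immaterial), and the count $\geq nm/6$ of good pairs is exactly Lemma~\ref{lem:good_Q}.

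The gap is in the factorization step: you chose the \emph{innermost} fences (topmost in $Q_{x,y-1}$ and bottommost in $Q_{x,y+1}$), whereas the paper uses the \emph{outermost} ones (bottommost in $Q_{x,y-1}$ and topmost in $Q_{x,y+1}$). This matters for measurability. With your choice, the region $R_{x,y}$ is not a stopping domain: to certify that a given path $\gamma$ in $Q_{x,y-1}$ is the \emph{topmost} fence you must know that no fence lies above $\gamma$, which requires revealing the heights in $Q_{x,y-1}$ above $\gamma$ --- and those faces lie \emph{inside} your $R_{x,y}$. Consequently $\{R_{x,y}=R\}$ is not measurable with respect to $h|_{R^c\cup\partial R}$, so \eqref{eq:SMP} does not deliver the claimed conditional independence, nor the clean boundary condition $\eta_{x,y}$ you describe. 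With the paper's outermost choice, the fences are explored from the bottom of $\Slice_{y-1}$ and the top of $\Slice_{y+1}$, revealing only faces outside the resulting domain $D_{x,y}$; the boundary values on $\partial D_{x,y}$ are then genuinely all $\leq(1-c_0)k$ (both the fence arcs and the $\{1,2\}$-arcs of $\gamma_L^x,\gamma_R^x$), and the rest of your argument goes through verbatim. The fix is simply to swap ``topmost'' and ``bottommost'' in your second paragraph.
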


\begin{proof}
	When $\calF$ occurs, for each good pair $(x,y)$, 
	let $\chi_T^{x,y}$ be the collection of top-most paths, in each connected component of $Q_{x,y+1}$, of height at most $(1-c_0)k$ that disconnect the bottom and top of $\Slice_{y+1}$.
	Similarly, let $\chi_B^{x,y}$ be the bottom-most paths in $Q_{x,y-1}$, of height at most $(1-c_0)k$	(here and for the rest of this proof, we again omit integer roundings in $\lfloor (1-c_0) k \rfloor$). 
	Write $D_{x,y}$ the connected component of $Q_{x,y}$ in the union of the faces of $U_{x, y}$ 
	contained on or between the curves of $\chi_B^{x,y}$ and $\chi_T^{x,y}$. 
	
	Notice that the domains $(D_{x,y}\, :\,(x,y) \text{ good})$ are measurable in terms of the height function outside of them
	and on their boundaries.
	Thus, conditionally on any realisation of these domains and on a realisation $\zeta$ of the height function outside of them
	and on their boundaries,  
	the height functions inside the different domains $D_{x,y}$ are independent of each other and follow laws $\bbP_{D_{x,y}}^{\zeta}$.

	The definition of $D_{x,y}$ is such that the values of $\zeta$ on $\partial D_{x,y}$ are at most $(1-c_0)k$ (when $k>k_0(c_0)$ is large enough so that $(1-c_0)k \geq 2$). 
	By~\eqref{eq:CBC-h}, each measure $\bbP_{D_{x,y}}^{\zeta}$ is stochastically dominated by $\bbP_{D_{x,y}}^{(1-c_0)k-1, (1-c_0)k}$.
	Thus, for any good $(x,y)$, using~\eqref{eq:pushing_11} (and $D_{x, y} \subset \Slice_{y-1} \cup \Slice_{y} \cup \Slice_{y+1}$), we have
	\begin{align*}
	\bbP_{D_{x,y}}^{\zeta}[{\rm Bottom}_{x,y} \xlra{h \geq k \text{ in $Q_{x,y}$}}_{\times} {\rm Top}_{x,y}]
	&\leq \bbP_{D_{xy}}^{(1-c_0)k-1, (1-c_0)k}[{\rm Bottom}_{xy} \xlra{h \geq k-1 \text{ in $Q_{x,y}$}}  {\rm Top}_{xy}] \\
	&\leq \bbP_{D_{x,y}}^{0,1}[{\rm Bottom}_{x,y} \xlra{h \geq c_0 k \text{ in $Q_{x,y}$}} {\rm Top}_{x,y}]\\
	&\leq 2\,\bbP_{\bbZ \times [(y-1)r,(y+2)r] }^{0,1}[{\rm Bottom}_{xy} \xlra{h \geq c_0k-2 \text{ in $Q_{x,y}$}} {\rm Top}_{x,y}]\\
	&\leq 2\,\bbP_{\bbZ \times [-r,2r]}^{0,1} \big[[0, \lfloor \delta r \rfloor ]\times \{0\} \xleftrightarrow{ h \geq c_0k - 2\text{ in $\bbZ \times [0,r]$}}\bbZ\times \{ r\}\big].
	\end{align*}
	The last inequality follows from the fact that $(x,y)$ is good, and therefore $\overline Q_{x,y}$ is tight, 
	which is to say that ${\rm Bottom}_{x,y}$ is shorter than $\delta r$. 
	
	Finally, since $\calR$ imposes that ${\rm Bottom}_{x,y} \xlra{h \geq k} {\rm Top}_{x,y}$ occurs in every domain $D_{x,y}$ and
	since there are at least $mn/6$ good pairs $(x,y)$, using the independence of the measures inside the domains $D_{x,y}$ and the computation above, we find that  
	\begin{align*}
			\bbP^{(0)}_{\bbO_{N,M}} [\calR \,|\, \calF \text{ and } \calX = X] \leq 
		\Big(2\bbP_{\bbZ \times [-r,2r]}^{0,1}\big[[0, \lfloor \delta r \rfloor  ]\times \{0\} \xleftrightarrow{ h \geq c_0k - 2\text{ in $\bbZ \times [0,r]$}}\bbZ\times \{ r\}\big]\Big)^{\tfrac{mn}{6}}, 
	\end{align*}
	as required.
\end{proof}

\begin{proof}[Theorem~\ref{thm:RSW_origins}]
	In this proof we require that $k >k_0(c_0, c_1)$ is large enough so that Lemmas~\ref{lem:fences} and~\ref{lem:ridges} as well as~\eqref{eq:second RSW input to main thm} below apply; we also require $r > 2k/\eta$ in order to apply Proposition~\ref{prop: Mano}.
	
	 Using elementary probability in the first step, and then Lemma~\ref{lem:fences} as well as~\eqref{eq:hu1} and~\eqref{eq:hu2} in the second, we have
	\begin{align*}
		\bbP^{(0)}_{\bbO_{N,M}} [\calR \,|\, \calF \text{ and } \calX = X]
		&\geq \bbP^{(0)}_{\bbO_{N,M}} [\calF \,|\, \calR \text{ and } \calX = X]\bbP^{(0)}_{\bbO_{N,M}} [\calR\,|\, \calX = X]\\
		&\geq c_0^{n m} \bbP^{(0)}_{\bbO_{N,M}} [ \calA(S,n,k) ].
	\end{align*}
	Applying now Lemma~\ref{lem:ridges}, we deduce that 
	\begin{align}
	\label{eq:almost there}
		\bbP_{\bbZ \times [-r,2r]}^{0,1}\big[[0, \lfloor \delta r \rfloor ]\times \{0\} \xleftrightarrow{ h \geq c_0k -2\text{ in $\bbZ \times [0,r]$}}\bbZ\times \{ r\}\big] 
		\geq  \frac{c_0^{6}}{2} \, \bbP^{(0)}_{\bbO_{N,M}} [ \calA(S,n,k) ]^{\tfrac{6}{n m}}.
	\end{align}
	Observe that the left-hand side does not depend on $M$ and $N$, while the right-hand one does. 
	Recall next that our choice of the parameter $\alpha  = \eta k /r$ (and the relation $r > 2k/\eta$)
	was matched for applying Proposition~\ref{prop: Mano}, which gives
	\begin{align*}
	 \bbP^{(0)}_{\bbO_{N,M}} [ \calA(S,n,k) ] \geq \exp \big( NM ( f_{\mathbf c}(\alpha)- f_{\mathbf c}(0) ) + o(NM) \big)
	\end{align*}
	as $M \to \infty$ and then $N \to \infty$.
	Applying this and the definitions~\eqref{eq:MN} of $m$ and $n$, the factor on the right-hand side of~\eqref{eq:almost there} becomes
	\begin{align}\label{eq:almost there RHS}
		 \bbP^{(0)}_{\bbO_{N,M}} [ \calA(S,n,k) ]^{\tfrac{6}{n m}}
		\geq \exp \left( 6 \eta r^2 ( f_{\mathbf c}(\tfrac{k}{\eta r} ) -f_{\mathbf c}(0) ) + o (1) \right) .
	\end{align}	
	 For the left-hand side of~\eqref{eq:almost there}, we apply Theorem~\ref{thm:RSW} (recall that $\delta$ was chosen so that it applies) to deduce that there exist absolute constants $c_1, C_1 > 0$ such that
	 \begin{align}\label{eq:second RSW input to main thm}
	 	\big(\tfrac{1}{c_1} \bbP_{\Lambda_{12 r}}^{0,1}[\calO_{h\ge c_1 k}( 6 r , 12 r)] \big)^{1/C_1 }
	 	\geq \bbP_{\bbZ \times [-r,2r]}^{0,1}\big[[0, \lfloor \delta r \rfloor ]\times \{0\} \xleftrightarrow{ h \geq c_0k -2\text{ in $\bbZ \times [0,r]$}}\bbZ\times \{ r\}\big],
	 \end{align}
	 for all $k > k_0(c_0, c_1)$ large enough.
	
	Injecting~\eqref{eq:almost there RHS} and~\eqref{eq:second RSW input to main thm} into~\eqref{eq:almost there}, we get that for suitable absolute constants $c, C > 0$,
	\begin{align*}
	\bbP_{\Lambda_{12 r}}^{0,1}[\calO_{h\ge c k}( 6 r , 12 r)]		
	\geq  c   \exp \big(C \,r^{2} [f_{\mathbf c}(\tfrac{k}{\eta r})-f_{\mathbf c}(0)] \big).
	\end{align*}
	This finishes the proof.
\end{proof}

\subsection{Proof of Theorem~\ref{thm:bxp}}

Observe first that by inclusion of events, it suffices to prove the claim when $k$ is larger than some constant.
Second, by height shift and~\eqref{eq:CBC-h}, 
\begin{align*}
	\bbP_{D}^{\xi}[\calO_{h\ge k}( n ,2n)]
	= \bbP_{D}^{\xi +\ell }[\calO_{h\ge k+\ell}( n,2n)]
	\geq 
		\bbP_{D}^{0,1}[\calO_{h\ge k+\ell}( n ,2n)],
\end{align*}
so, by adjusting $k$, it suffices to prove the claim for $\xi = \{0,1\}$. Third, observe that by Corollary~\ref{cor:pushing_modified} (or Proposition~\ref{prop:pushing_1} if the conditions $\xi$ and $\{0,1\}$ above were only imposed on a subset of $\partial D$), when $D \supset \Lambda_{2n}$, we have
\begin{align*}
	\bbP_{D}^{0,1}[\calO_{h\ge k }( n ,2n)]
	\geq \tfrac12 \bbP_{\Lambda_{2n}}^{0,1}[\calO_{h\ge k + 2}( n ,2n)].
\end{align*}
Thus, (adjusting $k$ again) it suffices to prove claim for the $D=\Lambda_{2n}$. We thus turn to proving the claim for $k$ large enough, $\xi=\{0,1\}$, and $D=\Lambda_{2n}$.

Fix now $\mathbf c \in [1,2]$ and $k$ large enough for Theorem~\ref{thm:RSW_origins} to apply.
Let $\eta, c,C > 0 $ and $C_0 > 0$ be the constants appearing in~Theorem~\ref{thm:RSW_origins} and~~\eqref{eq:BA}, respectively.
Applying~\eqref{eq:RSW_origins} and~\eqref{eq:BA} gives
\begin{align*}
	\bbP_{\Lambda_{12 r}}^{0,1}[\calO_{h\ge ck}( 6 r ,12 r)]
    	\ge c \exp\big[C r^2\big(f_\mathbf c (\tfrac{ k }{\eta r} )-f_\mathbf c(0)\big)\big]
	\ge c \exp\big[-C  C_0 k^2/\eta^{2}\big] > 0.
\end{align*}
This directly implies the claim when $n=6r$ is a multiple of $6$.
For general $n$, let $n'$ be the smallest multiple of $6$ with $n' \geq n$. Note that under $\bbP_{\Lambda_{2n'}}^{0,1}$, we necessarily have $h \leq 11$ on $\partial \Lambda_{2n}$. Thus, by~\eqref{eq:SMP} and~\eqref{eq:CBC-h}, we have
\begin{align*}
\bbP_{\Lambda_{2n'}}^{0,1}[\calO_{h\ge k+10}(  n' ,2 n')]
\leq
\bbP_{\Lambda_{2n}}^{10, 11}[\calO_{h\ge k+10}( n' ,2n)]
\leq
\bbP_{\Lambda_{2n}}^{0, 1}[\calO_{h\ge k}( n,2n)],
\end{align*}
where the second step used a shift of boundary conditions and inclusion of events. This concludes the proof.
%
%
\hfill $\square$

\section{Logarithmic bounds on variance of height functions}\label{sec:5}

Throughout this section, we restrict our attention to the six-vertex model with $1\le \mathbf c\le 2$. 

\subsection{Lower bounds}\label{sec:variance_lb}

\subsubsection{Proof of the lower bound in Corollary~\ref{cor:variance_D}}\label{sec:variance_lb_D}

The proof will be based on the following quantity:
\begin{align*}
v_n:= \min_{\xi : \partial \Lambda_n \to \{-1, 0, +1\}} \bbE^{\xi}_{\Lambda_n} [h(0)^2],
\end{align*}
where the minimum is taken over all functions $\xi: \partial \Lambda_n \to \{-1, 0, +1\}$ that take odd values on odd faces and even on even faces (all such $\xi$ are admissible boundary conditions).

\begin{lemma}\label{lem:v_n}
   Fix $\mathbf{c} \in [1, 2]$. There exist $R\ge 1$ such that for every $n$ large enough
    \begin{align*}
    	v_{Rn} \geq v_n + 1.
    \end{align*}
\end{lemma}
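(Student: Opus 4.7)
The plan is to apply Theorem~\ref{thm:bxp} to the optimizer of $v_{Rn}$ and extract a circuit of height $\sim k$ around $\Lambda_n$ with positive probability; conditioning on this circuit and shifting then passes to an inner measure with non-negative boundary, contributing an additive $k^2$ from the shift and $v_n$ from the inner-scale variance. Iterating this increment $m$ times yields the desired $+1$ for $R = 2^m$ sufficiently large.

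Let $\xi^\star : \partial \Lambda_{Rn} \to \{-1, 0, +1\}$ be a minimizer of $v_{Rn}$, so $|\xi^\star| \le 1$ and $v_{Rn} = \bbE^{\xi^\star}_{\Lambda_{Rn}}[h(0)^2]$. Take $R = 2$ initially and pick an even integer $k \ge 2$. By Theorem~\ref{thm:bxp} with $\ell = 1$ (applied both to $\xi^\star$ and to $-\xi^\star$, using sign-flip symmetry), both the events $\calO^+ := \calO_{h \ge k}(n, 2n)$ and $\calO^- := \calO_{h \le -k}(n, 2n)$ have probability at least $c_k := c(\mathbf{c}, 1, k) > 0$ under $\bbP^{\xi^\star}$, and they are disjoint. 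On $\calO^+$, let $\calC^+$ be the outermost circuit of $h \ge k$ in $A(n, 2n)$, which is measurable from outside $\calC^+$ and encloses a domain $D^+ \supset \Lambda_n$. By~\eqref{eq:SMP}, the law of $h$ inside $D^+$ is $\bbP^{h_{\calC^+}}_{D^+}$; since $k$ is even, the shift $\tilde h := h - k$ has law $\bbP^{h_{\calC^+} - k}_{D^+}$ with non-negative admissible boundary, and hence $\bbE[\tilde h(x)] \ge 0$ for $x \in D^+$ by Corollary~\ref{cor:E[h]>0}. Expanding $h(0)^2 = \tilde h(0)^2 + 2 k \tilde h(0) + k^2$ and dropping the non-negative cross term gives
\[
\bbE^{\xi^\star}[h(0)^2 \mathbf{1}_{\calO^+}] \;\ge\; \bbE^{\xi^\star}[\tilde h(0)^2 \mathbf{1}_{\calO^+}] + k^2 \bbP^{\xi^\star}[\calO^+],
\]
with the symmetric estimate on $\calO^-$ via $\tilde h = h + k$.

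The crux is then to establish the universal lower bound
\begin{equation*}
\bbE^{H}_{\Lambda_n}[h(0)^2] \;\ge\; v_n \qquad \text{for every admissible } H : \partial \Lambda_n \to \bbZ. \tag{$\ast$}
\end{equation*}
Applying~\eqref{eq:SMP} to $\Lambda_n \subset D^+$, ($\ast$) would convert the shifted inner estimate to $\bbE^{\xi^\star}[\tilde h(0)^2 \mathbf{1}_{\calO^+}] \ge v_n\, \bbP^{\xi^\star}[\calO^+]$, and similarly on $\calO^-$ and on the complement $(\calO^+ \cup \calO^-)^c$ (the latter without shift); summing then yields $v_{Rn} \ge v_n + 2 c_k k^2$. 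For $H \ge 0$, ($\ast$) follows from~\eqref{eq:CBC-|h|}: letting $H' \preceq H$ be the minimal $\{0, 1\}$-valued admissible boundary, one has $\bbE^{H'}[h(0)^2] \le \bbE^{H}[h(0)^2]$, while $\bbE^{H'}[h(0)^2] \ge v_n$ by definition of $v_n$; the case $H \le 0$ follows by sign-flip. The hard part will be $H$ with sign changes on $\partial \Lambda_n$; a plausible route is to condition on $|h_{\partial \Lambda_n}|$ and exploit the ferromagnetic Ising structure of the signs of $h$ given $|h|$ (cf.~Section~\ref{subsubsec: Ising tools}) together with~\eqref{eq:FKG-|h|} and sign-flip symmetry, reducing to the constant-sign case.

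Once ($\ast$) is in hand, choosing $k$ to maximize $2 c_k k^2$ (which is positive for each $k$ by Theorem~\ref{thm:bxp} and bounded uniformly), we obtain $v_{2n} \ge v_n + c^\dagger$ for some constant $c^\dagger > 0$ depending only on $\mathbf{c}$, valid for all $n$ large enough. Iterating this estimate $m$ times gives $v_{2^m n} \ge v_n + m c^\dagger$; choosing $m \ge 1/c^\dagger$, the lemma follows with $R = 2^m$.
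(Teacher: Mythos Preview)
Your central claim $(\ast)$ --- that $\bbE^{H}_{\Lambda_n}[h(0)^2] \ge v_n$ for \emph{every} admissible $H$ on $\partial\Lambda_n$ --- is false, and this breaks your argument. A sloped boundary condition (for instance $H$ close to $(x_1,x_2)\mapsto x_1+x_2$, adjusted for parity) freezes the height function inside $\Lambda_n$, forcing $h(0)$ to a deterministic bounded value, so $\bbE^H_{\Lambda_n}[h(0)^2]=O(1)$ whereas $v_n\to\infty$. No argument via Ising signs can salvage $(\ast)$ in this generality. You invoke $(\ast)$ on all three parts of your decomposition; while on $\calO^\pm$ one could in fact bypass it (after your shift the boundary on $\partial D^\pm$ is $\{0,1\}$, so~\eqref{eq:FKG-|h|} applied inside $D^\pm$ reduces directly to a mixture over $\{-1,0,1\}$-boundaries on $\partial\Lambda_n$), on the complement $(\calO^+\cup\calO^-)^c$ you work under $\bbP^{\xi^\star}$ with a boundary of mixed sign, and there~\eqref{eq:SMP} at $\partial\Lambda_n$ genuinely produces uncontrolled $H$. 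That term therefore cannot be bounded below by $v_n\,\bbP^{\xi^\star}[(\calO^+\cup\calO^-)^c]$, and your summation collapses. (Incidentally, $\calO^+$ and $\calO^-$ are not disjoint: nested circuits of opposite sign may coexist in the annulus.)

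The paper circumvents this by first removing the sign ambiguity at scale $Rn$. Choosing the sign of the minimiser so that $\bbE^{\xi^\star}_{\Lambda_{Rn}}[h(0)]\le 0$, one gets $v_{Rn}\ge\bbE^{\xi^\star+2}_{\Lambda_{Rn}}[h(0)^2]-4\ge\bbE^{0,1}_{\Lambda_{Rn}}[h(0)^2]-4$ via~\eqref{eq:CBC-|h|}. Under the nonnegative $\{0,1\}$ boundary, $h(0)^2=|h(0)|^2$ is increasing in $|h|$ and~\eqref{eq:FKG-|h|} is available throughout: one splits on the \emph{absolute-value} circuit event $\calO_{|h|\ge k}(n,Rn)$, bounds the inner term by $v_n+k^2$ via the outermost circuit and the same~\eqref{eq:FKG-|h|} trick inside it, and bounds the complementary term by $v_n$ via conditioning on $\{|h|\le 1\text{ on }\Lambda_{Rn}\setminus\Lambda_n\}$. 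The fixed loss $-4$ is then beaten in a single step by taking $k=4$ and using Lemma~\ref{lem:circ_increase} (not merely Theorem~\ref{thm:bxp}) to choose $R$ with circuit probability at least $\tfrac12$, rather than by iteration.
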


Before proving this lemma, let us explain how it implies the lower bound in Corollary~\ref{cor:variance_D}.

\begin{proof}[the lower bound in Corollary~\ref{cor:variance_D}]
We will suppose hereafter that $x= 0$.
Let $D$ be a discrete domain $D$ containing the box $\Lambda_n$ and $\xi$ be some boundary condition on $\partial D$ with $|\xi|\le \ell$.
Using~\eqref{eq:CBC-h} and $ |\bbE^{\xi + \ell}_D [h(0)]|\le 2\ell$ (by Corollary~\ref{cor:E[h]>0}), we get that 
\begin{align} \label{eq:adaa}
	\mathrm{Var}^\xi_D [h(0)] = \mathrm{Var}^{\xi+ \ell}_D [h(0)]& \geq   \bbE^{\xi + \ell}_D [h(0)^2]-4\ell^2.\end{align}
	Now, $\xi+\ell$ is of definite sign and we may apply
\eqref{eq:CBC-|h|} and~\eqref{eq:FKG-|h|} to find
\begin{align} \mathrm{Var}^\xi_D [h(0)]
	&\geq \bbE^{0,1}_D [h(0)^2]-4\ell^2  \geq \bbE^{0,1}_D [h(0)^2\,|\, |h| \leq 1 \text{ on $D \setminus \La_n$}]-4\ell^2.\nonumber
\end{align}
By the spatial Markov property, the last expectation value above is an average of quantities $\bbE^{\xi}_{\La_n} [h(0)^2]$ over boundary conditions $\xi$ with values in $\{-1,0,1\}$. As such, it is bounded from below by $v_n$. 

It is an immediate consequence of Lemma~\ref{lem:v_n} that $v_{n}\ge c \log n$ for some constant $c>0$ and all $n \geq 1$. 
Since $n$ may be chosen as the distance from $0$ to $\partial D$, this concludes the proof. 
\end{proof}

The rest of the section is dedicated to proving Lemma~\ref{lem:v_n}. 
We start by stating a consequence of Theorem~\ref{thm:bxp} which may also be of independent interest.  

For integers $N\ge n> 0$, recall that $A(n,N):=\Lambda_N \setminus \Lambda_{n}$ and 
$\calO_{h \geq k} (n,N)$ (resp.~$\calO_{|h| \geq k} (n,N)$) is the event that there exists a path of $h\ge k$ (resp.~$|h|\ge k$) in $A(n,N)$ forming a circuit  around $0$. 

\begin{lemma}\label{lem:circ_increase}
    Fix $\mathbf{c} \in [1, 2]$. For every $k \geq 0$, there exist $c, C, n_0 >0$ such that for all $N/2\ge n>n_0$, 
    \begin{align*}
    \bbP^{0,1}_{ \Lambda_N} [\calO_{ \vert h \vert \geq k} (n, N)] \geq 1- C(n/N)^c.
    \end{align*}
\end{lemma}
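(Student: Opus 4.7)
The plan is to iterate Theorem~\ref{thm:bxp} over a dyadic sequence of nested annuli, using the absolute-value comparison between boundary conditions~\eqref{eq:CBC-|h|} to control the effect of the configuration outside each annulus.

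Set $L = \lfloor \log_2(N/n) \rfloor$ and, for $0 \le i \le L-1$, define $A_i := A(2^i n, 2^{i+1}n) \subset A(n,N)$. Let $C_i$ be the event that $A_i$ contains a circuit around $0$ on which $|h| \ge k$. Since any such circuit also witnesses $\calO_{|h|\ge k}(n,N)$, it suffices to prove that $\bbP^{0,1}_{\Lambda_N}[\bigcap_i C_i^c] \le C(n/N)^c$. Let $\calF_i$ denote the $\sigma$-algebra generated by $|h|$ on $B_i := \Lambda_N \setminus (\Lambda_{2^{i+1}n} \setminus \partial\Lambda_{2^{i+1}n})$; the events $C_j$ for $j > i$ are $\calF_i$-measurable. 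The heart of the argument is to show that there exists a constant $c_0 = c_0(\mathbf{c}, k) > 0$ with
\begin{equation}\label{eq:key_cond_circ_inc}
\bbP^{0,1}_{\Lambda_N}[C_i \mid \calF_i] \ge c_0 \quad \text{a.s., for every } 0 \le i \le L-1.
\end{equation}
Granting~\eqref{eq:key_cond_circ_inc}, iterating the tower property over $i = 0, 1, \dots, L-1$ yields
\[
\bbP^{0,1}_{\Lambda_N}\Big[\bigcap_{i=0}^{L-1} C_i^c\Big] \le (1-c_0)^L \le \tfrac{1}{1-c_0}\,(n/N)^{\log_2(1/(1-c_0))},
\]
which is the desired polynomial bound.

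To prove~\eqref{eq:key_cond_circ_inc}, fix a realization $\zeta$ of $|h|$ on $B_i$ and let $\zeta_{\min} \preceq \zeta$ be the pointwise-minimal admissible $|h|$-configuration on $B_i$, whose values lie in $\{0,1\}$ (determined by face parity). Since $C_i$ is increasing in $|h|$, the inequality~\eqref{eq:CBC-|h|} yields
\[
\bbP^{0,1}_{\Lambda_N}[C_i \mid |h| = \zeta \text{ on } B_i] \ge \bbP^{0,1}_{\Lambda_N}[C_i \mid |h| = \zeta_{\min} \text{ on } B_i].
\]
Conditionally on $|h| = \zeta_{\min}$ on $B_i$, averaging over the random admissible signs of $h$ on $\partial\Lambda_{2^{i+1}n}$ and applying the spatial Markov property~\eqref{eq:SMP} identifies the conditional law of $h$ inside $\Lambda_{2^{i+1}n}$ as a mixture of measures $\bbP^{\xi}_{\Lambda_{2^{i+1}n}}$ with $|\xi| \le 1$. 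Since $\calO_{h\ge k}(2^i n, 2^{i+1}n) \subset C_i$, Theorem~\ref{thm:bxp} applied with $\ell = 1$ (valid since $2^i n \ge n \ge n_0$) gives $\bbP^{\xi}_{\Lambda_{2^{i+1}n}}[C_i] \ge c_0$ uniformly in $\xi$, and averaging proves~\eqref{eq:key_cond_circ_inc}.

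The central subtlety is the conditioning on $|h|$ rather than $h$ outside $\Lambda_{2^{i+1}n}$: only the $|h|$-monotonicity of Proposition~\ref{prop:|h|-monotonicity} permits reduction to the minimal boundary configuration. Unpacking this conditioning requires averaging over the random signs of $h$ on $\partial\Lambda_{2^{i+1}n}$ via the spatial Markov property, and checking that every such sign choice induces a boundary condition with $|\xi| \le 1$, so that Theorem~\ref{thm:bxp} applies uniformly in the scale $i$.
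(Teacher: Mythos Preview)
Your proof is correct and follows essentially the same approach as the paper's: iterate Theorem~\ref{thm:bxp} over dyadic annuli, using the $|h|$-monotonicity of Proposition~\ref{prop:|h|-monotonicity} to reduce the outer configuration to one with $|h|\le 1$ so that Theorem~\ref{thm:bxp} applies uniformly. The only organisational differences are that the paper proves the bound inductively in the domain $\Lambda_{2^i n}$ (and handles general $N$ at the end), whereas you work in $\Lambda_N$ throughout, and that the paper invokes~\eqref{eq:FKG-|h|} after conditioning on the decreasing event $\calO_{|h|\ge k}(2n,2^in)^c$, whereas you condition on the full $\sigma$-algebra of $|h|$ outside and invoke~\eqref{eq:CBC-|h|}; these are equivalent for the purpose at hand.
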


The necessity of the lemma comes from the fact that for the proof of Lemma~\ref{lem:v_n}, it does not suffice to show that circuits of a given height occur with positive probability in annuli (which is the conclusion of Theorem~\ref{thm:bxp}); we need circuits to occur with high probability, when the ratio between the inner and outer radii of the annulus is large. 

\begin{proof} 
Let us denote $\bbP^{0,1}_{N} :=\bbP^{0,1}_{\Lambda_N}$ for simplicity.
Below, we show  by induction that there exists $\delta=\delta(k)>0$  that for every $n>n_0(k)$ and $i\ge1$,
\begin{equation}
\label{eq:inductive hypothesis}
\bbP^{0,1}_{2^in} [ \calO_{ \vert h \vert \geq k} ( n, 2^in)^c ] \le (1-\delta)^i.
\end{equation}
The claim for $N=2^i n$ then directly follows from~\eqref{eq:inductive hypothesis}. To treat general $2^i n \leq N < 2^{i+1}n$, compute
\begin{align*}
\bbP_{N}^{0,1} [\calO_{ \vert h \vert \geq k} ( n, N) ] 
& \geq
\bbP_{N}^{0,1}  [\calO_{ \vert h \vert \geq k} ( n, 2^i n ) ] 
\qquad \text{(by inclusion)}
\\
& \geq
\bbP_{N}^{0,1}  [\calO_{ \vert h \vert \geq k} ( n, 2^i n )  \; \vert \; \vert h \vert \leq 1 \text{ on } \partial \Lambda_{ 2^i n } ]
\qquad \text{(by~\eqref{eq:FKG-|h|})}
\\
& \geq 
\min_{\xi: \partial \Lambda_{ 2^i n } \to \{ 0 , \pm 1 \} }
\bbP_{2^i n }^{\xi + 2 }  [\calO_{ \vert h \vert \geq k + 2} ( n, 2^i n )  ]
\qquad \text{(by~\eqref{eq:SMP})}
\\
& \geq 
\bbP_{2^i n }^{0,1}  [\calO_{ \vert h \vert \geq k + 2} ( n, 2^i n )  ]
\qquad \text{(by~\eqref{eq:CBC-|h|}),}
\end{align*}
and the claim follows from the case of $N=2^i n$ by adjusting $k$. We thus turn to the proof of~
\eqref{eq:inductive hypothesis}.

For $i=1$, using the inclusion of events in the first inequality, Theorem~\ref{thm:bxp}  implies that for $n>n_0$,
\begin{align*}
\bbP^{0,1}_{2 n} & [\calO_{ \vert h \vert \geq k} (n, 2 n)] 
\geq
 \bbP^{0,1}_{2n} [\calO_{ h \geq  k+2} (n, 2n)] \ge \delta
\end{align*}
for some constant $\delta > 0$ depending on $k$ only, and which we now fix.
Let us now assume that~\eqref{eq:inductive hypothesis} holds true for $i-1$ and then prove it for $i$. 
By inclusion of events and conditioning, we get
\begin{align*}
\bbP^{0,1}_{2^in}  [\calO_{ \vert h \vert \geq k} (n, 2^in)^c]
& \leq
\bbP^{0,1}_{2^in}  [ \calO_{ \vert h \vert \geq k} (2n, 2^in)^c \cap \calO_{ \vert h \vert \geq k} (n, 2n)^c] \\
&=\bbP^{0,1}_{2^in}  [\calO_{ \vert h \vert \geq k} (2n, 2^in)^c]\,
\underbrace{\bbP^{0,1}_{2^in}  [\calO_{ \vert h \vert \geq k} (n, 2n)^c\,|\,\calO_{ \vert h \vert \geq k} (2n, 2^in)^c]}_{P}.
\end{align*}
Using the inductive hypothesis, it thus suffices to show that $P\le 1-\delta$. Now, since $\calO_{ \vert h \vert \geq k} (2n, 2^in)^c$ depends only on $|h|$ on $A(2n,2^in)$, one may further condition on the precise value of $|h|$ in $A(2n-1,2^in)\supset A(2n,2^in)$. The measure thus obtained involved only conditioning on $|h|$, except on $\partial\Lambda_{2^in}$, where we have $h \in \{0,1\}$. We can therefore use FKG for $|h|$ to deduce that
\begin{align*}
P 
&\leq \bbP^{0,1}_{2^in}  [\calO_{ \vert h \vert \geq k} (n, 2n)^c\,|\,|h(x)| \le 1,\forall x\in A(2n-1,2^in)] \\
& \leq \bbP^{0,1}_{2^in}  [\calO_{ h \geq k} (n, 2n)^c\,|\,|h(x)| \le 1,\forall x\in A(2n-1,2^in)]\\
& \le \bbP^{0,-1}_{2n}  [\calO_{ h \geq k} (n, 2n)^c ]\\
& = 1- \bbP^{0,1}_{2n}  [\calO_{ h \geq k+2} (n, 2n) ] \\
& \leq 1 - \delta,
\end{align*}
where the additional manipulations were based on inclusion of events, spatial Markov property and comparison of boundary conditions, shift of boundary conditions, and our choice of $\delta$ above, 
respectively.
\end{proof}

\begin{proof}[Lemma~\ref{lem:v_n}]
Fix $k = 4$ and let $R > 1$ be such that 
\begin{align}\label{eq:Rchoice}
	 \bbP^{0,1}_{\Lambda_{Rn}} [\calO_{|h|\ge k}(n,{Rn})] \geq 1/2,
\end{align}
for all $n$ large enough.  

Fix $n \ge 1$ large enough for~\eqref{eq:Rchoice} to hold 
and let $\xi$ be a boundary condition on $\partial \La_{Rn}$ taking values in $\{-1,0,1\}$ 
that minimises $\bbE^{\xi}_{\Lambda_{Rn}} [h(0)^2]$. 
By symmetry, we may choose $\xi$ so that $\bbE^{\xi}_{\Lambda_{Rn}} [h(0)] \leq 0$.
Then, we have
\begin{align}
\nonumber
	v_{Rn} = \bbE^{\xi}_{\Lambda_{Rn}} [h(0)^2] 
	&\geq \bbE^{\xi}_{\Lambda_{Rn}} [(h(0)+2)^2] - 4 
	\\
	\nonumber
    &= \bbE^{\xi+2}_{\Lambda_{Rn}} [h(0)^2] - 4
    \\
	& \geq \bbE^{0,1}_{\Lambda_{Rn}} [h(0)^2] - 4,\label{eq:circ_no_circ0}
\end{align}
where the last step used~\eqref{eq:CBC-|h|}.

Hereafter we focus on bounding $\bbE^{0,1}_{\Lambda_{Rn}} [h(0)^2]$.
We have
\begin{align}\label{eq:circ_no_circ1}
	\bbE^{0,1}_{\Lambda_{Rn}} [h(0)^2] 
	 = \bbE^{0,1}_{\Lambda_{Rn}} [h(0)^2 \ind_{\calO_{|h|\ge k}(n,{Rn})}] + \bbE^{0,1}_{\Lambda_{Rn}} [h(0)^2 \ind_{\calO_{|h|\ge k}(n,{Rn})^c}],
\end{align}
and we will bound separately the two terms on the right-hand side of the above.

If $\calO_{|h|\ge k}(n,{Rn})$ occurs, let $\Gamma$ be the outer-most circuit with $|h|\geq k$ around $\Lambda_{Rn}$.
Write $\calD$ for the random domain formed of the faces on or surrounded by $\Gamma$. 
Notice that $\calD$ is measurable in terms of the values of $|h|$ on $\Gamma = \partial \calD$ and $\calD^c$. 
As such, the measure in $\calD$ is $\bbP^{\zeta}_\calD$, with $\zeta$ taking values either
$k$ and $k+1$ or $-k$ and $-k-1$. Thus
\begin{align}
	\bbE^{0,1}_{\Lambda_{Rn}} [h(0)^2 \ind_{\calO_{|h|\ge k}(n,{Rn})}] 
	&= \sum_{D'}	 \bbE^{k,k+1}_{D'} [h(0)^2] \bbP^{0,1}_{\Lambda_{Rn}} [\calD = D'] \nonumber\\
	&= \sum_{D'}	 \bbE^{0,1}_{D'} [(h(0)+k)^2] \bbP^{0,1}_{\Lambda_{Rn}} [\calD = D'] \nonumber\\
	&\geq k^2 \bbP^{0,1}_{\Lambda_{Rn}} [\calO_{|h|\ge k}(n,{Rn})]  + \sum_{D'}	 \bbE^{0,1}_{D'} [h(0)^2] \bbP^{0,1}_{\Lambda_{Rn}} [\calD = D'] \nonumber\\
	&\geq (k^2 + v_n)\bbP^{0,1}_{\Lambda_{Rn}} [\calO_{|h|\ge k}(n,{Rn})].  \label{eq:circ_no_circ2}
\end{align}
In the first equality, we used the symmetry $h\leftrightarrow -h$ and in the first inequality the positivity of $\bbE^{0,1}_{D'} [h(0)]$ (see Corollary~\ref{cor:E[h]>0}).
In the last inequality, we used~\eqref{eq:FKG-|h|} to bound $\bbE^{0,1}_{D'} [h(0)^2]$ by $v_n$, in the same way as after~\eqref{eq:adaa}. 

We turn to the second term of~\eqref{eq:circ_no_circ1}. This term is an average of quantities of the type 
	$\bbE^{0,1}_{\Lambda_{Rn}} [h(0)^2 \,|\, |h| = \zeta \text{ on $\La_n^c$}],$
where $\zeta$ runs through all values of $|h|$ outside $\La_n^c$ such that $\calO_{|h|\ge k}(n,{Rn})$ fails. 
Notice that by~\eqref{eq:FKG-|h|}, for any such $\zeta$, 
\begin{align*}
	\bbE^{0,1}_{\Lambda_{Rn}} [h(0)^2 \,|\, |h| = \zeta \text{ on $\La_n^c$}]
	\geq \bbE^{0,1}_{\Lambda_{Rn}} [h(0)^2 \,|\, |h| = 0 \text{ or } 1 \text{ on $\La_n^c$}]
	\geq v_n.
\end{align*}
In conclusion, 
\begin{align}\label{eq:circ_no_circ3}
	\bbE^{0,1}_{\Lambda_{Rn}} [h(0)^2 \ind_{\calO_{|h|\ge k}(n,{Rn})^c}] \geq v_n	\bbP^{0,1}_{\Lambda_{Rn}} [\calO_{|h|\ge k}(n,{Rn})^c].
\end{align}

Inject now~\eqref{eq:circ_no_circ2} and~\eqref{eq:circ_no_circ3} into~\eqref{eq:circ_no_circ1}, then use~\eqref{eq:circ_no_circ0} to conclude that
\begin{align*}
	v_{{Rn}} \geq v_n + k^2\, \bbP^{0,1}_{\Lambda_{Rn}} [\calO_{|h|\ge k}(n,{Rn})] - 4.
\end{align*}
Due to~\eqref{eq:Rchoice} and the fact that $k=4$, the right hand side is larger than $v_n +1$. 
\end{proof}

\subsubsection{Proof of the lower bound in Theorem~\ref{thm:variance_torus}}

Fix $N$ and $x, y \in F(\bbT_{N})$. 
Fix a representative of the equivalence class of each homomorphism by setting $h(x)=0$.
Using the FKG inequality for $|h|$ (recall that it does indeed hold for the balanced six-vertex model on the torus) we find 
\begin{align*}
	\mathbb E^{(\mathrm{bal})}_{\bbT_N}[(h(y)-h(x))^2]
	&=\mathbb E^{(\mathrm{bal})}_{\bbT_N}[h(y)^2|h(x)=0]\\
	&\ge \mathbb E^{(\mathrm{bal})}_{\bbT_N}[h(y)^2\,|\,|h(u)|\le 1\text{ for every }u\notin \Lambda_{ \lfloor d (x,y)/2 \rfloor }(y)]\\
	&\ge \min_{|\xi|\le 1}\mathbb E^\xi_{\Lambda_{ \lfloor d (x,y)/2 \rfloor }}[h(y)^2]\\
	&\ge c\log ( d (x,y)/2 ) .
\end{align*}
In the second inequality we used the spatial Markov property and in the third Lemma~\ref{lem:v_n}.
The lower bound of Theorem~\ref{thm:variance_torus} may be obtained by adapting the constant $c$. 

\subsection{Upper bounds}

In this section we prove the logarithmic upper bounds for the variance of Corollary~\ref{cor:variance_D} and Theorem~\ref{thm:variance_torus}. 
We start in Section~\ref{sec:variance_ub_D_sc} with the upper bound of Corollary~\ref{cor:variance_D} for simply-connected  domains.
The case of the torus (Theorem~\ref{thm:variance_torus}) is very similar to that of
simply connected domains, but with additional technical difficulties. We sketch it in Section~\ref{sec:variance_ub_torus}. 
Finally, the case of non simply-connected domains follows easily from the result on the torus, as shown in Section~\ref{sec:variance_ub_D}.
 
\subsubsection{The upper bound of Corollary~\ref{cor:variance_D} for simply-connected domains}\label{sec:variance_ub_D_sc}

We start by defining the counterpart of the quantity $v_n$ of Section~\ref{sec:variance_lb_D}. For $n\geq 1$, set 
\begin{align*}
	w_n:= \sup_{ \partial D \cap \Lambda_n \neq \emptyset  } \bbE^{0, 1}_{D} [h(0)^2],
\end{align*}
where the supremum is taken over simply-connected discrete domains $D$ with $ \partial D \cap \Lambda_n \neq \emptyset$.

\begin{lemma}\label{lem:w_n}
	Fix $\mathbf{c} \in [1, 2]$. There exists $C > 0$ such that for all $n\ge1$,
	\begin{align}\label{eq:w_n}
		w_{2n} \leq w_{n} + C.
	\end{align}
\end{lemma}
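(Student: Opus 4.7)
The plan is to mirror the proof of Lemma~\ref{lem:v_n} with the direction of all inequalities reversed. Fix a simply-connected discrete domain $D$ with $\partial D \cap \Lambda_{2n} \neq \emptyset$. If already $\partial D \cap \Lambda_n \neq \emptyset$, the bound $\bbE_D^{0,1}[h(0)^2] \leq w_n$ follows from the definition of $w_n$, so one may assume $\Lambda_n \subset D \setminus \partial D$. In particular, $A(n/2,n)$ lies entirely in $\Lambda_n \subset D$ and Theorem~\ref{thm:bxp} applies at this scale.

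The core step is to isolate, with probability at least a universal constant $c_0 > 0$, an outermost \emph{level-line circuit} $\Gamma$ around $0$ contained in $A(n/2,n)$: a closed curve on which $h$ takes only two consecutive integer values $\{m, m+1\}$ for some random level $m \in \bbZ$. Such a $\Gamma$ is to be extracted from a circuit of $h \geq 2$ in $A(n/2,n)$, which exists with probability at least $c_0$ by Theorem~\ref{thm:bxp} (applied with $k=2$ and $\ell=1$), via standard height-function arguments: the outermost level line separating the $\{h \geq 2\}$-component around $0$ from its exterior yields a closed curve of the required form. By Lemma~\ref{lem:circ_decrease}, the random level $m = m(\Gamma)$ has exponentially decaying tails under $\bbP_D^{0,1}$:
\[
\bbP_D^{0,1}\bigl[\,|m(\Gamma)|\geq k\,\bigr] \;\leq\; \bbP_D^{0,1}\bigl[\calO_{|h|\geq k}(n/2,n)\bigr] \;\leq\; C\,e^{-ck}
\]
for $k$ large enough.

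Conditional on $\Gamma$, the spatial Markov property~\eqref{eq:SMP} identifies the law of $h$ inside the enclosed domain $D_\Gamma$ with $\bbP_{D_\Gamma}^{\{m, m+1\}}$. Since $\partial D_\Gamma = \Gamma \subset A(n/2, n) \subset \Lambda_n$, the definition of $w_n$ yields $\bbE_{D_\Gamma}^{0,1}[h(0)^2] \leq w_n$. A height shift by $m$, combined with Corollary~\ref{cor:E[h]>0} to bound $|\bbE_{D_\Gamma}^{0,1}[h(0)]| \leq 1$, then gives
\[
\bbE_{D_\Gamma}^{\{m, m+1\}}[h(0)^2] \;=\; \bbE_{D_\Gamma}^{0,1}[(h(0) + m)^2] \;\leq\; w_n + 2|m| + m^2.
\]
Integrating over $\Gamma$ and using the exponential tail of $|m|$, the favourable-event contribution to $\bbE_D^{0,1}[h(0)^2]$ is at most $w_n + C_1$ for some absolute constant $C_1$.

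The main technical obstacle will be the unfavourable event $\{\Gamma$ does not exist$\}$, whose probability is only $\leq 1-c_0$, so that a naive trivial bound is insufficient to match the desired additive constant. The natural remedy is to iterate the construction across nested dyadic sub-annuli $A(2^j, 2^{j+1}) \subset \Lambda_n$: at each such scale, Theorem~\ref{thm:bxp} provides a conditionally positive probability of extracting a level-line circuit, so repeated failure across $O(\log n)$ scales has geometrically small probability. Combining these scales via the tower rule, and absorbing the successive conditioning levels through Lemma~\ref{lem:circ_decrease}, should yield $\bbE_D^{0,1}[h(0)^2 \ind_{\text{bad}}] \leq C_2$ uniformly in $n$, completing the proof with $C = C_1 + C_2$.
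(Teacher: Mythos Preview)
Your proposal has genuine gaps, and in fact the paper's argument is structured quite differently from a ``mirror'' of Lemma~\ref{lem:v_n}.

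The first problem is the extraction of a circuit $\Gamma\subset A(n/2,n)$ on which $h$ takes only two consecutive values. Knowing $\calO_{h\ge2}(n/2,n)$ does \emph{not} produce such a $\Gamma$: the outermost circuit of $h\ge2$ inside the annulus need not satisfy $h\in\{2,3\}$, since faces immediately outside it (still in $A(n/2,n)$) may well have $|h|\ge2$ without being part of any circuit around $0$. The natural way to get a level-line circuit with $h\in\{m,m+1\}$ is to take the inner boundary of the $\{|h|\le m\}$-cluster of $\partial D$, but that boundary has no reason to lie in $A(n/2,n)$. The second problem is your appeal to Lemma~\ref{lem:circ_decrease}: that lemma bounds the probability of a $\times$-circuit of large $|h|$ \emph{surrounding $\Lambda_n$} in a domain $D$ that contains $\Lambda_n$ but not $\Lambda_{2n}$. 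Your circuit lies inside $\Lambda_n$ and only surrounds $\Lambda_{n/2}$; applying the lemma at scale $n/2$ would require $D\not\supset\Lambda_n$, which you have explicitly excluded. So the claimed tail bound on $m(\Gamma)$ is not justified. Finally, the multiscale iteration you sketch for the ``bad event'' is exactly where all the work would be, and you have not shown how to control the conditioning across scales.

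The paper avoids all of this with a single clean device: rather than fixing an annulus and a level, it explores from $\partial D$ inward, setting $K:=\inf\{k\ge1:\partial D\xleftrightarrow{|h|\le k}\Lambda_n\}$. This $K$ is almost surely finite, so there is no bad event to iterate over; Lemma~\ref{lem:circ_decrease} is tailored precisely to give $K$ exponential tails. The unexplored region $\Omega$ then automatically has $\partial\Omega\cap\Lambda_n\neq\emptyset$ and boundary values in $\{\pm K,\pm(K+1)\}$, so a height shift gives $\bbE_\Omega^{K,K+1}[h(0)^2]\le w_n+2K+K^2$, and integrating over $K$ yields $w_{2n}\le w_n+\bbE[2K+K^2]\le w_n+C$.
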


Let us show how the above implies the upper bound in  Corollary~\ref{cor:variance_D} for simply-connected domains.

\begin{proof}[the upper bound in  Corollary~\ref{cor:variance_D} for simply-connected domains]
    We may assume $x = 0$. Fix a simply connected domain $D$ containing $0$ and a boundary condition $\xi$ with $|\xi| \leq \ell$.
    Let $n$ be the distance from $0$ to $D^c$. We have
    \begin{align*}
    	\mathrm{Var}_D^{ \xi } (h (0))  = \mathrm{Var}_D^{ \xi } (h (0) + \ell) 
    	 \leq  \bbE^\xi_D  [ (h (0) + \ell)^2] =   \bbE^{\xi+ \ell}_D  [ h (0) ^2]. 
	\end{align*}
	Then,~\eqref{eq:CBC-|h|} and Corollary~\ref{cor:E[h]>0} imply that
	\begin{align*}
    	 \bbE^{\xi+ \ell}_D  [ h (0) ^2]  \leq \bbE^{2\ell, 2 \ell - 1}_D  [ h (0) ^2]  \leq \mathrm{Var}^{2\ell, 2 \ell - 1}_D  ( h (0) ) + 4 \ell^2 = \mathrm{Var}^{0, 1}_D  ( h (0) ) + 4 \ell^2 \leq w_n + 4\ell^2.
    \end{align*}
	Finally, it is a direct consequence of Lemma~\ref{lem:w_n} that $w_n \leq C \log n$ for some constant $C$ and $n \geq 2$ and the claim thus follows from the previous two displayed equations. 
\end{proof}

To prove Lemma~\ref{lem:w_n}, we will use the following result which may also be of independent interest. 

\begin{lemma}\label{lem:circ_decrease}
    Fix $\mathbf{c} \in [1, 2]$. 
    There exist $c, C > 0$ such that for all $k$ and $n$ and any simply connected domain $D$ containing $\La_n$ but not $\La_{2n}$, 
	\begin{align}\label{eq:circ_decrease}
        \bbP^{0,1}_{D} [\partial D \xlra{|h| \leq k} \La_n] \ge 1 - C e^{-ck}.
    \end{align}
\end{lemma}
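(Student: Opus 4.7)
The plan is to proceed by duality followed by an iteration over height levels.

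First, by the duality of Remark~\ref{rem:dual_crossings}, the complementary event $\{\partial D \nxlra{|h|\le k} \La_n\}$ implies that some $\times$-path of $|h|\ge k+1$ in $D$ separates $\La_n$ from $\partial D$. Since $D$ is simply connected, this separating $\times$-path must form a $\times$-circuit around $\La_n$ inside $D$. Moreover, $\times$-adjacent faces differ in height by $0$ or $\pm 2$, so any $\times$-circuit of $|h|\ge k+1$ is either entirely at $h\ge k+1$ or entirely at $h\le -(k+1)$. By sign-flip symmetry and~\eqref{eq:CBC-h} (to compare the boundary condition $\{0,1\}$ against its reflection $\{-1,0\}$), it therefore suffices to show
\[
\bbP^{0,1}_D\big[A_{k+1}(D)\big] \le \tfrac{C}{2}\, e^{-ck}, \qquad\text{where}\qquad A_j(D'):=\{\exists\, \times\text{-circuit of } h\ge j \text{ around } \La_n \text{ in } D'\}.
\]

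Next, I would set $q_j := \sup_{D'} \bbP^{0,1}_{D'}\big[A_j(D')\big]$, with the supremum running over simply connected $D'$ with $\La_n\subset D'$ and $\La_{2n}\not\subset D'$. The aim is to prove a recursion $q_{j+\Delta} \le \rho\, q_j$ for some absolute constants $\Delta\in\bbN$ and $\rho\in(0,1)$, whereupon iteration gives $q_{k+1}\le \rho^{\lfloor k/\Delta\rfloor}$ and finishes the proof. To obtain this recursion, I would condition on the outer-most $\times$-circuit $\Gamma_j$ of $h\ge j$ around $\La_n$ and let $D_j\subset D'$ denote the enclosed sub-domain containing $\La_n$; observe that $D_j$ still satisfies the constraints $\La_n\subset D_j$ and $\La_{2n}\not\subset D_j$. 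Applying successively the spatial Markov property~\eqref{eq:SMP}, the comparison~\eqref{eq:CBC-h} (dominating the mixed boundary condition on $\partial D_j$, taking values in $\{0,1,j,j+1\}$, by the constant $\{j,j+1\}$), and the shift $h\mapsto h-j$, one obtains
\[
\bbP^{0,1}_{D'}\big[A_{j+\Delta}(D') \,\big|\, \Gamma_j\big] \;\le\; \bbP^{0,1}_{D_j}\big[A_\Delta(D_j)\big] \;\le\; \sup_{D''} \bbP^{0,1}_{D''}\big[A_\Delta(D'')\big].
\]
Averaging over $\Gamma_j$ yields the recursion with $\rho = \sup_{D''} \bbP^{0,1}_{D''}\big[A_\Delta(D'')\big]$.

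The core of the argument, and the main obstacle, is therefore showing that this $\rho$ is strictly less than $1$ for some absolute constant $\Delta$. By the same duality as above, this is equivalent to a uniform positive lower bound on $\bbP^{0,1}_{D''}\big[\La_n \xlra{h\le\Delta-1,\, D''} \partial D''\big]$ over all constrained $D''$. Here the hypothesis $\La_{2n}\not\subset D''$ is crucial: it guarantees a boundary face $b\in\partial D''\cap\La_{2n}$ with $h(b)\in\{0,1\}$, so that $\partial D''$ is geometrically close to $\La_n$. The plan is to build a connection from $\La_n$ to $b$ through a path of bounded height by assembling a constant number of RSW-type building blocks: Theorem~\ref{thm:bxp} applied to $-h$ yields circuits of $h\le 0$ in small annuli with uniformly positive probability, and these can be patched together via~\eqref{eq:FKG-h} and the pushing of boundary conditions (Corollary~\ref{cor:pushing_modified}) to form the desired connection, the resulting height threshold $\Delta-1$ being an absolute constant determined by the construction. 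Controlling the possibly irregular local geometry of $\partial D''$ near $b$ while keeping the path inside $D''$ is the delicate step that drives the choice of $\Delta$.
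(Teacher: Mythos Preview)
Your overall framework matches the paper's proof: duality to $\times$-circuits of $h\ge k+1$, reduction to one sign via~\eqref{eq:CBC-h}, and an iteration obtained by conditioning on the outermost circuit and using~\eqref{eq:SMP},~\eqref{eq:CBC-h} and a height shift. The paper carries this out with $\Delta=2$, writing the induction as $\bbP^{0,1}_D[\calO^\times_{h\ge 2k}(n)]\le e^{-ck}$ and conditioning on the outermost $\times$-loop where $h=2k$ to obtain a sub-domain $Q$ with boundary condition $\{2k-1,2k\}$; your version is equivalent up to bookkeeping.

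The genuine gap is in your argument for $\rho<1$. Your plan to connect $\Lambda_n$ to $b\in\partial D''\cap\Lambda_{2n}$ by patching circuits from Theorem~\ref{thm:bxp} runs into the obstacle you yourself flag: Theorem~\ref{thm:bxp} requires the domain to contain the full annulus, and no annulus centred near $b$ is guaranteed to lie inside $D''$. There is no scale at which this is repaired, since $D''$ can pinch arbitrarily near $b$; the last building block of any such patching always sits in a region of uncontrolled geometry, and neither~\eqref{eq:FKG-h} nor Corollary~\ref{cor:pushing_modified} circumvents this.

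The paper's resolution avoids building a path to $\partial D''$ altogether. It picks $z\in\partial\Lambda_{2n}$ with $z\notin D$ and observes that any $\times$-circuit in $Q$ surrounding $\Lambda_n$ must contain a $\times$-crossing of the \emph{fixed} annulus $\Lambda_{2n}(z)\setminus\Lambda_n(z)$. This reduces the single-step bound to controlling one crossing probability in a standard annulus. To make the measure match, it embeds $Q$ in a larger simply connected $R\supset Q\cup\Lambda_{2n}(z)$, realises $\bbP^{1,2}_Q$ as $\bbP^{1,2}_R[\,\cdot\mid h\in\{1,2\}\text{ on }R\setminus Q^\circ]$, and then pushes (Proposition~\ref{prop:pushing_1} for $-h$) down to $\Lambda_{2n}(z)$ with a bounded boundary condition. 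A short chain of~\eqref{eq:FKG-h} and~\eqref{eq:FKG-|h|} manipulations removes the residual conditioning and reduces everything to $\bbP^{0,1}_{\Lambda_{2n}}[\calO_{h\ge 4}(n,2n)]$, which Theorem~\ref{thm:bxp} bounds below uniformly. The key idea you are missing is to centre the auxiliary annulus at a point \emph{outside} $D$ rather than at $b$ or the origin; this is what makes the geometry uniform regardless of the shape of $D$.
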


\begin{remark}
It is useful to adopt the dual view of Remark~\ref{rem:dual_crossings} to Lemma~\ref{lem:circ_decrease}: equivalently
\begin{align*}
\bbP^{0,1}_{D} [\calO^{\times}_{|h| \geq k + 1} (n) ] \leq  C e^{-ck},
\end{align*}
where $\calO^{\times}_{|h| \geq k+1} (n)$ denotes the event that there exists a $\times$-circuit of $|h| \geq k+1$ in $D$ that winds around~$\Lambda_n$. 
\end{remark}

\begin{proof}
First, by the union bound and~\eqref{eq:CBC-h}
\begin{align*}
    \bbP^{0,1}_{D} [\calO^{\times}_{|h| \geq k} (n) ] 
    \leq \bbP^{0,1}_{D} [\calO^{\times}_{ h \geq k} (n) ]  
    + \bbP^{0,1}_{D} [\calO^{\times}_{ h \leq -k} (n) ] \leq 2 \bbP^{0,1}_{D} [\calO^{\times}_{ h \geq k} (n) ].
\end{align*}
We will prove that for some universal constant $c >0$ to be chosen below
\begin{align}\label{eq:the_induction}
	\bbP^{0,1}_{D} [\calO^{\times}_{ h \geq 2k} (n) ] \leq e^{-ck},
\end{align}
for all $k \geq 0$, by induction on $k$.
The statement is trivial for $k =0$, and we focus on the inductive step. 
Assume that \eqref{eq:the_induction} holds for some integer $k$.

When $\calO^{\times}_{ h \geq 2k} (n)$ occurs, let $\mathcal{Q}$ be the random discrete domain formed of faces inside the exterior-most $\times$-loop of $h=2k$, and the faces sharing a corner with this interior. Then,
\begin{align}\label{eq:begin induction proof}
    \bbP^{0,1}_{D} [\calO^{\times}_{ h \geq 2k + 2} (n) \; \vert \; \calO^{\times}_{ h \geq 2k} (n) \text{ and } \mathcal{Q} = Q ]
    & = \bbP^{2k,2k-1}_{Q} [\calO^{\times}_{ h \geq 2k + 2} (n) ] \\ 
    &= \bbP^{1, 2}_{Q} [\calO^{\times}_{ h \geq 4} (n) ] .   \nonumber
\end{align}

\begin{figure}
	\begin{center}
	\includegraphics[width=0.55\textwidth]{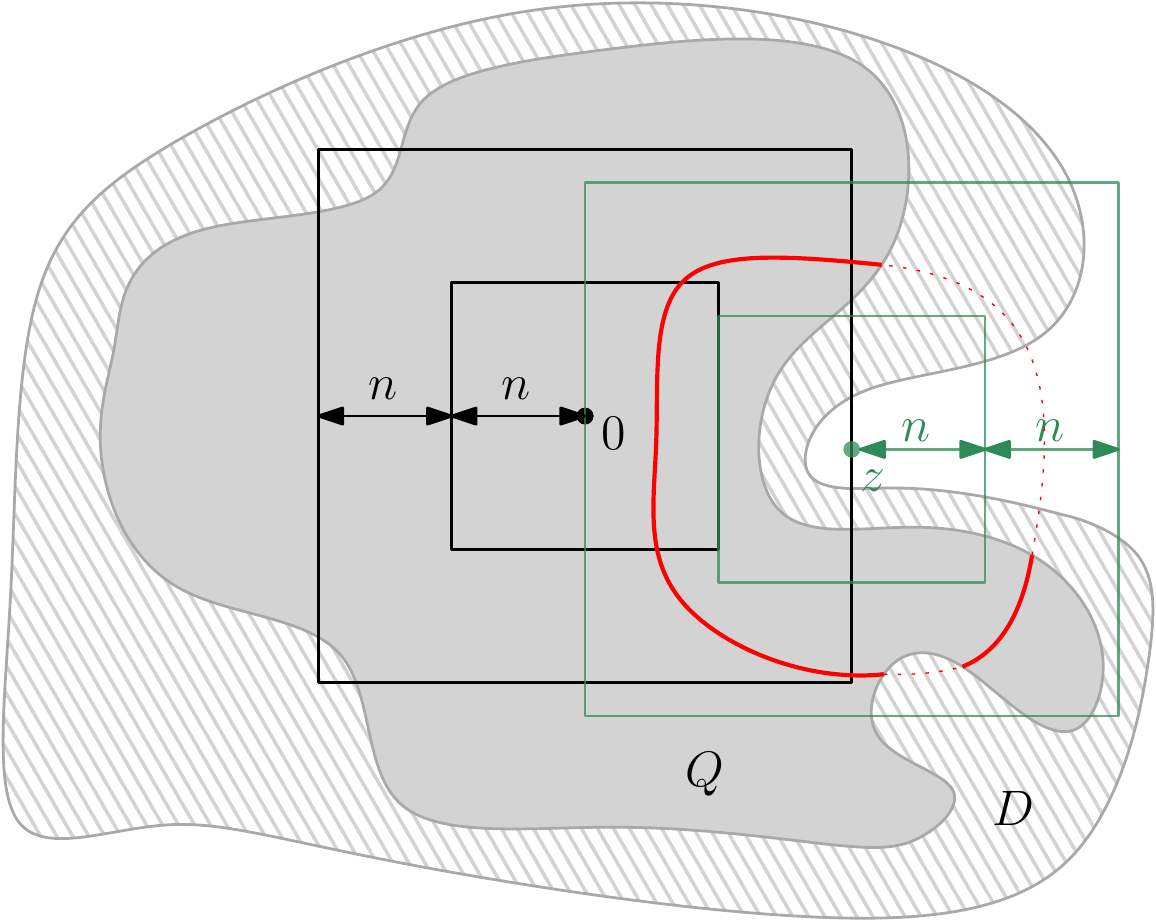}
	\caption{The event $\calQ = Q$ is determined by the value of $h$ on  $D\setminus Q$. 
	For $z$ as above, any circuit disconnecting $\partial D$ from $\Lambda_n$ must cross the annulus $A(n, 2n) + z$ from inside to outside. 
	By duality, any circuit in $\La_{2n}(z)$ surrounding $\La_n(z)$ induces a crossing between $\partial Q$ and~$\La_n$.}
		\label{fig:disconnecting_circuits_planar}
	\end{center}
\end{figure}

Fix now any $z \in \bbZ^2 $ on the boundary of $ \Lambda_{2n}$ (viewed as a continuous domain) and not inside $D$; such a $z$ exists as $D$ does not contain $\Lambda_{2n}$. Remark that any circuit contained in $Q$ and which surrounds $\Lambda_n$ necessarily includes a path between $\partial\Lambda_n(z)$ and $\partial \Lambda_{2n}(z)$ --  see Figure~\ref{fig:disconnecting_circuits_planar}.
Hence, we have
\begin{align*}
 \bbP^{1, 2}_{Q} [\calO^{\times}_{ h \geq 4} (n) ]
 & \leq \bbP^{1, 2}_{Q} [\partial \Lambda_n (z) \xlra{h \geq 4}_{\times} \partial \Lambda_{2n} (z) ].
\end{align*}
Let $R$ be a simply connected domain such that $Q\cup \Lambda_{2n}(z)\subset R$.
Write $\calO_{h \leq 3} (A(n, 2n) + z)$ for the event that there exists a circuit 
of faces of height at most $3$ in $\La_{2n}(z)$ that surrounds $\La_n(z)$. 
By duality (Remark~\ref{rem:dual_crossings}) and \eqref{eq:SMP} we then have
\begin{align*}
  	\bbP^{1, 2}_{Q} [\partial \Lambda_n (z) \xlra{h \geq 4}_{\times} \partial \Lambda_{2n} (z) ]
	=   1 - \bbP^{1,2}_{R} [\calO_{h \leq 3} (A(n, 2n) + z) \,|\, h \in \{1,2\} \text{ on } R\setminus Q^\circ],
\end{align*}
where $Q^\circ = Q \setminus \partial Q$ is the interior of $Q$.
Let $\xi$ be the maximal boundary condition on $\partial \La_{2n}(z)$ that takes values $\{1,2\}$ on $\partial \Lambda_{2n}(z) \setminus Q^\circ$, and that is smaller or equal to $6$ overall. 
Then, by Proposition~\ref{prop:pushing_1} applied to $-h$, 
\begin{align*}
	&\bbP^{1,2}_{R} [\calO_{h \leq 3} (A(n, 2n) + z)  \,|\, h \in \{1,2\} \text{ on } R\setminus Q^\circ] \\
	& \qquad \geq \bbP^{\xi}_{ \Lambda_{2n}(z) } [\calO_{h \leq 3} (A(n, 2n) + z)  \,|\, h \in \{1,2\} \text{ on } \La_{2n}(z)\setminus Q^\circ].
\end{align*}
Observe that the condition of Proposition~\ref{prop:pushing_1} that allows to remove the  multiplicative factor $2$ is indeed satisfied, 
as any path realising $\calO_{h \leq 3} (A(n, 2n) + z)$ must intersect  $\La_{2n}(z)\setminus Q^\circ$.
Applying duality on both sides of the previous inequality, we conclude that
\begin{align*}
  	\bbP^{1, 2}_{Q} [\partial \Lambda_n (z) \xlra{h \geq 4}_{\times} \partial \Lambda_{2n} (z) ]
	&\leq  \bbP^{\xi}_{\La_{2n}(z)} [\partial \Lambda_n (z) \xlra{h \geq 4}_{\times} \partial \Lambda_{2n} (z) \,|\, h \in \{1,2\} \text{ on } \La_{2n}(z)\setminus Q^\circ ]\\
	&\leq \bbP^{\xi}_{\La_{2n}(z)} [\partial \Lambda_n (z) \xlra{h \geq 3} \partial \Lambda_{2n} (z) \,|\, h \in \{1,2\} \text{ on } \La_{2n}(z)\setminus Q^\circ ].
\end{align*}
Using \eqref{eq:FKG-h} and \eqref{eq:FKG-|h|}, we find 
\begin{align*}
	\bbP^{\xi}_{\La_{2n}(z)}& [\partial \La_n (z) \xlra{h \geq 3} \partial \La_{2n} (z) \,|\, h \in \{1,2\} \text{ on } \La_{2n}(z)\setminus Q^\circ]\\
&=\bbP^{\xi-2}_{\La_{2n}(z)}[\partial \La_n (z) \xlra{h \geq 1} \{ \partial  \La_{2n} (z) \cap \xi \geq 3 \} \,\big|\, h \in \{-1,0\} \text{ on } \La_{2n}(z)\setminus Q^\circ]\\
&	\leq \bbP^{\xi-2}_{\La_{2n}(z)}[\partial \La_n (z) \xlra{|h| \geq 1} \{ \partial  \La_{2n} (z) \cap \xi \geq 3 \} \,|\,|h|\le 1\text{ on } \La_{2n}(z)\setminus Q^\circ]\\
&	\leq \bbP^{\xi-2}_{\La_{2n}(z)} [\partial \La_n (z) \xlra{|h| \geq 1} \{ \partial  \La_{2n} (z) \cap \xi \geq 3 \} ]\\
&	= \bbP^{\xi-2}_{\La_{2n}(z)} [\partial \La_n (z) \xlra{h \geq 1} \partial \La_{2n} (z)],
\end{align*}
{where in the second step, as well as in the last one, we observed that for any path realising the event for $|h|$, the sign of $h$ on that path is determined to be $+$ by the boundary condition.}
Finally, duality allows us to bound the above as 
\begin{align*}
 	\bbP^{\xi-2}_{\La_{2n}(z)} [\partial \Lambda_n (z) \xlra{h \geq 1} \partial \Lambda_{2n} (z)]
  	\leq 1 - \bbP^{3,4}_{\La_{2n}} [\calO_{h \leq 0}^\times (n)]
	\leq 1 - \bbP^{0,1}_{\La_{2n}} [\calO_{h \geq 4} (n)]
	\leq e^{-c},
\end{align*}
where $c > 0$ is independent of $n$ and is generated by Theorem~\ref{thm:bxp}.
Summarizing the chain of inequalities starting from~\eqref{eq:begin induction proof}, we have
\begin{align*}
\bbP^{0,1}_{D} [\calO^{\times}_{ h \geq 2k + 2} (n) \; \vert \; \calO^{\times}_{ h \geq 2k} (n) \text{ and } \mathcal{Q} = Q ]
\leq e^{-c}
\end{align*}
for all $Q$. Using the induction hypothesis and averaging over $Q$, we conclude that \eqref{eq:the_induction} also holds for $k+1$, and thus for all $k$. This implies~\eqref{eq:circ_decrease} after adjusting the constants.
\end{proof}

\begin{proof}[Lemma~\ref{lem:w_n}]
Let $D$ be a simply-connected domain such that $\partial D \cap \Lambda_{2n} \neq \emptyset$.
Define the random variable
\begin{align*}
	K := \inf \{ k \geq 1\,:\, \partial D \xlra{|h| \leq k} \La_n \}.
\end{align*}

Denote by
$\sfC_{k}$ the connected component of faces with $|h| \leq k$ of $\partial D$, and the $\times$-circuits of $\vert h \vert = k+1$ bounding them.
Then, $\sfC_k$ may be determined by only exploring the faces in it.
Explore $\sfC_K$ by revealing $\sfC_1$ then $\sfC_2$, etc, until the first cluster that reaches $\La_n$.  
Write $\Omega$ for the faces in $D\setminus \sfC_K$ or sharing a corner with a face in $D\setminus \sfC_K$.
Then,
\begin{align}\label{eq:KQ}
	\bbE^{0,1}_{D} [h(0)^2] 
	= \sum_{(Q, \zeta)} \bbP^{0,1}_{D} [h(0)^2 \,|\, \Omega = Q,\, h = \zeta \text{ on $\sfC_K$}]  
	\bbP^{0,1}_{D} [\Omega = Q,\, h = \zeta \text{ on $\sfC_K$}],
\end{align}
where the sum runs over all the possible realizations of $(\Omega, h_{|\sfC_K})$.
When $0 \not \in \Omega$, we have $h(0)^2\leq K^2$. 
Fix now $(Q,\zeta)$ such that $0 \in \Omega$. 
Write $k$ for the value of $K$ in the realization $\zeta$.
Then the values of $\zeta$ on $\partial Q$ are either $k$, $k+1$, $-k$ or $-k-1$.  
The sign of the boundary conditions may depend on the connected component of $Q$, however the quantity of interest, $h(0)^2$, is invariant under sign flip. Hence we can as well assume that $\zeta$ is positive on $\partial Q$.
Finally, observe that, due to the definition of $K$, $Q$ necessarily intersects $\La_n$.
Then,
\begin{align*}
    \bbE^{0,1}_{D} [h(0)^2 \,|\, \Omega = Q \text{ and } h = \zeta \text{ on $\sfC_K$}]  
    &= \bbE^{k,k+1}_{Q} [h(0)^2]\\
    &= \bbE^{0,1}_{Q} [(h(0)+k)^2]\\  
    &= \bbE^{0,1}_{Q} [h(0)^2]  + 2k \bbE^{0,1}_{Q} [h(0)]  + k^2 \\
    &\leq w_n  + 2k  + k^2.
\end{align*}
Plugging the above into~\eqref{eq:KQ}, we find 
\begin{align*}
	\bbE^{0,1}_{D} [h(0)^2] \leq  w_n  + \bbE^{0,1}_{D} [2K + K^2].
\end{align*}
Finally, Lemma~\ref{lem:circ_decrease} implies that $\bbE^{0,1}_{D} [2K + K^2] \leq C$ 
for some constant $C>0$ which is independent of $n$ or $D$. 
This proves~\eqref{eq:w_n}.
\end{proof}


\subsubsection{The upper bound of Theorem~\ref{thm:variance_torus}}\label{sec:variance_ub_torus}

\newcommand{\diam}{{\rm diam}}

Throughout this proof we fix $\mathbf{c}$ and $N$, and operate on the torus $\bbT_ {N} =: \bbT $.
For $B \subset F(\bbT )$, denote
\begin{align*}
\bbE^{0, 1}_{B^c} [ \;  \cdot \;  ]  = 
\bbE_{\bbT }^{\mathrm{(\mathrm{bal})}} [ \;  \cdot \; \vert \; h_{\vert B} \in \{ 0 , 1\}].
\end{align*}
For $u \in F(\bbT)$ and $1 \leq n \leq N/2$, write $\La_n(u)$ for the lift of 
$\La_n$ to $\bbT$, translated so that it is centered at the bottom-left corner of $u$. 

Let $x, y \in F(\bbT)$ be the faces appearing in the statement.
Due to the triangular inequality, it suffices to prove the bound for $d(x,y) \leq N/16$, and we will assume this henceforth. 
Write $d = d(x,y)$ and for simplicity assume that $d$ is a power of $2$ (small adaptations allow to overcome this assumptions). 

In analogy to Section~\ref{sec:variance_ub_D_sc}, for $n \leq N/4$, define
\begin{align*}
	w_n &:= \sup \{ \bbE^{0, 1}_{B^c} [h(x)^2] \,: B \subset F(\bbT) \text{ connected, intersecting $\La_n(x)$, with diameter $\geq 4n$ }\} \\
	u_n &:=\sup \{ \bbE^{0, 1}_{B^c} [h(x)^2] \,:\,B \subset F(\bbT) \text{ connected, containing $y$, and intersecting $\partial\La_n(y)$} \}.
\end{align*}
The result below controls the growth of $w_n$ and $u_n$, similarly to Lemma~\ref{lem:w_n} in the previous section. 

\begin{lemma}\label{lem:w_n torus}
	Fix $\mathbf{c} \in [1,2]$. There exists $C > 0$ such that for all $N$ and all $x, y \in \bbT_N$ with $d (x,y) \leq N/16$, we have
	\begin{align}
	w_{ 2n} &\leq w_{ n } + C \qquad \text{for all } n \leq N/8,  \label{eq:w_n2}\\
	u_{ n} &\leq u_{ 2n } + C \qquad \text{for all } n \leq N/8,  \label{eq:u_n2}\\
	u_{4d}  &\leq w_{d}.\label{eq:uw}
	\end{align}
\end{lemma}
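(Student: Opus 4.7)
The inequality~\eqref{eq:uw} is essentially definitional. If $B$ is connected, contains $y$, and intersects $\partial \Lambda_{4d}(y)$, then $B$ has diameter at least $4d$, and since $d(x,y) = d$ we have $y \in \Lambda_d(x)$, so $B \cap \Lambda_d(x) \neq \emptyset$. Thus $B$ contributes to the supremum defining $w_d$, yielding $\bbE^{0,1}_{B^c}[h(x)^2] \leq w_d$, and taking the supremum over such $B$ gives $u_{4d} \leq w_d$.

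For~\eqref{eq:w_n2}, the plan is to replay the proof of Lemma~\ref{lem:w_n} on the torus. Fix $B$ with diameter $\geq 8n$ intersecting $\Lambda_{2n}(x)$. Under $\bbE^{0,1}_{B^c}[\,\cdot\,]$, define $\sfC_k$ as the connected cluster of faces with $|h| \leq k$ containing $B$, and set
\[
K := \inf\{k \geq 1 : \sfC_k \cap \Lambda_n(x) \neq \emptyset\}.
\]
The first step is to show that $\bbE[K^2] \leq C$, by replaying Lemma~\ref{lem:circ_decrease} inside the subdomain $\Lambda_{4n}(x)$, which is genuinely simply connected since $4n \leq N/2$; the key input is Theorem~\ref{thm:bxp} combined with the comparison of boundary conditions~\eqref{eq:CBC-|h|} to push the $\{0,1\}$ condition on $B$ through to the boundary of $\Lambda_{4n}(x)$. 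After revealing $\sfC_K$ and $h|_{\sfC_K}$, the spatial Markov property together with the height-shift argument of Lemma~\ref{lem:w_n} (using Corollary~\ref{cor:E[h]>0} to handle the linear term) reduces the conditional bound on $h(x)^2$ to $\bbE^{0,1}_{B'^c}[h(x)^2] + 2K + K^2$, where $B'$ denotes the faces of $\sfC_K$ adjacent to the component $\Omega_x$ of $x$ in the unrevealed region.

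The main obstacle is to verify that $B'$ satisfies the diameter-$\geq 4n$ constraint in the definition of $w_n$. It intersects $\Lambda_n(x)$ by the definition of $K$. Its diameter should typically be $\geq 4n$ by inheritance from $B \subset \sfC_K$ of diameter $\geq 8n$; the exceptional case is when $\sfC_K$ pinches $x$ off into a small component $\Omega_x$. In that case, $\Omega_x$ itself has diameter $< 4n$ and the already-proved upper bound of Corollary~\ref{cor:variance_D} for simply connected domains (Section~\ref{sec:variance_ub_D_sc}) applies to give $\bbE^{0,1}_{\Omega_x}[h(x)^2] \leq C \log(4n) + O(K^2)$. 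The plan is to show that the probability of this exceptional pinching event decays polynomially in $n$, via a separate circuit-type estimate in an annulus around $x$ (the cluster $\sfC_K$ would have to wrap tightly around $x$ while simultaneously extending to diameter $8n$, a configuration whose cost is controlled by the methods of Section~\ref{sec:RSW}). Its contribution to $\bbE[h(x)^2]$ is then $O(1)$, and combining the two cases gives $w_{2n} \leq w_n + C$.

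Finally,~\eqref{eq:u_n2} is handled by the same exploration technique. Fix $B$ containing $y$ and intersecting $\partial \Lambda_n(y)$, let $\sfC_k$ be the cluster of $|h| \leq k$ containing $B$, and set $K := \inf\{k \geq 1 : \sfC_k \cap \partial \Lambda_{2n}(y) \neq \emptyset\}$. An annulus-crossing argument analogous to Lemma~\ref{lem:circ_decrease}, carried out inside $\Lambda_{4n}(y)$, gives $\bbE[K^2] \leq C$. The enlarged set $\tilde B := \sfC_K$ is connected, contains $y$, and intersects $\partial \Lambda_{2n}(y)$, and thus qualifies in the supremum defining $u_{2n}$; the height-shift cost is again controlled by $2K + K^2$, yielding $u_n \leq u_{2n} + C$.
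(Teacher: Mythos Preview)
Your approach matches the paper's: explore the $|h|\le k$ cluster $\sfC_k$ of $B$, control the stopping level $K$ via a torus analogue of Lemma~\ref{lem:circ_decrease}, and height-shift. Your arguments for~\eqref{eq:uw} and~\eqref{eq:u_n2} are correct.

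For~\eqref{eq:w_n2}, however, you create an unnecessary obstacle by taking $B'$ to be only the faces of $\sfC_K$ adjacent to $\Omega_x$, and then worry that this set may fail the diameter $\ge 4n$ constraint. The fix is to do exactly what you already do for~\eqref{eq:u_n2}: take $B' = \sfC_K$, the \emph{entire} explored cluster. It is connected by construction, contains $B$ (hence has diameter $\ge 8n \ge 4n$), and intersects $\Lambda_n(x)$ by the definition of $K$, so it qualifies for the supremum defining $w_n$. Since conditioning on $h\in\{0,1\}$ on $\sfC_K$ pins $h|_{\sfC_K}$ by parity, the spatial Markov property gives
\[
\bbE_{\Omega_x}^{0,1}[h(x)^2] \;=\; \bbE^{0,1}_{(\sfC_K)^c}[h(x)^2] \;\le\; w_n,
\]
with no pinching case to consider.

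Your proposed workaround is not only unnecessary but also unsubstantiated: the claim that the pinching probability decays polynomially in $n$ is not obvious. Under $\bbE^{0,1}_{B^c}$ with $B$ touching $\Lambda_{2n}(x)$, a circuit of $|h|\le K$ around $x$ inside $\Lambda_{4n}(x)$ is not a priori a rare event (indeed, low-height circuits near a $\{0,1\}$-boundary are rather typical), so this would require a genuine additional argument. Fortunately it is not needed.
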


Before outlining the proof of this lemma, let us see how it implies the upper bound of Theorem~\ref{thm:variance_torus}.

\begin{proof}[the upper bound of Theorem~\ref{thm:variance_torus}]
	By the definition of $u_1$, we have 
	\begin{align*}
		\bbE_{\bbT_{N}}^{\mathrm{(\mathrm{bal})}} [ ( h(x)-h(y) )^2 ]  
		\leq u_{1} 
		\stackrel{\eqref{eq:u_n2}}{\leq} u_{4d} + C \log 4d
		\stackrel{\eqref{eq:uw}}{\leq} w_{d} + C \log 4d
		\stackrel{\eqref{eq:w_n2}}{\leq} w_{1} + 2C \log 4d .
	\end{align*}
	Since $w_1\leq 2$, the desired bound is attained. 
\end{proof}

\paragraph{Proof outline for Lemma~\ref{lem:w_n torus}:}
The relations \eqref{eq:w_n2} and \eqref{eq:u_n2} are proved in the same way as in Lemma~\ref{lem:w_n}
and hinge on the following two statements (which correspond to Lemma~\ref{lem:circ_decrease}).
\begin{itemize}
	\item There exist $c, C > 0$ such that for all $k$ and $n \leq N/8$ and any 
	$B \subset F(\bbT)$ connected, intersecting $\La_{2n}(x)$ and with diameter at least $8n$, 
	\begin{align*}
        \bbP^{0,1}_{B^c} [B \xlra{|h| \leq k} \La_n(x)] \ge 1 - C e^{-ck}.
    \end{align*}
	\item There exist $c, C > 0$ such that for all $k$ and $n \leq N/8$ and any 
	$B \subset F(\bbT)$ connected, with $y \in B$ and intersecting $\partial\La_{n}(y)$, 
	\begin{align*}
        \bbP^{0,1}_{B^c} [B \xlra{|h| \leq k} \partial\La_{2n}(y)] \ge 1 - C e^{-ck}.
    \end{align*}
\end{itemize}
Both of these statements are proved in the same way as Lemma~\ref{lem:circ_decrease}. 

Finally \eqref{eq:uw} follows directly from the definition of $u_n$ and $w_n$, 
since any set appearing in the supremum defining $u_{4d}$ also appears in that defining $w_d$.
\hfill $\square$

\subsubsection{The upper bound of Corollary~\ref{cor:variance_D} for arbitrary domains}\label{sec:variance_ub_D}

Fix a finite planar domain $D$, a face $x$ of $D$, and a boundary condition $\xi$ on $\partial D$ with $|\xi| \leq \ell$ for some $\ell$. By two trivial steps and then~\eqref{eq:CBC-|h|} 
\begin{align*}
	\mathrm{Var}_D^{ \xi } (h (x)) = \mathrm{Var}_D^{ \xi + \ell } (h (x))  
    \leq  \bbE^{\xi + \ell}_D  [ h (x) ^2]
    \leq  \bbE^{\zeta + 2\ell }_D  [ h (x)^2]
    = \mathrm{Var}_D^{\zeta + 2\ell } ( h(x) ) + \bbE^{\zeta + 2\ell }_D  [ h (x)]^2,
\end{align*}
for any boundary conditions $\zeta$ taking values $-1$, $0$ and $1$, and with the same parity as $\xi + \ell$.
Let $\pm \zeta$ be the condition minimizing $\bbE^{\zeta  }_D  [ h (x)^2]$, with the sign chosen so that $\bbE^{\zeta  }_D  [ h (x)] \leq 0$; whence, by Corollary~\ref{cor:E[h]>0} and the above, we have
\begin{align*}
	\mathrm{Var}_D^{ \xi } (h (x)) \leq
    \mathrm{Var}_D^{\zeta  } ( h(x) ) + 4 \ell ^2.
     \end{align*}
     
     Let $y$ be the even face of $\partial D$ closest to $x$; note that thus $d(x, y) \leq d(x, \partial D) + 1$.
Furthermore, embed $D$ in the  torus $\bbT_{N}$ for some $N$ larger than twice the diameter of $D$, and for $\bbE_{\bbT_N}^{({\rm bal})}$ normalize height functions by $h(y)=0$. Using the choice of $\zeta$ above and the embedding of $D$ in $\bbT_N$, we have
\begin{align*}
	\Var_D^\zeta ( h(x) ) 
	&\leq \bbE_{\bbT_N}^{({\rm bal})}[h(x)^2 \,|\, |h|\leq 1 \text{ on $\partial D$}] \\ 
	&\leq \bbE_{\bbT_N}^{({\rm bal})}[h(x)^2 \,|\, h(y)= 0]&\text{ by~\eqref{eq:FKG-|h|}}\\
	&= \bbE_{\bbT_N}^{({\rm bal})}[(h(x) - h(y))^2].
\end{align*}
By Theorem~\ref{thm:localisation}, the latter is bounded by $C \log d_{\bbT_N} (x,y)$, where $d_{\bbT_N} (x,y)$ is the distance between $x$ and $y$, when embedded in the torus. 
Notice however that, due to our choice of $N$ and $y$, $d_{\bbT_N} (x,y) = d(x, y) \leq d(x, \partial D) + 1$. The claim follows by adjusting $C$.
\hfill $\square$

\paragraph*{Acknowledgements}
This research was funded by an IDEX Chair from Paris Saclay, by the NCCR SwissMap from the Swiss NSF and the ERC grant 757296 CRIBLAM. The second author is supported by the ERC CRIBLAM and by the Academy of Finland (grant \#339515). The third author is supported by the Swiss NSF. 
We thank Piet Lammers for useful discussions.

\appendix

\section{Proofs of the statements in Section~\ref{sec:monotonicity}}\label{app: pf of FKG and CBC}
\label{appendix:FKG stuff}

\subsection{Preliminaries}

In this preliminaries, we recall the classical Holley criterion, and also draw a connection between our model and the Ising model.

\subsubsection{Holley and FKG criteria}

Fix some discrete domain $D$ and $\mu$ and $\mu'$ denote two probability measures on $\heightfcns_D$.
We say that $\mu'$ \emph{stochastically dominates} $\mu$, denoted $\mu \leq_{st} \mu'$, 
if there exists a probability measure $\nu$ on $(h,h')\in \heightfcns_D \times \heightfcns_D$ such that the first and second marginal distributions are respectively $\mu$ and $\mu'$, and $\nu[h \preceq h']=1$.
Note that if $\mu \leq_{st} \mu'$, then, for all increasing $F: \heightfcns_D \to \bbR$,
\begin{align*}
	\mu [F(h)]  \leq\mu' [F(h) ].
\end{align*}

We say that $\mu$ is \emph{irreducible} if for any two $h, h' \in \heightfcns_D$ with $\mu[h]>0$ and $\mu[h'] > 0 $, there exists a finite sequence of height functions $h=h_0, h_1, \ldots, h_m = h'$, such that for every $1 \leq i \leq m$, $\mu[h_i] > 0$ and $h_i$ differs from $h_{i-1}$ on one face only. 

We now recall the classical Holley and FKG criteria. For details see the extensive discussion of these criteria in~\cite{Gri06}.

\begin{lemma}[Holley's criterion]\label{lem:Holley}
    Consider two measures $\mu$ and $\mu'$ such that
    \begin{itemize}[noitemsep]
        \item $\mu$ and $\mu'$ are irreducible,
        \item there exists $h \preceq h' \in \heightfcns_D$ such that $\mu[h]>0$ and $\mu'[h'] > 0$, 
        \item for every face $x \in D$, every $k \in \bbZ$, 
        $\mu$-almost every $\chi \in \heightfcns_{D \setminus \{x\} }$, 
        and $\mu'$-almost every $\chi' \in \heightfcns_{D \setminus \{x\} }$ with $\chi \preceq \chi'$, 
        \begin{align}\label{eq:Holley}
            \mu[ h(x) \geq k \; \vert \; h_{\vert D \setminus \{x\}} = \chi ]
            \leq \mu'[ h(x) \geq k \; \vert \; h_{\vert D \setminus \{x\}} = \chi' ],
        \end{align}
    \end{itemize}
    then $\mu \leq_{st} \mu' $.
\end{lemma}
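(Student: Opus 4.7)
The plan is to construct a coupled heat-bath Markov chain on pairs $(h,h') \in \heightfcns_D \times \heightfcns_D$ such that (i) it preserves the order $h \preceq h'$, (ii) each of its marginals converges to $\mu$ and $\mu'$ respectively, and (iii) it can be started from an ordered pair. Any subsequential weak limit of its time-$t$ joint law then yields the desired ordered coupling $\nu$, proving $\mu \leq_{st} \mu'$.

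First I would introduce, for each of $\mu$ and $\mu'$, a single-site Glauber chain on $\heightfcns_D$: at each discrete time step, pick a face $x \in D$ uniformly at random and resample the value at $x$ from the conditional law $\mu[\,\cdot\,|\,h_{|D\setminus\{x\}}]$ (respectively $\mu'[\,\cdot\,|\,h'_{|D\setminus\{x\}}]$). The irreducibility hypothesis is precisely tailored to this dynamics, so each chain is irreducible on the support of its invariant measure; hence, started from any configuration of positive mass, its law at time $t$ converges to $\mu$ (respectively $\mu'$). To couple them, I would at every step choose the \emph{same} face $x$ and resample the pair $(h(x),h'(x))$ using a monotone coupling of $\mu[\,\cdot\,|\,h_{|D\setminus\{x\}} = \chi]$ and $\mu'[\,\cdot\,|\,h'_{|D\setminus\{x\}} = \chi']$. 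The point that makes such a coupling available is that, given $\chi$, the admissible values of $h(x)$ form a set of at most two integers (either $\{v{-}1,v{+}1\}$ when all four neighbours of $x$ share a common value $v$, or a single forced integer otherwise); consequently~\eqref{eq:Holley} is exactly the stochastic domination needed to produce, for every admissible ordered pair $(\chi,\chi')$, a coupling supported on $\{h(x) \leq h'(x)\}$.

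Starting the coupled chain from the ordered pair $(h_0,h_0')$ furnished by the second bullet of the hypothesis, the inequality $h_t \preceq h'_t$ persists at every step: on $D \setminus \{x\}$ it is inherited from time $t-1$, and at $x$ it is guaranteed by the monotone coupling. Extracting a subsequential weak limit $\nu$ of the joint laws---using compactness, after noting that each individual chain stays in the effectively finite set of configurations reachable from the starting state---one obtains a probability measure supported on $\{h \preceq h'\}$ whose marginals are the limits of the two marginal chains, namely $\mu$ and $\mu'$. This $\nu$ is the required coupling. The main delicate step is the construction of the monotone coupling at each site, but as noted above this reduces, via the at-most-two-value structure of the local conditional supports, to the hypothesis~\eqref{eq:Holley} applied to each ordered pair $(\chi,\chi')$ that arises in the chain.
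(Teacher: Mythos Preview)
The paper does not supply its own proof of this lemma; it simply states the criterion and refers the reader to Grimmett's monograph \cite{Gri06} for details. Your argument is the standard one (essentially Holley's original proof via a monotone coupling of single-site Glauber dynamics) and is correct in this setting.

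A couple of minor remarks. First, the observation about the at-most-two-value structure of the conditional at a face is specific to height functions and is correct here, but is not actually needed for the monotone coupling: condition~\eqref{eq:Holley} is precisely stochastic domination of the one-dimensional conditional laws, so the quantile coupling would work regardless. Second, your appeal to ``compactness'' and ``effectively finite'' reachable sets is a bit loose; in the paper's applications the measures $\bbP_D^\xi$ have finite support (heights are bounded by the boundary values plus the diameter of $D$), so the state space is genuinely finite and convergence of the irreducible, aperiodic Glauber chain is immediate without any subsequence extraction. In a fully general formulation one would need to add a tightness argument or a boundedness hypothesis.
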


\begin{lemma}[FKG criterion]\label{lem: FKG}
Suppose that $\mu$ is irreducible. If for every face $x \in D$, every $k \in \bbZ$, and $\mu$-almost every $\chi \in \heightfcns_{D \setminus \{x\} }$ and $\chi' \in \heightfcns_{D \setminus \{x\}}$ with $\chi \preceq \chi'$, \begin{align}
\label{eq:FKG criterion}
 \mu[ h(x) \geq k \; \vert \; h_{\vert D \setminus \{x\}} = \chi ]
 \leq 
  \mu[ h(x) \geq k \; \vert \; h_{\vert D \setminus \{x\}} = \chi' ],
\end{align}
then for all increasing functions $F, G: \heightfcns_D \to \bbR$,
\begin{align*}
\mu [F(h)G(h)] \geq \mu [F(h) ] \mu [ G(h)].
\end{align*}
\end{lemma}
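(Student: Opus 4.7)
The plan is to derive~\eqref{eq:FKG-h} from Holley's criterion (Lemma~\ref{lem:Holley}) via the standard tilting argument. Since adding constants to $F$ or $G$ preserves the covariance $\mu[FG] - \mu[F]\mu[G]$, I may assume without loss of generality that $F$ and $G$ are strictly positive on the support of $\mu$. Define the tilted measure $\mu_F$ on $\heightfcns_D$ by $\mu_F[h] := F(h)\mu[h]/\mu[F]$. The desired inequality $\mu[FG] \geq \mu[F]\mu[G]$ is equivalent to $\mu_F[G] \geq \mu[G]$, which in turn follows from the stochastic domination $\mu \leq_{st} \mu_F$ applied to the increasing function $G$.

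To apply Holley's criterion to the pair $(\mu, \mu_F)$, I verify its three hypotheses. Since $F > 0$ on the support of $\mu$, the tilted measure $\mu_F$ shares this support, and is therefore irreducible by the same single-face update paths as $\mu$. Picking any $h$ in the common support gives $h \preceq h$ with $\mu[h], \mu_F[h] > 0$. It then remains to check inequality~\eqref{eq:Holley} for $(\mu, \mu_F)$: for every face $x$, integer $k$, and admissible $\chi \preceq \chi'$ in the respective conditional supports,
\[
\mu\bigl[h(x) \geq k \,\big|\, h_{|D \setminus \{x\}} = \chi\bigr] \leq \mu_F\bigl[h(x) \geq k \,\big|\, h_{|D \setminus \{x\}} = \chi'\bigr].
\]

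To prove this, I split it into two comparisons. The hypothesis~\eqref{eq:FKG criterion} gives $\mu[h(x) \geq k \mid \chi] \leq \mu[h(x) \geq k \mid \chi']$ directly, so it suffices to show that conditional on $\chi'$, the law of $h(x)$ under $\mu_F$ stochastically dominates its law under $\mu$. Now, the graph-homomorphism constraints $|h(x) - \chi'(y)|=1$ for the neighbors $y$ of $x$ force $h(x)$ into at most two admissible values; the non-trivial case is when both are admissible, say $a' < b' = a' + 2$, corresponding to extensions $h_- \preceq h_+$ of $\chi'$. Using the monotonicity $F(h_-) \leq F(h_+)$, the elementary computation
\[
\mu_F[h(x) = b' \mid \chi'] = \frac{F(h_+)\,\mu[h(x)=b' \mid \chi']}{F(h_+)\,\mu[h(x)=b' \mid \chi'] + F(h_-)\,\mu[h(x)=a' \mid \chi']} \geq \mu[h(x) = b' \mid \chi']
\]
yields the desired domination, and the single-value case is immediate since both sides of~\eqref{eq:Holley} are then $0$ or $1$. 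Combining the two comparisons establishes~\eqref{eq:Holley}, so Holley's criterion applies and the FKG inequality follows.

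The main obstacle is essentially bookkeeping: one must carefully separate the trivial cases (zero or one admissible value of $h(x)$ under $\chi$ or $\chi'$) from the two-value case, and verify that the hypothesis~\eqref{eq:FKG criterion} applies to every $\chi \preceq \chi'$ that can arise in the conditional supports of $\mu_F$. Both issues are handled by the fact that $F > 0$ keeps the supports of $\mu$ and $\mu_F$ aligned, and that the graph-homomorphism restriction forces at most two values at $x$, reducing the verification of stochastic domination to the one-line computation above.
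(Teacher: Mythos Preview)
Your proof is correct. The paper does not prove this lemma; it states both the Holley and FKG criteria as classical facts and refers the reader to~\cite{Gri06} for details, and your tilting argument---apply Holley's criterion to the pair $(\mu,\mu_F)$ with $\mu_F$ the $F$-weighted measure, using that the graph-homomorphism constraint leaves at most two admissible values at each face---is precisely the standard derivation found in such references.
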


\subsubsection{Signs of six-vertex height functions and the Ising model}
\label{subsubsec: Ising tools}

Let $D$ be a discrete domain and $H \in \heightfcns_D$ be non-negative.
Let $G=G(H)=(V, E)$ be the following \nolinebreak{(multi-)graph:} 
the vertices $V$ are labelled by the clusters of $H > 0$ on the graph~$D$;
between any two vertices $u, v \in V$ place as many edges as there are vertices of $D$
that are adjacent to a face in each of the clusters corresponding to $u$ and $v$. 
Notice that any vertex of $D$ that corresponds to an edge of $G$ necessarily has two adjacent faces for which $H =0$. 
%
For $v \in V$, the sign of any height function $h \in \heightfcns_D$ with $|h|=H$ is constant on the cluster of $H>0$ associated with $v$. We denote this sign as $\sigma_{h}(v)$, yielding a function $\sigma_{h} :V \to \{\pm1\}$. 

Define the Ising model on $G$ via the following weights $W_{\mathrm{Ising}}$ and  probability measure $\sfP_{\mathrm{Ising}}$: for $\sigma\in\{\pm1\}^{V}$,
\begin{align*}
W_{\mathrm{Ising}, H} (\sigma) &:= \prod_{e = \langle u, v \rangle \in E} \mathbf c^{\mathbbm{1} [\sigma(u) = \sigma(v) ] }, \\
\sfP_{\mathrm{Ising},H} [ \sigma ] &:= \tfrac{1}{Z} W_{\mathrm{Ising},H} (\sigma).
\end{align*}

\begin{lemma}
\label{lem: signs of h have ising distribution}
Let $h, H \in \heightfcns_D$ satisfying $\vert h \vert = H$. Then, in the above notation
\begin{align*}
W_{6V} (h) = \mathbf c^{N(H)}\,W_{\mathrm{Ising},H} (\sigma_{h}),
\end{align*}
where 
$N(H)$ is the number of type 5--6 vertices of $D$ in $H$ that are not edges of $G$.
\end{lemma}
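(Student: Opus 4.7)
The identity we want to prove is equivalent, by taking $\log_{\mathbf c}$, to the combinatorial statement
\[
n_5(h) + n_6(h) \;=\; N(H) \;+\; \#\{e = \langle u_1, u_2 \rangle \in E : \sigma_h(u_1) = \sigma_h(u_2)\}.
\]
The plan is to verify this vertex by vertex, using the key observation that a vertex $w$ of $D$ is of type $5$ or $6$ in a height function $h$ precisely when the four surrounding faces $f_1, f_2, f_3, f_4$ (in cyclic order) satisfy $h(f_1) = h(f_3)$ and $h(f_2) = h(f_4)$, i.e.\ when the heights around $w$ form an alternating pattern $(a, a \pm 1, a, a \pm 1)$. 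Recall that under our bipartition convention $h$ vanishes only on even faces, and that two adjacent faces cannot both carry $h = 0$ (they must differ by $1$).

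I would then split the analysis according to how many of the four faces around $w$ have $H = 0$: either (i) none; (ii) exactly one; or (iii) exactly two, which must be the two diagonally opposite even faces. In cases (i) and (ii) the faces with $H > 0$ are pairwise connected through adjacencies in $D$ (removing an $H = 0$ face, the three remaining form a connected arc), so they all lie in a single cluster $u$; on them $h = \sigma_h(u) \cdot H$, and the alternating-pattern condition for $h$ translates directly into the same condition for $H$. Hence $w$ is of type $5$--$6$ in $h$ iff it is of type $5$--$6$ in $H$, and such a $w$ is not an edge of $G$. (In case (ii), moreover, $H$ cannot be alternating since the unique $H = 0$ face is opposite to a face of $H > 0$, so $w$ is never type $5$--$6$ in either $h$ or $H$.)

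In case (iii) the two odd faces around $w$ both carry $H = 1$ (being adjacent to an $H = 0$ face), so $H$ takes the alternating pattern $(1, 0, 1, 0)$ at $w$ and is of type $5$--$6$ there. If the two odd faces lie in the same cluster, $w$ is not an edge of $G$ and the common sign makes $h$ of type $5$--$6$ as well. If instead they lie in distinct clusters $u_1, u_2$, then $w$ \emph{is} an edge of $G$ and $h$ takes values $(\sigma_h(u_1), 0, \sigma_h(u_2), 0)$ around $w$, so $h$ is of type $5$--$6$ at $w$ iff $\sigma_h(u_1) = \sigma_h(u_2)$. Summing contributions, the non-edge vertices contributing to $n_5(h) + n_6(h)$ are exactly the type-$5$--$6$ vertices of $H$ that are not edges of $G$, giving $N(H)$, while the edge vertices contributing are exactly the matching-sign edges of $G$, which accounts for $W_{\mathrm{Ising},H}(\sigma_h)$.

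The one subtle step, and the most likely source of error, is correctly identifying the height-function signature of type $5$--$6$ vertices: a check against Figure~\ref{fig:the_six_vertices} confirms that it is the \emph{alternating} pattern $(a, a \pm 1, a, a \pm 1)$ that carries the $\mathbf c$-weight, rather than the superficially similar peaked pattern $(a, a+1, a+2, a+1)$, which is actually of type $1$--$4$. Relatedly, one must appreciate that the ferromagnetic form of the Ising weight emerges naturally from case (iii)-with-different-clusters: the local heights $(\pm 1, 0, \pm 1, 0)$ are alternating (hence carry $\mathbf c$) precisely when the two signs agree. With these two points pinned down, the case analysis above is routine.
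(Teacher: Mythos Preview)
Your proof is correct and follows essentially the same approach as the paper's. The paper's argument is simply a compressed version of yours: it asserts directly that every type 5--6 vertex of $h$ is type 5--6 in $H$, that type 5--6 vertices of $H$ which are not edges of $G$ are automatically type 5--6 in $h$, and that the remaining (edge) vertices are type 5--6 in $h$ exactly when the two cluster signs agree. Your explicit case split on the number of $H=0$ faces around $w$ unpacks precisely these three claims.
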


\begin{proof}
Any  type 5--6 vertex of $h$ is also a type 5--6 vertex in $H$. 
Conversely, any type 5--6 vertex of $H$ which does not correspond to an edge of $G$ is also a type 5--6 vertex in~$h$. 
The other type 5--6 vertices of $H$ however may correspond to either type 1--4 or type 5--6 vertices of $h$, 
depending on the choice of the signs in $h$ of the two clusters of $H > 0$ meeting there.
Indeed, they are of type 5--6 only if the two clusters have same sign. 
We deduce that 
\begin{align}
\nonumber
W_{6V} (h) &:=
\mathbf c^{N(H)}\prod_{e = \langle u, v \rangle \in E}
\mathbf c^{\mathbbm1 [ \sigma_{h} (u) = \sigma_{h} (v)]} = \mathbf c^{N(H)}\,W_{\mathrm{Ising},H} (\sigma_{h}).
\end{align}
\end{proof}

Let now $H,H' \in \heightfcns_D$ be two height functions with $H' \ge H \geq 0$. 
Let $G'=(V', E')=G(H')$.
Note that every cluster of $H > 0$ is thus contained in a unique cluster of $H' > 0$.
Let $\pi: V \to V'$ be the projection corresponding to this inclusion, and define also  
 the preimage map $\pi^{-1}$  of this projection, from $V'$ to subsets of $V$.

\begin{lemma}
\label{lem: Ising on contracted graph is conditional Ising}
Condition the Ising model $\sfP_{{\rm Ising},H}$ on $G$ on the event that $\sigma(\cdot)$ is constant on $\pi^{-1}(v)$ for every $v \in V'$; then the law of  $\sigma \circ \pi^{-1}$ (this is a slight abuse of notation) is $\sfP_{{\rm Ising},H'}$. 
\end{lemma}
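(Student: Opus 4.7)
The plan is to write the conditional law of $\sigma \circ \pi^{-1}$ explicitly and show it is proportional to $W_{\mathrm{Ising}, H'}(\tau)$, thus matching $\sfP_{\mathrm{Ising}, H'}$.

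First, I would observe that on the conditioning event (that $\sigma$ is constant on each fibre $\pi^{-1}(v')$), the function $\sigma$ is entirely determined by $\tau := \sigma \circ \pi^{-1}$. The conditional law of $\tau$ is therefore proportional to
$$W_{\mathrm{Ising}, H}(\tau \circ \pi) = \prod_{e = \langle u, v \rangle \in E} \mathbf c^{\ind[\tau(\pi(u)) = \tau(\pi(v))]}.$$
I would split $E$ into \emph{internal} edges (with $\pi(u_e) = \pi(v_e)$) and \emph{external} edges (with $\pi(u_e) \ne \pi(v_e)$). Internal edges each contribute a factor $\mathbf c$ independent of $\tau$, which absorbs into the normalisation, so the conditional law is proportional to the analogous product restricted to external edges.

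The geometric core of the proof is then a bijection between external edges of $E$ and edges of $E'$, sending $\{\pi(u_e), \pi(v_e)\}$ to $\{u_{e'}, v_{e'}\}$. Both types of edges correspond to ``hinge'' vertices of $D$: vertices $v$ where two opposite surrounding faces have height $0$ (in $H$ or $H'$, respectively) and the other two opposite faces lie in distinct positive-height clusters. For an external $e \in E$ with hinge $v$, I would show that both $H = 0$ faces at $v$ must also satisfy $H' = 0$; otherwise, a newly positive face at $v$ together with the two $H > 0$ faces would form a single $H' > 0$ cluster through $v$, contradicting $\pi(u_e) \ne \pi(v_e)$. Hence $v$ is also a hinge for $E'$, giving the forward map. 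For the converse, the two opposite $H' = 0$ faces at any $E'$-hinge automatically have $H = 0$; the remaining question is the two ``high'' faces, which I would show both have $H > 0$: a parity argument on the local $2\times 2$ height pattern rules out exactly three zeros around $v$ (the configuration $(0,0,0,k)$ with $k \ge 1$ violates the step-$1$ condition on some pair of adjacent faces), and the only other possibility is that all four faces are $H = 0$, which needs separate treatment.

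The main obstacle is this residual ``four-zero'' case: the local picture of $H$ at $v$ is entirely flat-zero, yet $v$ is a hinge of $E'$. I would address it either by arguing that in such a configuration the two $H'$-clusters meeting diagonally at $v$ must coincide globally (so $v$ does not in fact contribute to $E'$), or by reinterpreting the lemma's statement modulo a natural convention on empty fibres of $\pi$, for instance extending $V$ by isolated formal vertices on which $\sigma$ is uniform and independent (these carry no Ising interaction and so do not disturb the calculation). Once this case is resolved, combining the bijection with the factorisation above yields $W_{\mathrm{Ising}, H}(\tau \circ \pi) = (\text{const}) \cdot W_{\mathrm{Ising}, H'}(\tau)$, and renormalising gives precisely $\sfP_{\mathrm{Ising}, H'}$ as the conditional law of $\sigma \circ \pi^{-1}$, as claimed.
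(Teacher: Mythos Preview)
Your approach is the same as the paper's: factor $W_{\mathrm{Ising},H}(\tau\circ\pi)$ over internal and external edges of $E$, then match external edges of $E$ bijectively with (non-loop) edges of $E'$ via the common hinge vertex. The paper carries this out exactly as you outline.

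The ``four-zero obstacle'' you identify is not real, and your proposed workarounds are unnecessary. At an $E'$-hinge $v$, the local picture of $H'$ is $\begin{smallmatrix}0&1\\1&0\end{smallmatrix}$ (or its reflection). The two faces with $H'=0$ indeed force $H=0$ there; but the two faces with $H'=1$ are \emph{adjacent} to the $H=0$ faces, so the step-$1$ condition forces $H\in\{-1,1\}$ on them, and $H\ge 0$ then gives $H=1$. In other words, your own step-$1$ argument already rules out four zeros just as it rules out three: any configuration with two adjacent zero faces is forbidden, and four zeros certainly has adjacent zeros. (Equivalently, the faces with $H'=1$ are odd faces, on which $H$ must be odd.) So $H$ has exactly the same local pattern $\begin{smallmatrix}0&1\\1&0\end{smallmatrix}$ at $v$, and the backward map is immediate. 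This also shows $\pi$ is surjective, so empty fibres never arise.

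One small point you gloss over: the bijection is with the \emph{non-loop} edges of $E'$ (your own description ``distinct positive-height clusters'' already says this). Loop edges of $E'$ contribute a $\tau$-independent factor $\mathbf c$ each to $W_{\mathrm{Ising},H'}(\tau)$, which is absorbed into the normalisation; the paper tracks this factor explicitly as $\mathbf c^{-\#\{\text{loop edges of }E'\}}$, but for your purposes it is enough to note it is constant.
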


\begin{proof}
Consider an edge $e' = \langle u', v' \rangle \in E'$ corresponding to a local configuration of $H'$ given by $\tiny{
\begin{array}{c }
0~1 \\
1~0
\end{array}
}$
or
 $\tiny{
\begin{array}{c }
1~0 \\
0~1
\end{array}
}$.
Since $0 \leq H \preceq H'$, $H$ has the same local configuration, and thus $e'$ corresponds to a unique edge $e \in E$, where furthermore $e = \langle v, u \rangle$ satisfies $\pi(v)=v'$ and $\pi(u)= u'$. We denote this injective map by $\iota: E' \to E$.
We claim that the restriction of $\iota$ is a bijection \[\iota:\{ e' = \langle u', v' \rangle  \in E' \; : \; u' \neq v'\}\longrightarrow\{ e = \langle u, v \rangle  \in E \; : \; \pi(u) \neq \pi(v)\}\] (we use a slight abuse of notation and write $\iota$ for the restriction as well). Indeed, first, for $e' = \langle u', v' \rangle  \in E' $ with $ u' \neq v'$ the image $\iota(e') = \langle v, u \rangle$ satisfies $\pi(v)=v' $ and $ \pi(u)= u'$, so $\pi(u) \neq \pi(v)$. Second, given $e = \langle u, v \rangle  \in E $, the additional condition $ \pi(u) \neq \pi(v)$ implies that the local configuration $\tiny{\begin{array}{c }
0~1 \\
1~0
\end{array}
}$
or
 $\tiny{
\begin{array}{c }
1~0 \\
0~1
\end{array}
}$
of $H$ corresponding to $e$ must be the same in $H'$. Hence, there exists $e' \in E'$, labelled by this local configuration of $H'$, that maps $\iota(e') = e$. This proves the bijectivity, as $\iota$ is by construction injective.

Suppose now that $\sigma(\cdot)$ is constant on $\pi^{-1}(v)$ for every $v \in V'$. Compute
\begin{align}
\nonumber
W_{\mathrm{Ising},H} (\sigma) &= 
\prod_{e = \langle u, v \rangle \in E} \mathbf{c}^{\mathbbm{1}[\sigma(u) = \sigma(v)]} 
\\
\nonumber
&= \underbrace{
\Big( 
\prod_{\substack{e = \langle u, v \rangle \in E \\ \pi(u) = \pi(v) }} \mathbf c
\Big)
}_{K(H, H')}
\times
\prod_{\substack{e = \langle u, v \rangle \in E \\ \pi(u) \neq \pi(v) }}  \mathbf{c}^{\mathbbm{1}[\sigma(u) = \sigma(v)]} 
\\
\nonumber
&= K(H,H') \times \prod_{\substack{e' = \langle u', v' \rangle \in E' \\ u' \neq v' }} \mathbf c^{\mathbbm{1}[\sigma \circ \pi^{-1} (u') = \sigma \circ \pi^{-1} (v')]}
\\
\nonumber
&=
 K(H,H')\mathbf c^{- \# \{ \text{loop edges of }E' \} } W_{\mathrm{Ising},H'} (\sigma \circ \pi^{-1}),
\end{align}
where in the third equality we re-labeled the product using the bijection $\iota$, and used the observation that for an edge $\langle u', v' \rangle \in E'$ in the new labeling, the corresponding $\langle u, v \rangle \in E$,  for which $\iota (\langle u, v \rangle ) = \langle u', v' \rangle  $, satisfies $u \in \pi^{-1} (u')$ and $v \in \pi^{-1} (v')$.
The claimed equality of distributions now follows from the previous displayed equation.
\end{proof}

\subsection{Proof of~\eqref{eq:FKG-h} and~\eqref{eq:CBC-h}}
\label{subsubsec: Proof of CBC and FKG}

We will check the assumption of Lemma~\ref{lem:Holley} for $\mu = \bbP^{\xi}_D$ and $\mu' = \bbP^{\xi'}_D$ where $\xi \preceq \xi'$. In the special case when $\xi = \xi'$, the assumptions of Lemma~\ref{lem:Holley} become those of Lemma~\ref{lem: FKG}. These two lemmas then directly imply~\eqref{eq:CBC-h} and~\eqref{eq:FKG-h}, respectively.

We start by showing the irreducibility of $\bbP^{\xi}_D$. Consider two height functions $h, h'$ which are admissible for $\bbP^{\xi}_D$.
It is easy to check that their point-wise maximum $h \vee h'$ is also admissible. Thus, it suffices to consider the case $h \preceq h'$, 
which is what we do next. 

Assuming that $h \neq h'$, the function $h'-h$ has at least one face of strictly positive value. 
Write $m := \max \{h'(z) - h(z): z\in D\}$ and let $x$ be a face of maximal $h'$-value among the faces $z$ with $h'(z) - h(z) = m$. 
By this maximality, one readily deduces that $h'$ takes values $h'(x)-1$ on all faces adjacent to $x$. 
Thus, the function $h_1$ which is equal to $h'$ on $D \setminus \{x\}$ and equal to $h'(x) -2$ at $x$ is also admissible. 
Applying repeatedly this type of modification, we construct a decreasing sequence of admissible height functions $h' = h_1, \ldots, h_m = h$, 
with $h_{i+1}$ differing from $h_i$ at only one face. In conclusion $\bbP^{\xi}_D$ is irreducible. 
(The monotonicity is unimportant here, but crucial when repeating the same argument for absolute values.) 
The same holds for $\bbP^{\xi'}_D$.

To check the second condition of Lemma~\ref{lem:Holley}, 
let $h$ and $h'$ be arbitrary admissible height functions for $\bbP^{\xi}_D$ and $\bbP^{\xi'}_D$, respectively. 
Then, the point-wise minimum and maximum $h \wedge h'$ and $h \vee h'$ 
are also admissible height functions for $\bbP^{\xi}_D$ and $\bbP^{\xi'}_D$, respectively.
These two height functions satisfy the second condition of Lemma~\ref{lem:Holley}.


We now check~\eqref{eq:Holley}. Let $\chi$ and $\chi'$ as in the assumption of Lemma~\ref{lem:Holley}. Let $N_x$ be the set of faces of $D$ adjacent to $x$ in $D$ (there are between 2 and 4 of them). Let $m:=\min_{y\in N_x} \chi (y)$, $M:=\max_{y\in N_x} \chi (y)$, and $m',M'$ similarly defined for $\chi'$. By assumption, we have that $m\le m'$ and $M\le M'$.

Moreover since $\chi$ and $\chi'$ are admissible, we have $M \in \{m,m+2\}$ and $M' \in \{m',m'+2\}.$
If $M = m+2$, then $h(x) = m+1$ with  $\bbP^{B, \xi}_D [\cdot |  h_{| D \setminus \{ x\} } = \chi ]$- probability $1$.
Otherwise $h(x) \in \{m-1,m+1\}$.  
As a consequence, if either $M>m$ and $M'>m'$, then ~\eqref{eq:Holley} holds trivially. 
The same is true when $m = M  < m' = M'$.

The only remaining case is when $m=m'=M=M'$. 
In this case, for both measures, we know that  $h(x)\in \{m-1, m+1\}$, and it thus remains to show that 
\[
\bbP^{B,\xi'}_D [h(x)=m+1 \; | \; h_{| D \setminus \{ x\} } = \chi' ]  \geq
\bbP^{B,\xi}_D [h(x)=m+1 \; | \; h_{| D \setminus \{ x\} } = \chi] .\] 
Let $N_x^\times$ be the set of faces in $D \setminus \{x\}$ that share a corner with $x$. 
On $N^\times_x$, $\chi$ takes a value in $\{m-1, m, m+1\}$. Define $n_- = \#\{y\in N^\times_x, \chi (y) = m-1\}$, $n_+ = \#\{y\in N^\times_x, \chi (y) = m+1\}$ and $n_-', n_+'$ similarly for $\chi'$. By computing the weights of the different height functions extending $\chi$, we get
	\begin{align*}
	\bbP^{B,\xi}_D [h(x)=m+1 \; | \; h_{| D \setminus \{ x\} } = \chi ] &= \frac{\mathbf c^{n_+}}{\mathbf c^{n_+}+\mathbf c^{n_-}},\\
	\bbP^{B,\xi'}_D [h(x)=m+1 \; | \; h_{| D \setminus \{ x\} } = \chi' ] &= \frac{\mathbf c^{n_+'}}{\mathbf c^{n_+'}+\mathbf c^{n_-'}}.
\end{align*}
Observe that the assumption $\chi \preceq \chi'$ implies $n_+ \le n'_+$ and $n_- \ge n'_-$, and as $\mathbf c\ge 1$, we thus deduce~\eqref{eq:Holley} in this case as well.
\hfill $\square$

\subsection{Proof of~\eqref{eq:FKG-|h|} and~\eqref{eq:CBC-|h|}}
\label{subsubsec: proof of CBC and FKG for abs(h)}

As before, we focus on proving the three properties of Lemma~\ref{lem:Holley} for the laws $\mu$ and $\mu'$ of $|h|$ under $\bbP^{\xi}_D$ and $\bbP^{\xi'}_D$.

For irreducibility, observe that, since $\xi \succeq 0$, 
$\bbP^{\xi}_D[| h | = H] > 0$ if and only if $\bbP^{\xi}_D[ h  = H] > 0$.
The irreducibility of the law of $|h|$ follows from that of $\bbP^{\xi}_D$.
The same holds for $\bbP^{\xi'}_D$.
The second property  of Lemma~\ref{lem:Holley} for $|h|$ is derived in a similar way from that for the law of~$h$. 

Finally, let us prove~\eqref{eq:Holley}.
Fix $0\le \chi \preceq \chi'$. Let $N_x$ be as in the proof of Proposition~\ref{prop: CBC and FKG}. 
Let $m:=\min_{y\in N_x} \chi (y)$, $M:=\max_{y\in N_x} \chi (y)$, and $m',M'$ similarly for $\chi'$. 
Then, $m'\ge m \ge 0$ and $M'\ge M \ge 0$.
Identically to the proof of Proposition~\ref{prop: CBC and FKG}, 
one can show that the only non trivial case is $m=m'=M=M'$, 
which we now assume is the case. 
We divide the proof in three cases depending on whether the common value $m=m'=M=M'$ is equal to $0$, $1$ or larger than or equal to $2$.
	
If $m=0$, then we must have $|h(x)|=1$ under both measures, and we therefore have nothing to prove. 

Suppose now that $m\ge 2$.
As in the proof of Proposition~\ref{prop: CBC and FKG}, let $N^\times_x$ be set of faces sharing a corner with $x$ and
$n_- := \#\{y\in N^\times_x: \chi (y) = m-1 \}$, $n_+ := \#\{y\in N^\times_x: \chi (y)= m+1\}$ and $n_-', n_+'$ similarly for $\chi'$. 
Given that $\chi$ and $\chi'$ only take values in $\{m-1,m,m+2\}$, the sign of $h$ is constant on $N^\times_x$. 
In particular, the types of the vertices at the corners of the square $x$
 only depend on the absolute value $| h |$, not on the sign of $h$. 
 One can thus directly compute the weights of the different possible configurations of $h$ and obtain
\begin{align*}
	\bbP^{\xi}_D [ | h (x) |=m+1 ~|~ | h_{| D \setminus \{ x\} } | = \chi ] &= 
	\frac{\mathbf c^{n_+}}{\mathbf c^{n_+}+\mathbf c^{n_-}},\\
	\bbP^{\xi'}_D [ | h (x) |=m+1 ~|~ | h_{| D \setminus \{ x\} } | = \chi' ] &= 
	\frac{\mathbf c^{n_+'}}{\mathbf c^{n_+'}+\mathbf c^{n_-'}}.
\end{align*} 
As in the proof of Proposition~\ref{prop: CBC and FKG}, $\chi \preceq \chi'$ implies $n_+ \le n'_+$
and $n_- \ge n'_-$, which in turn implies~\eqref{eq:Holley} since $\mathbf c\ge 1$.

There remains the case where $m=1$, which is the core of the proof 
and for which we use the connection to the Ising model mentioned in Section~\ref{subsubsec: Ising tools}.
In this case, there are only two possible values for $|h (x)|$, namely $0$ and $2$. 
We wish to show
\begin{align}\label{eq:to show for Holley}
	\bbP^{\xi}_D \big [| h(x) | = 2 \; \big| \; | h_{| D \setminus \{ x\} } | = \chi \big] 
	& \leq \bbP^{\xi'}_D \big [| h(x) | = 2 \; \big| \; | h_{| D \setminus \{ x\} } | = \chi' \big].
\end{align}

Let $H_0 \in \heightfcns_D$ (resp. $H_2$) be the height functions equal to $0$ (resp. 2) at $x$ and coinciding with $\chi$ on $D\setminus\{x\}$. Define
\begin{align}\label{eq:def of Z}
    Z_0 = \sum_{\substack{ h \in \heightfcns_D \\ | h | =H_0 \\ h \geq 0 \text{ on } B } } W_{6V} (h)
    \text{\qquad and \qquad}
    Z_2 = \sum_{\substack{ h \in \heightfcns_D \\ | h | = H_2 \\ h \geq 0 \text{ on } B } } W_{6V} (h).
\end{align}
Then
\begin{align*}
	\bbP^{\xi}_D \big[ | h(x) | = 2 \; \big| \; | h_{| D \setminus \{ x\} } | = \chi \big] = \frac{Z_2}{ Z_0 + Z_2 }.
\end{align*}
A similar formula is obtained for the ``primed'' configurations. 
To deduce~\eqref{eq:to show for Holley}, one needs to show that 
\begin{align}\label{eq:to show for Holley 2}
	Z_2 \big/ Z_0 \leq  Z'_2 \big/ Z'_0.
\end{align}

Now follows a simple but crucial observation. There is an injection $\mathbf T$ from the height functions $h$ contributing to $Z_2$ to the height functions $h$ contributing to $Z_0$: simply change the value $\pm 2$ of $h(x)$ to $0$. The image of this injection is exactly those $h$ contributing to $Z_0$ for which in addition $h$ has constant sign\footnote{And this sign tells whether the preimage takes value $+2$ or $-2$ at $x$, which implies the injectivity.} on $N_x$. Set $n_0 := \# \{ y \in N_x^\times \; : \; \chi(y) = 0 \}$ and $n_2:= \# \{ y \in N_x^\times \; : \; \chi(y) = 2 \}$. Under this injection the six-vertex weights become
\begin{align*}
W_{6V} (\mathbf T(h)) = \mathbf c^{n_0 - n_2} W_{6V} (h).
\end{align*}
We can thus express $Z_2$ using this up-to-constant weight-preserving injection as
\begin{align*}
Z_2 =  \mathbf c^{n_2 - n_0} \sum_{\substack{ h \in \heightfcns_D \\ | h | = H_0 \\ h \geq 0 \text{ on } B \\
\mathrm{sign}(h) \text{ cst. on }N_x  } } W_{6V} (h),
\end{align*}
and finally, using~\eqref{eq:def of Z},
\begin{align}
\label{eq:Z2/Z0 using sign probas}
Z_2 \big/ Z_0 = 
\mathbf  c^{n_2 - n_0} 
\bbP^{\xi}_D [ \mathrm{sign}(h) \text{ cst. on }N_x \; | \; | h | = H_0 ].
\end{align}
A similar formula holds for the ``primed'' configurations.

Recall again that $\mathbf c \geq 1$ and that $n_2 - n_0 \leq n'_2 - n'_0$. Using~\eqref{eq:Z2/Z0 using sign probas} and its ``primed'' analogue, we observe that for~\eqref{eq:to show for Holley 2} to hold it thus suffices that 
\begin{align}
\label{eq:to show for Holley 3}
\bbP^{\xi}_D [ \mathrm{sign}(h) \text{ cst. on }N_x \; | \; | h | = H_0 ]
 \leq
 \bbP^{\xi'}_D [ \mathrm{sign}(h') \text{ cst. on }N_x \; | \; | h' | = H'_0 ].
\end{align} 
Let us now study the conditional probability appearing on the left. Lemma~\ref{lem: signs of h have ising distribution} gives
\begin{align*}
\bbP^{\xi}_D [ \mathrm{sign}(h) \text{ cst. on }N_x \; | \; | h | = H_0 ]
&
=
\sum_{\substack{ h \in \heightfcns_D \\ | h | = H_0 \\ h \geq 0 \text{ on } B \\
\mathrm{sign}(h) \text{ cst. on }N_x } } W_{6V} (h) 
\Bigg/
\sum_{\substack{ h \in \heightfcns_D \\ | h | = H_0 \\ h \geq 0 \text{ on } B} } W_{6V} (h)
\\
& \stackrel{
\tiny{ \text{} }
}
{=}
\sum_{\substack{ \sigma \in \{\pm1 \}^{V} \\
\sigma = + 1 \text{ on } B \\
\sigma \text{ cst. on }N_x } } 
W_{\mathrm{Ising},H_0} ( \sigma ) 
\Bigg/
\sum_{\substack{ \sigma \in \{\pm1 \}^{V} \\
\sigma = + 1 \text{ on } B
} }
 W_{\mathrm{Ising},H_0} ( \sigma ) 
\\
&= \sfP_{\mathrm{Ising},H_0} [ \sigma \text{ cst. on } N_x \; | \; \sigma = + 1 \text{ on } B ].
\end{align*}
where the Ising model is as in Section~\ref{subsubsec: Ising tools}, and by ``$\sigma$  cst.~on $N_x$'' we mean that $\sigma$ is constant on the vertices of $v$ labeled by clusters of $H_0 > 0$ intersecting $N_x$; ``$\sigma = + 1$ on $B$'' should be interpreted analogously. 

A similar reasoning together with  
 Lemma~\ref{lem: Ising on contracted graph is conditional Ising} applied to $H_0 \preceq H_0'$  gives that
\begin{align*}
& \bbP^{\xi'}_D [ \mathrm{sign}(h') \text{ cst.~on }N_x \; | \; | h' | = H'_0 ]\\
\nonumber  &=\sfP_{\mathrm{Ising},H_0'} [ \sigma' \text{ cst.~on } N_x \; | \; \sigma' = + 1 \text{ on } B ]\\
&=
\sfP_{\mathrm{Ising},H_0} [ \sigma  \text{ cst.~on } N_x \; | \; \{\sigma  \text{ cst.~on } \pi^{-1}(v') \text{ for each }v' \in V'\}\cap\{
\sigma = + 1 \text{ on } B\}].
\end{align*}
Plugging the two previous displayed equations in~\eqref{eq:to show for Holley 3}, we see that it suffices to show that 
\begin{align}
\label{eq:to show for Holley 4}
\sfP_+ [ \sigma \text{ cst.~on } N_x ]
\leq
\sfP_+ [ \sigma  \text{ cst.~on } N_x \; \vert \; \sigma  \text{ cst.~on } \pi^{-1}(v') \text{ for each }v' \in V' ],
\end{align}
where $\sfP_+$ denotes $\sfP_{\mathrm{Ising},H_0}[\,\cdot\,|\sigma = + 1 \text{ on } B ]$. 

Denote by $N$ the vertices of $V$ that correspond to clusters intersecting  $N_x$, and denote the sets $\pi^{-1}(v')$ by $U_i$.  Equivalently to~\eqref{eq:to show for Holley 4}, we want to prove
\begin{align*}
\mathrm{Cov}_{\sfP_+} & \big( \mathbbm{1} [ \sigma \text{ cst.~on } N ], 
\prod_{i=1}^m  \mathbbm{1} [ \sigma \text{ cst.~on } U_i ]
\big) 
 \geq 0.
\end{align*}
Now, note that we have
\begin{align*}
\mathbbm{1} \{ \sigma \text{ cst.~on } A \}
= \prod_{u,v \in A}\frac{1+\sigma_u\sigma_v}{2}
= \sum_{U \subset A} a_U \prod_{u \in U} \sigma_u,
\end{align*}
where  $a_U \geq 0$ for every $U \subset A$. Applying this formula for $A=N$ and $A=U_i$, we get 
\begin{align*}
\mathrm{Cov}_{\sfP_+} & \big( \mathbbm{1} [ \sigma \text{ cst.~on } N ], 
\prod_{i=1}^m  \mathbbm{1} [ \sigma \text{ cst.~on } U_i ]
\big) 
=
\sum_{U \subset V}
\sum_{U' \subset N} a_U b_{U'}
\mathrm{Cov}_{\sfP_+} \big( \prod_{u' \in U'}  \sigma_{u'} ,  \prod_{u' \in U'} \sigma_{u}
 \big)
 \geq 0,
\end{align*}
where in the last step we observed that $a_U, b_{U'} \geq 0$ and that
by Griffiths' second inequality~\cite{Gri67}, each individual covariance term in the sum is non-negative. This finishes the proof.

\bibliographystyle{abbrv}
\def\cprime{$'$}

\end{document}